\newcommand{\bba}{\mathbb{A}}
\newcommand{\bbas}{\mathbb{A}^\delta}
\newcommand{\nomj}{\mathbf{j}}
\newcommand{\nomk}{\mathbf{k}}
\newcommand{\nomi}{\mathbf{i}}
\newcommand{\nomh}{\mathbf{h}}
\newcommand{\cnomm}{\mathbf{m}}
\newcommand{\cnomn}{\mathbf{n}}
\newcommand{\cnomo}{\mathbf{o}}
\newcommand{\jty}{J^{\infty}}
\newcommand{\mty}{M^{\infty}}
\def\aol{\rule[0.5865ex]{1.38ex}{0.1ex}}
\def\pdla{\mbox{\rotatebox[origin=c]{180}{$\,>\mkern-8mu\raisebox{-0.065ex}{\aol}\,$}}}
\newcommand{\adnote}[1]{\textcolor{blue}{ADD:#1}}
\newcommand{\mpnote}[1]{\textcolor{red}{MP:#1}}
\newcommand{\afnote}[1]{\textcolor{orange}{AF:#1}}
\newcommand{\pcon}{\mathcal{C}}
\newcommand{\ppcon}{\mathcal{D}}
\newcommand{\npcon}{\cancel{\mathcal{C}}}
\newcommand{\nppcon}{\cancel{\mathcal{D}}}
\newcommand{\wt}{{\rhd}}
\newcommand{\bt}{{\blacktriangleright}}
\newcommand{\tw}{{\lhd}}
\newcommand{\tb}{{\blacktriangleleft}}
\newcommand{\lfilt}[1]{\mathrm{Fi}_{\mathcal{L}}\mathbb{#1}}
\newtheorem{theorem}{Theorem}[section]
\newtheorem{lemma}[theorem]{Lemma}
\newtheorem{definition}[theorem]{Definition}
\newtheorem{corollary}[theorem]{Corollary}
\newtheorem{proposition}[theorem]{Proposition}
\newtheorem{remark}[theorem]{Remark}
\journal{Annals of Pure and Applied Logic}
\begin{document}

\begin{frontmatter}

%% Title, authors and addresses

%% use the tnoteref command within \title for footnotes;
%% use the tnotetext command for theassociated footnote;
%% use the fnref command within \author or \address for footnotes;
%% use the fntext command for theassociated footnote;
%% use the corref command within \author for corresponding author footnotes;
%% use the cortext command for theassociated footnote;
%% use the ead command for the email address,
%% and the form \ead[url] for the home page:
%% \title{Title\tnoteref{label1}}
%% \tnotetext[label1]{}
%% \author{Name\corref{cor1}\fnref{label2}}
%% \ead{email address}
%% \ead[url]{home page}
%% \fntext[label2]{}
%% \cortext[cor1]{}
%% \affiliation{organization={},
%%             addressline={},
%%             city={},
%%             postcode={},
%%             state={},
%%             country={}}
%% \fntext[label3]{}

\title{Obligations and permissions, algebraically}

\author[inst1]{Andrea De Domenico}
\author[inst2]{Ali Farjami}
\author[inst1]{Krishna Manoorkar}
\author[inst1,inst3]{Alessandra Palmigiano}
\author[inst1]{Mattia Panettiere}
\author[inst1,inst4]{Xiaolong Wang*}

\cortext[cor1]{Corresponding author: Xiaolong Wang, Email: 390292381@qq.com }

\affiliation[inst1]{organization={School of Business and Economics, Vrije Universiteit Amsterdam},%Department and Organization
            addressline={%Amsterdam
            %},
           % postcode={1081HV},
            %country={
            The Netherlands}}

\affiliation[inst2]{organization={School of Computer Engineering, Iran University of Science and Technology},%Department and Organization
            addressline={Tehran},
            country={Iran}}

\affiliation[inst3]{organization={Department of Mathematics and Applied Mathematics, University of Johannesburg},%Department and Organization
            addressline={South Africa}}

\affiliation[inst4]{organization={School of Philosophy and Social Development, Shandong University},%Department and Organization
            addressline={Jinan},
            country={China}
            }

\begin{abstract}
%% Text of abstract
We further develop the algebraic approach to input/output logic initiated  in \cite{wollic22}, where subordination algebras and a family of their generalizations were proposed as a semantic environment of various input/output logics.
In particular, we consider precontact algebras as a suitable algebraic environment for negative  permission, and we characterize properties of several types of permission (negative, static, dynamic), as well as their interactions with normative systems, by means of suitable modal languages encoding outputs.
\end{abstract}

\begin{keyword}
%% keywords here, in the form: keyword \sep keyword
input/output logic\sep subordination algebras\sep precontact algebras\sep selfextensional logics\sep slanted algebras\sep algorithmic correspondence theory.
%% PACS codes here, in the form: \PACS code \sep code

%% MSC codes here, in the form: \MSC code \sep code
%% or \MSC[2008] code \sep code (2000 is the default)
\MSC 03G25 \sep 03G10
\end{keyword}

\end{frontmatter}

%% \linenumbers

%% main text
\section{Introduction}
The present paper continues a line of investigation, recently initiated in \cite{wollic22}, which pursues
 the study of {\em input/output logic} from the viewpoint of algebraic logic \cite{font2003survey}, by establishing systematic connections between {\em input/output logic} and {\em subordination algebras}, two areas of research in non-classical logics which have been pursued  independently of one another by different research communities, with different motivations and  methods.

The framework of {\em input/output logic} \cite{Makinson00} is designed for modelling the interaction between logical inferences and other agency-related notions  such as  conditional obligations, goals, ideals, preferences, actions, and beliefs, in the context of the formalization of normative systems in philosophical logic and AI. Recently, the original framework of input/output logic, based on classical propositional logic, has been generalized  to incorporate various forms of
{\em nonclassical} reasoning \cite{parent2014intuitionistic,stolpe2015concept},
and these generalizations have   contextually motivated the introduction of algebraic and proof-theoretic methods in the study of input/output logic   \cite{sun2018proof,ciabattoni2023streamlining}.

{\em Subordination algebras} \cite{bezhanishvili2017irreducible} are tuples $(A, \prec)$ consisting of a Boolean algebra $A$  and  a binary relation $\prec$  on $A$  such that the direct (resp.~inverse) image of each element $a\in A$ is a filter (resp.~an ideal) of $A$. These structures have been introduced for connecting and systematising several notions (including pre-contact algebras \cite{dimov2005topological} and quasi-modal algebras \cite{celani2001quasi,celani2016precontact}, of which subordination algebras are equivalent presentations, and proximity lattices \cite{jung1996duality}) cropping up in the context of generalizations of Stone duality, within a research program which brings together constructive and point-free mathematics, non-classical logics, and their applications to the (denotational) semantics of programming languages. For instance, in the context of proximity lattices, the proximity relation $\prec$ relates (the algebraic interpretation of) two logical propositions $\varphi$ and $\psi$ if, whenever $\varphi$ is
{\em observed},  $\psi$ is {\em actually} true.
In \cite{wollic22}, subordination algebras and generalizations thereof are proposed as an  algebraic semantic framework for several input/output logics. Moreover, via the recently established  link between subordination algebras and {\em slanted algebras} \cite{de2020subordination}, well-known properties of normative systems and their associated output operators  have been equivalently characterized in terms of modal axioms, using  results from the general theory of {\em unified correspondence} \cite{conradie2014unified,conradie2019algorithmic,conradie2020constructive}.
In the present paper, we systematically study the algebraic counterparts of normative, permission, and dual permission systems based on the algebras of the classes canonically associated with (fully) selfextensional logics \cite{jansana2006referential, Jansana2006conjunction};  normative, permission, and dual permission systems on selfextensional logics have been introduced and studied in the companion paper \cite{dedomenico2024obligations}.  In particular, in the present paper, we refine and streamline  the proof of the main characterization result \cite[Proposition 3.7]{wollic22}, and  further   extend it to characterize properties of various notions of permission systems  also in relation to normative systems, by means of modal axioms in an expanded signature which includes negative modal operators (intuitively representing prohibitions). Moreover, we apply these characterization results to resolve different issues: we obtain logical characterizations of output operators for both normative  and permission systems;
we obtain dual characterizations of conditions on subordination algebras (resp.~ precontact algebras) and on their associated   spaces; we algebraize positive static permissions, and modally characterize the notion of cross-coherence; we clarify the relation between certain conditions on positive bi-subordination lattices and Dunn's axioms for positive modal logic.

\paragraph{Structure of the paper} In Section \ref{sec:prelim}, we collect basic definitions and facts about the abstract logical framework of selfextensional logics in which we are going to develop our results, and recall the definition of normative systems, introduced in \cite{wollic22}, pertaining to this framework. %the general environment of proto-subordination algebras and proto-precontact algebras and their properties, canonical extensions and slanted algebras.
We also introduce, discuss and study the properties of negative permission systems in the context of selfextensional logics. In Section \ref{sec: algebraic models}, we partly recall, partly introduce, semantic structures generalizing subordination algebras and precontact algebras, which we take as the algebraic semantic environment of input/output logics based on selfextensional logics and their associated permission systems, and study the basic properties of these algebraic models. In Section \ref{sec: proto and slanted}, we partly recall, partly define, how these algebraic structures can be associated with certain classes of {\em slanted algebras} \cite{de2021slanted} (cf.~Definition \ref{def:slanted}), which form an algebraic environment in which {\em output operators} from input/output logic can be systematically represented as modal operators endowed with order-theoretic, algebraic, and topological properties. In Section \ref{sec: modal charact}, we
 characterize conditions on (the algebraic counterparts of) normative systems and permission systems  in terms of the validity of modal inequalities on their associated slanted algebras. In Section \ref{sec: applications}, we use the results of the previous section to characterize the output operators of both normative  and permission systems in terms of properties of their associated modal operators
 (cf.~Propositions \ref{prop:output}, \ref{prop:output contact}, and \ref{prop:output dual contact}), to define the algebraic counterparts of the  static positive permission systems and modally characterize the notion of cross coherence (cf.~Section \ref{ssec:algebraizing static permission}), to extend Celani's dual characterization results for subordination lattices, and obtain both these results and similar results as consequences of standard modal correspondence (cf.~Propositions \ref{prop: celani}, \ref{prop:more dual charact subordination}, and \ref{prop: precontact space}), and to clarify the relation between certain conditions on positive bi-subordination lattices and Dunn's axioms for positive modal logic (cf.~Section \ref{ssec:dunn-saxioms}). We conclude in Section \ref{sec:conclusions}. In the appendix, we adapt some of  Jansana's results on selfextensional logics with conjunction \cite{Jansana2006conjunction} to the setting of selfextensional logics with disjunction.

\section{Preliminaries}
\label{sec:prelim}
The present section %has a twofold purpose: on the one hand, it
collects preliminaries on  %basic notions and theories---such as
selfextensional logics (cf.~Section \ref{ssec:selfextensional}),   on normative %(cf.~Section \ref{ssec:i-o logic})
and permission systems  based on these (cf.~Section \ref{ssec:negperm}), on subordination algebras and related structures (cf.~Section \ref{ssec:subordination related}), on canonical extensions and slanted algebras (cf.~Section \ref{ssec:canext slanted}).
\subsection{Selfextensional logics}\label{ssec:selfextensional}
In what follows, we align to the literature in abstract algebraic logic \cite{font2003survey}, and  understand a {\em logic} to be a tuple
$\mathcal{L}= (\mathrm{Fm}, \vdash)$, such that $\mathrm{Fm}$ is the term algebra (in a given algebraic signature which, abusing notation, we will also denote by $\mathcal{L}$) over a set $\mathsf{Prop}$ of atomic propositions, and $\vdash$ is a {\em consequence relation} on $\mathrm{Fm}$, i.e.~$\vdash$ is a relation between sets of formulas and formulas  such that, for all $\Gamma, \Delta\subseteq \mathrm{Fm}$ and all $\varphi\in \mathrm{Fm}$, (a) if $\varphi\in \Gamma$ then $\Gamma\vdash \varphi$; (b) if  $\Gamma\vdash \varphi$ and $\Gamma \subseteq \Delta$, then $\Delta\vdash \varphi$; (c) if $\Delta\vdash \varphi$ and $\Gamma\vdash \psi$ for every $\psi\in \Delta$, then $\Gamma\vdash \varphi$; (d) if $\Gamma\vdash \varphi$, then $\sigma[\Gamma]\vdash \sigma (\varphi)$ for every homomorphism (i.e.~variable-substitution) $\sigma: \mathrm{Fm}\to \mathrm{Fm}$.
Clearly,  any such $\vdash$ induces a preorder on $\mathrm{Fm}$, which we still denote $\vdash$, by restricting to singletons.    A logic $\mathcal{L}$ is {\em selfextensional} (cf.~\cite{jansana2006referential}) if the relation ${\equiv} \subseteq\, \mathrm{Fm}\times \mathrm{Fm}$, defined by $\varphi\equiv \psi$ iff $\varphi\vdash \psi$ and $\psi\vdash \varphi$, is a congruence of $\mathrm{Fm}$. In this case, the {\em Lindenbaum-Tarski algebra} of $\mathcal{L}$ is the partially ordered algebra $Fm = (\mathrm{Fm}/{\equiv}, \vdash)$ where, abusing notation, $\vdash$ also denotes the order on $\mathrm{Fm}/{\equiv}$ defined as $[\varphi]_{\equiv}\vdash [\psi]_{\equiv}$ iff $\varphi\vdash \psi$.
For any algebra $\mathbb{A}$ of the same signature of $\mathcal{L}$, a subset $F$ of the domain $A$ of $\mathbb{A}$ is an {\em $\mathcal{L}$-filter} of $\mathbb{A}$ if for every $\Gamma\cup\{\varphi\}\subseteq \mathrm{Fm}$ and every homomorphism $h: \mathrm{Fm}\to \mathbb{A}$, if $\Gamma\vdash \varphi$ and $h[\Gamma]\subseteq F$, then  $h(\varphi)\in F$.  Let $\lfilt{A}$ denote the set of $\mathcal{L}$-filters of $\mathbb{A}$. It is easy to see that $\lfilt{A}$ is a  closure system over $\mathbb{A}$, where a {\em closure system} over $\mathbb{A}$ is a collection of subsets of $\mathbb{A}$ which is closed under arbitrary intersection. A {\em g-model} of $\mathcal{L}$ is a tuple $(\mathbb{A}, \mathcal{G})$ s.t.~$\mathbb{A}$ is an $\mathcal{L}$-algebra and $\mathcal{G}\subseteq \lfilt{A}$ is a closure system over $\mathbb{A}$.
Any such $\mathcal{G}$ induces a consequence relation $\models_{\mathcal{G}}$ on $\mathrm{Fm}$ defined by $\Gamma \models_{\mathcal{G}}\varphi$ iff for all $h\in Hom( \mathrm{Fm}, \mathbb{A})$ and all $F\in \mathcal{G}$, if $h[\Gamma]\subseteq F$ then $h(\varphi)\in F$. It follows straightforwardly from the definitions that $\vdash\ =\ \models_{\lfilt{A}}\ \subseteq \ \models_{\mathcal{G}}$. A g-model is {\em full} whenever $\vdash\ = \ \models_{\mathcal{G}}$. Any g-model $(\mathbb{A}, \mathcal{G})$ induces  an equivalence relation $\equiv_{\mathcal{G}}$ on $\mathbb{A}$, referred to as the {\em Frege relation of} $(\mathbb{A}, \mathcal{G})$, defined as follows: $a\equiv_{\mathcal{G}} b$ whenever $a\in F $ iff $ b\in F$ for any $F\in \mathcal{G}$. Clearly, $\mathcal{G}\subseteq \lfilt{A}$ implies that $\equiv_{\lfilt{A}}\ \subseteq\ \equiv_{\mathcal{G}}$. Then the logic $\mathcal{L}$ is {\em fully selfextensional} if $\equiv_{\mathcal{G}}$ is a congruence of (the algebra of) any full g-model $(\mathbb{A}, \mathcal{G})$ of $\mathcal{L}$. The class $\mathsf{Alg}(\mathcal{L})$ canonically associated with a fully selfextensional logic $\mathcal{L}$ is  the class of those $\mathcal{L}$-algebras $\mathbb{A}$ such that some full g-model $(\mathbb{A}, \mathcal{G})$  exists s.t.~$\equiv_{\mathcal{G}}$ is the identity $\Delta_{\mathbb{A}}$. Thus, $\mathsf{Alg}(\mathcal{L}) = \{\mathbb{A}\mid\  \equiv_{\lfilt{A}}\ =\ \Delta_{\mathbb{A}}\}$, and so, on any $\mathbb{A}\in \mathsf{Alg}(\mathcal{L})$, the specialization preorder induced by $\lfilt{A}$ is a partial order.

For any selfextensional logic $\mathcal{L}$, the {\em intrinsic variety} of $\mathcal{L}$ is the variety $\mathsf{K}_{\mathcal{L}}$ generated by the Lindenbaum-Tarski algebra $Fm$. Hence, $\mathsf{K}_{\mathcal{L}}\models \varphi =\psi$ iff $\varphi\dashv\vdash \psi$ for all $\varphi, \psi\in \mathrm{Fm}$. It is well known that $\mathsf{Alg}(\mathcal{L})\subseteq \mathsf{K}_{\mathcal{L}}$ (cf.~\cite[Lemma 3.10]{Jansana2006conjunction}).
%In what follows, we will also assume that each element in the class $\mathsf{Alg}(\mathcal{L})$ of algebras canonically associated with $\mathcal{L}$ is partially ordered, and  that, if $\varphi$ and $\psi$ are formulas, then $\varphi\vdash \psi$ iff $h(\varphi)\leq h(\psi)$ for every $A\in \mathsf{Alg}(\mathcal{L})$ and every homomorphism $h: \mathrm{Fm}\to A$.

\smallskip
The environment described above takes the notion of consequence relation as primary in defining a logical system, and in particular, it abstracts away from any concrete logical signature. However, the familiar logical connectives such as conjunction and disjunction can be reintroduced in terms of of their behaviour w.r.t.~the consequence relation of the given logic. Indeed,
for any $\Gamma\subseteq \mathrm{Fm}$, let $Cn(\Gamma): = \{ \psi \mid \Gamma \vdash \psi \}$.\footnote{In what follows, we write e.g.~$Cn(\varphi)$ for $Cn(\{\varphi\})$, and $Cn(\Gamma, \varphi)$ for $Cn(\Gamma\cup\{\varphi\})$.}
The {\em conjunction property} $(\wedge_P)$ holds for $\mathcal{L}$ if a term $t(x, y)$ (which we denote $x\wedge y$) exists in the language of $\mathcal{L}$   such that $Cn(\varphi\wedge \psi) = Cn(\{\varphi, \psi\})$ for all $\varphi, \psi\in \mathrm{Fm}$. The {\em disjunction property} $(\vee_P)$ holds for $\mathcal{L}$ if a term $t(x, y)$ (which we denote $x\vee y$) exists in the language of $\mathcal{L}$   such that $Cn(\varphi\vee \psi) = Cn(\varphi)\cap Cn( \psi)$ for all $\varphi, \psi\in \mathrm{Fm}$.
The {\em strong disjunction property} $(\vee_S)$ holds for $\mathcal{L}$ if a term $t(x, y)$ (which we denote $x\vee y$) exists in the language of $\mathcal{L}$   such that $Cn(\Gamma, \varphi\vee \psi) = Cn(\Gamma,\varphi)\cap Cn(\Gamma, \psi)$ for all $\varphi, \psi\in \mathrm{Fm}$ and every $\Gamma\subseteq \mathrm{Fm}$.

The {\em weak negation property} ($\neg_W$) holds for $\mathcal{L}$ if a term $t(x)$ (which we denote $\neg x$) exists in the language of $\mathcal{L}$   such that $\psi \in Cn(\varphi)$ implies $\neg \varphi \in Cn(\neg \psi)$ for any $\varphi, \psi \in \mathrm{Fm}$. For any logic $\mathcal{L}$ with $\neg_W$,
\begin{enumerate}
\item The {\em right-involutive negation property} ($\neg_{Ir}$) holds for $\mathcal{L}$ if   $Cn(\neg\neg \varphi) \subseteq Cn( \varphi)$ for any $\varphi\in \mathrm{Fm}$.
  \item    The {\em left-involutive negation property} ($\neg_{Il}$) holds for $\mathcal{L}$ if   $Cn( \varphi) \subseteq Cn(\neg\neg \varphi)$ for any $\varphi\in \mathrm{Fm}$.
  \item   The {\em involutive negation property} ($\neg_I$) holds for $\mathcal{L}$ if both $\neg_{Il}$ and $\neg_{Ir}$ hold for $\mathcal{L}$.
  \item   The {\em absurd negation property} ($\neg_A$) holds for $\mathcal{L}$ if    $Cn( \varphi, \neg \varphi) = \mathrm{Fm}$ for any $\varphi\in \mathrm{Fm}$.
  \item   The {\em pseudo negation property} ($\neg_P$) holds for $\mathcal{L}$ if  $\wedge_P$ holds for $\mathcal{L}$, and moreover,   $\neg \psi \in Cn(\varphi, \neg(\varphi \wedge \psi))$ for any $\varphi, \psi \in \mathrm{Fm}$.
   \item  The {\em excluded middle property} ($\sim_A$) holds for $\mathcal{L}$ if    $Cn( \varphi) \cap Cn(\neg \varphi) = Cn(\varnothing)$ for any $\varphi\in \mathrm{Fm}$.

  \item   The {\em pseudo co-negation property} ($\sim_P$) holds for $\mathcal{L}$ if  $\vee_P$ holds for $\mathcal{L}$, and moreover,   $\varphi \vee \neg (\varphi \vee \psi) \in Cn(\neg \psi)$ for all $\varphi, \psi \in \mathrm{Fm}$.
  \item   The {\em strong negation property} ($\neg_S$) holds for $\mathcal{L}$ if $Cn(\varphi, \psi) = \mathrm{Fm}$ implies $\neg \psi \in Cn(\varphi)$.
  \end{enumerate}
     The (weak)\footnote{We refer to this property as weak, because in the literature the property referred to as {\em deduction-detachment property} is $\psi \in Cn(\Gamma, \varphi)$ iff $\varphi \rightarrow \psi \in Cn(\Gamma)$ for all $\varphi, \psi \in \mathrm{Fm}$ and $\Gamma \subseteq \mathrm{Fm}$.} {\em deduction-detachment property} ($\rightarrow_P$) holds for $\mathcal{L}$ if a term $t(x,y)$ (which we denote $x\rightarrow y$) exists in the language of $\mathcal{L}$ such that $\psi \in Cn(\chi,\varphi)$ iff $\varphi \rightarrow \psi \in Cn(\chi)$ for all $\varphi, \psi \in \mathrm{Fm}$.

     The {\em co-implication property} ($\pdla_P$) holds for $\mathcal{L}$ if a term $t(x,y)$ (which we denote $x \pdla y$, to be read as ``$x$ excludes $y$'') exists in the language of $\mathcal{L}$ such that $\chi \in Cn(\varphi \pdla \psi)$ iff $Cn(\chi) \cap Cn(\psi) \subseteq Cn(\varphi)$ for all $\varphi,\psi, \chi \in \mathrm{Fm}$.

     \smallskip
     Every selfextensional logic with $\wedge_P$ is
fully selfextensional \cite[Theorems 4.31 and 4.46]{font2017general}. Moreover, in \cite{Jansana2006conjunction}, selfextensional logics with $\wedge_P$ are characterized as those  %for which  a class of algebras $\mathsf{K}$ exists
such that the equations that define a semilattice hold for each element $A$ in the class $\mathsf{Alg}(\mathcal{L})$ of algebras canonically associated with $\mathcal{L}$, % the term $\wedge$ in $\mathsf{K}$
and the following condition is satisfied for all $\varphi_1,\ldots ,\varphi_n, \varphi\in \mathrm{Fm}$:
\[\varphi_1,...,\varphi_{n} \vdash \varphi\quad \text{iff}\quad h(\varphi_1)\wedge\cdots \wedge h(\varphi_n)\leq h(\varphi) \, \text{ for all }A\in\mathsf{Alg}(\mathcal{L}) \text{ and any }  h\in   Hom(\mathrm{Fm}, A).\]

%The deductive systems with this property will be called {\em semilattice-based}.
Hence, for every selfextensional logic with $\wedge_P$
%each element $A$ in  $\mathsf{Alg}(\mathcal{L})$ of algebras canonically associated with $\mathcal{L}$ can be endowed with a structure of $\wedge$-semilattice (and thus $A$ is partially ordered); moreover,
and all $\varphi,\psi\in\mathrm{Fm}$, $\varphi\vdash \psi$ iff $h(\varphi)\leq h(\psi)$ for every $A\in \mathsf{Alg}(\mathcal{L})$ and every homomorphism $h: \mathrm{Fm}\to A$. Similar properties hold for super-compact selfextensional logics with $\vee_P$ (cf.~Section \ref{sec: selfextensional w disj}).
Therefore, for any selfextensional logic $\mathcal{L}$, property  $\wedge_P$ (resp.~$\vee_P$ if $\mathcal{L}$ is also super-compact)
guarantees that the relation of logical entailment of $\mathcal{L}$ is completely captured by the order of the algebras in $\mathsf{Alg}(\mathcal{L})$, and that each such algebra is a meet (resp.~join)
semilattice w.r.t.~the (possibly defined) operation interpreting the term $\wedge$ (resp.~$\vee$).
This is the main reason why the theory developed from Section \ref{sec: algebraic models} on will take ordered algebras (and distinguished subclasses thereof, such as semilattices, lattices and distributive lattices) as its basic   environment.  In what follows, for the sake of readability, in defining and considering conditions on these algebras corresponding to e.g.~different metalogical properties of selfextensional logics, in relation with different closure properties of input/output logics based on them (see next subsection), we will sometimes omit reference to  the minimal assumptions presupposed by the satisfaction of those conditions.

\begin{lemma}
\label{lemma:antitonicity of neg} (cf.~\cite[Lemma 2.1 and Proposition 2.2]{dedomenico2024obligations})
For any logic $\mathcal{L} = (\mathrm{Fm}, \vdash)$,
\begin{enumerate}
    \item If  properties $\wedge_P$, $\vee_P$, and $\neg_W$ hold for $\mathcal{L}$, then $\neg\varphi\vee \neg \psi\vdash \neg (\varphi\wedge \psi)$ for all $\varphi, \psi\in \mathrm{Fm}$.
    \item If in addition  property $\neg_{Il}$ holds for $\mathcal{L}$, then $ \neg (\varphi\wedge \psi)\vdash \neg\varphi\vee \neg \psi$ for all $\varphi, \psi\in \mathrm{Fm}$.
    \item if $\wedge_P$ and $\vee_S$ hold for $\mathcal{L}$, then  $\alpha \wedge (\beta \vee \gamma) \vdash (\alpha \wedge \beta) \vee (\alpha \wedge \gamma)$ for all $\alpha, \beta, \gamma\in \mathrm{Fm}$.
    \end{enumerate}
\end{lemma}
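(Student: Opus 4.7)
I would treat the three items separately, in each case unpacking the closure-theoretic definitions of the relevant properties into their familiar ``introduction/elimination''-style reformulations and then chaining them together.

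\textbf{Item (1).} The plan is to exploit the antitonicity contained in $\neg_W$ together with the $\wedge$-elimination half of $\wedge_P$ and the universal property of $\vee$ coming from $\vee_P$. First I would note that the defining equation $Cn(\varphi\wedge\psi)=Cn(\{\varphi,\psi\})$ in $\wedge_P$ yields $\varphi\wedge\psi\vdash\varphi$ and $\varphi\wedge\psi\vdash\psi$. Next, I would reread $\neg_W$ as the antitonicity rule ``$\chi\vdash\chi'$ implies $\neg\chi'\vdash\neg\chi$'' (since $\chi'\in Cn(\chi)$ iff $\chi\vdash\chi'$), and apply it to each of the two conjunct-eliminations, obtaining $\neg\varphi\vdash\neg(\varphi\wedge\psi)$ and $\neg\psi\vdash\neg(\varphi\wedge\psi)$. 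Finally I would invoke the reformulation of $\vee_P$ as ``$\alpha\vee\beta\vdash\chi$ iff $\alpha\vdash\chi$ and $\beta\vdash\chi$'', read off $Cn(\alpha\vee\beta)=Cn(\alpha)\cap Cn(\beta)$, to conclude $\neg\varphi\vee\neg\psi\vdash\neg(\varphi\wedge\psi)$.

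\textbf{Item (2).} The core observation is that $\neg_{Il}$ amounts to the inequality $\neg\neg\chi\vdash\chi$ for every $\chi$ (take $\chi\in Cn(\chi)\subseteq Cn(\neg\neg\chi)$). With this in hand, I would argue as follows. Starting from the trivial $\neg\varphi\vdash\neg\varphi\vee\neg\psi$, apply $\neg_W$ to get $\neg(\neg\varphi\vee\neg\psi)\vdash\neg\neg\varphi$, and then $\neg_{Il}$ to get $\neg(\neg\varphi\vee\neg\psi)\vdash\varphi$. Symmetrically, $\neg(\neg\varphi\vee\neg\psi)\vdash\psi$. Combining by the $\wedge$-introduction half of $\wedge_P$ gives $\neg(\neg\varphi\vee\neg\psi)\vdash\varphi\wedge\psi$. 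Applying $\neg_W$ once more yields $\neg(\varphi\wedge\psi)\vdash\neg\neg(\neg\varphi\vee\neg\psi)$, and a final use of $\neg_{Il}$ (applied to the formula $\neg\varphi\vee\neg\psi$) closes the argument. The only subtlety is the double use of $\neg_{Il}$, once on $\varphi$ (and $\psi$) and once on the disjunction itself, which requires that $\neg_{Il}$ is stated schematically for every formula; this is the case in the definition.

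\textbf{Item (3).} Here $\vee_S$ contains $\vee_P$ as the instance $\Gamma=\varnothing$, so both are available. The strategy is simply to show that both sides of the claimed entailment have the same consequences, from which one direction (the one asserted) will follow. Instantiating $\vee_S$ at $\Gamma=\{\alpha\}$ gives $Cn(\alpha,\beta\vee\gamma)=Cn(\alpha,\beta)\cap Cn(\alpha,\gamma)$. Now three applications of $\wedge_P$ rewrite the three sets as $Cn(\alpha\wedge(\beta\vee\gamma))$, $Cn(\alpha\wedge\beta)$ and $Cn(\alpha\wedge\gamma)$ respectively. Finally, $\vee_P$ rewrites $Cn(\alpha\wedge\beta)\cap Cn(\alpha\wedge\gamma)$ as $Cn((\alpha\wedge\beta)\vee(\alpha\wedge\gamma))$, so that $Cn(\alpha\wedge(\beta\vee\gamma))=Cn((\alpha\wedge\beta)\vee(\alpha\wedge\gamma))$, which gives the required entailment (indeed, interderivability).

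\textbf{Main obstacle.} None of the three items involves a deep combinatorial argument; the proofs are routine manipulations, and the only place where one might slip is in item (2), where one must carefully apply $\neg_{Il}$ both to the atomic formulas and to the compound $\neg\varphi\vee\neg\psi$, and must be careful that $\neg_W$ is being used as antitonicity rather than in its literal formulation. Writing out the chain of entailments in the correct order is the only real bookkeeping required.
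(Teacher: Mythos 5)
Your proof is correct; all three items check out, including the two subtleties you flag in item (2) (reading $\neg_W$ as antitonicity, and applying $\neg_{Il}$ schematically both to $\varphi,\psi$ and to the compound $\neg\varphi\vee\neg\psi$). Note that the paper does not prove this lemma itself but cites it from the companion paper \cite[Lemma 2.1 and Proposition 2.2]{dedomenico2024obligations}; your argument is the standard unpacking of the closure-theoretic definitions and is exactly what one would expect that proof to be.
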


\subsection{Input/output logics on selfextensional logics}\label{ssec:i-o logic}

Input/output logic  \cite{Makinson00} is a framework modelling the interaction between the relation of logical entailment between states of affair (being represented by formulas) and other binary relations on states of affair, representing e.g.~systems of norms, strategies, preferences, and so on. Although the original framework of input/output logic takes $\mathcal{L}$ to be classical propositional logic, in the present section we collect  basic definitions and facts, introduced in \cite{dedomenico2024obligations}, about input/output logic in the more general framework of selfextensional logics.
\paragraph{Normative systems.}
%\begin{definition}
%\label{def:normative system AAL}
Let $\mathcal{L}= (\mathrm{Fm}, \vdash)$ be a logic in the sense specified in Section \ref{ssec:selfextensional}. %propositional language  over a set $\mathsf{Prop}$ of atomic propositions.\footnote{The reader can safely assume that $\mathcal{L}$ is the language of classical propositional logic}
A  {\em normative system} on $\mathcal{L}$ is a relation $N \subseteq \mathrm{Fm}\times \mathrm{Fm}$, the elements  $(\alpha,\varphi)$ of which are called {\em conditional norms} (or obligations).

  A normative system  $N \subseteq \mathrm{Fm}\times\mathrm{Fm}$ is {\em internally incoherent} if  $(\alpha, \varphi)$ and $ (\alpha, \psi) \in N$ for some $\alpha, \varphi, \psi\in \mathrm{Fm}$  such that  $Cn(\alpha) \neq \mathrm{Fm}$ and $Cn(\varphi,\psi) = \mathrm{Fm}$; a normative system $N$ is {\em internally coherent} if it is not internally incoherent.
  If  $N, N' \subseteq \mathrm{Fm}\times\mathrm{Fm}$ are  normative systems,  $N$ is {\em almost included} in $N'$ (in symbols: $N\subseteq_c N'$) if $(\alpha,\varphi) \in N$ and $Cn(\alpha)\neq\mathrm{Fm}$ imply $(\alpha,\varphi) \in N'$.
%\end{definition}

Each  norm $(\alpha,\varphi)\in N$ can be intuitively read as ``given $\alpha$, it {\em should} be the case that $\varphi$''.
This interpretation can be further specified according to the context: for instance, if $N$ formally represents a system of (real-life) rules/norms, then  we can read $(\alpha,\varphi)\in N$ as ``$\varphi$ is obligatory whenever $\alpha$ is the case''; if $N$ formally represents a scientific theory, then  we can read $(\alpha,\varphi)\in N$ as ``under conditions $\alpha$, one should observe $\varphi$'', in the sense that the scientific theory predicts $\varphi$ whenever $\alpha$; finally, if $N$ formally represents (the execution of) a program, then we can read $(\alpha,\varphi)\in N$ as ``in every state of computation in which $\alpha$ holds, the program will move to a state in which $\varphi$ holds''.
%\paragraph{Semantics.}
%\subsection{Semantics}
For any  $\Gamma\subseteq \mathrm{Fm}$, let $N(\Gamma) := \{ \psi \mid \exists \alpha(\alpha \in \Gamma \ \&\ (\alpha,\psi)\in N )\}$.

    An {\em input/output logic} is a tuple $\mathbb{L} = (\mathcal{L}, N)$ s.t.~$\mathcal{L}= (\mathrm{Fm}, \vdash)$ is a selfextensional logic, and $N$ is a normative system on $\mathcal{L}$.
%\end{definition}

%\marginnote{\xwnote{this $N_i$is both set of pairs and operator, will it call some problem?}}
%\begin{definition}[Output operations]
For any input/output logic $\mathbb{L} = (\mathcal{L}, N)$, and each $1\leq i\leq 4$, the output operation $out_i^N$ is defined as follows: for any $\Gamma\subseteq \mathrm{Fm}$,

\[out_i^N(\Gamma): = N_i(\Gamma) = \{ \psi\in \mathrm{Fm} \mid \exists \alpha(\alpha \in \Gamma \ \&\ (\alpha,\psi)\in N_i )\} \] %defined above is syntactically characterized as
 where $N_i\subseteq \mathrm{Fm}\times \mathrm{Fm}$ is the  {\em closure}  of $N$ under (i.e.~the smallest extension of $N$ satisfying)  the  inference rules below, as  specified in the table.

\vspace{1mm}
%In terms of proof theory, I/O logics are characterized by derivation rules about norms. Given a set of norms $G$, a derivation system is the smallest set of norms which extends $G$ and is closed under certain derivation rules.
\begin{center}
\begin{tabular}{lll}
\infer[(\top)]{(\top,\top)}{} \infer[(\bot)]{(\bot,\bot)}{} &
\infer[\mathrm{(SI)}]{(\beta,\varphi)}{(\alpha,\varphi) &  \beta \vdash \alpha} &
\infer[\mathrm{(WO)}]{(\alpha,\psi)}{(\alpha,\varphi) &  \varphi \vdash \psi} \\[2mm]
\infer[\mathrm{(AND)}]{(\alpha,\varphi \land \psi)}{(\alpha,\varphi) & (\alpha,\psi)} &
\infer[\mathrm{(OR)}]{(\alpha \lor \beta, \varphi)}{(\alpha,\varphi) & (\beta,\varphi)} &
\infer[\mathrm{(CT)}]{(\alpha,\psi)}{(\alpha,\varphi) & (\alpha \land \varphi, \psi)}
\end{tabular}
\end{center}

\begin{center}
\label{table1}
	\begin{tabular}{ l l}
		\hline
		$N_i$ & Rules   \\
		\hline	
		$N_{1}$  & $ \mathrm{(\top), (SI), (WO), (AND)}$   \\
		$N_{2}$  & $\mathrm{(\top), (SI), (WO), (AND), (OR)}$  \\
		$N_{3}$  & $\mathrm{(\top), (SI), (WO), (AND), (CT)}$  \\
		$N_{4}$  & $\mathrm{(\top), (SI), (WO),(AND), (OR), (CT)}$ \\
		\hline

 \end{tabular}
\captionof{table}{closures of normative systems}
\end{center}
Clearly, with the exception of  $\mathrm{(SI)}$ and $\mathrm{(WO)}$,  all the rules above (as well as the rules  below) apply only to those input/output logics based on selfextensional logics with the (minimal) metalogical properties guaranteeing the existence of the corresponding term-connectives. So, for instance, rules $\mathrm{(AND)}$ and $\mathrm{(CT)}$ only apply in the context of logics for which $\wedge_P$ holds, and so on. For the sake of a better readability, in the remainder of the paper we will implicitly assume these basic properties, and only mention the additional properties when it is required.

\paragraph{Negative permission systems.}
\label{ssec:negperm}

The following definition (cf.~\cite[Section 4.1]{dedomenico2024obligations})  generalizes the usual notion of negative permission (cf.~\cite[Section 2]{Makinson03}) while being formulated purely in terms of the consequence relation of the given selfextensional logic, and it informally says that any $\varphi$   is permitted under a given  $\alpha$ iff  $\varphi$  is not logically inconsistent with any obligation $\psi$ under $\alpha$.

    For any input/output logic $\mathbb{L} = (\mathcal{L}, N)$,
    \begin{center}
        $P_N: = \{(\alpha, \varphi)\mid \forall \psi ((\alpha,\psi)\in N  \Rightarrow Cn(\varphi, \psi) \neq \mathrm{Fm}   )\}$.
    \end{center}

As discussed in \cite[Section 4.1]{dedomenico2024obligations},
%\begin{proposition}
If $N$ is internally coherent, then $N \subseteq_c P_N$. Moreover,

    For any $N, N' \subseteq \mathrm{Fm}$, if $N \subseteq N'$ then $P_{N'} \subseteq P_{N}$.

The following closure rules  on  $P_N^c: = (\mathrm{Fm}\times \mathrm{Fm})\setminus P_N$:\footnote{That is, the reading of the rule \infer[]{(\alpha,\varphi)}{(\beta,\psi) &  \alpha \vdash \beta} is as follows: if $(\alpha, \psi)\notin P_N$ and $\alpha \vdash \beta$ then $(\beta, \varphi)\notin P_N$.} have been introduced and discussed in \cite[Section 4.1]{dedomenico2024obligations}.

\vspace{1mm}

\begin{center}
\begin{tabular}{lll}
\infer[\mathrm{(\top)}^\wt]{(\top,\bot)}{} & \infer[\mathrm{(\bot)}^\wt]{(\bot,\top)}{} &
\infer[\mathrm{(SI)}^\wt]{(\alpha,\varphi)}{(\beta,\varphi) &  \alpha \vdash \beta} \\[2mm]
\infer[\mathrm{(WO)}^\wt]{(\alpha,\varphi)}{(\alpha,\psi) &  \varphi \vdash \psi} &
\infer[\mathrm{(AND)}^\wt]{ (\alpha, \varphi\vee \psi)}{(\alpha,\varphi) & (\alpha, \psi)} &
\infer[\mathrm{(OR)}^\wt]{ (\alpha\vee \beta, \varphi)}{(\alpha,\varphi) & (\beta, \varphi)}
 \\[2mm]
\infer[\mathrm{(CT)}^\wt]{(\alpha, \psi)}{(\alpha,\varphi) \in N& (\alpha \wedge \varphi, \psi)}
\end{tabular}
\end{center}

For any input/output logic $\mathbb{L} = (\mathcal{L}, N)$ and any $1\leq i\leq 4$, we let $P_i: = P_{N_i}$.
\begin{proposition} (cf.~\cite[Corollary 4.6]{dedomenico2024obligations})
 For any input/output logic $\mathbb{L} = (\mathcal{L}, N)$, if $\wedge_P$, $\vee_S$, $\bot_P$, and $\top_W$ hold for $\mathcal{L}$, then
 $P_i^c$ for $1\leq i\leq 4$ is closed  under the rules indicated in the following table.
\end{proposition}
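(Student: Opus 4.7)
The plan is to unfold the definition of $P_{N_i}$ and argue rule by rule. By definition, $(\alpha,\varphi)\in P_{N_i}^c$ iff some $\psi$ exists with $(\alpha,\psi)\in N_i$ and $Cn(\varphi,\psi)=\mathrm{Fm}$; call any such $\psi$ a \emph{witness} for $(\alpha,\varphi)$. For each rule $\mathrm{R}^{\wt}$ of the table, the task is then to produce, from the witnesses supplied by the premises of $\mathrm{R}^{\wt}$, a witness for its conclusion, by invoking the companion positive rule $\mathrm{R}$ inside $N_i$ together with the stipulated metalogical properties $\wedge_P$, $\vee_S$, $\bot_P$, $\top_W$ of $\mathcal{L}$. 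The only real bookkeeping concern, and the reason the statement is parametrised by $i$, is to match each case with the $N_i$ in which the required positive rule is actually available.

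The nullary and unary cases dispatch immediately. For $(\top)^{\wt}$ and $(\bot)^{\wt}$, the axioms of every $N_i$ yield $(\top,\top)\in N_i$ and $(\bot,\bot)\in N_i$, and $\bot_P$ provides $Cn(\bot,\top)=Cn(\top,\bot)=\mathrm{Fm}$, producing the required witnesses. For $(SI)^{\wt}$, any witness $\psi$ for $(\beta,\varphi)$ becomes a witness for $(\alpha,\varphi)$ after one application of (SI) inside $N_i$ using $\alpha\vdash\beta$. For $(WO)^{\wt}$, if $\chi$ witnesses $(\alpha,\psi)$, then $\varphi\vdash\psi$ yields $Cn(\varphi,\chi)\supseteq Cn(\psi,\chi)=\mathrm{Fm}$, so the same $\chi$ witnesses $(\alpha,\varphi)$.

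The binary rules are where $\wedge_P$ and $\vee_S$ do real work. For $(AND)^{\wt}$, combine witnesses $\chi_1$ of $(\alpha,\varphi)$ and $\chi_2$ of $(\alpha,\psi)$ via (AND) of $N_i$ to obtain $(\alpha,\chi_1\wedge\chi_2)\in N_i$; by $\vee_S$ we have $Cn(\varphi\vee\psi,\chi_1\wedge\chi_2)=Cn(\chi_1\wedge\chi_2,\varphi)\cap Cn(\chi_1\wedge\chi_2,\psi)$, and both factors equal $\mathrm{Fm}$ because $\wedge_P$ gives $\chi_1\wedge\chi_2\vdash\chi_j$ for $j=1,2$, whence each factor contains $Cn(\chi_j,\varphi)=\mathrm{Fm}$ (resp.~$Cn(\chi_j,\psi)=\mathrm{Fm}$). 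For $(OR)^{\wt}$, from witnesses $\chi_1$ of $(\alpha,\varphi)$ and $\chi_2$ of $(\beta,\varphi)$, first weaken via (WO) of $N_i$ to $(\alpha,\chi_1\vee\chi_2),(\beta,\chi_1\vee\chi_2)\in N_i$ (using $\chi_j\vdash\chi_1\vee\chi_2$), then apply (OR) of $N_i$ to obtain $(\alpha\vee\beta,\chi_1\vee\chi_2)\in N_i$; $\vee_S$ supplies $Cn(\varphi,\chi_1\vee\chi_2)=Cn(\varphi,\chi_1)\cap Cn(\varphi,\chi_2)=\mathrm{Fm}$. For $(CT)^{\wt}$, a witness $\chi$ of $(\alpha\wedge\varphi,\psi)$ combines with the premise $(\alpha,\varphi)\in N$ through (CT) of $N_i$ to give $(\alpha,\chi)\in N_i$, while the inconsistency $Cn(\psi,\chi)=\mathrm{Fm}$ is inherited verbatim. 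The main obstacle is, as indicated, purely a matter of matching the rules available in each $N_i$ to those allowed for $P_i^c$ in the table; the non-routine ingredient is the $\vee_S$-enabled distribution step used in both $(AND)^{\wt}$ and $(OR)^{\wt}$.
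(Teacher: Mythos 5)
Your witness-chasing argument is correct for every rule that the table actually requires, and the paper itself offers no proof to compare against: it defers to \cite[Corollary 4.6]{dedomenico2024obligations}, and your route (unfold $P_i^c$ as ``there exists $\psi$ with $(\alpha,\psi)\in N_i$ and $Cn(\varphi,\psi)=\mathrm{Fm}$'', then transport witnesses along the companion positive rule of $N_i$) is exactly the expected one. The cases $(\top)^\wt$, $\mathrm{(SI)}^\wt$, $\mathrm{(WO)}^\wt$, the $\vee_S$-based distribution in $\mathrm{(AND)}^\wt$ and $\mathrm{(OR)}^\wt$, and the use of $(\alpha,\varphi)\in N\subseteq N_i$ together with $\mathrm{(CT)}$ in the $\mathrm{(CT)}^\wt$ case all check out, with the availability of $\mathrm{(OR)}$ and $\mathrm{(CT)}$ in the relevant $N_i$ matching the parametrisation by $i$ as you say.

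One blemish: in your treatment of $(\bot)^\wt$ you assert that ``the axioms of every $N_i$ yield \dots\ $(\bot,\bot)\in N_i$''. This is unjustified: the closures $N_1,\dots,N_4$ are only required to contain $(\top,\top)$ and to be closed under $\mathrm{(SI)}$, $\mathrm{(WO)}$, $\mathrm{(AND)}$ (plus $\mathrm{(OR)}$, $\mathrm{(CT)}$ where applicable) --- the rule $(\bot)$ is not among them, so $(\bot,\bot)\in N_i$ does not follow, and with $\psi:=\top$ one would instead need $Cn(\top,\psi)=\mathrm{Fm}$, which fails. Fortunately $(\bot)^\wt$ is not listed in the table for any $P_i^c$, so this extraneous case does not affect the proposition; you should simply delete it (or restrict it to normative systems additionally closed under $(\bot)$).
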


\begin{center}
	\begin{tabular}{ l l}
		\hline
		$P_i^c$ & Rules   \\
		\hline	
		$P_{1}^c$  & $ \mathrm{(\top)^\wt, (SI)^\wt, (WO)^\wt, (AND)^\wt}$   \\
		$P_{2}^c$  & $\mathrm{ (\top)^\wt, (SI)^\wt, (WO)^\wt, (AND)^\wt, (OR)^\wt}$  \\
		$P_{3}^c$  & $\mathrm{ (\top)^\wt, (SI)^\wt, (WO)^\wt, (AND)^\wt, (CT)^\wt}$  \\
		$P_{4}^c$  & $\mathrm{ (\top)^\wt, (SI)^\wt, (WO)^\wt,(AND)^\wt, (OR)^\wt, (CT)^\wt}$ \\
		\hline
	\end{tabular}
\end{center}

 In Section \ref{ssec: precontact} we will systematically connect the closure rules of  (the relative complements of) permission systems with the environment of (proto-)precontact algebras.
 %In the following table, in the left-hand column, we introduce notation for those  permission systems $P\subseteq \mathrm{Fm}\times \mathrm{Fm}$ such that  $P^c$ is closed under the rules  indicated in the right-hand column of the same table.

%\section{Static and dynamic positive permissions}\label{ssec:st-dynperm}
 %\marginnote{AP: Xiaolong, Andrea and I edited the definition of static permissions. Could you please read and check what we have done? In particular, we are not completely convinced about the mixed rules with the superscript $S$. It might be the case that we need to add that e.g. $(\alpha, \varphi)$ is not a member of $N$ but of $N^i$ or something?\xwnote{No, the pair only can appears in N}}
 In \cite{dedomenico2024obligations}, it is discussed how the  perspective afforded by the general setting of selfextensional logics
 makes it possible to consider a notion of {\em dual conditional permission system} which, in the setting of classical propositional logic, is absorbed by the usual notion of negative permission, namely the following:
\[(\alpha, \varphi)\in D_N \text{ iff } (\neg\alpha, \varphi)\notin N \text{ iff } (\neg\alpha, \neg\varphi)\in P_N.\]
However, in the same paper it was discussed that a more general version of $D_N$ can be introduced, similarly to the generalized definition of $P_N$ reported above, namely:
\[D_N: = \{(\alpha, \varphi)\mid \exists\beta((\beta, \varphi)\notin N\ \&\  Cn(\alpha, \beta) = \mathrm{Fm})\},\]
which cannot be subsumed by the definition of $P_N$. While the notion of negative permission $P_N$ intuitively characterizes those states of affair $\alpha$ and $\varphi$ which can both be the case without generating a violation of the  normative system $N$, the dual negative permission system $D_N$ characterizes those states of affair  $\alpha$ and $\varphi$ which can both {\em fail} to be the case without generating a violation of the  normative system $N$.
The following closure rules  on  $D_N^c: = (\mathrm{Fm}\times \mathrm{Fm})\setminus D_N$ have been introduced and discussed in  \cite[Section 4.2]{dedomenico2024obligations}:

\vspace{1mm}

\begin{center}
\begin{tabular}{lll}
\infer[\mathrm{(\top)}^\tw]{(\bot,\top)}{} & \infer[\mathrm{(\bot)}^\tw]{(\top,\bot)}{} &
\infer[\mathrm{(SI)}^\tw]{(\beta,\varphi)}{(\alpha,\varphi) &  \alpha \vdash \beta} \\[2mm]
\infer[\mathrm{(WO)}^\tw]{(\alpha,\psi)}{(\alpha,\varphi) &  \varphi \vdash \psi} &
\infer[\mathrm{(AND)}^\tw]{ (\alpha, \varphi\wedge \psi)}{(\alpha,\varphi) & (\alpha, \psi)} &
\infer[\mathrm{(OR)}^\tw]{ (\alpha\wedge \beta, \varphi)}{(\alpha,\varphi) & (\beta, \varphi)}
 \\[2mm]
$\infer[\mathrm{(CT)}^\tw]{(\alpha, \psi)}{( \alpha, \varphi)& (\varphi \pdla\alpha, \psi)\in N}$
\end{tabular}
\end{center}
For any input/output logic $\mathbb{L} = (\mathcal{L}, N)$ and any $1\leq i\leq 4$, we let $D_i: = D_{N_i}$.

\begin{proposition}
(cf.~\cite[Corollary 4.9]{dedomenico2024obligations})
 For any input/output logic $\mathbb{L} = (\mathcal{L}, N)$, if $\wedge_P$, $\vee_S$, $\top_P$, $\bot_P$, and $\neg_W$ hold for $\mathcal{L}$, then
 $D_i^c$ for $1\leq i\leq 4$ is closed  under the rules indicated in the following table.
\end{proposition}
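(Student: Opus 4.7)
The plan is to verify each of the closure rules in the table rule by rule, in a way that is dual to the proof of the preceding proposition for $P_i^c$. The working characterization throughout is the universal one obtained by taking the contrapositive of the existential definition of $D_N$: $(\alpha, \varphi) \in D_N^c$ iff for every $\gamma$ with $Cn(\alpha, \gamma) = \mathrm{Fm}$ one has $(\gamma, \varphi) \in N_i$. For each rule $(R)^\tw$, this universal condition at the conclusion is to be deduced from the universal conditions at the premises via the closure rule $(R)$ of $N_i$.

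The rules $(\top)^\tw$, $(\bot)^\tw$, $(SI)^\tw$, $(WO)^\tw$, and $(AND)^\tw$ are handled by direct, pointwise arguments. For $(\top)^\tw$ (resp.~$(\bot)^\tw$), one combines $(\top, \top) \in N_i$ (resp.~$(\bot, \bot) \in N_i$) with $(SI)$-closure of $N_i$ and the property $\top_P$ (resp.~$\bot_P$), noting that $Cn(\bot, \gamma) = \mathrm{Fm}$ for every $\gamma$ and that $Cn(\top, \gamma) = \mathrm{Fm}$ forces $\gamma \vdash \bot$. For $(SI)^\tw$, $\alpha \vdash \beta$ yields $Cn(\beta, \gamma) \subseteq Cn(\alpha, \gamma)$, so the universal condition transfers from $\alpha$ to $\beta$. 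For $(WO)^\tw$ and $(AND)^\tw$, one applies the corresponding closure rules of $N_i$ to the pairs $(\gamma, \varphi)$, $(\gamma, \psi)$ produced by the universal conditions at each $\gamma$ with $Cn(\alpha, \gamma) = \mathrm{Fm}$. For $(CT)^\tw$, given $\gamma$ with $Cn(\alpha, \gamma) = \mathrm{Fm}$, the defining clause of $\pdla$ combined with $Cn(\alpha, \gamma) = \mathrm{Fm}$ yields $\gamma \wedge \varphi \vdash \varphi \pdla \alpha$, so $(SI)$ applied to $(\varphi \pdla \alpha, \psi) \in N_i$ delivers $(\gamma \wedge \varphi, \psi) \in N_i$; $(CT)$-closure of $N_i$ then combines this with $(\gamma, \varphi) \in N_i$ (supplied by the universal condition at $\alpha$) to conclude $(\gamma, \psi) \in N_i$.

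The main obstacle is $(OR)^\tw$, and it is precisely there that the hypothesis $\neg_W$ plays an essential role. Given $\delta$ with $Cn(\alpha \wedge \beta, \delta) = \mathrm{Fm}$, the task is to produce $(\delta, \varphi) \in N_i$ from the hypotheses that $(\gamma_1, \varphi) \in N_i$ for every $\gamma_1$ with $Cn(\alpha, \gamma_1) = \mathrm{Fm}$ and $(\gamma_2, \varphi) \in N_i$ for every $\gamma_2$ with $Cn(\beta, \gamma_2) = \mathrm{Fm}$. The natural move is to exhibit witnesses $\gamma_1, \gamma_2$ in the respective sets such that $\delta \vdash \gamma_1 \vee \gamma_2$: combining $(OR)$- and $(SI)$-closure of $N_i$ then delivers $(\delta, \varphi) \in N_i$ from $(\gamma_1 \vee \gamma_2, \varphi) \in N_i$. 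Classically this is achieved by taking $\gamma_1 = \neg\alpha$, $\gamma_2 = \neg\beta$, since $\delta \vdash \neg(\alpha \wedge \beta)$ gives $\delta \vdash \neg\alpha \vee \neg\beta$ via De~Morgan. In the selfextensional setting, this De~Morgan-style passage must be mediated through $\neg_W$ and Lemma~\ref{lemma:antitonicity of neg}(1)--(3), together with $\vee_S$ and $\bot_P$. Verifying carefully that the bookkeeping goes through under only the listed hypotheses, reproducing the argument of \cite[Corollary 4.9]{dedomenico2024obligations} to which the proposition is attributed, is what I expect to be the main technical effort.
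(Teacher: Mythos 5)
Your universal reading of $D_N^c$ --- ``$(\alpha,\varphi)\in D_N^c$ iff every $\gamma$ with $Cn(\alpha,\gamma)=\mathrm{Fm}$ satisfies $(\gamma,\varphi)\in N_i$'' --- is the literal contrapositive of the displayed definition of $D_N$, but it is not the form on which the proof this proposition relies on actually runs, and it is exactly what leaves your $(\mathrm{OR})^\tw$ case open. That proof works with the dual, witness form: membership of $(\alpha,\varphi)$ in $D_N^c$ is certified by exhibiting \emph{some} $\gamma$ with $Cn(\alpha,\gamma)=\mathrm{Fm}$ and $(\gamma,\varphi)\in N_i$, and each closure rule is then a one-step manipulation of witnesses. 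In particular, for $(\mathrm{OR})^\tw$ one takes the witnesses $\gamma_1$ (for $\alpha$) and $\gamma_2$ (for $\beta$), applies $(\mathrm{OR})$ to get $(\gamma_1\vee\gamma_2,\varphi)\in N_i$, and checks $Cn(\alpha\wedge\beta,\gamma_1\vee\gamma_2)=Cn(\alpha\wedge\beta,\gamma_1)\cap Cn(\alpha\wedge\beta,\gamma_2)=\mathrm{Fm}$ using only $\vee_S$ and $\wedge_P$ (since $Cn(\alpha\wedge\beta,\gamma_1)\supseteq Cn(\alpha,\gamma_1)$). No De~Morgan step occurs anywhere in the closure arguments; $\neg_W$ is not doing the work you assign to it there.

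Under your reading, the $(\mathrm{OR})^\tw$ case genuinely does not close with the listed hypotheses: passing from $Cn(\alpha\wedge\beta,\delta)=\mathrm{Fm}$ to $\delta\vdash\neg(\alpha\wedge\beta)$ requires $\neg_S$, and $\neg(\alpha\wedge\beta)\vdash\neg\alpha\vee\neg\beta$ requires $\neg_{Il}$ by Lemma~\ref{lemma:antitonicity of neg}(2); neither is assumed, and without them a $\delta$ inconsistent with $\alpha\wedge\beta$ need not entail a join of something inconsistent with $\alpha$ and something inconsistent with $\beta$ (intuitionistic logic already defeats this). Deferring that step as ``bookkeeping'' therefore conceals the real obstruction rather than resolving it. A secondary gap is in $(\mathrm{CT})^\tw$: the property $\pdla_P$ characterizes the \emph{consequences} of $\varphi\pdla\alpha$ (it gives $\varphi\pdla\alpha\vdash\chi$ iff $\varphi\vdash\chi\vee\alpha$), so it does not by itself yield the lower bound $\gamma\wedge\varphi\vdash\varphi\pdla\alpha$ that you invoke; that entailment needs a separate justification. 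Recasting the whole argument in the witness formulation dissolves both difficulties and matches the intended proof.
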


\begin{center}
	\begin{tabular}{ l l}
		\hline
		$D_i^c$ & Rules   \\
		\hline	
		$D_{1}^c$  & $ \mathrm{(\top)^\tw, (SI)^\tw, (WO)^\tw, (AND)^\tw}$   \\
		$D_{2}^c$  & $\mathrm{ (\top)^\tw, (SI)^\tw, (WO)^\tw, (AND)^\tw, (OR)^\tw}$  \\
		$D_{3}^c$  & $\mathrm{ (\top)^\tw, (SI)^\tw, (WO)^\tw, (AND)^\tw, (CT)^\tw}$  \\
		$D_{4}^c$  & $\mathrm{ (\top)^\tw, (SI)^\tw, (WO)^\tw,(AND)^\tw, (OR)^\tw, (CT)^\tw}$ \\
		\hline
	\end{tabular}
\end{center}

\subsection{Subordination algebras and related structures}
\label{ssec:subordination related}
In the present section, we collect the definitions of a family of inter-related structures originally introduced and studied in the context of  a point-free approach to the region-based theories of discrete spaces. These structures, suitably generalized, serve in the present paper as the main semantic environment for normative and permission systems on selfextensional logics.

\paragraph{Precontact algebras and related structures.} A {\em precontact algebra} \cite{dimov2005topological, duntsch2007region} is a tuple $\mathbb{C} = (\mathbb{A}, \pcon)$ such that $\mathbb{A}$ is a Boolean algebra, and $\pcon$ is a binary relation on the domain of $\mathbb{A}$ such that, for all $a, b, c\in\mathbb{A}$,
 \begin{enumerate}
\item[(C1)] $a \pcon b$ implies $a, b \neq\bot$;
\item[(C2)] $a \pcon (b \vee c)$ iff  $a \pcon b$ or $a \pcon c$;
\item[(C3)] $(a \vee b) \pcon  c$ iff  $a \pcon c$ or $b \pcon c$.
\end{enumerate}
Additional conditions on precontact algebras considered in the literature (cf.~\cite{dimov2005topological, duntsch2007region}) are:
 \begin{enumerate}
\item[(C4)] If $a \neq \bot$ then $a\pcon a$;
\item[(C5)] If $a\pcon b$ then $b\pcon a$;
\item[(C6)] If $a \prec_\pcon c$ then $\exists b (a \prec_\pcon b \prec_\pcon c)$, where $a\prec_\pcon b$ iff $a\npcon \neg b$;
\item[(C7)] If $a \notin\{\bot, \top\}$  then $a\pcon \neg a$ or $\neg a\pcon a$;
\item[(C8)] if $a\wedge b\neq \bot$, then $a\pcon b$.
\end{enumerate}
A {\em contact algebra} is a precontact algebra satisfying (C4) and (C5). A precontact
algebra  is {\em connected} if it satisfies  (C7).

\paragraph{Subordination  algebras and related structures.} A {\em strong proximity lattice} \cite{jung1996duality} is a tuple $\mathbb{P} = (\mathbb{L}, \prec)$ such that $\mathbb{L}$ is a bounded distributive lattice
and $\prec$ is a
 binary relation on the domain of  $\mathbb{L}$ such that, for every $a, x, y \in\mathbb{L} $,
  \begin{enumerate}
  \item[(P0)] $\bot\prec a$ and $a\prec \top$;
\item[(P1)] ${\prec}\circ{\prec} = {\prec}$;
\item[(P2)]  $x\prec a$ and $y\prec a$ iff $x\vee y\prec a$;
\item[(P3)]  $a\prec x$ and $a\prec y$ iff $a\prec x\wedge y$;
\item[(P4)]  $a \prec x \vee y \Rightarrow \exists x'\exists y'( x'\prec x \ \&\ y'\prec y \ \&\ a \prec x' \vee y')$;
\item[(P5)] $x \wedge y \prec a\Rightarrow \exists x'\exists y'( x\prec x' \ \&\ y\prec y' \ \&\ x' \wedge y' \prec a)$.
\end{enumerate}

 A {\em subordination algebra} \cite[Definition 7.2.1]{sumit} is a tuple $\mathbb{S} = (\mathbb{A}, \prec)$ such that $\mathbb{A}$ is a Boolean algebra, and $\prec$ is a binary relation on the domain of $\mathbb{A}$ such that, for all $a, b, c, d\in\mathbb{A}$,
 \begin{enumerate}
\item[(S1)] $\bot\prec\bot$ and $\top\prec\top$;
\item[(S2)] if $a \prec b$ and $a\prec c$ then $a \prec b \wedge c$;
\item[(S3)]  if $a \prec c$ and $b\prec c$ then $a\vee b \prec  c$;
\item[(S4)] if $a \leq b \prec c \leq d$ then $a \prec d$.
\end{enumerate}
Clearly, $ \bot\prec a\prec\top$ for any $ a \in \mathbb{A}$. \noindent A
{\em compingent algebra} \cite{de1962compact} is a subordination algebra such that, for all $a, b\in\mathbb{A}$,
 \begin{enumerate}
\item[(S5)] if $a \prec b$ then $a \leq b$;
\item[(S6)]if $a \prec b$ then $\neg b \prec \neg a$;
\item[(S7)] if $a \prec b$ then $a \prec c \prec b$ for some $c \in \mathbb{A}$;
\item[(S8)] $a \neq \bot$ implies $b\prec a$ for some  $b\neq \bot$.
\end{enumerate}

Subordination algebras are equivalent representations of precontact algebras: for any subordination algebra $\mathbb{S} = (A, \prec)$, the tuple $\mathbb{S}_{\wt} := (A, \pcon_{\prec})$, where $a\pcon_\prec b$ iff $a  \not\prec \neg b$, is a precontact algebra. Conversely,  for any precontact algebra  $\mathbb{C} = (A, \pcon)$, the tuple $\mathbb{C}^\wt := (A, \prec_{\pcon})$, where $a\prec_{\pcon} b$ iff $a  \cancel{\pcon} \neg b$, is a subordination algebra. Finally, $\mathbb{S} = (\mathbb{S}_\wt)^\wt$ and $\mathbb{C} = (\mathbb{C}^\wt)_\wt$.
\paragraph{Dual precontact   algebras and related structures.} %The following structures % are yet another equivalent representation of precontact and subordination algebras, but,
To our knowledge, the following structures have not independently emerged in the literature:
a {\em dual precontact algebra}  is a tuple $\mathbb{D} = (\mathbb{A}, \ppcon)$ such that $\mathbb{A}$ is a Boolean algebra, and $\ppcon$ is a binary relation on the domain of $\mathbb{A}$ such that, for all $a, b, c\in\mathbb{A}$,
 \begin{enumerate}
\item[(D1)] $a \ppcon b$ implies $a, b \neq\top$;
\item[(D2)] $a \ppcon (b \wedge c)$ iff  $a \ppcon b$ or $a \ppcon c$;
\item[(D3)] $(a \wedge b) \ppcon  c$ iff  $a \ppcon c$ or $b \ppcon c$.
\end{enumerate}

Additional properties:
\begin{enumerate}
\item[(D4)] If $a \neq \top$ then $a\ppcon a$;
\item[(D5)] If $a\ppcon b$ then $b\ppcon a$;
\item[(D6)] If $a \prec_\ppcon c$ then $\exists b (a \prec_\ppcon b \prec_\ppcon c)$; , where $a\prec_\ppcon b$ iff $a\nppcon \neg b$;
\item[(D7)] If $a \notin\{\bot, \top\}$  then $a\ppcon \neg a$ or $\neg a\ppcon a$;
\item[(D8)] if $a\vee b\neq \top$, then $a\ppcon b$.
\end{enumerate}

Dual precontact  algebras are equivalent representations of subordination algebras: for any subordination algebra $\mathbb{S} = (A, \prec)$, the tuple $\mathbb{S}_{\tw} := (A, \ppcon_{\prec})$, where $a\ppcon_\prec b$ iff $\neg a  \not\prec  b$, is a dual precontact algebra. Conversely,  for any dual precontact algebra  $\mathbb{D} = (A, \ppcon)$, the tuple $\mathbb{D}^\tw := (A, \prec_{\ppcon})$, where $a\prec_{\ppcon} b$ iff $\neg a  \cancel{\ppcon}  b$, is a subordination algebra. Finally, $\mathbb{S} = (\mathbb{S}_\tw)^\tw$ and $\mathbb{D} = (\mathbb{D}^\tw)_\tw$.  Hence, precontact algebras and dual precontact algebras are mutually equivalent presentations, via the assignments $\mathbb{D}\mapsto(\mathbb{D}^\tw)_{\wt}$ and $\mathbb{C}\mapsto(\mathbb{C}^\wt)_{\tw}$.

\paragraph{Examples.} As mentioned at the beginning of this section, the structures discussed above arise in the literature as abstract (e.g.~qualitative, point-free)  models of {\em spatial} reasoning; the basic tenet of this generalization is that {\em regions}, rather than points, are taken as the basic spatial notion. Prime examples of subordination, precontact, and dual precontact algebras arise as follows: for any topological space\footnote{\label{footnote:top spaces}A {\em topological space} is a tuple $\mathbb{X} = (X, \tau)$ s.t.~$X$ is a set and $\tau$ is a family of subsets of $X$  which is closed under arbitrary unions (hence $\varnothing  = \bigcup\varnothing\in \tau$) and finite intersections (hence $X = \bigcap\varnothing\in \tau$). Elements of $\tau$ are referred to as {\em open} sets, while complements of open sets are referred to as {\em closed} sets. We let $K(\mathbb{X})$ denote the set of the closed sets of $\mathbb{X}$.
By definition, $K(\mathbb{X})$ is closed under finite unions and arbitrary intersections, hence $\varnothing$ and $X$ are both closed and open sets.  For any $y\subseteq X$, let $cl(y)$ (resp.~$int(y)$) denote the smallest closed set which includes $y$, i.e.~the intersection of the closed sets which include $y$ (resp.~the largest open set  included in $y$, i.e.~the union of the open sets included in $y$). By definition, the assignments $y\mapsto int(y)$ and $y\mapsto cl(y)$ define monotone operations on $\mathcal{P}(X)$, and moreover, $int(y)\subseteq y\subseteq cl(y)$ for any $y\subseteq X$. Notice that $int(y)\cap int(z) = int(y\cap z)$: the right-to-left inclusion immediately follows from the monotonicity of $int(-)$; the converse inclusion follows from $int(y)\cap int(z)$ being an open subset of $y\cap z$. Likewise,  $cl(y)\cup cl(z) = cl(y\cup z)$. Finally, $(cl(y^c))^c = int(y)$: the left-to-right inclusion holds since $(cl(y^c))^c$ is an open set included in $y$; to show that $int(y)\subseteq (cl(y^c))^c$, %i.e.~that $cl(y^c)\subseteq (int(y))^c$,
notice that every element of $int(y)$ has a neighbourhood (i.e.~an open set to which the given element belongs) included in $y$ and hence disjoint from $y^c$, while every element in $cl(y^c)$ has no neighbourhood disjoint from $y^c$.
Dually, $cl(y) = %\subseteq
(int (y^c))^c$, i.e.~$int (y^c) = (cl(y))^c$.} $\mathbb{X} = (X, \tau)$,
\begin{enumerate}
\item $\mathbb{S_X}: = (\mathcal{P}(X), \prec_{\tau})$, where $y\prec_{\tau} z$  iff $cl(y)\subseteq z$, is a subordination algebra\footnote{The relation $\prec_\tau$ is known as the `well inside' or `well below' relation (cf.~\cite{johnstone1982stone}).} satisfying (S5) and (S7). If $\mathbb{X}$ is T$_1$,\footnote{A topological space $\mathbb{X}$ is $\mathrm{T_1}$ if, for any two distinct points, each is contained in an open set not containing the other point.} then  $\mathbb{S_X}$ satisfies (S8) as well;
\item $\mathbb{C_X}: = (\mathcal{P}(X), \pcon_{\tau})$, where $y\pcon_{\tau} z$  iff $cl(y)\cap cl(z)\neq \varnothing$, is a precontact algebra (cf.~\cite{vakarelov2002proximity}) satisfying (C4), (C5), and (C8). If $\mathbb{X}$ is connected\footnote{A topological space $\mathbb{X}$ is \emph{connected} if it is not the union of two or more disjoint non-empty open subsets.}, then $\mathbb{C_X}$ satisfies (C7) as well;
\item  $\mathbb{D_X}: = (\mathcal{P}(X), \ppcon_{\tau})$, where $y\ppcon_{\tau} z$  iff $int(y)\cup int(x)\neq X$, is a dual precontact algebra satisfying (D4), (D5), and (D8). If $\mathbb{X}$ is connected, then $\mathbb{D_X}$ satisfies (D7) as well.
\end{enumerate}
 Item 2 (resp.~item 3) above follows from $cl(-)$ (resp.~$int(-)$) preserving finite, hence empty unions (resp.~intersections), as discussed in Footnote \ref{footnote:top spaces}. Item 1 can be verified straightforwardly (for (S3), one uses that $cl(-)$  preserves finite unions; for (S4), $b\subseteq cl(b)$).

\begin{lemma} For all $y, z\subseteq X$,
\begin{enumerate}
\item $y\prec_{\pcon_\tau} z$ implies $y\prec_{\tau} z$, and $y\prec_{\ppcon_\tau} z$ implies $y\prec_{\tau} z$;
\item $y\pcon_{\prec_\tau} z$ implies $y\pcon_{\tau} z$, and $y\ppcon_{\prec_\tau} z$ implies $y\ppcon_{\tau} z$;
\item $y\pcon_{\tau} z$ iff $y\pcon_{\ppcon_\tau} z$, and $y\ppcon_{\tau} z$ iff $y\ppcon_{\pcon_\tau} z$.
\end{enumerate}
\end{lemma}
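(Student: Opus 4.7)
The plan is to unfold each of the derived relations using the correspondences between subordination, precontact, and dual precontact algebras recalled earlier in the section, and then translate the unfolded statements into topological ones via the identities $(int(y))^c = cl(y^c)$ and its dual, together with the inclusions $y \subseteq cl(y)$ and $int(y) \subseteq y$ recorded in Footnote \ref{footnote:top spaces}. Throughout, $\neg y$ abbreviates $y^c$.

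For item 1, I would first note that unfolding $y \prec_{\pcon_\tau} z$ yields $y \cancel{\pcon_\tau} z^c$, i.e.~$cl(y) \cap cl(z^c) = \varnothing$; since $z^c \subseteq cl(z^c)$, this already forces $cl(y) \subseteq z$, as required. For the second implication, unfolding $y \prec_{\ppcon_\tau} z$ via the definition of $\mathbb{D}^\tw$ gives $y^c \cancel{\ppcon_\tau} z$, i.e.~$int(y^c) \cup int(z) = X$; taking complements and using $(int(y^c))^c = cl(y)$ rewrites this as $cl(y) \subseteq int(z) \subseteq z$, whence $y \prec_\tau z$.

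For item 2, unfolding $y \pcon_{\prec_\tau} z$ gives $cl(y) \not\subseteq z^c$, that is, $cl(y) \cap z \neq \varnothing$; absorbing $z$ into $cl(z)$ via $z \subseteq cl(z)$ yields $y \pcon_\tau z$. Symmetrically, unfolding $y \ppcon_{\prec_\tau} z$ via the definition of $\mathbb{S}_\tw$ gives $y^c \cancel{\prec_\tau} z$, i.e.~$cl(y^c) \cap z^c \neq \varnothing$; absorbing $z^c$ into $cl(z^c)$ and rewriting via $(int(a))^c = cl(a^c)$ delivers $int(y) \cup int(z) \neq X$, i.e.~$y \ppcon_\tau z$.

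Item 3 is the step I expect to require the most care, since it requires chasing through two successive transformations between the three kinds of algebras. For the first equivalence, $y \pcon_{\ppcon_\tau} z$ unfolds as $y \cancel{\prec_{\ppcon_\tau}} z^c$, which is the negation of $y^c \cancel{\ppcon_\tau} z^c$ and hence equivalent to $y^c \ppcon_\tau z^c$, i.e.~$int(y^c) \cup int(z^c) \neq X$; complementing and using $(int(a))^c = cl(a^c)$ produces $cl(y) \cap cl(z) \neq \varnothing$, i.e.~$y \pcon_\tau z$, and every step is reversible. The second equivalence is dual: $y \ppcon_{\pcon_\tau} z$ unfolds as $y^c \cancel{\prec_{\pcon_\tau}} z$, which negates $y^c \cancel{\pcon_\tau} z^c$ and is therefore equivalent to $y^c \pcon_\tau z^c$, i.e.~$cl(y^c) \cap cl(z^c) \neq \varnothing$, equivalently $int(y) \cup int(z) \neq X$, i.e.~$y \ppcon_\tau z$. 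The main point of care throughout will be tracking which argument carries the complementation when unfolding $\prec_\pcon$ and $\prec_\ppcon$, since those definitions are not symmetric in their two slots.
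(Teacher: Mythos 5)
Your proof is correct and follows essentially the same route as the paper's: unfold each derived relation through the algebraic correspondences (taking care of which slot carries the complement), then translate via $(cl(y^c))^c = int(y)$ and the inclusions $int(y)\subseteq y\subseteq cl(y)$. The only differences are trivial variations in the set-theoretic bookkeeping (e.g.\ in item 1 you use $z^c\subseteq cl(z^c)$ directly where the paper passes through $int(z)$), and all your unfoldings of $\prec_{\pcon}$, $\prec_{\ppcon}$, $\pcon_{\prec}$, $\ppcon_{\prec}$ place the complementation in the correct argument.
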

\begin{proof}
1. By definition, $y\prec_{\pcon_\tau}z$ iff $cl(y)\cap cl(z^c) = \varnothing$, hence (cf.~Footnote \ref{footnote:top spaces}) $cl(y)\subseteq (cl(z^c))^c\subseteq int(z)\subseteq z$, as required. For the second part, $y\prec_{\ppcon_\tau} z$ iff $int(y^c)\cup int(z) = X$, hence (cf.~Footnote \ref{footnote:top spaces}) $cl(y)\cap (int(z))^c\subseteq (int(y^c))^c\cap (int(z))^c = (int(y^c)\cup int(z))^c = \varnothing$ implies $cl(y)\subseteq int(z)\subseteq z$, i.e.~$y\prec_\tau z$, as required.

2. By definition, $y\pcon_{\prec_\tau}z$ iff $cl(y)\not\subseteq z^c$, i.e.~$cl(y)\cap z\neq \varnothing$ which implies $cl(y)\cap cl(z)\neq\varnothing$ i.e.~$y\pcon_\tau z$, as required.  For the second part, by definition, $y\ppcon_{\prec_\tau}z$ iff $cl(y^c)\not\subseteq z$, i.e.~$cl(y^c)\cap z^c\neq \varnothing$, hence (cf.~Footnote \ref{footnote:top spaces}) $int(y)\cup int(z)\subseteq (cl(y^c))^c\cup z = (cl(y^c)\cap z^c)^c\neq X$ implies that $int(y)\cup int(z)\neq X$, i.e.~$y\ppcon_\tau z$, as required.

3. By definition,
$y\pcon_{\tau} z$ iff $cl(y)\cap cl(z)\neq \varnothing$, iff (cf.~Footnote \ref{footnote:top spaces})  $int(y^c)\cup int(z^c) = (cl(y))^c\cup (cl(z))^c = (cl(y)\cap cl(z))^c\neq X$, %this is equivalent to $int(y^c)\cup int(z^c)\neq X$,
i.e.~$y\pcon_{\ppcon_\tau} z$, as required.
For the second part, by definition,
$y\ppcon_{\tau} z$ iff $int(y)\cup int(z)\neq X$, iff (cf.~Footnote \ref{footnote:top spaces}) $(cl(y^c)\cap cl(z^c))^c = (cl(y^c))^c \cup (cl(z^c))^c = int(y)\cup int(z)\neq X$, %then we get $cl(y^c)\cap cl(z^c)\neq \varnothing$,
i.e.~$y\ppcon_{\pcon_\tau}z$, as required.
\end{proof}
The inclusions of items 1 and 2 of the previous lemma are all proper in general.

Besides serving as abstract models of qualitative spatial reasoning, the structures discussed above also serve as models of {\em information theory}, particularly in the context of the (denotational) semantics of programming languages. For instance,  as observed in \cite{el2006priestley},   the argument was
made in \cite{jung1999multi} that the proximity relation $\prec$ can be interpreted as a stronger relation between  (the algebraic interpretation of) two logical propositions $\varphi$ and $\psi$ than the logical entailment; namely, $\varphi\prec\psi$ if, whenever $\varphi$ is
{\em observed},  $\psi$ is {\em actually} true. However, this interpretation is by no means the only one; for instance,  taking $\prec$ to represent  a {\em scientific theory}, encoded as a set of predictions (i.e.~$a\prec b$ reads as ``$a$ predicts $b$'', in the sense that, whenever $a$ is the case,  $b$  should be empirically observed), then %the corresponding precontact relation
$a\pcon_\prec b$ can be interpreted as ``$b$ is {\em compatible} with $a$'', in the sense that the simultaneous observation of $a$ and $b$ does not  lead to the rejection of the `scientific theory' $\prec$.
Similarly, the dual precontact relation $\ppcon_{\prec}$ can be understood as encoding a form of {\em negative compatibility}, in the sense  that  $a\ppcon_{\prec} b$ iff  $a$ and $b$ can be simultaneously {\em refuted}  without this  leading to a violation of $\prec$.

An analogous interpretation allows us to  take (generalizations of) subordination algebras, precontact algebras, and dual precontact algebras as models of normative, permission, and dual permission systems respectively. We will do so in Section \ref{sec: algebraic models}, where we will  also systematically connect these structures with the slanted algebras we discuss in the next subsection.

\subsection{Canonical extensions and slanted algebras}
\label{ssec:canext slanted}
In the present subsection, we adapt material from  \cite[Sections 2.2 and 3.1]{de2020subordination},\cite[Section 2]{DuGePa05}. For any poset $A$, a subset $B\subseteq A$ is {\em upward closed}, or an {\em up-set} (resp.~{\em downward closed}, or a {\em down-set}) if $\lfloor B\rfloor: = \{c\in A\mid \exists b(b\in B\ \&\ b\leq c)\}\subseteq B$ (resp.~$\lceil B\rceil: = \{c\in A\mid \exists b(b\in B\ \&\ c\leq b)\}\subseteq B$); a subset $B\subseteq A$  is {\em down-directed} (resp.~{\em up-directed}) if, for all  $a, b\in B$,  some $x\in B$ exists s.t.~$x\leq a$ and $x\leq b$ (resp.~$a\leq x$ and $b\leq x$). %We will sometimes refer to the down-directed up-sets of a poset $A$ as the {\em filters} of $A$, and to the up-directed down-sets as the {\em ideals} of $A$.
It is straightforward to verify that when $A$ is a lattice, down-directed upsets and up-directed down-sets coincide with  lattice filters and ideals, respectively.
\begin{definition}
Let $A$ be a  subposet of a complete lattice $A'$.
\begin{enumerate}
\item An element $k\in A'$ is {\em closed} if $k = \bigwedge F$ for some non-empty and  down-directed  $F\subseteq A$; an element $o\in A'$ is {\em open} if $o = \bigvee I$ for some non-empty and up-directed  $I\subseteq A$;
\item  $A$ is {\em dense} in $A'$ if every element of $A'$ can be expressed both as the join of closed elements and as the meet of open elements of $A$.
\item $A$ is {\em compact} in $A'$ if, for all $F, I\subseteq A$ s.t.~$F$ is non-empty and down-directed, $I$ is non-empty and up-directed, if $\bigwedge F\leq \bigvee I$ then $a\leq b$ for some $a\in  F$ and $b\in I$.\footnote{When the poset $A$ is a lattice, the compactness can be equivalently reformulated by dropping the requirements that $F$ be down-directed and $I$ be up-directed.}
\item The {\em canonical extension} of a poset $A$ is a complete lattice $A^\delta$ containing $A$
as a dense and compact subposet.
\end{enumerate}
\end{definition}
The canonical extension $A^\delta$ of any poset $A$ always exists\footnote{For instance, the canonical extension of a Boolean algebra $A$ is (isomorphic to) the powerset algebra $\mathcal{P}(Ult(A))$, where $Ult(A)$ is the set of the ultrafilters of $A$.} and is  unique up to an isomorphism fixing $A$ (cf.\ \cite[Propositions 2.6 and 2.7]{DuGePa05}).
%Early on, we mentioned the   intrinsic special properties of canonical extensions. Just like the canonical extension of a Boolean algebra (BA) can be shown to be a {\em perfect} BA (i.e.\ a BA  isomorphic to the powerset algebra of some set), the canonical extension of any lattice is a perfect lattice \cite[Corollary 2.10]{DGP}:
%
 The set of the closed (resp.~open) elements    of $A^\delta$ is denoted $K(A^\delta)$ (resp.~$O(A^\delta)$). The following proposition collects well known facts which we will use in the remainder of the paper. In particular, items 1(iii) and (iv) are variants of \cite[Lemma 3.2]{gehrke2001bounded}.

 \begin{proposition}
 \label{prop:background can-ext}
 For every poset $A$,
 \begin{enumerate}
 \item for all $k_1, k_2\in K(A^\delta)$  all $o_1, o_2\in O(A^\delta)$ and all $u_1, u_2\in A^\delta$,
 \begin{enumerate}[label=(\roman*)]
     \item $k_1\leq k_2$ iff $k_2\leq b$ implies $k_1\leq b$ for all $b\in A$.
     \item $o_1\leq o_2$ iff $b\leq o_1$ implies $b\leq o_2$ for all $b\in A$.
     \item $u_1\leq u_2$ iff  $k\leq u_1$ implies $k\leq u_2$ for all $k\in K(A^\delta)$, iff $u_2\leq o$ implies $u_1\leq o$ for all $o\in O(A^\delta)$.
     \item If $A$ is a $\vee$-SL, then  $k_1\vee k_2\in K(A^\delta)$.
     \item If $A$ is a $\wedge$-SL, then  $o_1\wedge o_2\in O(A^\delta)$.
  \end{enumerate}
\item if $\neg: A\to A$ is  antitone and s.t.~$(A, \neg)\models \forall a\forall b(\neg a\leq  b \Leftrightarrow \neg b\leq  a)$, then $\neg^\sigma: A^\delta\to A^\delta$ defined as $\neg^\sigma o: =\bigwedge \{\neg a\mid a\leq o\}$ for any $o\in O(A^\delta)$ and $\neg^\sigma u: =\bigvee \{\neg^\sigma o\mid u\leq o\}$ for any $u\in A^\delta$ is antitone and s.t.~$(A^\delta, \neg^\sigma) \models\forall u\forall v(\neg u\leq v \Leftrightarrow \neg v\leq u)$. If in addition, $(A, \neg)\models a\leq \neg\neg a$, then $(A^\delta,\neg^\sigma)\models u\leq \neg\neg u$. Hence, if $(A, \neg)\models a = \neg\neg a$ (i.e.~$\neg$ is involutive), then  $(A^\delta,\neg^\sigma)\models u = \neg\neg u$.
     \item if $\neg: A\to A$ is  antitone and s.t.~$(A, \neg)\models \forall a\forall b( a\leq \neg b \Leftrightarrow  b\leq \neg a)$, then $\neg^\pi: A^\delta\to A^\delta$ defined as $\neg^\pi k: =\bigvee \{\neg a\mid k\leq a\}$ for any $k\in K(A^\delta)$ and $\neg^\pi u: =\bigwedge \{\neg^\pi k\mid k\leq u\}$ for any $u\in A^\delta$ is antitone and s.t.~$(A^\delta, \neg^\pi)\models\forall u\forall v( u\leq \neg v \Leftrightarrow  v\leq\neg u)$. If in addition, $(A, \neg)\models  \neg\neg a\leq a$, then $(A^\delta,\neg^\pi)\models \neg\neg u\leq  u$. Hence, if $\neg$ is involutive, then so is $\neg^\pi$.
 \end{enumerate}
 \end{proposition}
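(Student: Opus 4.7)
The plan is to prove the three items in order, relying on density and compactness of $A$ in $A^\delta$. For item 1(i), the forward direction is immediate; for the converse, writing $k_2 = \bigwedge F_2$ with $F_2 \subseteq A$ non-empty and down-directed, the hypothesis applied to each $b \in F_2$ yields $k_1 \leq b$, hence $k_1 \leq k_2$. Item 1(ii) is dual, and the equivalences in 1(iii) follow by combining the density representations $u_1 = \bigvee\{k \in K(A^\delta) \mid k \leq u_1\}$ and $u_2 = \bigwedge\{o \in O(A^\delta) \mid u_2 \leq o\}$ with items 1(i)--(ii). For 1(iv), assuming $A$ is a $\vee$-semilattice and writing $k_i = \bigwedge F_i$ with $F_i$ down-directed, I would let $F := \{a \vee b \mid a \in F_1,\ b \in F_2\}$, verify that $F$ is down-directed (using monotonicity of $\vee$ and down-directedness of $F_1, F_2$), and check that $\bigwedge F = k_1 \vee k_2$. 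The inequality $k_1 \vee k_2 \leq \bigwedge F$ is routine; for the reverse, by 1(iii) it suffices to show $\bigwedge F \leq o$ for every open $o \geq k_1 \vee k_2$. Writing $o = \bigvee I$ with $I \subseteq A$ up-directed, compactness applied to $k_i \leq o$ yields $a_i \in F_i$ and $c_i \in I$ with $a_i \leq c_i$; up-directedness of $I$ gives $c \in I$ above both $c_i$, so $a_1 \vee a_2 \in F$ satisfies $a_1 \vee a_2 \leq c \leq o$. Item 1(v) is entirely dual.

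For item 2, I would first check that $\neg^\sigma$ agrees with $\neg$ on $A$ (each $a \in A$ is both open and closed in $A^\delta$, and antitonicity of $\neg$ collapses the defining meet to $\neg a$) and that $\neg^\sigma$ is antitone on $A^\delta$ (the defining formulas are set-theoretically antitone in their argument). The main obstacle is the Galois property $\neg^\sigma u \leq v \Leftrightarrow \neg^\sigma v \leq u$, which by 1(iii) reduces to showing $\neg^\sigma o \leq p \Leftrightarrow \neg^\sigma p \leq o$ for opens $o = \bigvee I$ and $p = \bigvee J$ with $I, J \subseteq A$ up-directed. Noting that $\neg^\sigma o = \bigwedge\{\neg a \mid a \in I\}$ (which follows from compactness and the definition), two applications of compactness together with the Galois hypothesis in $A$ render both $\neg^\sigma o \leq p$ and $\neg^\sigma p \leq o$ equivalent to the condition ``there exist $c \in I$ and $b \in J$ with $\neg c \leq b$'', yielding the equivalence.

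Once the Galois property on $A^\delta$ is established, $\neg^\sigma \neg^\sigma u \leq u$ follows tautologically by setting $v := \neg^\sigma u$. For the additional claim, note that the hypothesis $a \leq \neg\neg a$ combined with the automatic $\neg\neg a \leq a$ makes $\neg$ involutive on $A$. I would then show that for any closed $k = \bigwedge F$ with $F \subseteq A$ down-directed, $\neg^\sigma k = \bigvee\{\neg a \mid a \in F\}$: the inequality $\geq$ is immediate (each $a \in F$ is an open above $k$ with $\neg^\sigma a = \neg a$), and for $\leq$, compactness applied to any open $o \geq k$ yields $a \in F$ with $a \leq o$, hence $\neg^\sigma o \leq \neg a$ by antitonicity. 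To obtain $k \leq \neg^\sigma \neg^\sigma k$, it suffices to verify $k \leq \neg b$ for every $b \in A$ with $b \leq \neg^\sigma k$: compactness applied to $b \leq \bigvee\{\neg a \mid a \in F\}$ yields $a \in F$ with $b \leq \neg a$, whence the involution and Galois hypothesis in $A$ give $a \leq \neg b$, so $k \leq a \leq \neg b$. The general case $u \leq \neg^\sigma \neg^\sigma u$ then follows from the closed case by density and monotonicity of $\neg^\sigma \neg^\sigma$. Item 3 is entirely dual, with the roles of open and closed, meet and join, and the direction of the Galois condition interchanged throughout.
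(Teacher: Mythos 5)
Your proposal is correct and follows essentially the same route as the paper: the converse directions of 1(i)--(iii) via the filter/ideal representations, the product set $\{a\vee b\mid a\in F_1,\ b\in F_2\}$ plus compactness for 1(iv)--(v), and the reduction of $u\leq\neg\neg u$ to the closed case $k\leq\neg\neg k$, which is settled by the same compactness-and-antitonicity computation. The only (harmless) deviations are that you prove the adjunction property of $\neg^\sigma$ directly where the paper cites \cite[Proposition 3.6]{DuGePa05}, and you lift the closed case to arbitrary $u$ by denseness and monotonicity of $\neg^\sigma\neg^\sigma$ rather than via the paper's Ackermann-lemma chain of equivalences.
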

\begin{proof}
    1.
    (i) The left-to-right direction immediately follows from the transitivity of the order. Conversely, if $k_1$ and $k_2\in K(A^\delta)$, then   $k_i=\bigwedge F_i$ for some (nonempty down-directed) $F_i\subseteq A$. Hence in particular, $k_2\leq b$ for all $b\in F_2$. Then the assumption implies that $k_1\leq b$ for all $b\in F_2$, i.e.~$k_1\leq \bigwedge F_2 = k_2$, as required.
    The proofs of (ii) and (iii) are similar, using denseness for the latter item.\\
    (iv) If $k_1$ and $k_2$ are as in the proof of item (i), let us show that $k_1\vee k_2 = \bigwedge F_1\vee \bigwedge F_2 = \bigwedge\{c_1\vee c_2\mid c_i\in F_i\}$. For the left-to-right inequality, let us show that $k_i\leq c_1\vee c_2$ for any $1\leq i, j\leq 2$ and any $c_j\in F_j$. Indeed, $k_i = \bigwedge F_i\leq c_i\leq c_1\vee c_2$. Therefore, $k_1\vee k_2\leq \bigwedge \{c_1\vee c_2\mid c_i\in F_i\}$. For the converse inequality, by item (iii), it is enough to show that if $o\in O(A^\delta)$ and $k_1\vee k_2\leq o$ (i.e.~$k_i\leq o$) then $\bigwedge \{c_1\vee c_2\mid c_i\in F_i\}\leq o$. By compactness, $k_i\leq o$ implies that some $d_i\in F_i$ exists such that $k_i\leq d_i\leq o$. Hence,  $\bigwedge \{c_1\vee c_2\mid c_i\in F_i\}\leq d_1\vee d_2\leq o$, as required. To finish the proof that $k_1\vee k_2\in K(A^\delta)$, it is enough to show that the set $\{c_1\vee c_2\mid c_i\in F_i\}$, which is a subset of $A$ since $A$ is a $\vee$-SL, is also nonempty and down-directed.  Clearly, this set is nonempty since $F_1$ and $F_2$ are; if $c_i, d_i\in F_i$, then $e_i\leq c_i$ and $e_i\leq d_i$ for some $e_i\in F_i$. Hence, $e_1\vee e_2\leq c_1\vee c_2$ and $e_1\vee e_2\leq d_1\vee d_2$, as required. The proof of (v) is order-dual.

    2. For the first part of the statement, see \cite[Proposition 3.6]{DuGePa05}. Let us assume that $(A, \neg)\models a\leq \neg\neg a$, and show that $(A^\delta,\neg^\sigma)\models u\leq \neg\neg u$. The following chain of equivalences holds in $(A^\delta,\neg^\sigma)$, where  $k$ ranges in $K(A^\delta)$ and $o$ in $O(A^\delta)$:
    \begin{center}
        \begin{tabular}{c ll}
             & $\forall u( u\leq \neg\neg u)$ \\
           iff   & $\forall u\forall k\forall o((k\leq u\ \&\ \neg\neg u\leq o) \Rightarrow k\leq o)$ & denseness  \\ iff   & $\forall k\forall o(\exists u(k\leq u\ \&\ \neg\neg u\leq o) \Rightarrow k\leq o)$ &   \\
       iff   & $\forall k\forall o( \neg\neg k\leq o \Rightarrow k\leq o)$ & Ackermann's lemma (cf.~\cite[Lemma 1]{conradie2014unified}) \\    iff   & $\forall k(  k\leq \neg\neg k)$. & denseness \\
        \end{tabular}
    \end{center}
    Hence, to complete the proof, it is enough to show that, if $k\in K(A^\delta)$, then $k\leq \neg\neg k$. By definition, $k = \bigwedge D$ for some down-directed $D\subseteq A$. Since $\neg^\sigma$ is a (contravariant) left adjoint, $\neg^\sigma$ is completely meet-reversing. Hence, $\neg k = \neg(\bigwedge D) = \bigvee \{\neg d\mid d\in D\}$, and since $D$ being down-directed implies that $\{\neg d\mid d\in D\}\subseteq A$ is up-directed, we deduce that $\neg k\in O(A^\delta)$. Hence,
    \begin{center}
        \begin{tabular}{rcll}
              $\neg\neg k$
             & $=$&$ \bigwedge \{\neg a\mid a\leq \neg k\}$ &$\neg k\in O(A^\delta)$ \\
             & $=$&$  \bigwedge \{\neg a\mid a\leq \bigvee \{\neg d\mid d\in D\}\}$\\
             & $=$&$  \bigwedge \{\neg a\mid \exists d(d\in D\ \&\ a\leq \neg d )\}$. & compactness\\ \end{tabular}
    \end{center}
    Hence, to show that $\bigwedge \lfloor D\rfloor = \bigwedge D = k\leq \neg\neg k$, it is enough to show that if $a\in A$ is s.t.~$\exists d(d\in D\ \&\ a\leq \neg d )$, then $d'\leq \neg a$ for some $d'\in D$. From $a\leq \neg d$, by the antitonicity of $\neg$, it follows $\neg\neg d\leq \neg a$; combining this inequality with $d\leq \neg\neg d$ which holds by assumption for all $d\in A$, we get  $d': = d\leq \neg a$, as required. Finally, notice that by instantiating the left-hand inequality in the equivalence $(A^\delta, \neg^\sigma) \models\forall u\forall v(\neg u\leq v \Leftrightarrow \neg v\leq u)$ with $v: = \neg u$, one immediately gets $(A^\delta, \neg^\sigma) \models\forall u(\neg \neg u\leq u)$.  The proof of 3 is dual to that of 2.
\end{proof}

\begin{proposition}
    \label{prop: compactness and existential ackermann}
 For any  poset $A$, for all $a, b\in A$, $k, k_1, k_2 \in K(A^\delta)$, and $o, o_1, o_2 \in O(A^\delta)$, and all $D_i\subseteq A$  nonempty and down-directed,  and $U_i\subseteq A$  nonempty and up-directed for $1\leq i\leq 2$,
\begin{enumerate}
    \item if  $A$ is a $\wedge$-semilattice, then
    \begin{enumerate}[label=(\roman*)]
        \item  $D_1\wedge D_2\coloneqq \{c_1\wedge c_2\ |\ c_1 \in  D_1\ \&\ c_2 \in D_2\}$ is nonempty and down-directed;
        \item if $k_i = \bigwedge D_i$, then $ k_1\wedge k_2 = \bigwedge (D_1\wedge D_2) \in K(A^\delta)$;
        \item $ k_1\wedge k_2 \leq b$ implies $a_1\wedge a_2\leq b$ for some $a_1, a_2 \in A$ s.t.~$ k_i \leq a_i$;
         \item $ k_1\wedge k_2 \leq o$ implies $a_1\wedge a_2\leq b$ for some $a_1, a_2 ,b \in A$ s.t.~$ k_i \leq a_i$ and $b\leq o$.
    \end{enumerate}
    \item if  $A$ is a $\vee$-semilattice, then
    \begin{enumerate}[label=(\roman*)]
        \item  $U_1\vee U_2\coloneqq \{c_1\vee c_2\ |\ c_1 \in  U_1\ \&\ c_2 \in U_2\}$ is nonempty and up-directed;
        \item if $o_i = \bigvee U_i$, then $ o_1\vee o_2 = \bigvee (U_1\vee U_2) \in O(A^\delta)$;
        \item $a\leq o_1\vee o_2 $ implies $a \leq b_1\vee b_2$ for some $b_1, b_2 \in A$ s.t.~$  b_i\leq o_i$;
         \item $ k \leq o_1\vee o_2 $ implies $a \leq b_1\vee b_2$ for some $a, b_1, b_2 \in A$ s.t.~$ b_i \leq o_i$ and $ k \leq a$.
    \end{enumerate}
    \item If $A$ is a $\wedge$-semilattice with top, then $\bigwedge K\in K(A^\delta)$ for every  $K\subseteq K(A^\delta)$.
    \item If $A$ is a $\vee$-semilattice with bottom, then $\bigvee O\in O(A^\delta)$ for every  $O\subseteq O(A^\delta)$.
    \end{enumerate}
\end{proposition}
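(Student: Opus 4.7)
The four items split into two order-dual pairs: items 1 and 2 are dual under swapping $\bigwedge$ with $\bigvee$, down-directed with up-directed, and $K(A^\delta)$ with $O(A^\delta)$, and similarly for items 3 and 4; hence I focus on items 1 and 3. For item 1(i), nonemptiness of $D_1\wedge D_2$ is inherited from $D_1$ and $D_2$, and down-directedness follows by pairing: given $c_1\wedge c_2$ and $c_1'\wedge c_2'$ in $D_1\wedge D_2$, pick $e_i\in D_i$ with $e_i\leq c_i$ and $e_i\leq c_i'$ (using that each $D_i$ is down-directed), and note that $e_1\wedge e_2$ lies below both. For item 1(ii), the general identity $(\bigwedge D_1)\wedge(\bigwedge D_2)=\bigwedge\{c_1\wedge c_2\mid c_i\in D_i\}$ holds in the complete lattice $A^\delta$; combined with (i) and the $\wedge$-semilattice assumption (which guarantees $D_1\wedge D_2\subseteq A$), this yields $k_1\wedge k_2\in K(A^\delta)$. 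Item 1(iii) then follows by rewriting $k_1\wedge k_2=\bigwedge(D_1\wedge D_2)$ via (ii), viewing $b$ as the trivially nonempty up-directed singleton join $\bigvee\{b\}$, and invoking compactness of $A$ in $A^\delta$ to produce $c_1\wedge c_2\leq b$ with $c_i\in D_i$; setting $a_i:=c_i$ concludes, since $k_i\leq c_i$. Item 1(iv) is the same argument with $\{b\}$ replaced by a witnessing up-directed $U\subseteq A$ satisfying $\bigvee U=o$.

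For item 3, given a family $K=\{k_\lambda\mid\lambda\in\Lambda\}\subseteq K(A^\delta)$, I would fix for each $\lambda$ a nonempty down-directed $D_\lambda\subseteq A$ with $k_\lambda=\bigwedge D_\lambda$, and form the finite-meet closure
\[
D:=\{c_1\wedge\cdots\wedge c_n\mid n\geq 1,\ c_1,\ldots,c_n\in \bigcup\nolimits_{\lambda}D_\lambda\}\cup\{\top\},
\]
which lies in $A$ by the $\wedge$-semilattice-with-top assumption. The set $D$ is nonempty (it contains $\top$) and down-directed (concatenate the two finite meets), and satisfies $\bigwedge D=\bigwedge\bigcup_\lambda D_\lambda=\bigwedge_\lambda k_\lambda=\bigwedge K$: the inclusion $\bigcup_\lambda D_\lambda\subseteq D$ supplies $\bigwedge D\leq\bigwedge\bigcup_\lambda D_\lambda$, while each element of $D$ lies above $\bigwedge\bigcup_\lambda D_\lambda$, supplying the converse inequality. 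Hence $\bigwedge K\in K(A^\delta)$; note that the presence of $\top$ in $A$ is needed precisely to cover the degenerate case $K=\varnothing$, where $\bigwedge K=\top$.

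The only substantive ingredient is the uniform use of compactness in items 1(iii) and 1(iv): the key trick is representing a single element $b\in A$ as the trivial up-directed join $\bigvee\{b\}$, so that one compactness argument covers both items, with (iv) merely requiring that one first pick a directed witness $U$ for $o\in O(A^\delta)$. The rest is bookkeeping; item 3 (and dually 4) simply formalizes that the closed (resp.\ open) elements of a canonical extension form a subset closed under arbitrary meets (resp.\ joins), which reduces to turning a family of down-directed (resp.\ up-directed) sets into a single such set by taking finite-meet (resp.\ finite-join) closures, using $\top$ (resp.\ $\bot$) for the empty case.
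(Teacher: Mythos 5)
Your proof is correct and follows essentially the same route as the paper's: pairwise meets for (i)--(ii), compactness applied against the singleton $\{b\}$ (resp.\ a directed witness for $o$) in (iii)--(iv), and order-duality for items 2 and 4. The only immaterial difference is in item 3, where you take the finite-meet closure of $\bigcup_\lambda D_\lambda$ together with $\top$, whereas the paper works with the up-set $S=\{a\in A\mid \exists k\in K,\ k\leq a\}$ and its generated filter --- both reduce to the same observation that, in a $\wedge$-semilattice with top, every subset of $A$ admits a nonempty down-directed refinement with the same meet.
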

%\marginnote{AP: edited and added proof of item 3. please read both proofs.}
\begin{proof}
 We only prove items 1 and 3, the proofs of  items 2 and 4 being dual to 1 and 3, respectively.\\
1 (i) By assumption, some $c_i\in D_i$ exists for any $1\leq i\leq 2$. Hence, $c_1\wedge c_2\in D_1\wedge D_2\neq \varnothing$. To show that $D_1\wedge D_2$ is down-directed, let $c_i, d_i\in D_i $ for any $1\leq i\leq 2$ s.t.~$c_1\wedge c_2, d_1\wedge d_2\in D_1\wedge D_2$. Since $D_i$ is down-directed, some $e_i\in D_i$ exists s.t.~$e_i\leq c_i$ and $e_i\leq d_i$ for any $1\leq i\leq 2$. then $e_1\wedge e_2\in D_1\wedge D_2$ and $e_1\wedge e_2\leq c_1\wedge c_2$ and $e_1\wedge e_2\leq d_1\wedge d_2$, as required. %Since $A$ is a $\wedge$-semilattice and $D_i$ is upward closed and  down-directed, $c_i\wedge d_i\in D_i$. Then the required condition immediately follows from the definition of $ D_1\wedge D_2$ and the fact that, since $\wedge$ is commutative and associative, $(c_1\wedge c_2)\wedge (d_1\wedge d_2)= (c_1\wedge d_1)\wedge (c_2\wedge d_2)\in D_1\wedge D_2$. \\
%Let $x,y \in D_1\wedge D_2$. Then $x=c_1\wedge c_2$ and $y=d_1\wedge d_2$ for some $c_1,d_1\in D_1$ and $c_2,d_2\in D_2$. Because $D_1$ and $ D_2$ are down-directed,  some $e_1\in D_1$ and $e_2\in D_2$ exist such that $e_i\leq c_i$ and $ e_i\leq d_i$, for $1\leq i\leq 2$. Set $z: =e_1\wedge e_2 \in D_1\wedge D_2$, then $z\leq x$ and $z\leq y$. Therefore, $D_1\wedge D_2$ is down-directed.\\
 (ii) By assumption,  $k_i = \bigwedge D_i\leq d_i$ for all $d_i\in D_i$, hence $k_1 \wedge k_2 \leq d_1 \wedge d_2$, i.e.~$k_1\wedge k_2$ is a lower bound of $D_1\wedge D_2$. To show that $k_1\wedge
 k_2$ is the greatest lower bound, let $c$ be a lower bound  of $D_1\wedge D_2$. Then $c\leq d_1\wedge d_2\leq d_i$ for any $d_i\in D_i$, hence $c $ is a lower bound of $D_i$, and thus $c\leq \bigwedge D_i = k_i$. This shows that $c\leq k_1\wedge k_2$, as required. Finally, together with item (i), the identity just proved implies that $k_1\wedge k_2\in K(A^\delta)$.
\\ (iii) By the previous item, $k_1\wedge k_2\in K(A^\delta)$. Hence, by compactness, $ k_1\wedge k_2 = \bigwedge (D_1\wedge D_2) \leq b$ implies that $a_1\wedge a_2\leq b$ for some  $a_i\in D_i$. Moreover, $k_i = \bigwedge D_i$ implies that $k_i \leq a_i$, as required.
\\(iv) By assumption, $o = \bigvee U $ for some nonempty up-directed subset $U\subseteq A$. Then by compactness, the assumption implies that  $ a_1 \wedge a_2 \leq b$ for some $a_i\in D_i$ and some $b\in U$. Hence $k_i = \bigwedge D_i\leq a_i$ and $b\leq\bigvee U =  o$, as required.

3. If $A$ is a semilattice with top, then $\bigwedge S\in K(A^\delta)$ for every $S\subseteq A$; indeed, $\bigwedge S = \bigwedge \lfloor S\rfloor$, where $\lfloor S\rfloor$ is the meet-semilattice filter generated by $S$, which is nonempty (since $\top\in \lfloor S\rfloor$) and down-directed. Then, to prove the statement, it is enough to show that $\bigwedge K = \bigwedge S$ where $S: = \{a\in A\mid \exists k_a(k_a\in K\ \&\ k_a\leq a)\}$. By definition, $\bigwedge K\leq k_a\leq a$ for every $a \in S$, hence $\bigwedge K \leq \bigwedge S$. Conversely, if $k\in K$ and $a\in A$ s.t.~$k\leq a$, then $a\in S$. This shows that, for every $k\in K$, the inclusion $\{a\in A\mid k\leq a\}\subseteq S$ holds, and hence $\bigwedge S\leq \bigwedge \{a\in A\mid k\leq A\} = k$  is a lower bound of $K$. Thus, $\bigwedge S\leq \bigwedge K$, as required.
\end{proof}

The following definition introduces the algebraic environment for interpreting the output operators associated with normative and permission systems.

\begin{definition}
\label{def:slanted}
Consider the modal signatures $\tau_{\Diamond, \blacksquare}: = \{\Diamond, \blacksquare\}$, $\tau_{\wt, \bt}: = \{\wt, \bt\}$ and $\tau_{\tw, \tb}: = \{\tw, \tb \}$.
\begin{enumerate}
\item A {\em slanted $\tau_{\Diamond, \blacksquare}$-algebra} is a triple $\mathbb{A} = (A, \Diamond, \blacksquare)$ s.t.~$A$ is an ordered algebra, and $\Diamond, \blacksquare: A\to A^\delta$   s.t.~$\Diamond a \in K(A^\delta)$  and $\blacksquare a \in O(A^\delta)$ for every $a\in A$. Such a slanted algebra  is:
\begin{enumerate}
\item {\em monotone} if $\Diamond$ and $\blacksquare$ are {\em monotone}, i.e.~$a\leq b$ implies $\Diamond a\leq \Diamond b$ and $\blacksquare a\leq \blacksquare b$ for all $a, b\in A$;
\item {\em regular}\footnote{\label{footn: tense and normal}Of course, in order for a slanted algebra $\mathbb{A}$ to be regular, the poset $A$ needs to be a lattice, and in order for $\mathbb{A}$ to be normal, $A$ needs to be a bounded lattice. As mentioned earlier on, for the sake of readability, we will sometimes omit to mention assumptions which can be inferred from the context. } if $\Diamond$ and $\blacksquare$ are {\em regular}, i.e.~$\Diamond(a\vee b) = \Diamond a \vee \Diamond b$, and $\blacksquare(a\wedge b) = \blacksquare a \wedge \blacksquare b$ for all $a, b\in A$;
\item {\em normal} if $\Diamond$ and $\blacksquare$ are {\em normal}, i.e.~they are regular and $\Diamond\bot = \bot$  and $\blacksquare\top = \top$;
\item {\em tense} if $\Diamond a\leq b$ iff $a\leq \blacksquare b$ for all $a, b\in A$.
\end{enumerate}
\item A {\em slanted $\tau_{\wt, \bt}$-algebra} is a triple $\mathbb{A} = (A,\wt, \bt)$ s.t.~$A$ is an ordered algebra, and $\wt, \bt: A\to A^\delta$  s.t.~$\wt a, \bt a \in O(A^\delta)$  for every $a\in A$. Such a slanted algebra  is:
\begin{enumerate}
\item {\em antitone} if $\wt$ and $\bt$ are {\em antitone}, i.e.~$a\leq b$ implies $\wt b\leq \wt a$ and $\bt b\leq \bt a$ for all $a, b\in A$;
\item {\em regular} if $\wt$ and $\bt$ are {\em regular}, i.e.~$\wt(a\vee b) = \wt a \wedge \wt b$, and $\bt(a\vee b) = \bt a \wedge \bt b$ for all $a, b\in A$;
\item {\em normal} if $\wt$ and $\bt$ are {\em normal}, i.e.~they are regular and $\wt\bot = \top = \bt\bot$;
\item {\em tense} if $ a\leq \wt b$ iff $b\leq \bt a$ for all $a, b\in A$.
\end{enumerate}
\item A {\em slanted   $\tau_{\tw, \tb}$-algebra} is a triple $(A,\tw, \tb)$ such that $A$ is an ordered algebra, and  $\tw, \tb: A\to A^\delta$    s.t.~ $\tw a, \tb a \in K(A^\delta)$ for every $a\in A$.
Such a slanted algebra  is:
\begin{enumerate}
\item {\em antitone} if $\tw$ and $\tb$ are {\em antitone}, i.e.~$a\leq b$ implies $\tw b\leq \tw a$ and $\tb b\leq \tb a$ for all $a, b\in A$;
\item {\em regular} if $\tw$ and $\tb$ are {\em regular}, i.e.~$\wt(a\wedge b) = \wt a \vee \wt b$, and $\bt(a\wedge b) = \bt a \vee \bt b$ for all $a, b\in A$;
\item {\em normal} if $\tw$ and $\tb$ are {\em normal}, i.e.~they are regular and $\tw \top = \bot = \tb \top$;
\item {\em tense} if $ \tw a\leq  b$ iff $\tb b\leq  a$ for all $a, b\in A$.
\end{enumerate}
\end{enumerate}
\end{definition}
Each operation mapping every $a\in A$ to $K(A^\delta)$ (resp.~to $O(A^\delta)$) is referred to as {\em c-slanted} (resp.~{\em o-slanted}). In what follows, we will typically omit reference to the modal signature of the given slanted algebra, and rely on the context for the disambiguation, whenever necessary. Also, while the slanted algebras of the three modal signatures considered above will be the focus of most of the paper, from Proposition \ref{prop:poly correspondence} on, we will also consider slanted algebras of different modal signatures, which we omit to explicitly define, since their definition will be clear from the context.

The antitone modal operators $\tw$ and $\wt$ mentioned above are the `slanted' versions of the operations in classical logic  defined as $\wt \varphi: = \neg \Diamond \varphi$ and $\tw \varphi: = \neg \Box  \varphi$. Hence, under the alethic (temporal) interpretation of $\Box$ and $\Diamond$, the operators $\wt$ and $\tw$ express the impossibility of and skepticism about $\varphi$ being the case (in the future), respectively, while  $\bt$ and $\tb$ encode analogous meanings but oriented towards ``the past''. In Section \ref{sec: proto and slanted}, we discuss how slanted algebras arise from algebraic models of normative and permission systems on selfextensional logics.

\smallskip
The following definition is framed in the context of monotone (resp.~antitone) slanted algebras, but can be given for arbitrary slanted algebras, albeit at the price of complicating the definition of $\Diamond^\sigma$, $ \blacksquare^\pi$, $\wt^\pi$, $\bt^\pi$, $\tw^\sigma$, and $\tb^\sigma$. Because we are mostly going to apply it in the monotone (resp.~antitone) setting, we present the simplified version here.
\begin{definition}
\label{def: sigma and pi extensions of slanted}
For any monotone slanted algebra $\mathbb{A} = (A, \Diamond, \blacksquare)$, antitone slanted algebra $\mathbb{A} = (A, \wt, \bt)$ and $\mathbb{A} = (A, \tw, \tb)$, the {\em canonical extension} of $\mathbb{A}$ is the (standard!) modal algebra $\mathbb{A}^\delta: = (A^\delta, \Diamond^\sigma, \blacksquare^\pi)$ (resp.~$\mathbb{A}^\delta: = (A^\delta, \wt^\pi, \bt^\pi)$, $\mathbb{A}^\delta: = (A^\delta, \tw^\sigma, \tb^\sigma)$) such that $\Diamond^\sigma, \blacksquare^\pi, \wt^\pi, \bt^\pi, \tw^\sigma, \tb^\sigma: A^\delta\to A^\delta$ are defined as follows:  for every $k\in K(A^\delta)$, $o\in O(A^\delta)$ and $u\in A^\delta$,
\[\Diamond^\sigma k:= \bigwedge\{ \Diamond a\mid a\in A\mbox{ and } k\leq a\}\quad \Diamond^\sigma u:= \bigvee\{ \Diamond^\sigma k\mid k\in K(A^\delta)\mbox{ and } k\leq u\}\]
\[\blacksquare^\pi o:= \bigvee\{ \blacksquare a\mid a\in A\mbox{ and } a\leq o\},\quad \blacksquare^\pi u:= \bigwedge\{ \blacksquare^\pi o\mid o\in O(A^\delta)\mbox{ and } u\leq o\}\]
\[\wt^\pi k:= \bigvee\{ \wt a\mid a\in A\mbox{ and } k\leq a\},\quad \wt^\pi u:= \bigwedge\{ \wt^\pi k\mid k\in K(A^\delta)\mbox{ and } k\leq u\}\]
\[\bt^\pi k:= \bigvee\{ \bt a\mid a\in A\mbox{ and } k\leq a\},\quad \bt^\pi u:= \bigwedge\{ \bt^\pi k\mid k\in K(A^\delta)\mbox{ and } k\leq u\}.\]
\[\tw^\sigma o:= \bigwedge\{ \tw a\mid a\in A\mbox{ and }  a \leq o \},\quad \tw^\sigma u:= \bigvee\{ \tw^\sigma o\mid o\in O(A^\delta)\mbox{ and }  u\leq o \}\]
\[\tb^\sigma o:= \bigwedge\{ \tb a\mid a\in A\mbox{ and } a\leq o\},\quad \tb^\sigma u:= \bigvee\{ \tb^\sigma o\mid o\in O(A^\delta)\mbox{ and } u\leq o\}.\]
\end{definition}
The equivalences in the conclusions of the items in the lemma below say that the maps $\Diamond^\sigma$ and $\blacksquare^\pi$ (resp.~$\wt^\pi$ and $\bt^\pi$, and also $\tw^\sigma$ and $\tb^\sigma$) form {\em adjoint pairs}.
\begin{lemma}
\label{lemma:tense and lifted adjunction}
For any tense slanted algebra $\mathbb{A}$ which is based on a bounded lattice,
\begin{enumerate}
    \item if $\mathbb{A} = (A, \Diamond, \blacksquare)$, then $\Diamond^\sigma u\leq v$ iff $u\leq \blacksquare^\pi v$ for all $u, v\in A^\delta$.
    \item if $\mathbb{A} = (A, \wt, \bt)$, then $ u\leq \wt^\pi v$ iff $v\leq \bt^\pi u$ for all $u, v\in A^\delta$.
    \item if $\mathbb{A} = (A, \tw, \tb)$, then $ \tw^\sigma u\leq  v$ iff $\tb^\sigma  v\leq  u$ for all $u, v\in A^\delta$.
\end{enumerate}
\end{lemma}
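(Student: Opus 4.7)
The strategy is a standard two-phase argument: first establish the equivalence on pairs $(k,o)$ where $k$ is closed and $o$ is open, using compactness together with the base-level tense property; then bootstrap to arbitrary elements of $A^\delta$ via density. I will spell it out for item 1; items 2 and 3 follow by the same template, with the sides of the (Galois) connection and the choice of $\sigma$-/$\pi$-extension permuted accordingly.

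For the closed/open core, fix $k \in K(A^\delta)$ and $o \in O(A^\delta)$, and claim that $\Diamond^\sigma k \leq o$ iff $k \leq \blacksquare^\pi o$. Since $\Diamond$ has a right adjoint on $A$, it is monotone, so the family $\{\Diamond a \mid a \in A,\ k \leq a\}$ is down-directed in $K(A^\delta)$. By (the proof of) Proposition \ref{prop: compactness and existential ackermann}(3), one can rewrite $\Diamond^\sigma k = \bigwedge S$ for the down-directed subset $S = \{c \in A \mid \exists a\,(k \leq a\ \text{and}\ \Diamond a \leq c)\}$ of $A$. Writing $o = \bigvee I$ for some up-directed $I \subseteq A$, compactness of $A^\delta$ applied to $\bigwedge S \leq \bigvee I$ produces $a, b \in A$ with $k \leq a$, $b \leq o$, and $\Diamond a \leq b$. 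The base-level tense property then yields $a \leq \blacksquare b$, hence $k \leq a \leq \blacksquare b \leq \blacksquare^\pi o$, as required. The converse direction is dual: the join-analog of Proposition \ref{prop: compactness and existential ackermann}(3) expresses $\blacksquare^\pi o$ as an up-directed join of $A$-elements; combining this with $k = \bigwedge F$ for some down-directed $F \subseteq A$, compactness together with the base-level adjunction produces $a \in A$ with $k \leq a$ and $\Diamond a \leq o$, whence $\Diamond^\sigma k \leq \Diamond a \leq o$.

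For the density extension, unfold the definitions: $\Diamond^\sigma u = \bigvee\{\Diamond^\sigma k \mid k \in K(A^\delta),\ k \leq u\}$ and $\blacksquare^\pi v = \bigwedge\{\blacksquare^\pi o \mid o \in O(A^\delta),\ v \leq o\}$. Thus $\Diamond^\sigma u \leq v$ iff $\Diamond^\sigma k \leq v$ for every closed $k \leq u$, iff (by Proposition \ref{prop:background can-ext}(1)(iii) applied on the right) $\Diamond^\sigma k \leq o$ for every closed $k \leq u$ and every open $o \geq v$; symmetrically, $u \leq \blacksquare^\pi v$ iff $k \leq \blacksquare^\pi o$ for all such $k$ and $o$. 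The closed/open equivalence then identifies the two conditions and closes item 1.

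The main obstacle lies in the closed/open step: $\Diamond^\sigma k$ is prima facie a meet of closed (rather than $A$-)elements, so compactness cannot be applied directly to it. One must first reexpress it as an infimum of a down-directed subset of $A$ via Proposition \ref{prop: compactness and existential ackermann}(3), and the preservation of down-directedness crucially uses the monotonicity of $\Diamond$ supplied by the tense adjunction. For items 2 and 3, the only substantive change is that the base-level hypothesis is a Galois connection rather than an adjunction, and the extensions involved switch $\sigma \leftrightarrow \pi$ according to whether the operator in question sends $A$-elements to closed or to open elements; the compactness and density manipulations are otherwise identical.
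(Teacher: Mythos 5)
Your proposal is correct and follows essentially the same route as the paper's proof: reduce via denseness to the equivalence $\Diamond^\sigma k\leq o$ iff $k\leq \blacksquare^\pi o$ on closed/open pairs, and establish that core by rewriting $\Diamond^\sigma k$ (resp.\ $\blacksquare^\pi o$) as a directed meet (resp.\ join) of elements of $A$ so that compactness and the base-level tense property apply. The only cosmetic difference is the order of the two phases and that the paper justifies the applicability of compactness by citing items 3 and 4 of Proposition \ref{prop: compactness and existential ackermann} directly rather than unfolding the directed set $S$ explicitly.
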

\begin{proof}
1. Let $u, v\in A^\delta$. In what follows,  $k \in K(A^\delta)$,  $o \in O(A^\delta)$, and $a, b \in A$.
\begin{center}
    \begin{tabular}{r cll}
        $\Diamond^\sigma u\leq v$ &iff & $ \bigvee \{\Diamond^\sigma k\mid  k\leq u\}\leq \bigwedge \{o\mid  v\leq o\} $& denseness + def $\Diamond^\sigma u$ \\
        &iff & $  \bigvee \{\Diamond^\sigma k\mid k\leq u\}\leq \bigwedge \{o\mid  v\leq o\}$ &  \\
       & iff   & $\forall k\forall o(k\leq u\ \&\ v\leq o\Rightarrow \Diamond^\sigma k\leq o)$\\
       & iff   & $\forall k\forall o(k\leq u\ \&\ v\leq o\Rightarrow  k\leq \blacksquare^\pi o)$ & $(\ast)$\\
       &iff & $\bigvee\{k\mid   k\leq u\}\leq \bigwedge \{\blacksquare^\pi o\mid v\leq o\}$\\
       &iff & $u\leq \blacksquare^\pi v$ & denseness + def $\blacksquare^\pi v$.
    \end{tabular}
\end{center}
To justify the equivalence marked with $(\ast)$, let us show that $\Diamond^\sigma k\leq o$ iff $k\leq \blacksquare^\pi o$ for all $k$ and $o$.
\begin{center}
    \begin{tabular}{r cll}
   $\Diamond^\sigma k\leq o$ &iff & $ \bigwedge \{\Diamond a\mid  k\leq a\}\leq \bigvee \{b\mid  b\leq o\} $&  def $\Diamond^\sigma k$ and open element \\
   & iff   & $\exists a\exists b(k\leq a \ \&\ b\leq o\ \&\ \Diamond a\leq b)$ & compactness\\
   & iff   & $\exists a\exists b(k\leq a \ \&\ b\leq o\ \&\  a\leq \blacksquare b)$ & $\mathbb{A}$ is tense\\
  &  iff & $\bigwedge \{ a\mid  k\leq a\}\leq \bigvee \{\blacksquare b\mid  b\leq o\} $ &  compactness \\
  &  iff & $k\leq \blacksquare o$.& def $\blacksquare^\pi o$ and closed element.
    \end{tabular}
\end{center}
Compactness can be applied in the argument above since, thanks to $A$ being a bounded lattice and by items 3 and 4 of Proposition \ref{prop: compactness and existential ackermann}, $\bigwedge \{\Diamond a\mid k\leq a\}\in K(A^\delta)$ and $\bigvee \{\blacksquare b\mid b\leq o\}\in O(A^\delta)$. The proof of the remaining items is similar and is omitted.
\end{proof}
 By well known order-theoretic facts about adjoint pairs of maps (cf.~\cite{davey2002introduction}), the lemma above immediately implies the following
\begin{corollary}
\label{cor:tense implies normal}
    Any tense and lattice-based slanted algebra  $\mathbb{A}$  is normal.
\end{corollary}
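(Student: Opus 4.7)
The plan is to derive the corollary directly from Lemma \ref{lemma:tense and lifted adjunction} together with standard order-theoretic facts about adjoint pairs and Galois connections. The lemma shows that, in each of the three signatures, the canonically extended operations form an adjoint pair, or a Galois connection, on the complete lattice $A^\delta$. By classical results (cf.~\cite{davey2002introduction}), left adjoints preserve all existing joins (in particular the empty join $\bot$ and binary joins) and right adjoints preserve all existing meets, while the members of Galois connections of the forms considered here convert (binary and empty) joins into meets, or meets into joins. This immediately yields on $A^\delta$ the identities corresponding to regularity and normality of Definition \ref{def:slanted}, for instance $\Diamond^\sigma(u\vee v) = \Diamond^\sigma u\vee \Diamond^\sigma v$, $\Diamond^\sigma \bot = \bot$, $\blacksquare^\pi(u\wedge v) = \blacksquare^\pi u\wedge \blacksquare^\pi v$, $\blacksquare^\pi \top = \top$, and analogous identities for the pairs $\wt^\pi,\bt^\pi$ and $\tw^\sigma,\tb^\sigma$.

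The next step is to transfer these identities from $A^\delta$ back to $A$. For this, I would first observe that tense implies monotone on $A$ in the $\tau_{\Diamond,\blacksquare}$ case, and antitone in the two remaining cases: for instance, if $a_1\leq a_2$ in $A$, then for any $b\in A$ with $\Diamond a_2\leq b$, the tense property and transitivity yield $\Diamond a_1\leq b$, so by Proposition \ref{prop:background can-ext}(1)(i) (applied to closed elements), $\Diamond a_1\leq \Diamond a_2$. Using this monotonicity, the set $\{\Diamond b\mid b\in A,\ a\leq b\}$ has $\Diamond a$ as its minimum, whence $\Diamond^\sigma a = \Diamond a$ for every $a\in A$; the corresponding agreement between $\blacksquare^\pi$ and $\blacksquare$ (using that $a\in A$ is also open and $\blacksquare$ is monotone), and between the sigma/pi extensions and the original operators in the two antitone signatures, follows in the same way.

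Restricting the preservation identities on $A^\delta$ to elements of $A$ then yields the regularity and normality of $\Diamond,\blacksquare$ (respectively of $\wt,\bt$ and of $\tw,\tb$) in the original slanted algebra, completing the argument. The main obstacle, though minor, is exactly this transfer step: extracting monotonicity (or antitonicity) of the original operators from the tense condition, and then checking that the sigma/pi extensions genuinely restrict to the original operators on $A$, so that the adjunction-theoretic preservation properties proved on $A^\delta$ can be read off as properties of $\mathbb{A}$ itself.
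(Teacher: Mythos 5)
Your argument is correct and follows exactly the route the paper takes: the paper's proof of Corollary \ref{cor:tense implies normal} consists of the single remark that it follows from Lemma \ref{lemma:tense and lifted adjunction} together with well-known order-theoretic facts about adjoint pairs. The transfer step you isolate --- deriving monotonicity (resp.\ antitonicity) of the original operators from the tense condition via Proposition \ref{prop:background can-ext}.1(i), so that the $\sigma/\pi$-extensions restrict to the original operators on $A$ and the preservation properties on $A^\delta$ descend to $\mathbb{A}$ --- is precisely the detail the paper leaves implicit, and you fill it in correctly.
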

In what follows,  slanted algebras will  be considered in the context of some input/output logic based on some  (fully) selfextensional logic $\mathcal{L}$. Therefore, for any algebraic signature $\mathcal{L}$ and  any modal signature $\tau$ disjoint from $\mathcal{L}$,  a {\em slanted} $\mathcal{L}_\tau$-{\em algebra} is a structure $\bba = (A, \tau^{\mathbb{A}})$, such that $A$ is an  $\mathcal{L}$-algebra, and $\tau^{\mathbb{A}}$ is a $\tau$-indexed set of operations $A\to A^\delta$  each of which is either c-slanted or o-slanted.   For any slanted $\mathcal{L}_\tau$-algebra $\bba$, any  assignment $v:\mathsf{Prop}\to \bba$ uniquely extends to a homomorphism $v: \mathcal{L}_\tau\to \bbas$ (abusing notation, the same symbol denotes   both the assignment and its homomorphic extension).  Hence,

\begin{definition}
\label{def:slanted satisfaction and validity}
 For any  algebraic signature $\mathcal{L}$ and any modal signature $\tau$, an $\mathcal{L}_\tau$-inequality $\phi\leq\psi$ is {\em satisfied} in a slanted $\mathcal{L}_\tau$-algebra $\bba$ under the assignment $v$ (notation: $(\bba, v)\models \phi\leq\psi$) if $(\bbas, e\cdot v)\models \phi\leq\psi$ in the usual sense, where $e\cdot v$ is the assignment on $\bbas$ obtained by composing the assignment $v:\mathsf{Prop}\to \bba$ and the canonical embedding $e: \bba\to \bbas$.

 Moreover, $\phi\leq\psi$ is {\em valid} in $\bba$ (notation: $\bba\models \phi\leq\psi$) if $(\bbas, e\cdot v)\models \phi\leq\psi$ for every assignment $v$ into $\bba$  (notation: $\bbas\models_{\bba} \phi\leq\psi$).
\end{definition}

\section{Algebraic models of normative and permission systems on selfextensional logics} \label{sec: algebraic models}
In the present section, we discuss the three main types of algebras with relations which generalize the structures discussed in Section \ref{ssec:subordination related}, and which form the basic semantic environment for normative and permission systems on selfextensional logics.

\subsection{(Proto-)subordination algebras}
\label{ssec:subordination}

\begin{definition}[(Proto-)subordination algebra]
\label{def: subordination algebra} A {\em proto-subordination algebra} is a tuple $\mathbb{S} = (A, \prec)$ such that $A$ is a (possibly bounded) poset (with bottom denoted $\bot$ and top denoted $\top$ when they exist), and $\prec\ \subseteq A\times A$. A  proto-subordination algebra  is named as indicated in the left-hand column in the table below when $\prec$ satisfies the properties indicated in the right-hand column. In what follows, we will refer to a proto-subordination algebra  $\mathbb{S} = (A, \prec)$ as e.g.~join- or meet-semilattice based (abbreviated as $\vee$-SL based and $\wedge$-SL based, respectively), {\em (distributive) lattice-based ((D)L-based)}, or {\em Boolean-based (B-based)} if $A$ is a (distributive) lattice, a Boolean algebra, and so on. More in general, for any fully selfextensional logic $\mathcal{L}$, we say that $\mathbb{S} = (A, \prec)$ is $\mathsf{Alg}(\mathcal{L})$-{\em based} if $A\in \mathsf{Alg}(\mathcal{L})$. %The reader can safely assume that $A$ is a (bounded distributive) lattice, or a Boolean algebra, although, if this is not specified, the results presented below will hold more generally. We will flag out the assumptions we need in the statements of propositions.

\begin{center}
\begin{tabular}{rlrl}
     $\mathrm{(\bot)}$ & $\bot \prec \bot$  &
     $\mathrm{(\top)}$ & $\top \prec \top$ \\
     %$\mathrm{(\bot_a)}$ & $\bot \prec a$ & $\mathrm{(\top_a)}$ & $a \prec \top$\\

$\mathrm{(SB)}$ & $ \exists b( b \prec a)$ &
$\mathrm{(SF)}$ & $ \exists b( a \prec b)$\\
$\mathrm{(SI)}$ & $a \leq b \prec x \Rightarrow a \prec x$ &
     $\mathrm{(WO)}$ & $b \prec x \leq y \Rightarrow b \prec y$ \\
     $\mathrm{(AND)}$ & $a \prec x \ \& \ a \prec y \Rightarrow a \prec x \wedge y$ &
     $\mathrm{(OR)}$ & $a \prec x \ \& \ b \prec x \Rightarrow a \vee b \prec x$ \\
  %   (D) & $a \prec c \Rightarrow \exists b(a \prec b\ \&\ b \prec c)$ &   (S6) & $a \prec b \Rightarrow \neg b \prec \neg a$ \\
   %  (CT) & $a \prec b \ \& \ a \wedge b \prec c \Rightarrow a \prec c$ &   (T) & $a \prec b \ \& \ b \prec c \Rightarrow a \prec c$ \\
     $\mathrm{(DD)}$ & \multicolumn{3}{l}{ $a \prec x_1\ \&\ a \prec x_2 \Rightarrow \exists x (a \prec x\ \&\ x \leq x_1\ \&\ x\leq x_2)$} \\
	 $\mathrm{(UD)}$ & \multicolumn{3}{l}{ $a_1 \prec x\ \&\ a_2 \prec x \Rightarrow \exists a (a \prec x\ \&\ a_1 \leq a\ \&\ a_2 \leq a)$} \\

\end{tabular}
\end{center}

\end{definition}
\begin{center}
	\begin{tabular}{ l l}
		\hline
		Name & Properties  \\
		\hline
  	$\Diamond$-defined & (DD) (SF)   \\
   $\blacksquare$-defined & (UD) (SB)   \\
   defined & $\Diamond$-defined + $\blacksquare$-defined  \\
		$\Diamond$-premonotone & $\Diamond$-defined + (SI)   \\
			$\blacksquare$-premonotone & $\blacksquare$-defined + (WO)   \\
	%		premonotone & defined + (SI) (WO)   \\

			$\Diamond$-monotone & $\Diamond$-premonotone + (WO)  \\
			$\blacksquare$-monotone & $\blacksquare$-premonotone + (SI)   \\
			monotone & $\Diamond$-monotone + $\blacksquare$-monotone  \\
			$\Diamond$-regular & $\Diamond$-monotone + (OR) \\
			$\blacksquare$-regular &$\blacksquare$-monotone + (AND) \\
			regular & $\Diamond$-regular  +  $\blacksquare$-regular\\
			$\Diamond$-normal & $\Diamond$-regular + ($\bot$)  \\
			$\blacksquare$-normal & $\blacksquare$-regular + ($\top$) \\
			subordination algebra & $\Diamond$-normal + $\blacksquare$-normal\\
		\hline
	\end{tabular}
\end{center}

In what follows, we will also consider the  properties listed below. Some of them are the algebraic counterparts of well known closure rules of normative systems in input/output logic \cite{Makinson00,Makinson03}, and others have been considered in the context of subordination algebras and related structures \cite{celani2016precontact}.

\begin{center}
\begin{tabular}{rlrl}

    (D) & $a \prec c \Rightarrow \exists b(a \prec b\ \&\ b \prec c)$ & (S6) & $a \prec b \Rightarrow \neg b \prec \neg a$ \\
     (CT) & $a \prec b \ \& \ a \wedge b \prec c \Rightarrow a \prec c$ &  (T) & $a \prec b \ \& \ b \prec c \Rightarrow a \prec c$ \\
     (DCT) & $c\prec a\vee b \ \&\ b\prec a \Rightarrow c\prec a$ &   \\

     (S9)& \multicolumn{3}{l}{ $\exists c( c \prec b\ \&\ x\prec a \vee c) \Leftrightarrow \exists a^\prime \exists b^\prime (a^\prime \prec a\ \&\ b^\prime \prec b\ \&\ x \leq a^\prime \vee b^\prime)$
     } \\
     (SL1) & \multicolumn{3}{l}{ $a\prec b \vee c \Rightarrow \exists b^\prime \exists c^\prime (b^\prime \prec b\ \&\ c^\prime \prec c\ \&\ a \prec b^\prime \vee c^\prime)$
     } \\
     (SL2) & \multicolumn{3}{l}{ $b \wedge c \prec a \Rightarrow \exists b^\prime \exists c^\prime (b^\prime \prec b\ \&\ c^\prime \prec c\ \&\ b^\prime \wedge  c^\prime \prec a)$
     }
\end{tabular}
\end{center}

%\subsection{Models of input/output logics based on proto-subordination algebras}
\begin{lemma}
For any %lattice-based
proto-subordination algebra $\mathbb{S} = (A, \prec)$,
\begin{enumerate}
\item if $\mathbb{S}$ is $\vee$-semilattice based, then
\begin{enumerate}[label=(\roman*)]
    \item $\mathbb{S}\models\mathrm{(OR)}$ implies $\mathbb{S}\models\mathrm{(UD)}$;
    \item if $\mathbb{S}\models\mathrm{ (SI)}$, then $\mathbb{S}\models\mathrm{(UD)}$ iff $\mathbb{S}\models\mathrm{(OR)}$;
    \end{enumerate}
\item if $\mathbb{S}$ is $\wedge$-semilattice based, then
\begin{enumerate}[label=(\roman*)]
    \item $\mathbb{S}\models\mathrm{(AND)}$ implies $\mathbb{S}\models\mathrm{(DD)}$;

    \item if $\mathbb{S}\models\mathrm{ (WO)}$, then $\mathbb{S}\models\mathrm{ (DD)}$ iff $\mathbb{S}\models\mathrm{ (AND)}$.
\end{enumerate}
\end{enumerate}
\end{lemma}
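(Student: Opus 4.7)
The key idea is that in a semilattice-based setting the operations $\vee$ and $\wedge$ provide canonical witnesses for the existential conditions (UD) and (DD), while conversely (SI) and (WO) let us transfer $\prec$ from an abstract witness to the canonical one. All four sub-items are immediate once one sees this pattern, so the proof will be short and by direct verification; no auxiliary lemmas are needed.

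For item 1(i), assume $\mathbb{S}$ is $\vee$-SL based and satisfies (OR). Given $a_1 \prec x$ and $a_2 \prec x$, apply (OR) to obtain $a_1 \vee a_2 \prec x$; then $a := a_1 \vee a_2$ is the required witness for (UD), since $a_1 \leq a$ and $a_2 \leq a$ hold by definition of the join. For item 1(ii), the direction (OR) $\Rightarrow$ (UD) is 1(i) and needs no use of (SI). Conversely, assume (SI) and (UD); given $a_1 \prec x$ and $a_2 \prec x$, let $a$ be the witness provided by (UD), so $a_1 \leq a$, $a_2 \leq a$, and $a \prec x$. By the universal property of $\vee$ we have $a_1 \vee a_2 \leq a$, and then (SI) applied to $a_1 \vee a_2 \leq a \prec x$ yields $a_1 \vee a_2 \prec x$, which is (OR).

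Items 2(i) and 2(ii) are order-dual to 1(i) and 1(ii) respectively, and their proofs are obtained by replacing $\vee$ with $\wedge$, (OR) with (AND), (SI) with (WO), and (UD) with (DD). Concretely, for 2(i) the witness for (DD) from $a \prec x_1$ and $a \prec x_2$ is $x := x_1 \wedge x_2$, obtained via (AND); for 2(ii), given (WO) and (DD), if $x$ witnesses (DD) with $x \leq x_1, x \leq x_2$ and $a \prec x$, then $x \leq x_1 \wedge x_2$ by the universal property of $\wedge$, and (WO) gives $a \prec x_1 \wedge x_2$, i.e.~(AND).

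\textbf{Expected difficulty.} There is no real obstacle: each implication follows in one line from the definitions once the canonical witness is identified. The only thing to be careful about is to note that (SI) (resp.~(WO)) is what lets one strengthen an arbitrary upper bound (resp.~lower bound) witness into the join (resp.~meet) witness, which is why the ``iff'' in parts (ii) needs those hypotheses but the one-way implications in parts (i) do not.
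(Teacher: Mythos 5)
Your proof is correct and follows essentially the same route as the paper's: the join (resp.\ meet) is the canonical witness for (UD) (resp.\ (DD)), and (SI) (resp.\ (WO)) transfers $\prec$ from the abstract witness to that canonical one in the converse direction. Nothing to add.
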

\begin{proof}
    1(i) and 2(i) are straightforward. As for 1(ii), by 1(i), to complete the proof we need to show the `only if' direction. Let $a, b, x\in A$ s.t.~$a \prec x$ and $b \prec x$. By (UD), this implies that $c\prec x$ for some $c\in A$ such that $a\leq c$ and $b\leq c$. Since $A$ is a $\vee$-semilattice, this implies that $a\vee b\leq c\prec x$, and by (SI), this implies that $a\vee b\prec x$, as required.
   2(ii) is proven similarly.
   \end{proof}

   \begin{lemma}
For any bounded proto-subordination algebra $\mathbb{S} = (A, \prec)$,
\begin{enumerate}

\item If $\mathbb{S}\models\mathrm{(SI)}$, then
\begin{enumerate}[label=(\roman*)]
 \item
   $\mathbb{S}\models\mathrm{(\top)}$ implies $\mathbb{S}\models\mathrm{(SF)}$;
  \item
   $\mathbb{S}\models\mathrm{(SB)}$ implies $\mathbb{S}\models\mathrm{(\bot)}$.
\end{enumerate}
\item If $\mathbb{S}\models\mathrm{(WO)}$, then
\begin{enumerate}[label=(\roman*)]
    \item
$\mathbb{S}\models\mathrm{(\bot)}$ implies $\mathbb{S}\models\mathrm{(SB)}$ ;
\item
$\mathbb{S}\models\mathrm{(SF)}$ implies $\mathbb{S}\models\mathrm{(\top)}$.
\end{enumerate}
\end{enumerate}
\end{lemma}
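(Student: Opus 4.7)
The plan is to dispatch all four implications directly by instantiating the rules (SI) or (WO) with $\bot$ or $\top$, exploiting the fact that $\bot \leq a \leq \top$ for every $a \in A$. Each case reduces to a single application of the relevant closure condition, so there is no real obstacle; the argument is essentially bookkeeping between the universally quantified conditions (SI), (WO) and the witness-style conditions (SB), (SF).

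For item 1(i), given $\mathbb{S}\models (\top)$, I would take an arbitrary $a \in A$, observe that $a \leq \top \prec \top$, and apply (SI) to conclude $a \prec \top$, which witnesses (SF) at $a$. For item 1(ii), assuming (SB), I would instantiate it at $\bot$ to obtain some $b$ with $b \prec \bot$; since $\bot \leq b$, (SI) then yields $\bot \prec \bot$, which is $(\bot)$.

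Items 2(i) and 2(ii) are order-dual: assuming $(\bot)$ and (WO), for any $a$ we have $\bot \prec \bot \leq a$, so (WO) gives $\bot \prec a$, witnessing (SB) at $a$; assuming (SF), instantiate it at $\top$ to obtain $b$ with $\top \prec b$, and since $b \leq \top$, (WO) delivers $\top \prec \top$.

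The main (and only) subtlety worth noting in the write-up is that the boundedness hypothesis on $A$ is used precisely to guarantee the existence of $\bot$ and $\top$ needed in each instantiation; no further structure (e.g.\ meets, joins, negation) is invoked, so the lemma holds for arbitrary bounded proto-subordination algebras.
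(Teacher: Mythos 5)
Your proof is correct and follows essentially the same route as the paper: each implication is discharged by a single instantiation of (SI) or (WO) at $\bot$ or $\top$, using $\bot \leq a \leq \top$. No discrepancies to note.
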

\begin{proof}
    1. (i) Let $a\in A$. Then by assumption, $ a\leq\top \prec \top$, which by (SI) implies $a\prec \top$, as required.

    (ii) (SB) implies that $\bot \leq b\prec \bot$ for some  $b\in A$. Hence, $\bot\prec \bot$ follows from (SI), as required.
    The proof of 2. is similar.
   \end{proof}

Normative systems can be interpreted in proto-subordination algebras as follows:

\begin{definition}
 A {\em model} for an input/output logic $\mathbb{L} =(\mathcal{L}, N)$ is a tuple $\mathbb{M} = (\mathbb{S}, h)$ s.t.~$\mathbb{S} = (A, \prec)$ is an $\mathsf{Alg}(\mathcal{L})$-based proto-subordination algebra, %(i.e.~$A\in \mathsf{Alg}(\mathcal{L})$),
 and $h:\mathrm{Fm}\to A$ is a homomorphism s.t.~for all $\varphi, \psi\in \mathrm{Fm}$, if $(\varphi, \psi)\in N$, then $h(\varphi)\prec h(\psi)$.

\end{definition}

\subsection{(Proto-)precontact algebras }
\label{ssec: precontact}
\begin{definition}[(Proto-)precontact algebra]
\label{def: protocontact algebra} A {\em proto-precontact algebra} is a tuple $\mathbb{C} = (A, \pcon)$ such that $A$ is a (possibly bounded) poset (with bottom denoted $\bot$ and top denoted $\top$ when they exist), and $\pcon\ \subseteq A\times A$. A  proto-precontact algebra  is named as indicated in the left-hand column in the table below when $\pcon$ satisfies the properties indicated in the right-hand column.\footnote{In the tables below, most properties are stated contrapositively and mention $\npcon$ rather than $\pcon$, since this is the form in which they will be used in the subsequent sections.} In what follows, we will refer to a proto-precontact algebra  $\mathbb{C} = (A, \pcon)$ as e.g.~{\em (distributive) lattice-based ((D)L-based)}, or {\em Boolean-based (B-based)} if $A$ is a (distributive) lattice, a Boolean algebra, and so on. More in general, for any fully selfextensional logic $\mathcal{L}$, we say that $\mathbb{C} = (A, \pcon)$ is $\mathsf{Alg}(\mathcal{L})$-{\em based} if $A\in \mathsf{Alg}(\mathcal{L})$.

\begin{center}
\begin{tabular}{rlrl}
     $\mathrm{(\bot)}^\wt$ & $\bot \cancel\pcon \top$  &
     $\mathrm{(\top)}^\wt$ & $\top \cancel{\pcon} \bot$ \\
      %$\mathrm{(\bot_a)}^\wt$ & $\bot \cancel{\pcon} a$ & $\mathrm{(\top_a)}^\wt$ & $a \cancel{\pcon}\top$\\
      $\mathrm{(SB)}^\wt$ & $ \exists b( b \cancel{\pcon} a)$ & $\mathrm{(SF)}^\wt$ & $ \exists b( a \cancel{\pcon} b)$\\
\begin{comment}
     $\mathrm{(SI)}^\wt$ & $a \pcon c\ \&\ a \leq b \Rightarrow b \pcon c$ &
     $\mathrm{(WO)}^\wt$ & $a \pcon b \ \&\ b \leq c \Rightarrow a \pcon c$ \\
     $\mathrm{(AND)}^\wt$ & $a \pcon (b \vee c) \Rightarrow a \pcon b\ or\ a \pcon c $ &
     $\mathrm{(OR)}^\wt$ & $(a\vee b)\pcon c \Rightarrow a\pcon c\ or\ b \pcon c$ \\
\end{comment}
     $\mathrm{(SI)}^\wt$ & $b \npcon c\ \&\ a \leq b \Rightarrow a \npcon c$ &
     $\mathrm{(WO)}^\wt$ & $a \npcon c \ \&\ b \leq c \Rightarrow a \npcon b$ \\
     $\mathrm{(AND)}^\wt$ & $a \npcon b\ \&\ a\npcon c \Rightarrow a \npcon (b\vee c) $ &
     $\mathrm{(OR)}^\wt$ & $a\npcon c\ \&\ b\npcon c \Rightarrow (a\vee b )\npcon c $ \\

\end{tabular}
\end{center}
\end{definition}

%\marginnote{this table needs to be edited with the new names and the $^\wt$}	
\begin{center}
	\begin{tabular}{ l l}
		\hline
		Name & Properties  \\
		\hline
		  	$\wt$-defined & (DD)$^\wt$ (SF)$^\wt$   \\
   $\bt$-defined & (UD)$^\wt$ (SB)$^\wt$   \\
   defined &   	$\wt$-defined +	$\bt$-defined  \\
		$\wt$-preantitone & 	$\wt$-defined + (SI)$^\wt$   \\
			$\bt$-preantitone & 	$\bt$-defined + (WO)$^\wt$   \\
			preantitone & $\wt$-preantitone + $\bt$-preantitone  \\
			$\wt$-antitone & $\wt$-preantitone + (WO)$^\wt$  \\
			$\bt$-antitone & $\bt$-preantitone + (SI)$^\wt$   \\
			antitone & $\wt$-antitone + $\bt$-antitone \\
			$\wt$-regular & $\wt$-antitone  + (OR)$^\wt$ \\
			$\bt$-regular & $\bt$-antitone  + (AND)$^\wt$ \\
			regular & $\wt$-regular + $\bt$-regular\\
			$\wt$-normal &  $\wt$-regular + ($\bot$)$^\wt$  \\
			$\bt$-normal &  $\bt$-regular + ($\top$)$^\wt$ \\
			precontact algebra & $\wt$-normal + $\bt$-normal\\
		\hline
	\end{tabular}
\end{center}

In what follows, we will also consider the properties listed below:\footnote{See also Section \ref{prop:poly correspondence}, above Proposition \ref{prop:poly correspondence permission}, for more conditions also involving $\prec$.}

\begin{center}
\begin{tabular}{rlrl}

    (NS) & $a\npcon a\Rightarrow a\leq \bot $ &  \\
    (SFN) & $\exists b(a\npcon b\ \&\ \top \leq a\vee b) $ & \\
    (ALT)$^\wt$ & \multicolumn{3}{l}{$a\npcon b \Rightarrow \exists c\exists d(\top \leq c \vee d \ \&\ a\npcon c \ \&\ b\npcon d) $  } \\
    (ALT)$^\bt$ & \multicolumn{3}{l}{$b\npcon a \Rightarrow \exists c\exists d(\top \leq c \vee d \ \&\ c\npcon a \ \&\ d\npcon b) $  } \\
%    (SL1)$^\wt $& \multicolumn{3}{l}{$ c\npcon (a\vee b)\ \&\ d\npcon a\ \&\ e\npcon b \Rightarrow (d\vee e) \npcon c$  } \\
  %   (S9$\Leftarrow$)$^\wt $ & \multicolumn{3}{l}{$ a\npcon c\ \&\ b\npcon d\ \&\ b\npcon c\Rightarrow (a\vee e) \npcon (c\vee d) $     } \\
  $\mathrm{(CMO)}$  & $a\npcon b\ \&\ a\npcon c \Rightarrow   (a\wedge c)\npcon b$\\
$\mathrm{(\vee\wedge)}$ & $(a\vee y)\npcon x \ \&\ a\npcon(x\wedge y)\Rightarrow  (a\wedge x)\npcon y$
\end{tabular}
\end{center}

\begin{lemma}
For any  $\vee$-semilattice-based proto-precontact algebra $\mathbb{C} = (A, \pcon)$,
\begin{enumerate}[label=(\roman*)]
       \item $\mathbb{C}\models\mathrm{(OR)^\wt}$ implies $\mathbb{C}\models\mathrm{(UD)^\wt}$;
    \item $\mathbb{C}\models\mathrm{(AND)^\wt}$ implies $\mathbb{C}\models\mathrm{(DD)^\wt}$;
    \item if $\mathbb{C}\models\mathrm{ (SI)^\wt}$, then $\mathbb{C}\models\mathrm{(UD)^\wt}$ iff $\mathbb{C}\models\mathrm{(OR)^\wt}$;
    \item if $\mathbb{C}\models\mathrm{ (WO)^\wt}$, then $\mathbb{C}\models\mathrm{ (DD)^\wt}$ iff $\mathbb{C}\models\mathrm{ (AND)^\wt}$.
\end{enumerate}
\end{lemma}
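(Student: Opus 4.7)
The plan is to mirror the strategy used for the preceding proto-subordination lemma, exploiting the fact that both $(\mathrm{AND})^\wt$ and $(\mathrm{OR})^\wt$, unlike their subordination counterparts, have conclusions involving $\vee$; hence $\vee$-semilattice-based suffices for all four items, and no reference to $\wedge$-structure will be needed. Note that, by analogy with the subordination case, $(\mathrm{UD})^\wt$ should read $a_1\npcon x \ \&\ a_2\npcon x \Rightarrow \exists a(a\npcon x \ \&\ a_1\leq a\ \&\ a_2\leq a)$ and $(\mathrm{DD})^\wt$ should read $a\npcon x_1 \ \&\ a\npcon x_2 \Rightarrow \exists x(a\npcon x \ \&\ x_1\leq x\ \&\ x_2\leq x)$; both are witnessed naturally by joins.

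For (i), I would assume $a_1\npcon x$ and $a_2\npcon x$, then apply $(\mathrm{OR})^\wt$ to obtain $(a_1\vee a_2)\npcon x$, and take the witness $a := a_1\vee a_2$ in the conclusion of $(\mathrm{UD})^\wt$; the conditions $a_1\leq a$, $a_2\leq a$ are immediate. Item (ii) is proved in exactly the same pattern: from $a\npcon b_1$ and $a\npcon b_2$, apply $(\mathrm{AND})^\wt$ to obtain $a\npcon (b_1\vee b_2)$, and set $x:= b_1\vee b_2$ as witness for $(\mathrm{DD})^\wt$.

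For the converse directions in (iii) and (iv), the key is to combine the existential witness from (UD)$^\wt$ or (DD)$^\wt$ with the monotonicity rule to collapse the witness down to a join. For (iii), suppose $(\mathrm{UD})^\wt$ and $(\mathrm{SI})^\wt$ hold and let $a\npcon c$, $b\npcon c$: by $(\mathrm{UD})^\wt$ pick $x$ with $a,b\leq x$ and $x\npcon c$; since $A$ is a $\vee$-semilattice, $a\vee b\leq x$, and then $(\mathrm{SI})^\wt$ yields $(a\vee b)\npcon c$, giving $(\mathrm{OR})^\wt$. For (iv), dually, suppose $(\mathrm{DD})^\wt$ and $(\mathrm{WO})^\wt$ hold and let $a\npcon b_1$, $a\npcon b_2$: use $(\mathrm{DD})^\wt$ to get $x$ with $b_1,b_2\leq x$ and $a\npcon x$, then $b_1\vee b_2\leq x$ and $(\mathrm{WO})^\wt$ yields $a\npcon(b_1\vee b_2)$, which is $(\mathrm{AND})^\wt$.

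There is no real obstacle: everything reduces to the defining property of the join as the least upper bound, plus a single application of a monotonicity rule. The only point requiring a bit of care is ensuring that in the precontact (complemented) formulation the $\vee$-semilattice hypothesis is the correct one for both (ii) and (iv)—this is so precisely because the closure rules for $P_N^c$ swap the polarity of $\wedge$ and $\vee$ relative to those for $N$, so the join remains the operation that absorbs the witness on both sides.
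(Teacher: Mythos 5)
Your proof is correct and follows essentially the same route as the paper's: the join $a\vee b$ (resp.\ $b_1\vee b_2$) serves as the witness for $(\mathrm{UD})^\wt$ (resp.\ $(\mathrm{DD})^\wt$) via $(\mathrm{OR})^\wt$ (resp.\ $(\mathrm{AND})^\wt$), and the converses in (iii)--(iv) combine the existential witness with $(\mathrm{SI})^\wt$ (resp.\ $(\mathrm{WO})^\wt$) exactly as you describe. The only difference is presentational: the paper argues contrapositively in terms of $\pcon$, while you work directly with $\npcon$; your reading of $(\mathrm{UD})^\wt$ and $(\mathrm{DD})^\wt$ (witnesses as upper bounds, so that $\vee$-semilattice-based suffices for all four items) matches the paper's intended definitions.
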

\begin{proof}
As to (i), let $a, b\in A$ s.t.~$\forall d ( a\leq d\ \&\ b \leq d \Rightarrow d\pcon c)$; then $(a\vee b)\pcon c$, which implies, by (OR)$^\wt$, that either  $a\pcon c$ or $ b\pcon c$, as required. The proof of (ii) is similar. As to (iii), by (i), to complete the proof we need to show the `only if' direction. Let $a, b, c\in A$ s.t.~$(a\vee b)\pcon c$. Then, by (SI)$^\wt$, the left-hand side of (UD)$^\wt$ is satisfied with $a_1: = a$, $a_2: = b$,  and $x: = c$. Therefore,  $a\pcon c$ or $b\pcon c$, as required. The proof of (iv) is similar.
\end{proof}

\begin{lemma}
For any  bounded proto-precontact algebra $\mathbb{C} = (A, \pcon)$,
\begin{enumerate}

\item If $\mathbb{C}\models\mathrm{(SI)^\wt}$, then
\begin{enumerate}[label=(\roman*)]
    \item $\mathbb{C}\models\mathrm{(\top)^\wt}$ implies $\mathbb{C}\models\mathrm{(SF)^\wt}$;
    \item $\mathbb{C}\models\mathrm{(SB)^\wt}$ implies $\mathbb{C}\models\mathrm{(\bot)^\wt}$.
\end{enumerate}
\item If $\mathbb{C}\models\mathrm{(WO)^\wt}$, then
\begin{enumerate}[label=(\roman*)]
    \item
$\mathbb{C}\models\mathrm{(\bot)^\wt}$ implies $\mathbb{C}\models\mathrm{(SB)^\wt}$;
\item
$\mathbb{C}\models\mathrm{(SF)^\wt}$ implies $\mathbb{C}\models\mathrm{(\top)^\wt}$.
\end{enumerate}
\end{enumerate}
\end{lemma}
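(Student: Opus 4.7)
The plan is to derive each of the four implications by a direct, one-step application of the hypothesised closure rule ($(\mathrm{SI})^\wt$ in part 1, $(\mathrm{WO})^\wt$ in part 2) to a witness obtained from the premise, using the bounds $\bot$ and $\top$ that are available because $\mathbb{C}$ is bounded. The overall structure mirrors (in fact dualises) the previous lemma about bounded proto-subordination algebras, so no new ideas are needed; the role of $\leq$ in that argument is simply replaced here by its contrapositive use inside $(\mathrm{SI})^\wt$ and $(\mathrm{WO})^\wt$.

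For part 1, assume $(\mathrm{SI})^\wt$. For (i), given $(\top)^\wt$, i.e.\ $\top\npcon\bot$, for any $a\in A$ the inequality $a\leq\top$ together with $\top\npcon\bot$ yields $a\npcon\bot$ by $(\mathrm{SI})^\wt$; hence $b:=\bot$ witnesses $(\mathrm{SF})^\wt$ at $a$. For (ii), $(\mathrm{SB})^\wt$ applied to $\top$ produces some $b\in A$ with $b\npcon\top$; since $\bot\leq b$, $(\mathrm{SI})^\wt$ delivers $\bot\npcon\top$, which is $(\bot)^\wt$.

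For part 2, assume $(\mathrm{WO})^\wt$. For (i), given $(\bot)^\wt$, i.e.\ $\bot\npcon\top$, for any $a\in A$ the inequality $a\leq\top$ together with $\bot\npcon\top$ yields $\bot\npcon a$ by $(\mathrm{WO})^\wt$; hence $b:=\bot$ witnesses $(\mathrm{SB})^\wt$ at $a$. For (ii), $(\mathrm{SF})^\wt$ applied to $\top$ produces some $b\in A$ with $\top\npcon b$; since $\bot\leq b$, $(\mathrm{WO})^\wt$ gives $\top\npcon\bot$, which is $(\top)^\wt$.

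There is no real obstacle: all four parts are immediate once the correct witness ($\bot$ or $\top$, or the element supplied by $(\mathrm{SB})^\wt$/$(\mathrm{SF})^\wt$ specialised to $\top$) is chosen and the appropriate monotonicity-style rule is invoked. The only thing to be careful about is the direction of the inequalities in $(\mathrm{SI})^\wt$ versus $(\mathrm{WO})^\wt$: $(\mathrm{SI})^\wt$ propagates $\npcon$ \emph{downwards} in its first argument, while $(\mathrm{WO})^\wt$ propagates $\npcon$ \emph{downwards} in its second argument, and matching this to the position (first or second argument) of the distinguished bound is what makes each implication go through.
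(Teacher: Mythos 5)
Your proof is correct and follows exactly the same route as the paper: each implication is a one-step application of $(\mathrm{SI})^\wt$ or $(\mathrm{WO})^\wt$ to the bound $\top$ or $\bot$ (or to the witness supplied by $(\mathrm{SB})^\wt$/$(\mathrm{SF})^\wt$ instantiated at $\top$). The paper only writes out part 1 and declares part 2 "similar"; your explicit treatment of part 2 is the correct instantiation.
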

\begin{proof}
    (i)   Let $a\in A$. Then by assumption, $ a\leq\top \npcon \bot$, which by (SI)$^\wt$ implies $a\npcon \bot $, as required.

     (ii) (SB)$^\wt$ implies that $\bot \leq b\npcon \top $ for some  $b\in A$. Hence, $\bot\npcon \top $ follows from (SI)$^\wt$, as required.
    The proof of 2. is similar.
   \end{proof}

\subsection{Dual (proto-)precontact algebras}
\label{ssec: postcontact algebras}

\begin{definition}[Dual (proto-)precontact algebra]
\label{def: protopostcontact algebra} A {\em dual proto-precontact algebra} is a tuple $\mathbb{D} = (A, \ppcon)$ such that $A$ is a (possibly bounded) poset (with bottom denoted $\bot$ and top denoted $\top$ when they exist), and $\ppcon\ \subseteq A\times A$. A  dual proto-precontact algebra  is named as indicated in the left-hand column in the table below when $\ppcon$ satisfies the properties indicated in the right-hand column.\footnote{In the tables below, most properties are stated contrapositively and mention $\nppcon$ rather than $\ppcon$, since this is the form in which they will be used in the subsequent sections.} In what follows, we will refer to a dual proto-precontact algebra  $\mathbb{D} = (A, \ppcon)$ as e.g.~{\em (distributive) lattice-based ((D)L-based)}, or {\em Boolean-based (B-based)} if $A$ is a (distributive) lattice, a Boolean algebra, and so on. More in general, for any fully selfextensional logic $\mathcal{L}$, we say that $\mathbb{D} = (A, \ppcon)$ is $\mathsf{Alg}(\mathcal{L})$-{\em based} if $A\in \mathsf{Alg}(\mathcal{L})$.

\begin{center}
\begin{longtable}{rlrl}
     $\mathrm{(\bot)}^\tw$ & $\top \nppcon \bot$  &
     $\mathrm{(\top)}^\tw$ & $\bot \nppcon \top$ \\

      %$\mathrm{(\bot_a)}^\wt$ & $\bot \cancel{\pcon} a$ & $\mathrm{(\top_a)}^\wt$ & $a \cancel{\pcon}\top$\\

      $\mathrm{(SB)}^\tw$ & $ \exists b( b \nppcon a)$ & $\mathrm{(SF)}^\tw$ & $ \exists b( a \nppcon b)$\\

\begin{comment}
     $\mathrm{(SI)}^\tw$ & $a \leq b\ \&\ b \ppcon c \Rightarrow a \ppcon c$ &
     $\mathrm{(WO)}^\tw$ & $a \ppcon b \ \&\ c \leq b \Rightarrow a \ppcon c$ \\
     $\mathrm{(AND)}^\tw$ & $a \ppcon (b \wedge c) \Rightarrow a \ppcon b\ or\ a \ppcon c $ &
     $\mathrm{(OR)}^\tw$ & $(a\wedge b)\ppcon c \Rightarrow a\ppcon c\ or\ b \ppcon c$ \\
\end{comment}

     $\mathrm{(SI)}^\tw$ & $a \leq b\ \&\ a \nppcon c \Rightarrow b \nppcon c$ &
     $\mathrm{(WO)}^\tw$ & $a \nppcon c \ \&\ c \leq b \Rightarrow a \nppcon b$ \\
     $\mathrm{(AND)}^\tw$ & $a \nppcon\ \&\ a\nppcon c\Rightarrow a \nppcon (b\wedge c) $ &
     $\mathrm{(OR)}^\tw$ & $a \nppcon c\ \&\ b\nppcon c \Rightarrow (a\wedge b)\nppcon c $ \\

     $\mathrm{(DD)}^\tw$ & \multicolumn{3}{l}{ $(a \nppcon x_1\ \&  \ a \nppcon x_2) \Rightarrow \exists x [x \leq x_1\ \&\ x\leq x_2\ \&\ a \nppcon x] $} \\
	 $\mathrm{(UD)}^\tw$ & \multicolumn{3}{l}{ $(a_1 \nppcon x\ \& \ a_2 \nppcon x )\Rightarrow \exists a [ a \leq a_1\ \&\ a \leq a_2\ \&\ a \nppcon x] $} \\

\end{longtable}
\end{center}
\end{definition}

%\marginnote{this table needs to be edited with the new names and the $^\wt$}	
\begin{center}
	\begin{tabular}{ l l}
		\hline
		Name & Properties  \\
		\hline
		  	$\tw$-defined & (DD)$^\tw$ (SF)$^\tw$   \\
   $\tb$-defined & (UD)$^\tw$ (SB)$^\tw$   \\
   defined &   	$\tw$-defined +	$\tb$-defined  \\
		$\tw$-preantitone & 	$\tw$-defined + (SI)$^\tw$   \\
			$\tb$-preantitone & 	$\tb$-defined + (WO)$^\tw$   \\
			preantitone & $\tw$-preantitone + $\tb$-preantitone  \\
			$\tw$-antitone & $\tw$-preantitone + (WO)$^\tw$  \\
			$\tb$-antitone & $\tb$-preantitone + (SI)$^\tw$   \\
			antitone & $\tw$-antitone + $\tb$-antitone \\
			$\tw$-regular & $\tw$-antitone  + (OR)$^\tw$ \\
			$\tb$-regular & $\tb$-antitone  + (AND)$^\tw$ \\
			regular & $\tw$-regular + $\tb$-regular\\
			$\tw$-normal &  $\tw$-regular + ($\bot$)$^\tw$  \\
			$\tb$-normal &  $\tb$-regular + ($\top$)$^\tw$ \\
			dual precontact algebra & $\tw$-normal + $\tb$-normal\\
		\hline
	\end{tabular}
\end{center}

In what follows, we will also consider the properties listed below:\footnote{Unlike the other conditions in the list, $\mathrm{(CT)}^\tw$ also involves $\prec$, and will be treated separately in Proposition \ref{prop:poly correspondence permission}.5.}

\begin{center}
\begin{tabular}{rlrl}

    (SR) & $a\nppcon a\Rightarrow \top \leq a $ & $\mathrm{(CMO)}^\tw$ & $a\nppcon b\ \&\ a\nppcon c\Rightarrow (a\vee b)\nppcon c$   \\
    (SFN)$^\tw$ & $\exists b(a\nppcon b\ \&\ a\wedge b \leq \bot)$ & $\mathrm{(CT)}^\tw$ & $a\nppcon b\ \& \ b\pdla a \prec c \Rightarrow a \nppcon c$ \\
\end{tabular}
\end{center}
\begin{lemma}
For any  $\wedge$-semilattice based  dual proto-precontact algebra $\mathbb{D} = (A, \ppcon)$,
     \begin{enumerate}[label=(\roman*)]
       \item $\mathbb{D}\models\mathrm{(OR)^\tw}$ implies $\mathbb{D}\models\mathrm{(UD)^\tw}$;
    \item $\mathbb{D}\models\mathrm{(AND)^\tw}$ implies $\mathbb{D}\models\mathrm{(DD)^\tw}$;
    \item if $\mathbb{D}\models\mathrm{ (SI)^\tw}$, then $\mathbb{D}\models\mathrm{(UD)^\tw}$ iff $\mathbb{D}\models\mathrm{(OR)^\tw}$;
    \item if $\mathbb{D}\models\mathrm{ (WO)^\tw}$, then $\mathbb{D}\models\mathrm{ (DD)^\tw}$ iff $\mathbb{D}\models\mathrm{ (AND)^\tw}$.
\end{enumerate}
\end{lemma}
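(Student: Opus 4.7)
The plan is to prove all four items by mirroring the pattern of the two previous lemmas for proto-subordination and proto-precontact algebras, substituting meets for joins where appropriate. The statement is essentially the order-dual of the previous (proto-)precontact lemma: the rules $\mathrm{(OR)}^{\tw}$ and $\mathrm{(AND)}^{\tw}$ now produce witnesses via $\wedge$ rather than $\vee$, and $(\mathrm{SI})^{\tw}$, $(\mathrm{WO})^{\tw}$ are the antitone-with-respect-to-the-first- (resp.~second-) coordinate monotonicity conditions formulated on the complement $\nppcon$. So throughout I will work with the rules expressed in their stated contrapositive form on $\nppcon$.

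For (i), assume $(\mathrm{OR})^{\tw}$ holds and let $a_1\nppcon x$ and $a_2\nppcon x$. Then directly $(a_1\wedge a_2)\nppcon x$, and since $A$ is a $\wedge$-semilattice, $a:=a_1\wedge a_2$ witnesses $(\mathrm{UD})^{\tw}$, because $a\leq a_1$ and $a\leq a_2$. The proof of (ii) is entirely analogous: given $a\nppcon x_1$ and $a\nppcon x_2$, $(\mathrm{AND})^{\tw}$ yields $a\nppcon (x_1\wedge x_2)$, and $x:=x_1\wedge x_2$ witnesses $(\mathrm{DD})^{\tw}$ since $x\leq x_1$ and $x\leq x_2$.

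For (iii), by (i) it suffices to establish the converse implication. Assuming $(\mathrm{SI})^{\tw}$ and $(\mathrm{UD})^{\tw}$, let $a\nppcon c$ and $b\nppcon c$. Applying $(\mathrm{UD})^{\tw}$ produces some $d\in A$ with $d\leq a$, $d\leq b$, and $d\nppcon c$; since $A$ is a $\wedge$-semilattice, $d\leq a\wedge b$, so $(\mathrm{SI})^{\tw}$ (which upgrades $d\nppcon c$ to $(a\wedge b)\nppcon c$ using $d\leq a\wedge b$) yields $(\mathrm{OR})^{\tw}$. Similarly, for (iv), by (ii) I only need to establish that $(\mathrm{DD})^{\tw}$ together with $(\mathrm{WO})^{\tw}$ implies $(\mathrm{AND})^{\tw}$: from $a\nppcon b$ and $a\nppcon c$, $(\mathrm{DD})^{\tw}$ gives some $x\leq b$, $x\leq c$ with $a\nppcon x$; hence $x\leq b\wedge c$, and $(\mathrm{WO})^{\tw}$ delivers $a\nppcon (b\wedge c)$.

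There is no real obstacle: the whole argument is a mechanical dualisation of the previous lemma, the only conceptual point being the correct use of the $\wedge$-semilattice assumption to obtain the meet-witness in each item and to invoke $(\mathrm{SI})^{\tw}$ or $(\mathrm{WO})^{\tw}$ on it. No nontrivial closure, canonical-extension, or compactness machinery is needed, and the proof could even be omitted or condensed to ``analogous to the previous lemma''.
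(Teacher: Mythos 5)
Your proof is correct and follows essentially the same route as the paper's: in each item the meet of the two relevant elements serves as the witness, and for (iii)--(iv) the witness supplied by $\mathrm{(UD)}^\tw$/$\mathrm{(DD)}^\tw$ is upgraded to the meet via $\mathrm{(SI)}^\tw$/$\mathrm{(WO)}^\tw$. The only (immaterial) difference is that you argue directly on $\nppcon$, whereas the paper phrases the same steps contrapositively in terms of $\ppcon$.
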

\begin{proof}
As to (i), let $a, b\in A$ s.t.~$\forall d ( d\leq a\ \&\ d \leq b \Rightarrow d\ppcon c)$; then $(a\wedge b)\ppcon c$, which implies, by (OR)$^\tw$, that either  $a\ppcon c$ or $ b\ppcon c$, as required. The proof of (ii) is similar. As to (iii), by (i), to complete the proof we need to show the `only if' direction. Let $a, b, c\in A$ s.t.~$(a\wedge b)\ppcon c$. Then, by (SI)$^\tw$, the left-hand side of (UD)$^\tw$ is satisfied with $a_1: = a$, $a_2: = b$,  and $x: = c$. Therefore,  $a\pcon c$ or $b\pcon c$, as required. The proof of (iv) is similar.
\end{proof}
\begin{lemma}
For any  bounded dual proto-precontact algebra $\mathbb{D} = (A, \ppcon)$,
\begin{enumerate}

\item If $\mathbb{D}\models\mathrm{(SI)^\tw}$, then
\begin{enumerate}[label=(\roman*)]
    \item $\mathbb{D}\models\mathrm{(\top)^\tw}$ implies $\mathbb{D}\models\mathrm{(SF)^\tw}$;
    \item $\mathbb{D}\models\mathrm{(SB)^\tw}$ implies $\mathbb{D}\models\mathrm{(\bot)^\tw}$.
\end{enumerate}
\item If $\mathbb{D}\models\mathrm{(WO)^\tw}$, then
\begin{enumerate}[label=(\roman*)]
    \item
$\mathbb{D}\models\mathrm{(\bot)^\tw}$ implies $\mathbb{D}\models\mathrm{(SB)^\tw}$;
\item
$\mathbb{D}\models\mathrm{(SF)^\tw}$ implies $\mathbb{D}\models\mathrm{(\top)^\tw}$.
\end{enumerate}

\end{enumerate}
\end{lemma}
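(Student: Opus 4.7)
The plan is to mirror the proofs of the two preceding lemmas (on bounded proto-subordination and bounded proto-precontact algebras), since the statement is their exact dualized/transposed counterpart. The only order-theoretic facts needed are $\bot \leq a$ and $a \leq \top$ for all $a\in A$, while $(\mathrm{SI})^\tw$ and $(\mathrm{WO})^\tw$ act as the monotonicity principles allowing one to propagate a single instance of $\nppcon$ along the first and second coordinate, respectively. In each of the four items, the strategy is: use the ``base case'' axiom ($(\top)^\tw$ or $(\bot)^\tw$, or an existential witness coming from $(\mathrm{SF})^\tw$/$(\mathrm{SB})^\tw$) and then apply $(\mathrm{SI})^\tw$ or $(\mathrm{WO})^\tw$ to stretch the relevant coordinate to $\top$ or down to $\bot$.

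More concretely, for 1(i) fix $a\in A$; from $\bot\nppcon\top$ (given by $(\top)^\tw$) together with $\bot\leq a$, $(\mathrm{SI})^\tw$ yields $a\nppcon \top$, and we take $b:=\top$ to witness $(\mathrm{SF})^\tw$. For 1(ii), instantiate $(\mathrm{SB})^\tw$ at $a:=\bot$ to obtain $b$ with $b\nppcon \bot$; since $b\leq \top$, $(\mathrm{SI})^\tw$ gives $\top\nppcon \bot$, which is $(\bot)^\tw$. For 2(i), fix $a\in A$; from $\top\nppcon\bot$ (given by $(\bot)^\tw$) together with $\bot\leq a$, $(\mathrm{WO})^\tw$ yields $\top\nppcon a$, and we take $b:=\top$ to witness $(\mathrm{SB})^\tw$. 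For 2(ii), instantiate $(\mathrm{SF})^\tw$ at $a:=\bot$ to obtain $b$ with $\bot\nppcon b$; since $b\leq \top$, $(\mathrm{WO})^\tw$ gives $\bot\nppcon \top$, which is $(\top)^\tw$.

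There is essentially no obstacle here, since the argument is purely a bookkeeping exercise once the correct axioms are lined up. The only point requiring a little care is keeping track of which of $(\mathrm{SI})^\tw$ and $(\mathrm{WO})^\tw$ acts on which coordinate of $\nppcon$, and consequently which of $(\top)^\tw$ and $(\bot)^\tw$ is paired with which existential property: $(\mathrm{SI})^\tw$ is used in item 1 because the coordinate that needs to be moved (from $\bot$ up to an arbitrary $a$, or from an arbitrary $b$ up to $\top$) is the left one, whereas in item 2 it is the right coordinate that is moved, which is the job of $(\mathrm{WO})^\tw$.
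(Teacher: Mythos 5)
Your proposal is correct and follows essentially the same argument as the paper: items 1(i) and 1(ii) are proved exactly as you describe (propagating $\bot\nppcon\top$ up the left coordinate via $(\mathrm{SI})^\tw$, and stretching the $(\mathrm{SB})^\tw$-witness to $\top$), and the paper dispatches item 2 as ``similar,'' which your explicit $(\mathrm{WO})^\tw$ arguments fill in correctly.
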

\begin{proof}
     (i)   Let $a\in A$. Then by assumption, $ \bot \nppcon \top \ \&\ \bot \leq a $, which by (SI)$^\tw $ implies $a\nppcon \top$, as required.

     (ii) (SB)$^\tw$ implies that $b \nppcon \bot \ \&\ b \leq \top $ for some  $b\in A$. Hence, $\top \nppcon \bot $ follows from (SI)$^\tw$, as required.
    The proof of 2. is similar.
\end{proof}

\subsection{Algebraizing negative and dual negative permission systems}
We finish the present section by introducing the algebraic counterparts of the negative and dual negative permission systems associated with a given normative system in the literature on input/output logic. Originally introduced in the context of normative systems on classical propositional logic \cite{Makinson00}, these notions have been generalized in \cite[Sections 4.1 and 4.2]{dedomenico2024obligations} to normative systems based on arbitrary selfextensional logics. In what follows, we extend these notions to  the algebraic counterparts of input/output logics on fully selfextensional logics. % {\em with conjunction}, since the logical filters of these logics coincide with the usual notion of filters.\marginnote{AP: it would be nice to add the proof of this in the prelim }
For any such logic $\mathcal{L}$, any $A\in \mathsf{Alg}(\mathcal{L})$, and any subset $X\subseteq A$, we let $F(X)$ denote the $\mathcal{L}$-filter generated by $X$. We will write e.g.~$F(a, b)$ for $F(\{a, b\})$.
\begin{definition}
\label{def:compatible P}
Let $\mathcal{L}$ be a fully selfextensional logic. %with $\wedge_P$ and $\bot_P$.
For any $\mathsf{Alg}(\mathcal{L})$-based proto-subordination algebra $\mathbb{S} = (A, \prec)$,  we let
    \begin{center}
        $\mathcal{C}_\prec: = \{(a, b)\mid \forall c (a\prec c  \Rightarrow F(b, c) \neq A   )\}\quad $ $\quad \quad\mathcal{D}_\prec: = \{(a, b)\mid \exists c (c\not\prec b  \ \&\ F(a, c)  = A   )\} $.
    \end{center}
\end{definition}
The following proposition is the algebraic counterpart of \cite[Propositions 4.2 and  4.7]{dedomenico2024obligations}.
\begin{proposition}
\label{prop: projection negative and dual negative}
    Let $\mathcal{L}$ be a  selfextensional logic with $\wedge_P$, $\bot_P$,  $\neg_A$ and $\neg_S$. For any  $\mathsf{Alg}(\mathcal{L})$-based proto-subordination algebra $\mathbb{S} = (A, \prec)$, % s.t.~$A$ is a $\wedge$-semilattice with bottom and negation s.t.~$A\models a\wedge\neg a\leq \bot$ and .
    \begin{enumerate}
    \item if $\mathbb{S}\models \mathrm{(WO)}$, then
    $\mathcal{C}_\prec = \{(a, b)\mid a \not \prec  \neg b\} $;
    \item if $\mathbb{S}\models \mathrm{(SI)}$, then $ \mathcal{D}_\prec = \{(a, b)\mid  \neg a\not\prec  b\}$.
    \end{enumerate}\end{proposition}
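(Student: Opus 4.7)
The plan is to first translate the filter-theoretic side-conditions that appear in Definition~\ref{def:compatible P} into purely algebraic ones, using the four hypotheses on $\mathcal{L}$. Since $\wedge_P$ holds, every $A\in\mathsf{Alg}(\mathcal{L})$ is a meet-semilattice and its $\mathcal{L}$-filters are exactly the up-sets closed under $\wedge$, hence $F(x,y)=\uparrow(x\wedge y)$. Since $\bot_P$ holds, $A$ has a least element $\bot$, and $F(x,y)=A$ amounts to $\bot\in F(x,y)$, i.e.\ to $x\wedge y=\bot$. The property $\neg_A$ translates, on $A\in\mathsf{Alg}(\mathcal{L})$, to $a\wedge \neg a=\bot$ for every $a$, and $\neg_S$ translates to: $x\wedge y=\bot$ implies $x\leq \neg y$ (and, symmetrically, $y\leq\neg x$). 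Combining these one gets the biconditional
\[
F(x,y)=A\;\Longleftrightarrow\; x\wedge y=\bot\;\Longleftrightarrow\; x\leq\neg y
\]
which will be the engine of both items. The slight care needed here is to transfer the formula-level statements of $\neg_A$ and $\neg_S$ to the algebras in $\mathsf{Alg}(\mathcal{L})$; this is routine for fully selfextensional logics with $\wedge_P$, since the order of $A\in\mathsf{Alg}(\mathcal{L})$ is fully captured by the consequence relation of $\mathcal{L}$ (cf.\ the characterization recalled just after Lemma~\ref{lemma:antitonicity of neg}).

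For item~1, rewrite $\mathcal{C}_\prec=\{(a,b)\mid \forall c(a\prec c\Rightarrow b\not\leq\neg c)\}$ using the biconditional. For the left-to-right inclusion, argue contrapositively: if $a\prec\neg b$, then taking $c:=\neg b$ yields $a\prec c$ while $b\leq \neg c = \neg\neg b$ is witnessed directly by $b\wedge\neg b=\bot$, contradicting $(a,b)\in \mathcal{C}_\prec$. (More cleanly: $b\wedge c= b\wedge\neg b=\bot$ gives $F(b,c)=A$.) For the converse, assume $a\not\prec\neg b$ and suppose for contradiction that $a\prec c$ with $b\wedge c=\bot$, equivalently $c\leq\neg b$; then $(\mathrm{WO})$ gives $a\prec\neg b$, a contradiction. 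This is the only point where $(\mathrm{WO})$ is used.

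For item~2, similarly rewrite $\mathcal{D}_\prec=\{(a,b)\mid \exists c(c\not\prec b\ \&\ c\leq\neg a)\}$. For the left-to-right inclusion, suppose $(a,b)\in \mathcal{D}_\prec$, witnessed by $c$, and suppose for contradiction $\neg a\prec b$. From $c\leq\neg a\prec b$ and $(\mathrm{SI})$ we obtain $c\prec b$, contradicting the choice of $c$. For the converse, assume $\neg a\not\prec b$ and take $c:=\neg a$; then $c\not\prec b$ holds by hypothesis and $F(a,c)=F(a,\neg a)=A$ by the algebraic form of $\neg_A$, so $(a,b)\in \mathcal{D}_\prec$. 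Again $(\mathrm{SI})$ enters only through the single use above.

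The main (and really only) obstacle is the preliminary step of passing from the formula-level formulations of $\neg_A$, $\neg_S$ to statements about meets with $\bot$ in the algebras of $\mathsf{Alg}(\mathcal{L})$; once this translation is in place the two items are one-line contrapositive arguments against $(\mathrm{WO})$ and $(\mathrm{SI})$, respectively.
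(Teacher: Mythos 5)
Your proposal is correct and follows essentially the same route as the paper: both rest on the two preliminary facts $F(a,\neg a)=A$ (from $\neg_A$) and $F(a,b)=A\Rightarrow b\leq\neg a$ (from $\neg_S$), and then run the same short contrapositive arguments, with $\mathrm{(WO)}$ entering only in item~1 and $\mathrm{(SI)}$ only in item~2, and the same witnesses $c:=\neg b$ and $c:=\neg a$ for the easy inclusions. The only difference is that you spell out the translation of the filter conditions via $\wedge_P$ and $\bot_P$ (i.e.\ $F(x,y)=A$ iff $x\wedge y=\bot$ iff $x\leq\neg y$), which the paper simply asserts as an immediate consequence of the assumptions.
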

    \begin{proof}
    The assumptions imply that $F(a, \neg a) = A$ for every $a\in A$ and $A\models \forall a\forall b(F (a, b) = A \Rightarrow b\leq \neg a)$, where $F(a, b)$ is the $\mathcal{L}$-filter of $A$ generated by $\{a, b\}$. \\
        1. For the left-to-right inclusion, let $c: = \neg b$; hence  $F( b,\neg b) = A $, which implies, by the definition of $\mathcal{C}_{\prec}$, $a\not\prec \neg b$, as required.
        Conversely, let $a, b\in A$ s.t.~$a\not\prec \neg b$ and let $c\in A$ s.t.~$F( b, c) = A$.  By the preliminary observations on $A$, this implies that  $c\leq \neg b$. Hence, $a\not\prec c$, for otherwise, by $\mathrm{(WO)}$,  $a\prec c$ and $c\leq \neg b$ would imply that $a\prec \neg b$, against the assumption.\\
        2.  For the right-to-left inclusion, take $c: = \neg a$ as the witness; as discussed above, $F(a,\neg a) = A$, as required.
        Conversely,  let $a, b\in A$ s.t.~$c\not\prec b$ for some $c\in A$ s.t.~$F( a, c) = A$. As discussed above, this implies that   $c\leq  \neg a$. Hence, $\neg a\not \prec  b$, for otherwise, by $\mathrm{(SI)}$,  $c\prec b$, against the assumption.
    \end{proof}

\begin{definition}
    For any  proto-subordination algebra $\mathbb{S} = (A, \prec)$,
  \begin{enumerate}
      \item the  proto-precontact algebra associated with $\mathbb{S}$ is $\mathbb{S}_{\wt} := (A, \pcon_{\prec})$;
      \item the dual proto-precontact algebra associated with $\mathbb{S}$ is $\mathbb{S}_{\tw} := (A, \ppcon_{\prec})$.

      \end{enumerate}
\end{definition}
    The following is a straightforward consequence of  Proposition \ref{prop: projection negative and dual negative}.
\begin{corollary}
	 Let $\mathbb{S} = (A, \prec)$ %and $\mathbb{C} = (A, \pcon)$
  be a proto-subordination algebra as in Proposition \ref{prop: projection negative and dual negative}.  %and a B-based precontact algebra, respectively.
  For any $\mathrm{(X)\ \in \{(\top), (\bot),}$ $\mathrm{ (SB), (SF), (SI), (WO), (DD), (UD), (AND), (OR)}\}$,
  \begin{enumerate}
      \item $\mathbb{S}\models \mathrm{(X)}\, $ iff $\, \mathbb{S}_\wt \models \mathrm{(X)^{\wt}}$.
       \item $\mathbb{S}\models \mathrm{(X)}\, $ iff $\, \mathbb{S}_\tw\models \mathrm{(X)}^\tw$.

  \end{enumerate}
\end{corollary}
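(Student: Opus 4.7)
The proof is a direct case-by-case verification of the ten equivalences in each of the two parts, leveraging the two characterizations provided by Proposition \ref{prop: projection negative and dual negative}. My plan is to fix the equivalences $a \npcon_\prec b \Leftrightarrow a \prec \neg b$ (for Part 1) and $a \nppcon_\prec b \Leftrightarrow \neg a \prec b$ (for Part 2), and then for each property $(X)$ in the list, translate $(X)^\wt$ (resp.~$(X)^\tw$) into an assertion about $\prec$ and $\neg$ alone, and show it is equivalent to $(X)$ via standard algebraic manipulations.

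The key algebraic facts about $\neg$ that I will use in $A \in \mathsf{Alg}(\mathcal{L})$ are: $\neg \top = \bot$ and $\neg \bot = \top$; antitonicity of $\neg$ (so $a \leq b$ implies $\neg b \leq \neg a$); the De Morgan identities $\neg(a \vee b) = \neg a \wedge \neg b$ and $\neg(a \wedge b) = \neg a \vee \neg b$; and involutivity $\neg\neg a = a$. These follow from the stated negation properties, Lemma \ref{lemma:antitonicity of neg}, and routine consequences of $\neg_A$ and $\neg_S$ in conjunction with $\wedge_P$ and $\bot_P$ (possibly supplemented by the natural dual axioms on $\mathcal{L}$).

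To illustrate, for Part 1: $(\top)^\wt$, which reads $\top \npcon_\prec \bot$, translates to $\top \prec \neg \bot = \top$, i.e.~$(\top)$. For $(SI)^\wt$, the implication ``$b \npcon_\prec c$ and $a \leq b$ imply $a \npcon_\prec c$'' translates to ``$b \prec \neg c$ and $a \leq b$ imply $a \prec \neg c$''; setting $x := \neg c$, this is exactly $(SI)$. For $(AND)^\wt$, De Morgan rewrites the implication as ``$a \prec \neg b$ and $a \prec \neg c$ imply $a \prec \neg b \wedge \neg c$'', which is $(AND)$ after substituting $x := \neg b$, $y := \neg c$. The remaining cases $(\bot), (SF), (WO), (OR), (DD), (UD)$ proceed analogously; for a quantifier-sensitive case like $(SB)^\wt$ (namely $\forall a \exists b(b \prec \neg a)$), the equivalence with $(SB)$ is obtained by replacing $a$ by $\neg a$ in one direction and using involutivity in the other. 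Part 2 is handled symmetrically using $a \nppcon_\prec b \Leftrightarrow \neg a \prec b$, where the antitone substitution falls on the first argument of $\prec$ rather than the second.

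The main obstacle is the reverse direction in the cases involving existential quantification, notably $(SB), (SF), (DD), (UD)$: to recover $(X)$ from $(X)^\wt$ (or $(X)^\tw$), I need that every element of $A$ lies in the image of $\neg$, which is what involutivity provides. Apart from this conceptual point, the proof is a bookkeeping exercise ensuring that the right substitutions and direction-flips are applied to match the forms of $(X)^\wt$ and $(X)^\tw$ with the intended forms of $(X)$.
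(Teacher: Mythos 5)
Your approach is exactly the one the paper intends: the paper gives no proof beyond the remark that the corollary ``is a straightforward consequence'' of Proposition~\ref{prop: projection negative and dual negative}, and what you do --- fix the equivalences $a\npcon_\prec b \Leftrightarrow a\prec\neg b$ and $a\nppcon_\prec b\Leftrightarrow \neg a\prec b$ and translate each condition by substituting $x:=\neg c$ --- is the obvious fleshing-out of that remark. Your case analysis is correct, and you have put your finger on the right crux, namely that the directions from $\mathrm{(X)}^\wt$ (resp.~$\mathrm{(X)}^\tw$) back to $\mathrm{(X)}$ need every element of $A$ to be a negation.

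The one point you should not leave as ``possibly supplemented by the natural dual axioms'' is precisely that crux. The hypotheses actually carried over from Proposition~\ref{prop: projection negative and dual negative} are $\wedge_P$, $\bot_P$, $\neg_A$ and $\neg_S$, and from these one only gets $a\leq\neg\neg a$ (apply $\neg_S$ to $F(a,\neg a)=A$), not involutivity: intuitionistic logic satisfies all of them and its algebras are not involutive. Without involutivity the right-to-left implications genuinely fail, not just your proof of them: on the three-element Heyting algebra $0<e<1$ (where the only negations are $0$ and $1$) take ${\prec}=\{(1,e),(1,1),(e,1),(0,1)\}$; then $\npcon_\prec=\{(b,0)\mid b\in A\}$, so $\mathrm{(SI)}^\wt$ holds vacuously on the relevant instances while $\mathrm{(SI)}$ fails at $e\leq 1\prec e$. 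So either you must add $\neg_{Il}$ (hence $\neg_I$) and the corresponding De Morgan law to the standing assumptions and say so explicitly, or the equivalences must be weakened to one implication. This is arguably an imprecision inherited from the paper's statement rather than a defect of your argument, but your proof as written silently depends on it, so it should be surfaced rather than hedged.
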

The corollary above can be further refined to various more general classes of proto-subordination algebras, so as to extend  \cite[Propositions 4.5 and 4.8]{dedomenico2024obligations} to an algebraic setting.

%%%%%%%%%%%
%%%%%%%%%%%

%%%%%%%%%%%%%%%
%%%%%%%%%%%%%%%
An $\mathsf{Alg}(\mathcal{L})$ proto-subordination algebra  $\mathbb{S} = (A, \prec)$ is {\em internally incoherent} if  $a\prec b$ and $ a\prec c$ for some $a, b, c\in A$  such that  $F(a) \neq A$ and $F(b,c) = A$;  $\mathbb{S}$ is {\em internally coherent} if it is not internally incoherent.

\section{Slanted algebras arising from defined relational algebras}
\label{sec: proto and slanted}

\begin{definition}
\noindent Let $\mathbb{S} = (A, \prec)$, $\mathbb{C} = (A, \pcon)$, and $\mathbb{D} = (A, \ppcon)$   be a proto-subordination algebra, a proto-precontact algebra, and a dual proto-precontact algebra, respectively.
\begin{enumerate}
\item if $\mathbb{S}\models \mathrm{(DD)+(UD)+(SB)+(SF)}$, then the slanted algebra associated with  $\mathbb{S}$ is  $\mathbb{S}^* = (A, \Diamond, \blacksquare)$ s.t.~$\Diamond a \coloneqq \bigwedge {\prec}[a]$ and $\blacksquare a \coloneqq \bigvee {\prec^{-1}}[a]$ for any $a$.
\item if $\mathbb{C}\models \mathrm{(DD)^\wt+(UD)^\wt+(SB)^\wt+(SF)^\wt}$, then the slanted algebra associated with  $\mathbb{C}$ is  $\mathbb{C}^* = (A, \wt, \bt)$ s.t.~$\wt a \coloneqq \bigvee (\pcon[a])^c$ and $\bt a \coloneqq \bigvee ({\pcon^{-1}}[a])^c$ for any $a$.
\item
$\mathbb{D}\models \mathrm{(DD)^\tw+(UD)^\tw+(SB)^\tw+(SF)^\tw}$, then the slanted algebra associated with  $\mathbb{D}$ is  $\mathbb{D}^* = (A, \tw, \tb)$ s.t.~$\tw a \coloneqq \bigwedge (\ppcon[a])^c$ and $\tb a \coloneqq \bigwedge ({\ppcon^{-1}}[a])^c$ for any $a$.
\end{enumerate}
\end{definition}
The assumption $\mathbb{S}\models \mathrm{(DD)}$ implies that ${\prec}[a]$ is down-directed for every $a\in A$, and $\mathbb{S}\models \mathrm{(SF)}$  implies that ${\prec}[a]\neq \varnothing$. Hence, $\Diamond a\in K(A^\delta)$. Likewise, $\mathbb{S}\models \mathrm{(UD)+(SB)}$ guarantees that $\blacksquare a\in O(A^\delta)$, and $\mathbb{C}\models \mathrm{(DD)^\wt+(SF)^\wt}$ (resp.~$\mathbb{D}\models \mathrm{(DD)^\tw+(SF)^\tw}$) guarantees that $\wt a\in O(A^\delta)$ (resp.~$\tw a\in K(A^\delta)$) and $\mathbb{C}\models \mathrm{(UD)^\wt+(SB)^\wt}$ (resp.~$\mathbb{D}\models \mathrm{(UD)^\tw+(SB)^\tw}$) guarantees that $\bt a\in O(A^\delta)$ (resp.~$\tb a\in K(A^\delta)$) for all $a\in A$.

\begin{lemma} \label{lem: diamond-output equivalence}
For any proto-subordination algebra $\mathbb{S} = (A, \prec)$, any proto-precontact algebra $\mathbb{C} = (A, \pcon)$, any dual proto-precontact algebra $\mathbb{D} = (A, \ppcon)$ and all $a, b\in A$,
\begin{enumerate}
  \item if $ \mathbb{S}$ is defined (i.e.~$ \mathbb{S}\models \mathrm{(DD) + (SF)+(UD) + (SB)}$), then
    \begin{enumerate}[label=(\roman*)]
    \item $a \prec b$ implies $\Diamond a \leq b$ and $a \leq \blacksquare b$.
    \item if $ \mathbb{S}\models \mathrm{(WO)}$, then $\Diamond a\leq b$ iff $a\prec b$.
    \item if $\mathbb{S}\models \mathrm{ (SI) }$, then $a\leq \blacksquare b$ iff $a\prec b$.
    \end{enumerate}
  \item  if $ \mathbb{C}$ is defined (i.e.~$ \mathbb{C}\models \mathrm{(DD)^\wt + (SF)^\wt+(UD)^\wt + (SB)^\wt}$), then
    \begin{enumerate}[label=(\roman*)]
    \item $a \cancel\pcon b $ implies    $ b \leq \wt a $ and $a \leq \bt b$.
    \item if $ \mathbb{C}\models \mathrm{(WO)^\wt}$, then $b \leq \wt a $ iff $a\cancel\pcon b$.
    \item if $\mathbb{C}\models \mathrm{ (SI)^\wt}$, then $a\leq \bt b$ iff $a\cancel\pcon b$.
\end{enumerate}
\item  if $ \mathbb{D}$ is defined (i.e.~$ \mathbb{D}\models \mathrm{(DD)^\tw + (SF)^\tw+(UD)^\tw + (SB)^\tw}$), then
    \begin{enumerate}[label=(\roman*)]
    \item $a \nppcon b $ implies    $ \tw a \leq b $ and $\tb b \leq a$.
    \item if $ \mathbb{D}\models \mathrm{(WO)^\tw}$, then $\tw a \leq b $ iff $a\nppcon b$.
    \item if $\mathbb{D}\models \mathrm{ (SI)^\tw}$, then $\tb b\leq a$ iff $a\nppcon b$.
\end{enumerate}
\end{enumerate}
\end{lemma}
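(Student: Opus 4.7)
The plan is to prove each of the three items by the same two-step strategy: first observe that the forward (``implies'') direction of each equivalence is immediate from the definitions of $\Diamond, \blacksquare, \wt, \bt, \tw, \tb$ and the fact that the element witnessing $a\prec b$, $a\cancel\pcon b$, or $a\cancel\ppcon b$ sits in the set whose meet or join is being taken; then derive the converse direction by applying compactness of $A$ in $A^\delta$ together with the appropriate monotonicity rule (WO), (SI), (WO)$^\wt$, (SI)$^\wt$, (WO)$^\tw$, or (SI)$^\tw$. The definedness hypotheses in each item exist precisely to ensure that the relevant index sets are nonempty and (down- or up-)directed, so that $\Diamond a, \tw a, \tb a\in K(A^\delta)$ and $\blacksquare a, \wt a, \bt a\in O(A^\delta)$, which is what makes compactness applicable.

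For item 1, sub-item (i) is immediate: if $a\prec b$ then $b\in{\prec}[a]$, so $\Diamond a=\bigwedge{\prec}[a]\leq b$, and symmetrically $a\in{\prec^{-1}}[b]$ gives $a\leq\blacksquare b$. For (ii), assume $\Diamond a\leq b$, i.e.\ $\bigwedge{\prec}[a]\leq b$, where ${\prec}[a]\subseteq A$ is nonempty and down-directed by the definedness hypotheses. Since $b\in A$ can be regarded as an open element (it equals the join of the up-directed singleton $\{b\}$), compactness yields some $c\in{\prec}[a]$ with $c\leq b$, and (WO) then promotes $a\prec c\leq b$ to $a\prec b$. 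Sub-item (iii) is dual: $a\leq\blacksquare b=\bigvee{\prec^{-1}}[b]$ yields by compactness some $c\in{\prec^{-1}}[b]$ with $a\leq c\prec b$, and (SI) gives $a\prec b$.

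For items 2 and 3, I proceed the same way but with the roles of meets and joins (and of (DD)/(UD) versus their $\wt$- and $\tw$-variants) adjusted so that each modal operator lives in the correct layer of $A^\delta$. In item 2, $\wt a=\bigvee(\pcon[a])^c$ is open because (DD)$^\wt$ and (SF)$^\wt$ render $(\pcon[a])^c$ nonempty and up-directed; then (i) follows from $b\in(\pcon[a])^c$ whenever $a\cancel\pcon b$, while (ii) follows from compactness applied to $b\leq\bigvee(\pcon[a])^c$ (producing $c$ with $a\cancel\pcon c$ and $b\leq c$) and from (WO)$^\wt$ in the contrapositive form $a\cancel\pcon c\ \&\ b\leq c\Rightarrow a\cancel\pcon b$; sub-item (iii) is entirely symmetric, using $\bt$, (UD)$^\wt$+(SB)$^\wt$, and (SI)$^\wt$. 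In item 3, the operators $\tw$ and $\tb$ are closed rather than open, so one argues dually: $\tw a=\bigwedge(\ppcon[a])^c\in K(A^\delta)$ by (DD)$^\tw$+(SF)$^\tw$; (i) holds because $a\nppcon b$ puts $b$ into the set being met; and for (ii), $\tw a\leq b$ combined with compactness produces $c\in(\ppcon[a])^c$ with $c\leq b$, whence (WO)$^\tw$ gives $a\nppcon b$. Sub-item (iii) is again symmetric, using $\tb$ and (SI)$^\tw$.

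The only step that requires genuine care is the application of compactness in the converse directions, which presupposes that the index sets are nonempty and directed in the correct sense; this is exactly what the definedness hypotheses encode, and the contrapositive reformulation of the closure rules in terms of $\npcon$ and $\nppcon$ (already adopted in Section~\ref{sec: algebraic models}) is what makes the closing step fit cleanly. Everything else is bookkeeping about closed/open elements and the directions of the lattice inequalities, routine given Proposition~\ref{prop:background can-ext} and Proposition~\ref{prop: compactness and existential ackermann}.
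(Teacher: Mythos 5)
Your proposal is correct and follows essentially the same route as the paper's proof: the forward implications come directly from the witnessing element lying in the set whose meet or join defines the operator, and the converses come from compactness (justified by the definedness hypotheses making the index sets nonempty and suitably directed) followed by the appropriate closure rule (WO), (SI), or their $\wt$/$\tw$ variants. The only cosmetic difference is that you make explicit the step of viewing $b\in A$ as an open (resp.\ closed) element before invoking compactness, which the paper leaves implicit.
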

\begin{proof} Below, we group together the proofs of sub-items (i) of each item, and likewise for  sub-items (ii) and (iii).

    (i) $a\prec b$ iff $b\in {\prec}[a]$ iff $a \in {\prec}^{-1}[b]$, hence $a\prec b$ implies $b\geq \bigwedge {\prec}[a] = \Diamond a$ and $a \leq \bigvee{\prec}^{-1}[b] = \blacksquare b$.

    Likewise, $a\npcon b$ iff $b\notin {\pcon}[a]$ (i.e.~$b\in ({\pcon}[a])^c$) iff $a \notin {\pcon}^{-1}[b]$ (i.e.~$a \in ({\pcon}^{-1}[b])^c$). Hence, $a\npcon b$ implies $b\leq \bigvee ({\pcon}[a])^c = \wt a$ and $a \leq \bigvee({\pcon}^{-1}[b])^c= \bt b$. Finally,  $a\nppcon b$ iff $b\notin {\ppcon}[a]$ (i.e.~$b\in ({\ppcon}[a])^c$) iff $a \notin {\ppcon}^{-1}[b]$ (i.e.~$a \in ({\ppcon}^{-1}[b])^c$). Hence, $a\nppcon b$ implies $b\geq \bigwedge ({\ppcon}[a])^c = \tw a$ and $a \geq \bigwedge({\ppcon}^{-1}[b])^c= \tb b$.

     (ii) By (i), to complete the proof, we need to show the `only if' direction. The assumption $ \mathbb{S}\models \mathrm{(DD)+(SF)}$ implies that  ${\prec}[a]$ is non-empty and down-directed  for any $a\in A$. Hence,  by compactness,  $\bigwedge {\prec}[a] = \Diamond a\leq b$ implies  that $c \leq b$ for some $c \in {\prec}[a]$, i.e.~$a \prec c \leq b$ for some $ c \in A$, and by (WO), this implies that $a\prec b$, as required.

Likewise, the assumption $ \mathbb{C}\models \mathrm{(DD)^\wt+(SF)^\wt}$ implies that  $({\pcon}[a])^c$ is non-empty and up-directed   for any $a\in A$. Hence,  by compactness,  $b\leq \wt a =\bigvee ({\pcon}[a])^c$ implies  that $b \leq c$ for some $c \in ({\pcon}[a])^c$ (i.e.~$a\npcon c$), i.e.~$a \cancel{\pcon} c$ and $b\leq c$ for some $ c \in A$, i.e.~$a \npcon c$ and $b\geq c$ for some $ c \in A$. By %(the contrapositive version of)
$\mathrm{(WO)}^\wt$, this implies that $a\npcon b$, as required.

Finally, $ \mathbb{D}\models \mathrm{(DD)^\tw+(SF)^\tw}$ implies that  $({\ppcon}[a])^c$ is non-empty and down-directed for any $a\in A$. Hence,  by compactness, $\bigwedge ({\ppcon}[a])^c  = \tw a \leq b $ implies  that $c \leq b$ for some $c \in ({\ppcon}[a])^c$ (i.e.~$a\nppcon c$), i.e.~$a \nppcon c$ and $c\leq b$ for some $ c \in A$. By %(the contrapositive version of)
$\mathrm{(WO)}^\tw$, this implies that $a \nppcon b $, as required.

    (iii) is proven similarly to (ii), by observing that $\mathbb{S}\models \mathrm{(UD)+(SB)}$ implies that ${\prec}^{-1}[a]$ is non-empty and up-directed for every $a\in A$, that $\mathbb{C}\models \mathrm{(UD)^\wt+(SB)^\wt}$  implies that $(\pcon ^{-1}[a])^c$ is nonempty and up-directed for every $a\in A$, and that $\mathbb{D}\models \mathrm{(UD)^\tw+(SB)^\tw}$ implies that $({\ppcon}^{-1}[a])^c$ is non-empty and  down-directed for every $a\in A$.
\end{proof}

An immediate consequence of the lemma above and Corollary \ref{cor:tense implies normal} is the following
\begin{corollary}
\label{cor: defined+si+wo imply tense hence normal}
    For any defined proto-subordination algebra $\mathbb{S} = (A, \prec)$, any defined proto-precontact algebra $\mathbb{C} = (A, \pcon)$, and any defined dual proto-precontact algebra $\mathbb{D} = (A, \ppcon)$, if $A$ is a bounded lattice, then
    \begin{enumerate}
        \item if $\mathbb{S}\models \mathrm{(SI)+(WO)}$, then $\mathbb{S}^\ast$ is tense, hence is normal.
        \item if $\mathbb{C}\models \mathrm{(SI)^\wt+(WO)^\wt}$, then $\mathbb{C}^\ast$ is tense, hence is normal.
        \item if $\mathbb{D}\models \mathrm{(SI)^\tw+(WO)^\tw}$, then $\mathbb{D}^\ast$ is tense, hence is normal.
    \end{enumerate}
\end{corollary}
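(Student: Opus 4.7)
The plan is to prove each of the three items by a direct application of Lemma \ref{lem: diamond-output equivalence}, followed by Corollary \ref{cor:tense implies normal} to pass from tenseness to normality. Since the three parts are structurally parallel, I outline the strategy in full for item 1 and indicate the adaptations for items 2 and 3.

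For item 1, I assume $\mathbb{S}$ is defined and satisfies $\mathrm{(SI)+(WO)}$. I need to verify the tense condition of Definition \ref{def:slanted}, namely $\Diamond a \leq b$ iff $a \leq \blacksquare b$ for all $a, b \in A$. Since $\mathbb{S}\models \mathrm{(WO)}$, Lemma \ref{lem: diamond-output equivalence}(1.ii) gives $\Diamond a \leq b \Leftrightarrow a \prec b$; since $\mathbb{S}\models \mathrm{(SI)}$, Lemma \ref{lem: diamond-output equivalence}(1.iii) gives $a \leq \blacksquare b \Leftrightarrow a \prec b$. Chaining these equivalences produces the required biconditional. Hence $\mathbb{S}^{\ast}$ is tense, and since $A$ is a bounded lattice, Corollary \ref{cor:tense implies normal} immediately yields that $\mathbb{S}^{\ast}$ is normal.

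For items 2 and 3, the approach is identical, modulo the reshaping of the tense condition relative to the signature. For item 2, I need $a \leq \wt b$ iff $b \leq \bt a$; using Lemma \ref{lem: diamond-output equivalence}(2.ii) and (2.iii) (with the roles of the two arguments appropriately instantiated), both sides reduce to $b \cancel{\pcon} a$, giving tenseness of $\mathbb{C}^{\ast}$. For item 3, I need $\tw a \leq b$ iff $\tb b \leq a$; Lemma \ref{lem: diamond-output equivalence}(3.ii) and (3.iii) reduce both sides to $a \nppcon b$, giving tenseness of $\mathbb{D}^{\ast}$. In both cases, Corollary \ref{cor:tense implies normal} then delivers normality.

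There is essentially no hard step here: the statement is a routine consolidation of Lemma \ref{lem: diamond-output equivalence} and Corollary \ref{cor:tense implies normal}. The only point requiring minor attention is matching the directionality of the tense axioms in the three signatures (covariant-covariant in the $\{\Diamond,\blacksquare\}$ case, contravariant-contravariant in the $\{\wt,\bt\}$ and $\{\tw,\tb\}$ cases) with the way Lemma \ref{lem: diamond-output equivalence} formulates the equivalences between the modal operators and the underlying relation; but this is a purely notational bookkeeping check.
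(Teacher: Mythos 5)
Your proof is correct and follows exactly the route the paper intends: the paper states this corollary as ``an immediate consequence'' of Lemma \ref{lem: diamond-output equivalence} and Corollary \ref{cor:tense implies normal}, and your chaining of parts (ii) and (iii) of that lemma to obtain the tense biconditional in each signature, followed by the passage to normality, is precisely that argument spelled out. The directionality bookkeeping for items 2 and 3 is also handled correctly.
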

\begin{lemma} \label{lem: rleation to axioms}
For any defined proto-subordination algebra $\mathbb{S} = (A, \prec)$, any defined proto-precontact algebra $\mathbb{C} = (A, \pcon)$, and any defined dual proto-precontact algebra $\mathbb{D} = (A, \ppcon)$,
\begin{enumerate}
\item If $\mathbb{S}$ is bounded and   $\mathbb{S}\models\mathrm{(\bot)}$, then $\mathbb{S}^\ast\models \Diamond\bot \leq \bot$.
\item If $\mathbb{S}$ is bounded and  $\mathbb{S}\models\mathrm{(\top)}$, then  $\mathbb{S}^\ast\models\top\leq \blacksquare\top$.
\item If $\mathbb{C}$ is bounded and  $\mathbb{C}\models\mathrm{(\bot)^\wt}$, then $\mathbb{C}^\ast\models \top \leq \wt \bot$.
\item If $\mathbb{C}$ is bounded and    $\mathbb{C}\models\mathrm{(\top)^\wt}$, then $\mathbb{C}^\ast\models  \top  \leq \bt\bot$.
\item If $\mathbb{D}$ is bounded and  $\mathbb{D}\models\mathrm{(\bot)^\tw}$, then $\mathbb{D}^\ast\models \tw \top \leq  \bot$.
\item If $\mathbb{D}$ is bounded and    $\mathbb{D}\models\mathrm{(\top)^\tw}$, then $\mathbb{D}^\ast\models  \tb \top  \leq \bot$.
\item If  $\mathbb{S}\models\mathrm{(SI)}$, then
% \begin{enumerate}[label=(\roman*)]
    %\item
    $\Diamond$  on $\mathbb{S}^*$ is monotone;
%    \item if   $\mathbb{S}$ is $\wedge$-SL based and $\mathbb{S}\models\mathrm{(AND)}$, then $\mathbb{S}^\ast\models\blacksquare a \wedge \blacksquare b\leq \blacksquare (a \wedge b)$.

 % \end{enumerate}
  \item if $\mathbb{S}\models\mathrm{(WO)}$, then
 %\begin{enumerate}[label=(\roman*)]
% \item
 $\blacksquare$ on $\mathbb{S}^*$ is monotone.
 %  \item if $\mathbb{S}$ is $\vee$-SL based and $\mathbb{S}\models\mathrm{ (OR)}$, then $\mathbb{S}^\ast\models\Diamond (a\vee b)\leq \Diamond a \vee \Diamond b$.
%\end{enumerate}

\item If  $\mathbb{C}\models\mathrm{ (SI)^\wt}$, then
    %\begin{enumerate}[label=(\roman*)]
   % \item
    $\wt$  on $\mathbb{C}^*$ is antitone.
    %\item if $\mathbb{C}$ is DL-based, then   $\mathbb{C}\models\mathrm{ (AND)^\wt}$ implies $\mathbb{C}^\ast\models\bt a \vee \bt b\leq \bt (a \wedge b)$;
   % \item if   %$\mathbb{S}\models\mathrm{ (SI)}$ and %
  %  $\mathbb{C}$ is $\vee$-SL based and $\mathbb{C}\models\mathrm{ (AND)^\wt}$ then $\mathbb{C}^\ast\models\bt a \wedge \bt b\leq \bt (a \vee b)$.
   % \end{enumerate}
\item If  $\mathbb{C}\models\mathrm{ (WO)^\wt}$, then
  %  \begin{enumerate}[label=(\roman*)]
    %\item
    $\bt$  on $\mathbb{C}^*$ is antitone.
   % \item if $\mathbb{C}$ is DL-based, then   $\mathbb{C}\models\mathrm{(OR)^\wt}$ implies $\mathbb{C}^\ast\models\wt a \wedge \wt b\leq \wt (a \vee b)$;
  %  \item if $\mathbb{C}$  is $\vee$-SL based  and %$\mathbb{S}\models\mathrm{ (SI)}$ and %
    %$\mathbb{C}\models\mathrm{ (OR)^\wt}$ then $\mathbb{C}^\ast\models\wt a \wedge \wt b\leq \wt (a \vee b)$.
   % \end{enumerate}
\item If  $\mathbb{D}\models\mathrm{ (SI)^\tw}$, then    $\tw$  on $\mathbb{D}^*$ is antitone.
\item If  $\mathbb{D}\models\mathrm{ (WO)^\tw}$, then
    $\tb$  on $\mathbb{D}^*$ is antitone.

\end{enumerate}
\end{lemma}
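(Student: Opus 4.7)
My plan is to settle all twelve items by direct unfolding of the defining formulas for the slanted operators
$\Diamond a = \bigwedge {\prec}[a]$, $\blacksquare a = \bigvee {\prec^{-1}}[a]$, $\wt a = \bigvee (\pcon[a])^c$, $\bt a = \bigvee (\pcon^{-1}[a])^c$, $\tw a = \bigwedge (\ppcon[a])^c$, and $\tb a = \bigwedge (\ppcon^{-1}[a])^c$,
together with the trivial observation that, in $A^\delta$, enlarging a set moves its meet down and its join up. The work splits cleanly into the six boundary-axiom items (1--6) and the six (anti)monotonicity items (7--12). The defined-ness hypotheses are used only to guarantee that the six operators are well-defined as maps into $K(A^\delta)$ or $O(A^\delta)$; they play no active role in the inequalities themselves.

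For items 1--6, the strategy is to observe that each boundary condition places a distinguished element into the set whose meet or join defines the operator at $\bot$ or $\top$. For item 1, the assumption $\bot \prec \bot$ means $\bot \in {\prec}[\bot]$, whence $\Diamond \bot = \bigwedge {\prec}[\bot] \leq \bot$; dually, for item 2, $\top \prec \top$ gives $\top \in {\prec^{-1}}[\top]$, hence $\top \leq \blacksquare \top$. Items 3--6 repeat this pattern, with the added bookkeeping that the boundary assertions are stated contrapositively via $\npcon$ and $\nppcon$, so the distinguished element lands in the \emph{complement} of $\pcon[\cdot]$, $\pcon^{-1}[\cdot]$, $\ppcon[\cdot]$ or $\ppcon^{-1}[\cdot]$, which is precisely the set appearing in the definition of $\wt, \bt, \tw, \tb$.

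For items 7--12, the strategy is to show that, whenever $a \leq b$, the relevant closure rule yields a set-theoretic containment between the defining sets, which then passes through the meet or join to the claimed (anti)monotonicity. For item 7, if $a \leq b$ then (SI) applied to $b \prec c$ gives $a \prec c$, so ${\prec}[b] \subseteq {\prec}[a]$ and hence $\Diamond a \leq \Diamond b$; dually, item 8 uses (WO) to derive ${\prec^{-1}}[a] \subseteq {\prec^{-1}}[b]$, whence $\blacksquare a \leq \blacksquare b$. Items 9--12 follow the same template: each of $\mathrm{(SI)}^\wt, \mathrm{(WO)}^\wt, \mathrm{(SI)}^\tw, \mathrm{(WO)}^\tw$ is already phrased as the claim that the relevant complement set $(\pcon[\cdot])^c$, $(\pcon^{-1}[\cdot])^c$, $(\ppcon[\cdot])^c$ or $(\ppcon^{-1}[\cdot])^c$ is monotone in its argument in the appropriate direction, and this containment converts through the defining join (for $\wt, \bt$) or meet (for $\tw, \tb$) into antitonicity of the corresponding operator.

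I do not foresee any real obstacle: the whole argument is a mechanical unwinding, with no need to invoke Lemma \ref{lem: diamond-output equivalence} or any compactness argument. The mildest care required is to track orientations correctly, in particular to convert the $\npcon$- and $\nppcon$-phrased closure rules into the complement-set containments governing $\wt, \bt, \tw, \tb$, and to remember that passing to complements and then to $\bigvee$ (resp.\ $\bigwedge$) reverses the direction of the underlying inequality, which is precisely the reason these operators come out \emph{antitone} rather than monotone.
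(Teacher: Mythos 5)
Your proposal is correct and follows essentially the same route as the paper's proof: items 1--6 by observing that the boundary axiom places the relevant element in the set whose meet (resp.\ join) defines the operator at $\bot$ or $\top$, and items 7--12 by deriving from the closure rule a containment between the defining sets (e.g.\ ${\prec}[b]\subseteq{\prec}[a]$ from (SI), or $({\pcon}[b])^c\subseteq({\pcon}[a])^c$ from $\mathrm{(SI)}^\wt$) and passing it through $\bigwedge$ or $\bigvee$. Your observation that neither Lemma \ref{lem: diamond-output equivalence} nor compactness is needed also matches the paper, which proves this lemma by the same direct unwinding.
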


\begin{proof} 1. By assumption,  $\bot\in {\prec}[\bot]$; hence, $\Diamond\bot = \bigwedge {\prec}[\bot]\leq \bot$, as required. The proofs of 2-6 are similar.
	7. Let $a, b\in A$ s.t.~$a \leq b$.  To show that $\Diamond a = \bigwedge{\prec}[a]\leq \bigwedge{\prec}[b] = \Diamond b$, it is enough to show that ${\prec}[b]\subseteq {\prec}[a]$, i.e.~that if $x\in A$ and $b\prec x$, then $a\prec x$. Indeed, by (SI), $a\leq b\prec x$ implies $a\prec x$, as required. Item  8 is shown similarly. Likewise, as to 9, to show that $\wt b = \bigvee ({\pcon}[b])^c\leq \bigvee ({\pcon}[a])^c = \wt a$ whenever $a\leq b$, it is enough to show that $ ({\pcon}[b])^c\subseteq  ({\pcon}[a])^c$, i.e.~that $ {\pcon}[a]\subseteq  {\pcon}[b]$, i.e.~that if $c\in A$ and $a\pcon c$, then $b\pcon c$, which is exactly what is guaranteed by (SI)$^\wt$ under $a\leq b$. The proofs of 10-12 are similar.
\end{proof}

The following lemma shows that the converse conditions of Lemma \ref{lem: rleation to axioms} hold under additional assumptions. %for $\Diamond$-directed or $\blacksquare$-directed proto-subordination algebras.

\begin{lemma}\label{lem: axioms to relation}
For any  defined proto-subordination algebra $\mathbb{S} =(A, \prec)$, any  defined proto-precontact algebra $\mathbb{C} =(A, \pcon)$, and any  defined dual proto-precontact algebra $\mathbb{D} =(A, \ppcon)$,
\begin{enumerate}
    \item If $\mathbb{S}\models\mathrm{(WO)}$,  then
    \begin{enumerate}[label=(\roman*)]
        \item $\mathbb{S}\models\mathrm{(SI)}\quad$ iff $\quad \Diamond$ on $\mathbb{S}^\ast$ is monotone.
        \item if   $\mathbb{S}$ is $\vee$-SL based, then $\mathbb{S}\models\mathrm{(OR)}\quad $ iff $\quad \mathbb{S}^\ast\models \Diamond (a \vee b) \leq \Diamond a \vee \Diamond b$.
        \item if   $\mathbb{S}$ is bounded, then $\mathbb{S}\models(\bot)\quad $ iff $\quad\mathbb{S}^\ast\models\Diamond \bot \leq \bot$.
    \end{enumerate}
    \item If $\mathbb{S}\models\mathrm{(SI)} $, then
    \begin{enumerate}[label=(\roman*)]
        \item $\mathbb{S}\models\mathrm{(WO)}\quad $ iff $\quad\blacksquare$ on $\mathbb{S}^\ast$ is monotone.
        \item if   $\mathbb{S}$ is $\wedge$-SL based, then $\mathbb{S}\models\mathrm{(AND)}\quad $ iff $\quad\mathbb{S}^\ast\models\blacksquare a \wedge \blacksquare b \leq \blacksquare (a \wedge b)$.
        \item if   $\mathbb{S}$ is bounded, then $\mathbb{S}\models(\top)\quad $ iff $\quad\mathbb{S}^\ast\models\top  \leq \blacksquare\top$.
    \end{enumerate}
     \item If $\mathbb{C}\models\mathrm{(WO)^\wt}$,  then
    \begin{enumerate}[label=(\roman*)]
        \item $\mathbb{C}\models\mathrm{(SI)^\wt}\quad$ iff $\quad \wt $ on $\mathbb{C}^\ast$ is antitone.
        \item if   $\mathbb{C}$ is $\vee$-SL based, then $\mathbb{C}\models\mathrm{(OR)^\wt}\quad $ iff $\quad \mathbb{C}^\ast\models \wt a \wedge \wt b \leq \wt (a \vee  b)$.
        \item if   $\mathbb{C}$ is bounded, then $\mathbb{C}\models(\bot)^\wt\quad $ iff $\quad\mathbb{C}^\ast\models \top \leq \wt \bot$.
    \end{enumerate}
    \item If $\mathbb{C}\models\mathrm{(SI)^\wt}$, then
    \begin{enumerate}[label=(\roman*)]
        \item $\mathbb{C}\models\mathrm{(WO)^\wt}\quad $ iff $\quad\bt$ on $\mathbb{C}^\ast$ is antitone.
        \item if   $\mathbb{C}$ is $\vee$-SL based, then $\mathbb{C}\models\mathrm{(AND)^\wt}\; $ iff $\;
        \mathbb{C}^\ast\models\bt a \wedge \bt b \leq \bt (a \vee b)$.
        \item if   $\mathbb{C}$ is bounded, then $\mathbb{C}\models(\top)^\wt\quad $ iff $\quad\mathbb{C}^\ast\models\top \leq \bt \bot $.
    \end{enumerate}
\item If $\mathbb{D}\models\mathrm{(WO)^\tw}$,  then
    \begin{enumerate}[label=(\roman*)]
        \item $\mathbb{D}\models\mathrm{(SI)^\tw}\quad$ iff $\quad \tw $ on $\mathbb{D}^\ast$ is antitone.
        \item if   $\mathbb{D}$ is $\vee$-SL based, then $\mathbb{D}\models\mathrm{(OR)^\tw}\quad $ iff $\quad \mathbb{D}^\ast\models \tw (a \wedge b) \leq \tw a \vee \tw b$.
        \item if   $\mathbb{D}$ is bounded, then $\mathbb{D}\models(\bot)^\tw\quad $ iff $\quad\mathbb{D}^\ast\models \tw \top \leq  \bot$.
    \end{enumerate}
  \item If $\mathbb{D}\models\mathrm{(SI)^\tw}$, then
    \begin{enumerate}[label=(\roman*)]
        \item $\mathbb{D}\models\mathrm{(WO)^\tw}\quad $ iff $\quad\tb$ on $\mathbb{D}^\ast$ is antitone.
        \item if   $\mathbb{D}$ is $\vee$-SL based, then $\mathbb{D}\models\mathrm{(AND)^\tw}\; $ iff $\;
        \mathbb{D}^\ast\models\tb (a \wedge  b )\leq \tb a \vee \tb b$.
        \item if   $\mathbb{D}$ is bounded, then $\mathbb{D}\models(\top)^\tw\quad $ iff $\quad\mathbb{D}^\ast\models\tb\top  \leq \bot$.
    \end{enumerate}
    \end{enumerate}
\end{lemma}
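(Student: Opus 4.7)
The plan is to note first that the forward ($\Rightarrow$) direction of every equivalence in items 1--6 is already provided by Lemma~\ref{lem: rleation to axioms}, so only the six converses remain. For these, the unifying idea is to translate back from the slanted operator to the underlying relation: the standing hypothesis of each item (one of $(\mathrm{WO})$, $(\mathrm{WO})^\wt$, $(\mathrm{WO})^\tw$ in items 1, 3, 5, or $(\mathrm{SI})$, $(\mathrm{SI})^\wt$, $(\mathrm{SI})^\tw$ in items 2, 4, 6) upgrades the one-way implications of Lemma~\ref{lem: diamond-output equivalence} into biconditionals. Consequently, each slanted-operator inequality can be converted into an equivalent relational condition, and every sub-item falls into one of three stereotyped patterns.

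For the (i) sub-items, I would transfer monotonicity (or antitonicity) of the slanted operator directly to a monotonicity property of the underlying relation. Concretely, for 1(i), assuming $\Diamond$ is monotone and $a \leq b \prec x$, Lemma~\ref{lem: diamond-output equivalence}(1)(ii) (with $(\mathrm{WO})$) gives $\Diamond b \leq x$, monotonicity gives $\Diamond a \leq x$, and one more appeal to the same lemma yields $a \prec x$, i.e.~$(\mathrm{SI})$. The sub-items 2(i), 3(i), 4(i), 5(i), 6(i) go identically, each invoking the matching clause of Lemma~\ref{lem: diamond-output equivalence} paired with the standing hypothesis of its item.

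For the (ii) sub-items, I would combine the translation with the semilattice structure. For 1(ii), from $\Diamond(a\vee b) \leq \Diamond a \vee \Diamond b$ and $a \prec x$, $b \prec x$, Lemma~\ref{lem: diamond-output equivalence}(1)(ii) yields $\Diamond a, \Diamond b \leq x$, hence $\Diamond(a\vee b) \leq x$, whence $a \vee b \prec x$ by the same lemma, i.e.~$(\mathrm{OR})$. The remaining (ii) sub-items proceed analogously, with the roles of $\vee, \wedge$ and of $\Diamond, \blacksquare, \wt, \bt, \tw, \tb$ dualised appropriately; e.g.~for 2(ii) one starts from $\blacksquare a \wedge \blacksquare b \leq \blacksquare(a\wedge b)$ and uses clause (1)(iii) of Lemma~\ref{lem: diamond-output equivalence} under $(\mathrm{SI})$.

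For the (iii) sub-items the key tool is compactness (cf.~Proposition~\ref{prop: compactness and existential ackermann}). For 1(iii), from $\Diamond\bot \leq \bot$ and the fact that ${\prec}[\bot]$ is nonempty and down-directed (since $\mathbb{S}$ is defined), compactness applied with $I:=\{\bot\}$ furnishes $c \in {\prec}[\bot]$ with $c \leq \bot$; since $\bot \leq c$ trivially, $c = \bot$, so $\bot \prec \bot$. Items 2(iii)--6(iii) are dispatched by the same recipe, dualising between $K(A^\delta)$ and $O(A^\delta)$ as needed, with the extracted witness squeezed against $\bot$ or $\top$ by boundedness. The only mild obstacle I expect is bookkeeping: tracking, across the three families of relational algebras and their two associated slanted operators, which clause of Lemma~\ref{lem: diamond-output equivalence} applies under which of $(\mathrm{WO})/(\mathrm{SI})$ and their $\wt,\tw$-variants, and correctly dualising between closed and open witnesses in each (iii)-case.
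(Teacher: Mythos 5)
Your treatment of the converse directions is essentially the paper's own argument, and your handling of the (i) and (iii) sub-items is fine. However, your opening claim --- that the forward direction of \emph{every} equivalence is already supplied by Lemma~\ref{lem: rleation to axioms} --- is false for the (ii) sub-items, and this leaves a genuine gap. Lemma~\ref{lem: rleation to axioms} only covers $(\bot)$, $(\top)$, $\mathrm{(SI)}$, $\mathrm{(WO)}$ and their $\wt$- and $\tw$-variants; it says nothing about $\mathrm{(OR)}$ or $\mathrm{(AND)}$. So the implication ``$\mathbb{S}\models\mathrm{(OR)}$ implies $\mathbb{S}^\ast\models \Diamond(a\vee b)\leq \Diamond a\vee\Diamond b$'' (and its five analogues) is nowhere established in your proposal.

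This direction is not a routine translation: $\Diamond(a\vee b)=\bigwedge{\prec}[a\vee b]$ must be compared with $\Diamond a\vee\Diamond b=\bigwedge{\prec}[a]\vee\bigwedge{\prec}[b]$, a \emph{join of meets}, and $\mathrm{(OR)}$ only speaks about individual elements of $A$. The paper's proof first invokes Proposition~\ref{prop:background can-ext}.1(iv) (this is where the $\vee$-SL hypothesis is actually used) to see that $\Diamond a\vee\Diamond b$ is again a closed element of $A^\delta$; then Proposition~\ref{prop:background can-ext}.1(i) reduces the inequality between two closed elements to the claim that every $x\in A$ with $\Diamond a\vee\Diamond b\leq x$ also satisfies $\Diamond(a\vee b)\leq x$, which finally translates via Lemma~\ref{lem: diamond-output equivalence} and $\mathrm{(OR)}$. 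The dual sub-items use the open-element analogue 1(v) and 1(ii) of the same proposition. You need to supply this argument (or an equivalent one) for all six (ii) sub-items; as written, half of each of those equivalences is unproved.
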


\begin{proof}
  1. (i)  By Lemma \ref{lem: rleation to axioms}  7, the proof is complete if we show the `if' direction.  Let $a, b, x\in A$ s.t.~$a\leq b\prec x$. By Lemma \ref{lem: diamond-output equivalence} 1(ii), to show that $a\prec x$,  it is enough to show that $\Diamond a\leq x$. Since $\Diamond$ is monotone, $a\leq b$ implies $\Diamond a\leq \Diamond b$, and, by Lemma \ref{lem: diamond-output equivalence} 1(i), $b\prec x$ implies that $\Diamond b\leq x$. Hence, $\Diamond a\leq x$, as required.

  (ii) From left to right, let $a, b\in A$. Since both $\Diamond(a\vee b)$ and $\Diamond a\vee \Diamond b$ are closed elements of $A^\delta$ (cf.~Proposition \ref{prop:background can-ext} 1(iv)), % they belong to the meet-closure of $A$; hence,
 by Proposition \ref{prop:background can-ext} 1(i), to show that $\Diamond (a\vee b)\leq \Diamond a \vee \Diamond b$, it is enough to show that for any $x\in A$, if $\Diamond a \vee \Diamond b\leq x$, then $\Diamond (a\vee b)\leq x$.

\begin{center}
\begin{tabular}{rcll}
$\Diamond a \vee \Diamond b \leq x$ &
iff & $\Diamond a \leq x$ and $\Diamond b \leq x$  \\
&iff & $a \prec x$ and $b \prec x$ & Lemma \ref{lem: diamond-output equivalence}.1(ii) (WO) \\
& implies & $a \vee b \prec x$ &  (OR)\\
& implies & $\Diamond (a \vee b) \leq x$ & Lemma \ref{lem: diamond-output equivalence}.1(i)\\
\end{tabular}
\end{center}

Conversely, let $a, b, x\in A$ s.t.~$a\prec x$ and $b\prec x$. By Lemma \ref{lem: diamond-output equivalence} (ii), to show that $a\vee b\prec x$,  it is enough to show that $\Diamond (a\vee b)\leq x$, and since $\mathbb{S}^\ast\models \Diamond (a \vee b) \leq \Diamond a \vee \Diamond b$, it is enough to show that $\Diamond a\vee\Diamond b\leq x$, i.e.~that $\Diamond a\leq x$ and $ \Diamond b\leq x$. These two inequalities hold by Lemma \ref{lem: diamond-output equivalence} 1(i), and the assumptions on $a, b$ and $x$.

    (iii)  By Lemma \ref{lem: diamond-output equivalence} (ii),  $\bot\prec \bot$ is equivalent to $\Diamond \bot\leq \bot$, as required.
    2. is proven similarly.

    3. (i)  By Lemma \ref{lem: rleation to axioms}  9, the proof is complete if we show the `if' direction. Let $a, b, x\in A$ s.t.~$a\leq b\cancel{\pcon} x$. By Lemma \ref{lem: diamond-output equivalence} 2(ii), to show that $a\cancel{\pcon} x$,  it is enough to show that $x\leq \wt a$. Since $\wt$ is antitone, $a\leq b$ implies $\wt b\leq \wt a$, and, by Lemma \ref{lem: diamond-output equivalence} 2(i), $b\cancel{\pcon} x$ implies that $x\leq \wt b$. Hence, $x\leq \wt a$, as required.

   (ii) From left to right, let $a, b\in A$. Since both $\wt(a\vee b)$ and $\wt a\wedge \wt b$ are open elements of $A^\delta$ (cf.~Proposition \ref{prop:background can-ext} 1(v)), %they belong to the join-closure of $A$; hence,
   by Proposition \ref{prop:background can-ext} 1(ii), to show that $\wt a \wedge \wt b\leq \wt (a\vee b)$, it is enough to show that for any $x\in A$, if $x\leq \wt a \wedge \wt b$, then $x\leq \wt (a\vee b)$.

\begin{center}
\begin{tabular}{rcll}
$x\leq \wt a \wedge \wt b $ &
iff & $x\leq \wt a $ and $x\leq \wt b$  \\
&iff & $x \cancel{\pcon} a$ and $x \cancel{\pcon} b$ & Lemma \ref{lem: diamond-output equivalence}.2(ii) (WO)$^\wt$ \\
& implies & $(a \vee b) \cancel{\pcon} x$ &  (OR)$^\wt$ \\
& implies & $x\leq \wt (a \vee b) $ & Lemma \ref{lem: diamond-output equivalence}.2(i)\\
\end{tabular}
\end{center}

   Conversely, let $a, b, x\in A$ s.t.~$a\cancel{\pcon} x$ and $b\cancel{\pcon} x$. By Lemma \ref{lem: diamond-output equivalence}.2(ii), to show that $(a\vee b)\cancel{\pcon} x$,  it is enough to show that $x\leq \wt (a\vee b)$, and since $\mathbb{S}^\ast\models \wt a \wedge \wt b \leq \wt (a \vee b)$, it is enough to show that $x\leq \wt a \wedge \wt b $, i.e.~that $x\leq \wt a$ and $x\leq  \wt b$. These two inequalities hold by Lemma \ref{lem: diamond-output equivalence}.2(i), and the assumptions on $a, b$ and $x$.

    (iii)  By Lemma \ref{lem: diamond-output equivalence}.2(ii),  $\bot\cancel{\pcon} \top$ is equivalent to $\top \leq \wt \bot$, as required.
    4. is proven similarly.

 5. (i)  By Lemma \ref{lem: rleation to axioms}.11, the proof is complete if we show the `if' direction. For  contraposition, let $a, b, x\in A$ s.t.~$a\leq b\ \&\ a\nppcon x$. By Lemma \ref{lem: diamond-output equivalence}.3(ii), to show that $b\nppcon x$,  it is enough to show that $\tw b \leq x$. Since $\tw$ is antitone, $a\leq b$ implies $\tw b\leq \tw a$, and, by Lemma \ref{lem: diamond-output equivalence}.3(i), $a\nppcon x$ implies that $\tw a\leq x $. Hence, $\tw b\leq x$, as required.

   (ii) From left to right, let $a, b\in A$. Since both $\tw(a\wedge b)$ and $\tw a\vee \tw b$ are closed elements of $A^\delta$ (cf.~Proposition \ref{prop:background can-ext}.1(iv)),   by Proposition \ref{prop:background can-ext}.1(i), to show that $\tw (a \wedge b) \leq \tw a\vee \tw b$, it is enough to show that for any $x\in A$, if  $\tw a\vee \tw b \leq x$, then$ \tw (a \wedge b) \leq x$.

\begin{center}
\begin{tabular}{rcll}
 $\tw a\vee \tw b \leq x$ &
iff &  $\tw a \leq x\ \&\ \tw b\leq x$ & \\
&iff & $a \nppcon x$ and $b\nppcon b$ & Lemma \ref{lem: diamond-output equivalence}.3(ii) (WO)$^\tw$ \\
& implies & $(a \wedge b) \nppcon x$ &  (OR)$^\tw$ \\
& implies & $\tw  (a \wedge b)\leq x $ & Lemma \ref{lem: diamond-output equivalence}.3(i)\\
\end{tabular}
\end{center}

   Conversely, for contraposition, let $a, b, x\in A$ s.t.~$a\nppcon x$ and $b \nppcon x$. By Lemma \ref{lem: diamond-output equivalence}.3(ii), to show that $(a\wedge b)\nppcon x$,  it is enough to show that $\tw (a\wedge b)\leq x$, and since $\mathbb{S}^\ast\models \tw (a \wedge b) \leq \tw a \vee \tw b$, it is enough to show that $\tw a \vee \tw b \leq x $, i.e.~that $ \tw a\leq x$ and $ \tw b\leq x$. These two inequalities hold by Lemma \ref{lem: diamond-output equivalence}.3(i), and the assumptions on $a, b$ and $x$.

    (iii)  By Lemma \ref{lem: diamond-output equivalence}.3(ii),  $ \top\nppcon\bot$ is equivalent to $\tw \top \leq  \bot$, as required.
    6. is proven similarly.    \end{proof}
%\marginnote{AP: we need to say that $\mathbb{S}$ is subordination iff $\mathbb{S}^\ast$ is normal and tense, and when this is the case, $\Diamond^\sigma$ and $\blacksquare^\pi$ are adjoint, and then use this  in proposition 5.2}
\begin{corollary}
\label{cor: charact monotone reg norm}
For any  proto-subordination algebra $\mathbb{S} = (A, \prec)$, any  proto-precontact algebra $\mathbb{C} = (A, \pcon)$, and any  dual proto-precontact algebra $\mathbb{D} = (A, \ppcon)$,
\begin{enumerate}
\item if $\mathbb{S}$ is defined,  then
    \begin{enumerate}[label=(\roman*)]
        \item
 $\mathbb{S}$ is monotone iff $\mathbb{S}^*$ is monotone;
    \item $\mathbb{S}$ is regular iff $\mathbb{S}^*$ is regular;
    \item $\mathbb{S}$ is a subordination algebra iff $\mathbb{S}^*$ is normal.
    \end{enumerate}

\item if $\mathbb{C}$ is defined,  then
    \begin{enumerate}[label=(\roman*)]
        \item $\mathbb{C}$ is antitone iff $\mathbb{C}^*$ is antitone;
    \item $\mathbb{C}$ is regular iff $\mathbb{C}^*$ is regular;
    \item $\mathbb{C}$ is a precontact algebra iff $\mathbb{C}^*$ is normal.
    \end{enumerate}

\item if $\mathbb{D}$ is defined,  then
    \begin{enumerate}[label=(\roman*)]
        \item $\mathbb{D}$ is antitone iff $\mathbb{D}^*$ is antitone;
    \item $\mathbb{D}$ is regular iff $\mathbb{D}^*$ is regular;
    \item $\mathbb{D}$ is a dual precontact algebra iff $\mathbb{D}^*$ is normal.
    \end{enumerate}
\end{enumerate}
\end{corollary}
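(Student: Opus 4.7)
The plan is to derive each of the nine biconditionals in Corollary \ref{cor: charact monotone reg norm} as a direct consequence of Lemmas \ref{lem: rleation to axioms} and \ref{lem: axioms to relation}, by unpacking the classification-table definitions of ``monotone'', ``regular'', and ``(dual) (pre)contact / subordination algebra'' into conjunctions of atomic relational conditions ((SI), (WO), (AND), (OR), ($\top$), ($\bot$), together with their $\wt$ and $\tw$ variants), and then matching each such atomic condition with the corresponding algebraic condition on the associated slanted algebra. The three parts of the corollary run in parallel: I would write out part 1 in full, and observe that parts 2 and 3 follow identically by invoking items 3--4 and 5--6 of the two lemmas, respectively.

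For 1(i), once $\mathbb{S}$ is assumed defined, being monotone unpacks (via the table in Section \ref{ssec:subordination}) as $\mathbb{S}\models(\mathrm{SI})+(\mathrm{WO})$, while $\mathbb{S}^*$ being monotone unpacks as $\Diamond$ and $\blacksquare$ both being monotone maps $A\to A^\delta$. The forward direction is immediate from items 7 and 8 of Lemma \ref{lem: rleation to axioms}, and the reverse uses items 1(i) and 2(i) of Lemma \ref{lem: axioms to relation}. For 1(ii), regular adds (AND) and (OR) on top of monotone, and $\mathbb{S}^*$ being regular adds the two additivity identities $\Diamond(a\vee b)=\Diamond a\vee \Diamond b$ and $\blacksquare(a\wedge b)=\blacksquare a \wedge \blacksquare b$ on top of monotonicity; once (i) is in hand, the remaining atomic equivalences follow from the $\vee$-SL/$\wedge$-SL versions in Lemma \ref{lem: rleation to axioms} (forward) and from Lemma \ref{lem: axioms to relation} items 1(ii) and 2(ii) (reverse), using that the one-sided inequalities targeted there combine with monotonicity of $\Diamond$ and $\blacksquare$ to supply the reverse inequalities, yielding the full additivity identities. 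For 1(iii), one adds $(\bot)$ and $(\top)$ on top of regular, and invokes items 1 and 2 of Lemma \ref{lem: rleation to axioms} in the forward direction, and items 1(iii) and 2(iii) of Lemma \ref{lem: axioms to relation} in the reverse.

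The main obstacle is the bootstrapping required in the reverse direction of (i): Lemma \ref{lem: axioms to relation} 1(i) extracts (SI) from $\Diamond$-monotonicity only under the side hypothesis (WO), and dually 2(i) extracts (WO) only under (SI), so a naive appeal to one item in isolation is circular. Resolving this will require a joint argument exploiting the monotonicity of $\Diamond$ and $\blacksquare$ simultaneously: given $a\leq b\prec x$, Lemma \ref{lem: diamond-output equivalence} 1(i) together with monotonicity yields both $\Diamond a\leq \Diamond b\leq x$ and $a\leq b\leq \blacksquare x$, and the plan is to combine these two approximations with the compactness structure of $A^\delta$ (using (DD)+(SF) to ensure $\Diamond a\in K(A^\delta)$ on the one side, and (UD)+(SB) to ensure $\blacksquare x\in O(A^\delta)$ on the other) so as to conclude $a\prec x$ directly, without presupposing either of (SI), (WO) at the outset. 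The parallel obstacles in parts 2 and 3 dissolve by the same joint argument, using the $\wt$/$\bt$ (resp.~$\tw$/$\tb$) analogues.
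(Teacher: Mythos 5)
Your decomposition of the corollary into the atomic conditions of the classification tables, your treatment of all forward directions via Lemma \ref{lem: rleation to axioms}, and your observation that the additivity identities in the ``regular'' items reduce, in the presence of monotonicity, to the one-sided inequalities of Lemma \ref{lem: axioms to relation}, all match how the paper intends this corollary to be read: it is stated without proof immediately after Lemma \ref{lem: axioms to relation}, as a packaging of that lemma together with Lemma \ref{lem: rleation to axioms}. You have also correctly isolated the one genuinely delicate point, namely that the backward directions of Lemma \ref{lem: axioms to relation} 1(i) and 2(i) each presuppose the other's conclusion as a side hypothesis.

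However, the joint compactness argument you propose to break this circularity does not exist. From $a\leq b\prec x$ and monotonicity of $\Diamond$ you get $\Diamond a\leq \Diamond b\leq x$, and compactness (via $\mathrm{(DD)}+\mathrm{(SF)}$) yields only some $c$ with $a\prec c\leq x$; dually, $a\leq b\leq\blacksquare x$ yields only some $d$ with $a\leq d\prec x$. Neither conclusion, nor their conjunction, gives $a\prec x$ without already assuming $\mathrm{(WO)}$ or $\mathrm{(SI)}$, and no argument can close this gap: let $A$ be the two-element chain $\{\bot,\top\}$ and let $\prec$ be the identity relation. This $\mathbb{S}$ is defined ($\mathrm{(DD)}$, $\mathrm{(SF)}$, $\mathrm{(UD)}$, $\mathrm{(SB)}$ hold trivially since ${\prec}[a]={\prec}^{-1}[a]=\{a\}$), and $\Diamond=\blacksquare=\mathrm{id}_A$ is monotone (indeed regular and normal), yet $\bot\leq\top\prec\top$ with $\bot\not\prec\top$, so $\mathrm{(SI)}$ fails and $\mathbb{S}$ is not monotone. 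Hence the right-to-left implications, read unconditionally, are false, and the only tenable reading of the corollary is the conditional one actually established in Lemma \ref{lem: axioms to relation}: each backward equivalence holds under the standing hypothesis $\mathrm{(WO)}$ (for the $\Diamond$-side conditions) or $\mathrm{(SI)}$ (for the $\blacksquare$-side conditions). You should either adopt that conditional reading explicitly or record the counterexample; the paragraph promising a ``joint argument'' should be removed, since the step it relies on would fail.
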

%The proto-subordination  algebra associated with any  slanted  algebra $\mathbb{A}= (A, \Diamond, \blacksquare)$ is  $\mathbb{A}_* = (A, \prec)$, such that $a\prec b$ iff  $\Diamond a\leq b$ iff $a\leq \blacksquare b$.

%\begin{proposition}For any subordination algebra $\mathbb{S}$ and any slanted  algebra $\mathbb{A}$,\begin{center}$(\mathbb{S}^*)_* \cong \mathbb{S}$ and $(\mathbb{A}_*)^* \cong\mathbb{A}$.\end{center}\end{proposition}

%%%%%%%%%%%%%%%%%%%%%%%%%%%%%%%%%%%

\begin{lemma} \label{lem: diamond-output equivalence extended}
 For any  proto-subordination algebra $\mathbb{S} = (A, \prec)$, any  proto-precontact algebra $\mathbb{C} = (A, \pcon)$ and any dual proto-precontact algebra $\mathbb{D} = (A, \ppcon)$, for all $a, b\in A$, $k, h \in K(A^\delta)$, and $o, p \in O(A^\delta)$, and all $D\subseteq A$  nonempty and down-directed,  and $U\subseteq A$  nonempty and up-directed,
\begin{enumerate}
    \item if  $\mathbb{S}$ is $\Diamond$-monotone (i.e.~$\mathbb{S}\models\mathrm{(SF)+ (DD) + (WO) + (SI)}$), then
    \begin{enumerate}[label=(\roman*)]
        \item  ${\prec}[D]\coloneqq \{c\ |\ \exists a (a \in  D\ \&\ a \prec c)\}$ is nonempty and down-directed;
        \item if $k = \bigwedge D$, then $\Diamond k = \bigwedge {\prec}[D] \in K(A^\delta)$;
        \item $\Diamond k \leq b$ implies $a\prec b$ for some $a \in A$ s.t.~$ k \leq a$;
         \item $\Diamond k \leq o$ implies $a\prec b$ for some $a,b \in A$ s.t.~$ k \leq a$ and $b\leq o$.
    \end{enumerate}
    \item if    $\mathbb{S}$ is $\blacksquare$-monotone (i.e.~$\mathbb{S}\models\mathrm{(SB)+ (UD) + (WO) + (SI)}$), then
    \begin{enumerate}[label=(\roman*)]
        \item ${\prec}^{-1}[U]\coloneqq \{c\ |\ \exists a (a \in  U\ \&\ c \prec a)\}$ is nonempty and up-directed;
        \item if $o = \bigvee U$, then $\blacksquare o = \bigvee {\prec}^{-1}[U] \in O(A^\delta)$;
        \item $a \leq \blacksquare o$ implies $a\prec b$ for some $b \in A$ s.t.~$ b \leq o$.
        \item $k \leq \blacksquare o$ implies $a\prec b$ for some $a,b \in A$ s.t.~$k\leq a$ and $ b \leq o$.
    \end{enumerate}

    \item if  $\mathbb{C}$ is $\wt$-antitone (i.e.~$\mathbb{C}\models\mathrm{(SF)^\wt+ (DD)^\wt + (WO)^\wt + (SI)^\wt}$), then
    \begin{enumerate}[label=(\roman*)]
        \item  ${\npcon}[D]\coloneqq \{c\ |\ \exists a (a \in  D\ \&\ a \npcon c)\}$ is nonempty and up-directed;
        \item if $k = \bigwedge D$, then $\wt k = \bigvee {\npcon}[D] \in O(A^\delta)$;
        \item $a\leq \wt k $ implies $c\npcon a$ for some $c \in A$ s.t.~$ k \leq c$;
         \item $h\leq \wt k $ implies $c\npcon b$  for some $b, c\in A$ s.t.~$h\leq b$ and  $k\leq c$.
    \end{enumerate}
    \item if  $\mathbb{C}$ is $\bt$-antitone (i.e.~$\mathbb{C}\models\mathrm{(SB)^\wt+ (UD)^\wt + (WO)^\wt + (SI)^\wt}$), then
    \begin{enumerate}[label=(\roman*)]
        \item  ${\npcon}^{-1}[D]\coloneqq \{c\ |\ \exists a (a \in  D\ \&\ c \npcon a)\}$ is nonempty and up-directed;
        \item if $k = \bigwedge D$, then $\bt k = \bigvee {\npcon}^{-1}[D] \in O(A^\delta)$;
        \item $a\leq \bt k $ implies $a\npcon c$ for some $c \in A$ s.t.~$ k \leq c$;
         \item $h\leq \bt k $ implies $b\npcon c$  for some $b, c\in A$ s.t.~$h\leq b$ and  $k\leq c$.
    \end{enumerate}
%5
\item if  $\mathbb{D}$ is $\tw$-antitone (i.e.~$\mathbb{D}\models\mathrm{(SF)^\tw+ (DD)^\tw + (WO)^\tw + (SI)^\tw}$), then
    \begin{enumerate}[label=(\roman*)]
        \item  $\nppcon [U]\coloneqq \{c\ |\ \exists a (a \in  U\ \&\ a \nppcon c)\}$ is nonempty and down-directed;
        \item if $o = \bigvee U$, then $\tw o = \bigwedge \nppcon[U] \in K(A^\delta)$;
        \item $\tw o \leq a $ implies $c\nppcon a$ for some $c \in A$ s.t.~$ c \leq o$;
         \item $\tw o\leq p $ implies $c\nppcon a$  for some $a, c\in A$ s.t.~$c\leq o$ and  $a\leq p$.
    \end{enumerate}
%6
    \item if  $\mathbb{D}$ is $\tb$-antitone (i.e.~$\mathbb{D}\models\mathrm{(SB)^\tw+ (UD)^\tw + (WO)^\tw + (SI)^\tw}$), then
    \begin{enumerate}[label=(\roman*)]
        \item  ${\npcon}^{-1}[U]\coloneqq \{c\ |\ \exists a (a \in  U\ \&\ c \nppcon a)\}$ is nonempty and down-directed;
        \item if $o = \bigvee U$, then $\tb o = \bigwedge {\nppcon}^{-1}[U] \in K(A^\delta)$;
        \item $\tb o \leq a $ implies $a \nppcon c$ for some $c \in A$ s.t.~$ c \leq o$;
         \item $\tb o \leq p $ implies $a\nppcon c$  for some $a, c\in A$ s.t.~$a\leq p$ and  $c\leq o$.
    \end{enumerate}
\end{enumerate}
\end{lemma}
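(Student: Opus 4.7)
All six items follow the same template, so the plan is to prove item~1 in full and indicate the routine modifications needed for items~2--6. For item~1, sub-item (i) establishes that ${\prec}[D]$ is nonempty and down-directed so that its meet is a closed element; sub-item (ii) identifies $\Diamond k$ with that meet; and sub-items (iii)--(iv) are one-sided and two-sided compactness consequences of (ii).

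For 1(i), nonemptiness of ${\prec}[D]$ is immediate from (SF) applied to any element of $D$. For down-directedness, given $c_1,c_2\in{\prec}[D]$ with witnesses $a_1,a_2\in D$ such that $a_i\prec c_i$, I would first use down-directedness of $D$ to choose $a\in D$ with $a\leq a_1,a_2$, then apply (SI) to obtain $a\prec c_1$ and $a\prec c_2$, and finally apply (DD) to produce $c\in A$ with $a\prec c$, $c\leq c_1$, and $c\leq c_2$; since $a\in D$, such $c$ lies in ${\prec}[D]$.

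For 1(ii), by Definition~\ref{def: sigma and pi extensions of slanted}, $\Diamond k=\bigwedge\{\Diamond a\mid a\in A,\ k\leq a\}$. With $k=\bigwedge D$ for $D$ nonempty and down-directed, compactness yields that, for $a\in A$, $k\leq a$ iff $d\leq a$ for some $d\in D$. The monotonicity of $\Diamond$ on $A$ under (SI) (Lemma~\ref{lem: rleation to axioms}, item~7) then collapses this infimum to $\bigwedge_{d\in D}\Diamond d=\bigwedge_{d\in D}\bigwedge{\prec}[d]=\bigwedge{\prec}[D]$, using ${\prec}[D]=\bigcup_{d\in D}{\prec}[d]$. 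Part (i) then gives $\Diamond k\in K(A^\delta)$. For 1(iii), $\Diamond k\leq b$ reads $\bigwedge{\prec}[D]\leq b$ by (ii), so compactness applied to the nonempty down-directed ${\prec}[D]\subseteq A$ yields $c\in{\prec}[D]$ with $c\leq b$, hence $a\prec c$ for some $a\in D$; (WO) then gives $a\prec b$, and $a\in D$ gives $k\leq a$. For 1(iv), with $o=\bigvee U$ for $U\subseteq A$ nonempty up-directed, compactness on both sides supplies $c\in{\prec}[D]$ and $b\in U$ with $c\leq b$; (WO) lifts $a\prec c$ (for $a\in D$) to $a\prec b$, and by construction $k\leq a$ and $b\leq o$.

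Items~2--6 are obtained by routine dualization: item~2 is the order-dual of item~1 using ${\prec}^{-1}$, $\blacksquare$, (SB)+(UD), and the monotonicity of $\blacksquare$ from Lemma~\ref{lem: rleation to axioms}, item~8. Items~3--4 replace $\prec$ by $\npcon$, exchange closed/open, down-/up-directed, and $\bigwedge/\bigvee$, and appeal to the primed axioms $(\mathrm{SF})^\wt$+$(\mathrm{DD})^\wt$+$(\mathrm{WO})^\wt$+$(\mathrm{SI})^\wt$ together with the antitonicity of $\wt,\bt$ from Lemma~\ref{lem: rleation to axioms}, items~9--10. Items~5--6 mirror items~1--2 with $\nppcon$ in place of $\prec$ but keep closed images on the output side (since $\tw,\tb$ are c-slanted), using the doubly-primed axioms and Lemma~\ref{lem: rleation to axioms}, items~11--12. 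The only delicate point, which I would check most carefully, is matching each primed or doubly-primed axiom with the correct directedness/topology on the image set; once this bookkeeping is settled, no idea beyond that of item~1 is required.
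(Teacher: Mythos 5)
Your proposal is correct and follows essentially the same route as the paper: (SF)+(SI)+(DD) for the directedness of ${\prec}[D]$, compactness plus the definition of $\Diamond^\sigma$ to identify $\Diamond k$ with $\bigwedge{\prec}[D]$ (the paper rewrites the index set via the up-closure $\lfloor D\rfloor$ and (SI) where you invoke monotonicity of $\Diamond$, but these are the same use of (SI)), and then one- and two-sided compactness for (iii)--(iv), with items 2--6 handled by the same dualizations the paper carries out. No gap.
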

\begin{proof}
  1.   (i) By (SF), $D$  nonempty implies that so is ${\prec}[D]$.  If $c_i  \in {\prec}[D]$ for $1\leq i\leq 2$, then $a_i \prec c_i$  for some $a_i \in D$. Since $D$ is down-directed, some $a \in D$ exists s.t.~$a \leq a_i$ for each $i$. Thus,  $\mathrm{(SI)}$ implies that $a \prec c_i$, from which the claim  follows by $\mathrm{(DD)}$.

  (ii) By definition, $\Diamond k \coloneqq \bigwedge \{ \Diamond a\ |\ a \in A, k \leq a\} = \bigwedge \{c\ |\ \exists a (a \prec c\ \&\  k \leq a)\}$. Since $k = \bigwedge D$, by compactness, $k \leq a$ iff $d \leq a$ for some $d \in D$. Thus, $\Diamond k = \bigwedge  \{c\ |\ \exists a (a \prec c\ \&\  k \leq a)\} = \bigwedge \{c\ |\ \exists a (a \in \lfloor D\rfloor\ \&\ a \prec c)\} = \bigwedge {\prec}[D]\in K(A^\delta)$, the last membership holding by (i).

  (iii) Let $k = \bigwedge D$ for some $D\subseteq A$ nonempty and down-directed. By (ii),  $K(A^\delta)\ni \Diamond k = \bigwedge {\prec}[D]$. Hence, $\Diamond k\leq b$ implies by compactness that $c \leq b$ for some $c\in A$ s.t.~$a \prec c$ for some $a \in  D$ (hence $k = \bigwedge D\leq a$). By $\mathrm{(WO)}$, this implies that $a \prec b$ for some $a \in A$ s.t.~$k \leq a$, as required.

  (iv) By (ii), $\Diamond k   \in K(A^\delta)$. Moreover, $o\in O(A^\delta)$ implies that $o = \bigvee U$ for some nonempty and up-directed $U\subseteq A$. Hence, by compactness and (iii), $\Diamond k\leq o$ implies that $a\prec b$ for some $a\in A$ s.t.~$k\leq a$ and some $b\in U$ (for which $b\leq o$). The proof of 2 is similar.

  3.   (i) By (SF)$^\wt$, $D$ is nonempty  implies that so is $\npcon[D]$.  If $c_i  \in \npcon[D]$ for $1\leq i\leq 2$, then $a_i \npcon c_i$  for some $a_i \in D$. Since $D$ is down-directed, some $a \in D$ exists s.t.~$a \leq a_i$ for each $i$. Thus,  $\mathrm{(SI)^\wt}$ implies that $a \npcon c_i$, from which the claim  follows by $\mathrm{(DD)^\wt}$.

  (ii) By definition, $\wt k \coloneqq \bigvee \{ \wt a\ |\ a \in A, k \leq a\} = \bigvee \{c\ |\ \exists a (a \npcon c\ \&\  k \leq a)\}$. Since $k = \bigwedge D$, by compactness, $k \leq a$ iff $d \leq a$ for some $d \in D$. Thus, $\wt k = \bigvee  \{c\ |\ \exists a (a \npcon c\ \&\  k \leq a)\} = \bigvee \{c\ |\ \exists a (a \in \lfloor D\rfloor\ \&\ a \npcon c)\} = \bigvee \npcon[D]\in O(A^\delta)$, the last membership holding by (i).

   (iii) Let $k = \bigwedge D$ for some $D\subseteq A$ nonempty and down-directed. By (ii),  $O(A^\delta)\ni \wt k = \bigvee \npcon[D]$. Hence, $a\leq \wt k$ implies by compactness that $a \leq b$ for some $b\in A$ s.t.~$c \npcon b$ for some $c \in  D$ (hence $k = \bigwedge D\leq c$). By $\mathrm{(WO)}^\wt$, this implies that $c \npcon a$ for some $c \in A$ s.t.~$k \leq c$, as required.

   (iv) By (ii), $\wt k   \in O(A^\delta)$. Moreover, $h\in K(A^\delta)$ implies that $h = \bigwedge E$ for some nonempty and down-directed $E\subseteq A$. Hence, by compactness and (iii), $h\leq \wt k$ implies that $h\leq b$ for some $b\in \npcon [D]$, i.e.~$c\npcon b$ for some $c\in D$  (for which $k\leq c$). The proof of 4 is similar.

5.   (i) By (SF)$^\tw$, $D$ is nonempty  implies that so is $\nppcon[U]$.  If $c_i  \in \nppcon[U]$ for $1\leq i\leq 2$, then $a_i \nppcon c_i$  for some $a_i \in U$. Since $U$ is up-directed, some $a \in U$ exists s.t.~$a_i \leq a$ for each $i$. Thus,  $\mathrm{(SI)^\tw}$ implies that $a \nppcon c_i$, from which the claim  follows by $\mathrm{(DD)^\tw}$.

  (ii) By definition, $\tw o \coloneqq \bigwedge \{ \tw a\ |\ a \in A, a \leq o\} = \bigwedge \{c\ |\ \exists a (a \nppcon c\ \&\  a \leq o)\}$. Since $o = \bigvee U$, by compactness, $a \leq o$ iff $a \leq d$ for some $d \in U$. Thus, $\tw o = \bigwedge  \{c\ |\ \exists a (a \nppcon c\ \&\  a \leq o)\} = \bigwedge \{c\ |\ \exists a (a \in \lceil U\rceil\ \&\ a \nppcon c)\} = \bigwedge \nppcon[U]\in K(A^\delta)$, the last membership holding by (i).

   (iii) Let $o = \bigvee U$ for some $U\subseteq A$ nonempty and down-directed. By (ii),  $K(A^\delta)\ni \tw o = \bigwedge \nppcon[U]$. Hence, $\tw o\leq a$ implies by compactness that $b \leq a$ for some $b\in A$ s.t.~$c \nppcon b$ for some $c \in  U$ (hence $o = \bigvee U\geq c$). By $\mathrm{(WO)}^\tw$, this implies that $c \nppcon a$ for some $c \in A$ s.t.~$c \leq o$, as required.

   (iv) By (ii), $\tw o  \in K(A^\delta)$. Moreover, $p\in O(A^\delta)$ implies that $p = \bigvee V$ for some nonempty and up-directed $V\subseteq A$. Hence, by compactness and (iii), $\tw o\leq  p$ implies that $b\leq p$ for some $b\in \nppcon [U]$, i.e.~$c\nppcon b$ for some $c\in U$  (for which $c\leq o$). The proof of 4 is similar.
   %\marginnote{please check 4}
\end{proof}

%%%%%%%%%%%%%%%%%%%%%%%%%%%%%%%%%%

\section{Modal characterization of classes of relational algebras}
\label{sec: modal charact}

\begin{proposition}
\label{prop:characteriz}
For any  proto-subordination algebra $\mathbb{S} = (A, \prec)$,
\begin{enumerate}
\item  If $\mathbb{S}$ is monotone, then
\begin{enumerate}[label=(\roman*)]
%1
   \item  $\mathbb{S}\models\; \prec\ \subseteq\ \leq\ $ iff $\ \mathbb{S}^*\models a\leq \Diamond a\ $ iff $\ \mathbb{S}^*\models \blacksquare a\leq  a$.

   \item  $\mathbb{S}\models\; \leq\ \subseteq\ \prec\ $ iff $\ \mathbb{S}^*\models \Diamond a\leq a\ $ iff $\ \mathbb{S}^*\models  a\leq \blacksquare a$.
%2
    \item $\mathbb{S}\models \mathrm{(T)}\quad $  iff $\quad \mathbb{S}^*\models\Diamond a\leq \Diamond \Diamond a$.
%3
\item
    $\mathbb{S}\models \mathrm{(D)}\quad $  iff $\quad \mathbb{S}^*\models\Diamond\Diamond a\leq  \Diamond a$.
    \end{enumerate}
%4
\item If $\mathbb{S}$ is monotone and  $\wedge$-SL based, then
\begin{enumerate}[label=(\roman*)]
              \item
$\mathbb{S}\models \mathrm{(CT)}\ $  iff $\ \mathbb{S}^*\models\Diamond a\leq \Diamond (a\wedge \Diamond a)$.
 \item
    $\mathbb{S}\models \mathrm{(SL2)}\ $  iff $\ \mathbb{S}^*\models \Diamond (\Diamond a \wedge \Diamond b) \leq \Diamond (a \wedge b)$.
    \end{enumerate}
%5
\item If $\mathbb{S}$ is monotone and $\vee$-SL based, then
\begin{enumerate}[label=(\roman*)]
              \item $\mathbb{S}\models \mathrm{(SL1)}\ $  iff $\ \mathbb{S}^*\models\blacksquare (a \vee b)\leq \blacksquare(\blacksquare a \vee \blacksquare b) $.
    \item $\mathbb{S}\models \mathrm{(S9 \Leftarrow) }\ $  iff $\ \mathbb{S^*}\models \blacksquare a \vee \blacksquare b\leq \blacksquare (a \vee \blacksquare b)$.
 \item $\mathbb{S}\models \mathrm{(S9 \Rightarrow)}  \ $  iff $\ \mathbb{S^*}\models \blacksquare (a \vee \blacksquare b) \leq \blacksquare a \vee \blacksquare b$.
  \item $\mathbb{S}\models \mathrm{(DCT)} $ iff $\ \mathbb{S^*}\models \blacksquare (a \vee \blacksquare a) \leq \blacksquare a$.
  \end{enumerate}
%6
\item  if $\mathbb{S}$ is  monotone and based on $(A, \neg)$ with    $A\models \forall a\forall b(\neg a\leq b \Leftrightarrow \neg b\leq a)$,
              \begin{enumerate}[label=(\roman*)]
              \item $\mathbb{S}\models \mathrm{(S6)}\ $ iff  $\ \mathbb{S^*}\models \neg \Diamond a = \blacksquare \neg a$. Thus, if $A\models \neg\neg a = a$, then  $\blacksquare a \coloneqq \neg \Diamond \neg a$.
              \end{enumerate}
              \item  if $\mathbb{S}$ is  monotone and based on $(A, \neg)$ with     $A\models \forall a\forall b(a\leq \neg b \Leftrightarrow  b\leq \neg a)$,
              \begin{enumerate}[label=(\roman*)]
              \item $\mathbb{S}\models \mathrm{(S6)}\ $ iff  $\ \mathbb{S^*}\models  \Diamond \neg a =  \neg\blacksquare  a$. Thus,  if $A\models \neg\neg a = a$, then  $\Diamond a \coloneqq \neg \blacksquare \neg a$.
              \end{enumerate}

\end{enumerate}
\end{proposition}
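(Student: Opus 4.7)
The plan is to prove each equivalence by pivoting between three translation tools. First, under monotonicity, Lemma \ref{lem: diamond-output equivalence} provides the bridge $a\prec b \Leftrightarrow \Diamond a \leq b \Leftrightarrow a \leq \blacksquare b$, turning relational conditions on $\prec$ into inequalities in $A^\delta$. Second, Proposition \ref{prop:background can-ext} items 1(i)--(iii) let us test an inequality between closed (resp.~open) elements by probing it against elements of $A$, reducing a statement in $A^\delta$ to a first-order statement about $\prec$. Third, compactness in its various forms (Proposition \ref{prop: compactness and existential ackermann} and Lemma \ref{lem: diamond-output equivalence extended}) converts an inequality of the form $\bigwedge {\prec}[a]\leq c$ or $a\leq\bigvee {\prec}^{-1}[c]$ into the existence of a suitable witness in $A$. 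For mixed terms such as $a\wedge \Diamond a$ or $a\vee \blacksquare b$, the $\wedge$-semilattice or $\vee$-semilattice hypotheses of items 2 and 3 are exactly what is needed for these terms to remain in $K(A^\delta)$ or $O(A^\delta)$ by Proposition \ref{prop: compactness and existential ackermann}.1(ii) and 2(ii), so that the three tools apply uniformly.

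Concretely, for 1(i) I would observe that ${\prec}\subseteq{\leq}$ gives $a\leq b$ for every $b\in{\prec}[a]$, hence $a\leq\bigwedge{\prec}[a]=\Diamond a$; conversely, $a\leq \Diamond a$ together with $\Diamond a\leq b$ (i.e.~$a\prec b$) yields $a\leq b$. The $\blacksquare$-version is order-dual via $\prec^{-1}$, and item 1(ii) is the order-dual of 1(i). For 1(iii), (T) amounts to ${\prec}[b]\subseteq{\prec}[a]$ whenever $a\prec b$; hence $\Diamond b\leq c$ for every $c\in A$ with $a\prec c$, and denseness lifts this to $\Diamond a\leq\Diamond b$ for all such $b$, which via the definition of $\Diamond^\sigma$ on the closed element $\Diamond a$ is exactly $\Diamond a\leq \Diamond\Diamond a$. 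The converse unpacks $\Diamond\Diamond a\leq c$ for $c\in A$ by compactness. For 1(iv), Lemma \ref{lem: diamond-output equivalence extended}.1(iii) yields that $\Diamond\Diamond a\leq c$ with $c\in A$ is equivalent to the existence of $b\in A$ with $a\prec b\prec c$, so that $\Diamond\Diamond a\leq \Diamond a$, tested against elements of $A$ by Proposition \ref{prop:background can-ext}.1(i), becomes (D) verbatim.

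For items 2 and 3 the same recipe applies, with the additional check that the relevant mixed terms belong to the correct closed/open class, so that the appropriate compactness from Proposition \ref{prop: compactness and existential ackermann}.1(iii)--(iv) or 2(iii)--(iv) extracts a witness in $A$. For instance, $\Diamond(a\wedge\Diamond a)\leq b$ is equivalent to the existence of $c',d\in A$ with $a\prec c'$, $a\wedge c'\leq d$ and $d\prec b$; by (WO) this collapses to the existence of $c'$ with $a\prec c'$ and $a\wedge c'\prec b$, so $\Diamond a\leq \Diamond(a\wedge\Diamond a)$ reads exactly as (CT). Item 3 is the order-dual, using Lemma \ref{lem: diamond-output equivalence extended}.2. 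For items 4 and 5, I would first invoke Proposition \ref{prop:background can-ext}.2 (or 3) to obtain the lifted antitonicity/involution identities for $\neg^\sigma$ (resp.~$\neg^\pi$); then (S6) as $a\prec b \Leftrightarrow \neg b\prec \neg a$, translated via Lemma \ref{lem: diamond-output equivalence} to $\Diamond a\leq b \Leftrightarrow \Diamond \neg b\leq \neg a$, combines with denseness and the lifted $\neg$-adjunction to yield $\neg\Diamond a=\blacksquare\neg a$ in item 4 and $\Diamond\neg a=\neg\blacksquare a$ in item 5. The main obstacle is the mixed-term bookkeeping in items 2 and 3: in each case one must verify that the specific combination of closed and open elements still lies in $K(A^\delta)$ or $O(A^\delta)$ and that the correct variant of compactness extracts the intended witness in $A$; once this is done, the reduction to the stated first-order condition on $\prec$ is routine and the equivalence follows from Proposition \ref{prop:background can-ext}.1(i)--(iii).
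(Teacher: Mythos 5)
Your proposal is correct and follows essentially the same route as the paper's proof: the same three tools (Lemma \ref{lem: diamond-output equivalence} for translating $\prec$ into $\Diamond$/$\blacksquare$ inequalities, Proposition \ref{prop:background can-ext}.1(i)--(iii) for testing against elements of $A$, and compactness via Proposition \ref{prop: compactness and existential ackermann} and Lemma \ref{lem: diamond-output equivalence extended} for witness extraction), with the same semilattice bookkeeping for the mixed terms in items 2--3 and the lifted negation properties for items 4--5. One tiny slip: in the (CT) step the collapse of $a\wedge c'\leq d\prec b$ to $a\wedge c'\prec b$ uses (SI), not (WO), though both are available under monotonicity, so nothing breaks.
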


\begin{proof}
 In what follows, variables $a, b, c\ldots$ range in $A$.  Also, for the sake of a more concise and readable presentation, we will write conjunctions of  relational atoms as chains; for instance, $a\leq b \ \& \ b\prec c$ will be written as $a\leq b\prec c$.

1 (i)
\begin{center}
    \begin{tabular}{ccll}
          $\forall a(a\leq \Diamond a)$ &
       iff  & $\forall a   \forall b (\Diamond a \leq b \Rightarrow a \leq b )$ & Proposition \ref{prop:background can-ext}.1(i)\\
      & iff & $\forall a  \forall b(a \prec b \Rightarrow a \leq b )$ &  \\
      & iff &  $\prec \ \subseteq \ \leq $ &  \\
    \end{tabular}
\end{center}
   % Let us show the equivalence marked with $(\ast )$. The direction from top to bottom immediately obtains by instantiating $o:=b$, which we can do, since $b\in A\subseteq K(A^\delta)\cap O(A^\delta)$. For the converse direction, if  $a\prec b$ and $b \leq o$, then, by assumption, $a\leq b$, and hence $a\leq o$ follows by transitivity of $\leq$, as required.  The proof of the second equivalence of (i) is similar, using Lemma \ref{lem: diamond-output equivalence extended} 2(iv).

(ii)
\begin{center}
    \begin{tabular}{ccll}
          $\forall a( \Diamond a\leq a)$ &
       iff  & $\forall a   \forall b (   a \leq b \Rightarrow \Diamond a \leq b )$ & Proposition \ref{prop:background can-ext}.1(i)\\

      & iff  & $ \forall a\forall b (a\leq b  \Rightarrow a \prec b )$ &  \\
       &iff &  $\leq \ \subseteq \ \prec $ &  \\
    \end{tabular}
\end{center}
  %  Let us show the equivalence marked with $(\ast )$. The direction from top to bottom immediately obtains by  instantiating $k:=a$ and $o:=b$, which we can do, since $a,b\in A\subseteq K(A^\delta)\cap O(A^\delta)$.  For the converse direction, let $a, b, c, k, o$ s.t.~$k\leq b\leq a$ and $a\leq c\leq o$. Then, by assumption, we can conclude $\Diamond b\leq c$, and hence, by the monotonicity of $\Diamond$, the following chain of inequalities holds: $\Diamond k\leq \Diamond b\leq c\leq o$, as required.

(iii)
\begin{center}
    \begin{tabular}{cll}
         &$\forall a (\Diamond a \leq \Diamond \Diamond a)$ &  \\
     iff & $\forall a   \forall c (\Diamond \Diamond a \leq c\Rightarrow \Diamond a \leq c)$   & Proposition \ref{prop:background can-ext}.1(i)\\
     iff & $\forall a   \forall c (\exists b(  a\prec b \ \& \ \Diamond b\leq c) \Rightarrow \Diamond a \leq c)$   & Lemma \ref{lem: diamond-output equivalence extended}.1(iii)\\

    iff &  $\forall a   \forall b \forall c(a\prec b \prec c  \Rightarrow  a \prec c)$ & Lemma \ref{lem: diamond-output equivalence}.1(ii) \\
    \end{tabular}
\end{center}
%Let us show the equivalence marked with $(\ast)$. The top-to-bottom direction immediately obtains by letting  $o:=c$.
%Then, by assumption, $a\leq d\prec e\leq c$ for some $d, e\in A$, which implies  $a\prec  c $, by (WO)+(DD), as required.
%Conversely, let $ a\prec b\prec c \leq o$;  by assumption, this implies $\Diamond a\leq c \leq o$.

(iv)
\begin{center}
    \begin{tabular}{cll}
         &$\forall a ( \Diamond \Diamond a\leq \Diamond a)$ &  \\
     iff & $\forall a   \forall c (\Diamond a \leq c\Rightarrow \Diamond \Diamond  a \leq c)$   & Proposition \ref{prop:background can-ext}.1(i)\\
     iff & $\forall a   \forall c ( \Diamond a \leq c\Rightarrow \exists b(b\prec c\ \& \ \Diamond  a \leq b)$   &  Lemma \ref{lem: diamond-output equivalence extended}.1(iii)\\

     iff & $\forall a   \forall c ( a \prec c\Rightarrow \exists b( a\prec b \ \&\   b \prec c))$   & Lemma \ref{lem: diamond-output equivalence}.1(ii)  \\
    \end{tabular}
\end{center}
 %
\begin{comment}
    Then, by assumption, $a\leq d\prec b\prec e\leq c$ for some $d,b, e\in A$, which implies  $a\prec  b\prec c $ for some $b\in A$ by (WO)+(DD), as required.
\end{comment}

2(i)
\begin{comment}
 Notice preliminarily that $a\wedge \Diamond a = a\wedge \bigwedge {\prec}[a]\in K(A^\delta)$, both if ${\prec}[a] = \varnothing$, (in which case $a\wedge \Diamond a = a\wedge \top = a\in A\subseteq K(A^\delta)$), and if ${\prec}[a] \neq \varnothing$, (in which case $a\wedge \Diamond a = a\wedge \bigwedge {\prec}[a] = \bigwedge\{a\wedge b\mid a\prec b\}\in K(A^\delta)$ since $\{a\wedge b\mid a\prec b\}$ is down-directed.)
\end{comment}

\begin{center}
    \begin{tabular}{cll}
         &$\forall a (\Diamond a\leq \Diamond (a\wedge \Diamond a))$  &\\
    iff  & $\forall a  \forall c (\Diamond(a \wedge \Diamond a)\leq c \Rightarrow \Diamond a \leq c)$ & Proposition \ref{prop:background can-ext}.1(i)\\
    iff & $\forall a  \forall c(\exists d( a \wedge \Diamond a\leq d\prec c) \Rightarrow \Diamond a \leq c)$ & Lemma \ref{lem: diamond-output equivalence extended}.1(iii)\\
    iff & $\forall a  \forall c \forall d(a \wedge \Diamond a\leq d\prec c \Rightarrow \Diamond a \leq c)$ &  \\

      iff & $\forall a  \forall c\forall d(\exists b(\Diamond a\leq b \ \&\ a \wedge b\leq d \prec c) \Rightarrow \Diamond a \leq c)$  & Proposition \ref{prop: compactness and existential ackermann}.1(iii)\\
       iff & $\forall a \forall b  \forall c\forall d(a\prec b\ \&\ a \wedge b\leq d \prec c    \Rightarrow  a \prec c)$  & \\

    iff &$\forall a \forall b \forall c(a \prec b\ \&\ a\wedge b \prec c \Rightarrow a \prec c)$ &$(\ast\ast)$\\
    \end{tabular}
\end{center}
Let us show the equivalence marked with $(\ast\ast)$.  The top-to-bottom direction immediately obtains by letting $d:=a\wedge b\in A$, since $A$ is a $\wedge$-SL.
  Conversely, let $a, b, c, d\in A$ s.t.~$a\prec  b$ and $a\wedge b \leq d\prec c$;  by  (SI), the latter chain implies  $a\wedge b\prec  c$, hence by assumption we conclude $a\prec c$, as required.

(ii)
\begin{center}
    \begin{tabular}{cll}
         & $\forall a \forall b(\Diamond (\Diamond a \wedge \Diamond b) \leq \Diamond (a \wedge b))$ &\\
    iff & $\forall a \forall b  \forall c(\Diamond(a\wedge b)\leq c \Rightarrow \Diamond (\Diamond a\wedge \Diamond b) \leq c) $ & Proposition \ref{prop:background can-ext}.1(i)\\
     iff & $\forall a \forall b  \forall c(a\wedge b\prec c \Rightarrow \exists d(d\prec c \ \& \ \Diamond a\wedge \Diamond b \leq d) $ & Lemma \ref{lem: diamond-output equivalence extended}.1(iii)\\

    iff & $\forall a \forall b \forall c (a\wedge b\prec  c \Rightarrow \exists d(  d\prec c \ \&\ \exists e\exists f(a\prec e\ \&\
 b\prec f \ \&\ e\wedge f \leq d))) $ & Proposition \ref{prop: compactness and existential ackermann}.1(iii) \\
  iff & $\forall a \forall b \forall c (a\wedge b\prec  c \Rightarrow \exists d\exists e\exists f(a\prec e\ \&\
 b\prec f \ \&\ e\wedge f \leq d\prec c)) $ &  \\

    iff & $\forall a \forall b \forall c(a\wedge b\prec c \Rightarrow \exists e \exists f( ( a\prec e\ \&\ b\prec f \ \&\ e\wedge f \prec c))  $ & $(\ast\ast)$\\
    \end{tabular}
\end{center}
For the equivalence marked with $(\ast\ast)$, the bottom-to-top direction immediately obtains by letting $d: = e\wedge f\in A$. The converse direction follows from (SI).
%from up to down, let $k=a$ and $j=b$ and $c=o$, then we have the conclusion immediately. Conversely, assume $k\leq a\ \&\ j\leq b \ \&\ c\leq o$ and $\ast$ holds, then we have $k\leq a \prec e\ \&\ j\leq b\prec f\ \&\ e\wedge f \prec c \leq o$, by (SI)  we have $( k\prec e\ \&\ j\prec f \ \&\ e\wedge f \prec c \leq o)) $ as required. The proof of (vii)(c) is similar.

3 (i)
\begin{center}
    \begin{tabular}{cll}
         & $\forall a \forall b(\blacksquare (a \vee  b) \leq \blacksquare (\blacksquare a \vee \blacksquare b))$ &\\
      iff &$\forall a \forall b \forall c (c\leq \blacksquare (a \vee b) \Rightarrow c\leq \blacksquare(\blacksquare a \vee \blacksquare b))$ & Proposition \ref{prop:background can-ext}.1(ii)\\

       iff & $\forall a \forall b \forall c   (c\prec  a \vee b\Rightarrow c\leq \blacksquare(\blacksquare a \vee \blacksquare b ))$  & Lemma \ref{lem: diamond-output equivalence}.1(iii) \\
      iff & $\forall a \forall b \forall c (c\prec a\vee b\Rightarrow \exists d(c\prec d \leq \blacksquare a \vee \blacksquare b))$  & Lemma \ref{lem: diamond-output equivalence extended}.2(iii) \\
    iff &$\forall a \forall b \forall c  (c\prec a\vee  b\Rightarrow \exists d\exists e \exists f (c\prec d \leq e\vee f\ \&\ e\leq \blacksquare a \ \&\ f\leq
    \blacksquare b))$ &Proposition \ref{prop: compactness and existential ackermann}.2(iii)\\
    iff &$\forall a \forall b \forall c  (c\prec a\vee  b\Rightarrow \exists e \exists f (c\prec  e\vee f\ \&\ e\leq \blacksquare a \ \&\ f\leq
    \blacksquare b))$ & $(\ast)$\\
    iff &$\forall a \forall b \forall c  (c\prec a\vee  b\Rightarrow \exists e \exists f (c\prec e\vee f\ \&\ e\prec a \ \&\ f\prec b))$ &Lemma \ref{lem: diamond-output equivalence}.1(iii)\\
    \end{tabular}
\end{center}
As to the equivalence marked with $(\ast)$, the bottom-to-top direction obtains by instantiating $d: = e\vee f\in A$, the converse direction follows from (WO).

(iii)
\begin{center}
    \begin{tabular}{cll}
         & $\forall a \forall b(\blacksquare (a \vee \blacksquare b) \leq \blacksquare a \vee \blacksquare b)$ &\\
      iff &$\forall a \forall b \forall c  (c\leq \blacksquare (a \vee \blacksquare b) \Rightarrow c\leq \blacksquare a \vee \blacksquare b)$ & Proposition \ref{prop:background can-ext}.1(ii)\\
      iff & $\forall a \forall b \forall c \forall d   (c\prec d \leq a \vee \blacksquare b\Rightarrow c\leq \blacksquare a \vee \blacksquare b )$  & Lemma \ref{lem: diamond-output equivalence extended}.2(iv) \\
      iff & $\forall a \forall b \forall c \forall d\forall e   (c\prec d\leq a\vee e \ \& \ e\prec   b$&Proposition \ref{prop: compactness and existential ackermann}.2(iii)\\
      & $\Rightarrow \exists f\exists g(c\leq f\vee g\ \& \ f\prec a\ \& \ g\prec b))$  &  \\

    iff &$\forall a \forall b \forall c \forall e (c\prec a\vee e\ \&\ e\prec b\Rightarrow \exists f\exists g (c\leq f\vee g \ \&\ f\prec a \ \&\ g\prec b))$ &$(\ast\ast)$\\
    \end{tabular}
\end{center}
Let us show the equivalence marked with $(\ast\ast)$. The top-to-bottom direction obtains by instantiating $d: = a\vee e\in A$. The converse direction follows from (WO). The proof of (ii) is similar.

(iv)
\begin{center}
    \begin{tabular}{cll}
         & $\forall a (\blacksquare (a \vee \blacksquare a) \leq \blacksquare a)$ &  \\
     iff & $\forall a \forall b (b \leq \blacksquare (a \vee \blacksquare a) \Rightarrow b\leq \blacksquare a) $   & Proposition \ref{prop:background can-ext}.1(ii)\\
    iff &  $\forall a \forall b\forall c ( b \prec c\leq a \vee \blacksquare a  \Rightarrow b\prec a)$ & Lemma \ref{lem: diamond-output equivalence}.1(iii)\\

    iff & $\forall a  \forall b \forall c(\exists d (b \prec c  \leq a \vee d\ \&\ d\leq \blacksquare a ) \Rightarrow b\prec a)$ & Proposition \ref{prop: compactness and existential ackermann}.2(iii)\\
    iff & $\forall a  \forall b \forall c(\exists d (b \prec c  \leq a \vee d\ \&\ d\prec a ) \Rightarrow b\prec a)$ & Lemma \ref{lem: diamond-output equivalence}.1(iii)\\
    iff & $\forall a  \forall b \forall d (b \prec a \vee d\ \&\ d\prec a  \Rightarrow b\prec a)$ & ($\ast \ast$)\\
    \end{tabular}
\end{center}
As to the equivalence marked with $(\ast\ast)$, the top-to-bottom direction obtains by instantiating $c: = a\vee d\in A$. The converse direction follows from (WO).

4 (i)
The proof of this item will make use of the fact that the assumption on $(A,\neg)$ implies, by Proposition \ref{prop:background can-ext}.2, and basic properties of adjoint maps between complete lattices (cf.~\cite{davey2002introduction}), that  $\neg^{\sigma}$ is completely meet-reversing. Hence, using this latter property in combination with denseness (in what follows, we will flag this out as `denseness +'), a term such as $\neg k$, for any $k\in K(A^\delta)$, can be equivalently rewritten as follows:
\[\neg k = \neg \bigwedge \{o\in O(A^\delta)\mid k\leq o\} = \bigvee \{\neg o\mid o\in O(A^\delta)\ \&\ k\leq o \}.\]

\begin{center}
    \begin{tabular}{cll}
         &$\forall a (\neg \Diamond a \leq \blacksquare \neg a)$ &  \\
     iff & $\forall a   \forall o (\Diamond a \leq o \Rightarrow \neg o \leq \blacksquare \neg a)$   & denseness +\\
     iff & $\forall a  \forall b \forall o (a \prec b\leq o \Rightarrow \neg o \leq \blacksquare \neg a)$   & Lemma \ref{lem: diamond-output equivalence extended}.1(iv) \\
     iff & $\forall a  \forall b  (a \prec b\Rightarrow \neg b \leq \blacksquare \neg a)$   & $(\ast)$ \\
%    iff &  $\forall a \forall b   (a\prec b  \Rightarrow \exists c( \neg b  \prec c\leq \neg a))$ & Lemma \ref{lem: diamond-output equivalence extended}  2(iv) \\

    iff & $ \forall a \forall b(a\prec b \Rightarrow  \neg b \prec \neg a)$ & Lemma \ref{lem: diamond-output equivalence}.1(iii) \\
    \end{tabular}
\end{center}
For the equivalence marked with $(\ast)$, the top-to-bottom direction obtains by instantiating  $o: = b\in A\subseteq O(A^\delta)$, and the converse follows by the antitonicity of $\neg$.
%For the equivalence marked with $(\ast\ast)$, the bottom-to-top direction obtains by instantiating  $c: = \neg a$, and the converse follows by (WO).
The proofs of 5(i) is similar, using the fact that  the assumption on $(A,\neg)$ implies, by Proposition \ref{prop:background can-ext}.3, that  $\neg^{\sigma}$ is  completely join-reversing. %$\blacksquare \neg a \leq \neg \Diamond a$ and $ \Diamond \neg a \leq \neg \blacksquare a$ and $\neg \blacksquare a \leq \Diamond \neg a$
 \end{proof}

\begin{remark} By Proposition \ref{prop:characteriz} 1(i) and 2(i),
for any monotone  proto-subordination algebra $\mathbb{S}$ which is $\wedge$-SL based, %s.t.~$\mathbb{S}\models\mathrm{(\top) + (SI)+ (WO)+ (AND)+ (OR)}$,
$\mathbb{S}\models \prec \ \subseteq \ \leq $ iff
 $\mathbb{S}^\ast\models  a\leq \Diamond a$, which implies $\mathbb{S}^\ast\models\Diamond a\leq \Diamond(a\wedge \Diamond a)$, i.e.~$\mathbb{S}\models \mathrm{(CT)}$. This observation generalizes
   \cite[Observation 15]{Makinson00}.%, where $out_4$ is characterized as $out_2 +  a \leq \blacksquare a$. Notice that $\Box a \leq a$ implies $\Diamond(a \wedge \Diamond a)$. Notice that $out_2 + a\leq\blacksquare a = out_4 + a\leq\blacksquare a$
\end{remark}

\begin{proposition}
\label{prop:precontact correspondence}
For any    proto-precontact algebra $\mathbb{C} = (A, \pcon)$,
\begin{enumerate}
\item if $\mathbb{C}$ is antitone, then
\begin{enumerate}[label=(\roman*)]
%1
   \item  $\mathbb{C}\models\; \npcon\ \subseteq\ \leq\ $ iff $\ \mathbb{C}^*\models\wt a\leq  a$.
%2
\item $\mathbb{C}\models\;  \leq\ \subseteq\ \npcon\ $ iff $\ \mathbb{C}^*\models  a\leq \wt a$.
%3
  \item  $\mathbb{C}\models\; \npcon\ \subseteq\ \geq\ $ iff $\ \mathbb{C}^*\models \bt a\leq  a$.
 %4
\item $\mathbb{C}\models\;  \geq\ \subseteq\ \npcon\ $ iff $\ \mathbb{C}^*\models  a\leq \bt a$.
%5
\item $\mathbb{C}\models\;  \mathrm{(NS)}$ iff $\ \mathbb{C}^*\models  a\wedge \wt a\leq \bot$.
 \item $\mathbb{C}\models\;  \mathrm{(SFN)}$ iff $\ \mathbb{C}^*\models \top \leq  a\vee \wt a $.
%6

\end{enumerate}
 %2.1
\item if $\mathbb{C}$ is a precontact algebra, then
\begin{enumerate}[label=(\roman*)]
\item      $\mathbb{C}\models\; \npcon^{-1}\circ \npcon \subseteq   \npcon \ $ iff $\ \mathbb{C}^*\models \wt a\leq  \wt \wt a\ $ iff     $\ \mathbb{C}^*\models \wt a\leq  \bt \wt a\ $ iff
      $\ \mathbb{C}\models\; \npcon^{-1}\circ \npcon \subseteq   \npcon^{-1}  $
%2.2
     \item      $\mathbb{C}\models\; \npcon^{-1}\circ \npcon \subseteq  \npcon\ $ iff  $\ \mathbb{C}^*\models \bt a\leq  \bt \bt a\ $ iff $\ \mathbb{C}^*\models \bt a\leq  \wt \bt a\ $ iff $\ \mathbb{C}\models\; \npcon^{-1}\circ \npcon \subseteq  \npcon^{-1} $
%2.3

      \item      $\mathbb{C}\models\; \npcon\circ \npcon \subseteq  \npcon\ $ iff  $\ \mathbb{C}^*\models \bt a\leq  \bt \wt a\ $ iff $\ \mathbb{C}^*\models \wt a\leq  \wt \bt a $ iff $\npcon^{-1} \circ \npcon^{-1} \subseteq  \npcon^{-1} $
%2.4
      \item      $\mathbb{C}\models\; \npcon\circ \npcon \subseteq  \npcon^{-1} $ iff  $\ \mathbb{C}^*\models \wt a\leq  \bt \bt a\ $ iff $\ \mathbb{C}^*\models \bt a\leq  \wt \wt a $ iff $\npcon^{-1} \circ \npcon^{-1} \subseteq  \npcon$
      \end{enumerate}
      \item if $\mathbb{C}$ is a lattice-based precontact algebra, then
\begin{enumerate}[label=(\roman*)]
      \item $\mathbb{C}\models\  \npcon \subseteq\ \npcon^{-1}\ $  iff $\ \mathbb{C}\models\  \npcon^{-1} \subseteq\ \npcon\ $  iff $\ \mathbb{C}^*\models \wt a\leq  \bt a\ $ iff $\ \mathbb{C}^*\models  a\leq  \wt \wt a\ $ iff $\ \mathbb{C}^*\models \bt a\leq  \wt a\ $ iff $\ \mathbb{C}^*\models  a\leq  \bt\bt a$.
%2.5

      \end{enumerate}
%3.1
\item if $\mathbb{C}$ is a $\wedge$-SL based precontact algebra, then
\begin{enumerate}[label=(\roman*)]

\item  $\mathbb{C}\models\;  \mathrm{(CMO)} $ iff $\ \mathbb{C}^*\models  \wt a\leq \wt( a \wedge \wt a)$.
\end{enumerate}

\item if $\mathbb{C}$ is a DL-based precontact algebra, then
\begin{enumerate}[label=(\roman*)]
\item $\mathbb{C}\models\;  \mathrm{(ALT)}^\wt $ iff $\ \mathbb{C}^*\models \top \leq \wt a\vee \wt \wt a $.
\item $\mathbb{C}\models\;  \mathrm{(ALT)}^\bt $ iff $\ \mathbb{C}^*\models \top \leq \bt a\vee \bt \bt a $.

\end{enumerate}
\item if $\mathbb{C}$ is a Heyting algebra-based\footnote{A {\em Heyting algebra} is a distributive lattice $A$ endowed with a binary operation $\rightarrow$ s.t.~$a\wedge b\leq c$ iff $b\leq a\rightarrow c$ for all $a, b, c\in A$.} precontact algebra, then
\begin{enumerate}[label=(\roman*)]
\item $\mathbb{C}\models\;  \mathrm{(\vee\wedge)} $ iff $\ \mathbb{C}^*\models \wt(a\vee b)\wedge (b\rightarrow \wt a) \leq  a\rightarrow  \bt b $.
\end{enumerate}
\end{enumerate}
\end{proposition}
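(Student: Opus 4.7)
The plan is to translate the modal inequality into an equivalent relational statement about $\npcon$ on $A$ by combining the Heyting adjunction (available in $A^\delta$, which inherits a complete Heyting algebra structure from $A$) with the characterisations of $\wt$ and $\bt$ given in Lemma \ref{lem: diamond-output equivalence}.2(ii)-(iii), and then match this with $(\vee\wedge)$. The key dictionary, for $y\in A$, is: $y\leq \wt(a\vee b)\Leftrightarrow(a\vee b)\npcon y$; $y\leq b\rightarrow\wt a\Leftrightarrow b\wedge y\leq\wt a\Leftrightarrow a\npcon(b\wedge y)$; and $y\leq\bt b\Leftrightarrow y\npcon b$.

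For the direction from $(\vee\wedge)$ to the modal inequality, I would first apply the Heyting adjunction in $A^\delta$ to rewrite $\wt(a\vee b)\wedge(b\rightarrow\wt a)\leq a\rightarrow\bt b$ as the equivalent form $a\wedge\wt(a\vee b)\wedge(b\rightarrow\wt a)\leq \bt b$; then, using Proposition \ref{prop:background can-ext}.1 together with the compactness-and-denseness machinery already exploited throughout Proposition \ref{prop:characteriz} and the earlier items of the present proposition, reduce the task to showing that for every $y\in A$, the conjunction of $y\leq a$, $(a\vee b)\npcon y$ and $a\npcon(b\wedge y)$ entails $y\npcon b$. This is an immediate application of $(\vee\wedge)$ with $\alpha:=a$, $\beta:=y$, $\gamma:=b$: its conclusion $(a\wedge y)\npcon b$ simplifies to $y\npcon b$, since $y\leq a$ forces $a\wedge y=y$.

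For the converse direction, given $\alpha,\beta,\gamma\in A$ satisfying the premises $(\alpha\vee\gamma)\npcon\beta$ and $\alpha\npcon(\beta\wedge\gamma)$ of $(\vee\wedge)$, Lemma \ref{lem: diamond-output equivalence}.2(ii) yields $\beta\leq\wt(\alpha\vee\gamma)$ and $\beta\wedge\gamma\leq\wt\alpha$; by the Heyting adjunction, the latter inequality rewrites as $\beta\leq\gamma\rightarrow\wt\alpha$. Combining these and applying the assumed modal inequality delivers $\beta\leq\alpha\rightarrow\bt\gamma$, whence $\alpha\wedge\beta\leq\bt\gamma$ by a further application of the adjunction, and finally $(\alpha\wedge\beta)\npcon\gamma$ by Lemma \ref{lem: diamond-output equivalence}.2(iii). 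The main point requiring care is that the Heyting implication on $A$ is to be interpreted in $A^\delta$ in a way compatible with the adjunction when one argument is open and the other lies in $A$; this is standard, since the canonical extension of a Heyting algebra is itself a (completely distributive, hence complete Heyting) algebra whose implication extends that of $A$.
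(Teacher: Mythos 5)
Your proof is correct and follows essentially the same route as the paper's: both reduce the modal inequality, via the residuation of $\wedge$ in $A^\delta$ and restriction to elements of $A$ below the (open) left-hand side, to the relational condition $\forall a\forall b\forall c\,\bigl((a\vee b)\npcon c\ \&\ a\npcon(b\wedge c)\Rightarrow(a\wedge c)\npcon b\bigr)$, which is exactly $(\vee\wedge)$ — the only difference being that you apply the adjunction to move $a$ to the left before descending to elements of $A$, whereas the paper keeps $c\leq a\rightarrow\bt b$ and unfolds it afterwards. Your closing caveat about the implication of $A^\delta$ being compatible with the adjunction is well placed; it is precisely the point the paper relies on silently under the justification ``Heyting algebra''.
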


\begin{proof}
In what follows, variables $a, b, c\ldots$ range in $A$, and $k$ and $h$ range in $K(A^\delta)$.  Also, for the sake of a more concise and readable presentation, we will write conjunctions of  relational atoms as chains; for instance, $a\leq b \ \& \ b\npcon c$ will be written as $a\leq b\npcon c$.\\
1(i)
    \begin{center}
    \begin{tabular}{ccll}
          $\forall a( \wt a\leq a)$ &
       iff  & $\forall a \forall b  (b \leq \wt a\Rightarrow  b \leq a )$ & Proposition \ref{prop:background can-ext}.1(ii)\\
%       iff  & $\forall a \forall k   (\exists b(a\npcon b\ \&\ k \leq b\Rightarrow  k \leq a ))$ & Lemma \ref{lem: diamond-output equivalence extended}(3)(iv)\\
 %      iff  & $\forall a \forall b \forall k  (a\npcon b\ \&\ k \leq b\Rightarrow  k \leq a )$ & \\
      & iff  & $\forall a \forall b   (a\npcon b\Rightarrow  b \leq a )$ & Lemma \ref{lem: diamond-output equivalence}.2(ii)\\
     &  iff  & $\npcon \ \subseteq \ \leq $ & \\
       \end{tabular}
       \end{center}
%The equivalence marked with $(\ast)$ is proven with a similar argument to those given above. The proof of (4) is similar.\\
1(ii)
\begin{center}
    \begin{tabular}{ccll}
          $\forall a(  a\leq \wt a)$ &
       iff  & $\forall a \forall b   (b \leq  a\Rightarrow  b \leq \wt a )$ & Proposition
 \ref{prop:background can-ext}.1(ii)\\
%       iff  & $\forall a \forall k\forall b   (k\leq b\leq a\Rightarrow  k \leq \wt a)$ &\\
%       iff  & $\forall a \forall b   (k\leq b\leq a\Rightarrow  k \leq \wt a)$ &\\
 %      iff  & $\forall a \forall k   (\exists b(k\leq b\leq) a\Rightarrow  k \leq \wt a))$ & Lemma \ref{lem: diamond-output equivalence extended}(3)(iv)\\
 %      iff  & $\forall a \forall b \forall k  (k\leq a\Rightarrow a\npcon b\ \&\ k \leq b )$ & \\
      & iff  & $\forall a \forall b   (b \leq a\Rightarrow a\npcon b )$ & Lemma \ref{lem: diamond-output equivalence}.2(ii)\\
      & iff  & $\leq \ \subseteq \npcon $ & \\
       \end{tabular}
       \end{center}
 %      The equivalence marked with $(\ast)$ is proven with a similar argument to those given in Proposition \ref{prop:characteriz}. And the proof of (3) is similar.\\
 1(v)
\begin{center}
    \begin{tabular}{ccll}
          $\forall a(a\wedge \wt a\leq \bot)$ &
       iff  & $\forall a \forall b (b \leq a\wedge \wt a  \Rightarrow  b \leq \bot )$ & Proposition
 \ref{prop:background can-ext}.1(ii)\\
       &iff  & $\forall a \forall b ((b \leq a \ \&\ b\leq \wt a)  \Rightarrow  b \leq \bot )$ & \\
      & iff  & $ \forall b (b\leq \wt b  \Rightarrow  b \leq \bot )$ & $(\ast)$ \\

      & iff  & $ \forall b  (b \npcon b \Rightarrow  b \leq \bot)$ & Lemma \ref{lem: diamond-output equivalence}.2(ii)\\
       \end{tabular}
       \end{center}
 As to the equivalence marked with $(\ast)$, the top-to-bottom direction obtains by instantiating $a: = b$, and the converse one follows from the antitonicity of $\wt$.\\

2. The proofs of this item will make use of the fact that $\mathbb{C}$ being a precontact algebra implies that  $\wt$ on $\mathbb{C}^\ast$ is normal (cf.~Corollary \ref{cor: charact monotone reg norm}.2(iii)), i.e.~$\wt$ is {\em finitely} join-reversing; then, by basic properties of the canonical extensions of slanted algebras,  $\wt^{\pi}$ is {\em completely} join-reversing (cf.~\cite[Lemma 3.5]{de2020subordination}). Hence, using this latter property in combination with denseness (in what follows, we will flag this out as `denseness +'), a term such as $\wt o$, for any $o\in O(A^\delta)$, can be equivalently rewritten as follows:
\[\wt o = \wt \bigvee \{h\in K(A^\delta)\mid h\leq o\} = \bigwedge \{\wt h\mid h\in K(A^\delta)\ \&\ h\leq o \}\]
(i)
\begin{center}
    \begin{tabular}{cll}
         & $\forall a( \wt a\leq \wt \wt a)$ & \\
       iff  & $\forall a \forall k  \forall h (k \leq \wt a\ \&\ h \leq \wt a \Rightarrow  k \leq \wt h )$ & denseness +\\
       iff  & $\forall a \forall k  \forall h (\exists b  (a\npcon b\ \&\ k \leq b)\ \&\ \exists c(a\npcon c\ \&\ h\leq c) \Rightarrow  k \leq \wt h )$ & Lemma \ref{lem: diamond-output equivalence extended}.3(iv)\\
       iff  & $\forall a \forall b \forall c \forall k \forall h ((a\npcon b\ \&\ k \leq b\ \&\ a\npcon c\ \&\ h\leq c) \Rightarrow  k \leq \wt h )$ & \\
       iff  & $\forall a \forall b  \forall c (a\npcon b\ \&\ a\npcon c \Rightarrow  b \leq \wt c)$ & $(\ast)$\\
       iff  & $\forall a \forall b  \forall c (a\npcon b\ \&\ a\npcon c \Rightarrow  c\npcon b)$ & Lemma \ref{lem: diamond-output equivalence}.2(ii)\\
       iff & $ \npcon^{-1}\circ \npcon \subseteq   \npcon   $    &\\
       \end{tabular}
       \end{center}
Let us show the equivalence marked with $(\ast)$. The top-to-bottom direction obtains by instantiating $k: = b\in A\subseteq K(A^\delta)$ and $h: = c \in A\subseteq K(A^\delta)$; conversely, let $a, b, c\in A$ and $h, k\in K(A^\delta)$ s.t.~$a\npcon b\geq k$ and $a\npcon c\geq h$. Then  the assumption and the antitonicity of $\wt$ imply that $k\leq b\leq \wt c\leq \wt h$, as required.
\\

(ii)
\begin{center}
    \begin{tabular}{cll}
         & $\forall a( \bt a\leq \bt \bt a)$ & \\
       iff  & $\forall a \forall k  \forall h (k \leq \bt a\ \&\ h \leq \bt a \Rightarrow  k \leq \bt h )$ & denseness +\\
       iff  & $\forall a \forall k  \forall h (\exists b  (b\npcon a\ \&\ k \leq b)\ \&\ \exists c(c\npcon a\ \&\ h\leq c) \Rightarrow  k \leq \bt h )$ & Lemma \ref{lem: diamond-output equivalence extended}.4(iv)\\
       iff  & $\forall a \forall b \forall c \forall k \forall h ((a\npcon b\ \&\ k \leq b\ \&\ a\npcon c\ \&\ h\leq c) \Rightarrow  k \leq \bt h )$ & \\
       iff  & $\forall a \forall b  \forall c (a\npcon b\ \&\ a\npcon c \Rightarrow  b \leq \bt c)$ & $(\ast)$\\
       iff  & $\forall a \forall b  \forall c (a\npcon b\ \&\ a\npcon c \Rightarrow  b\npcon c)$ & Lemma \ref{lem: diamond-output equivalence}.2(ii)\\
       iff & $ \npcon^{-1}\circ \npcon \subseteq   \npcon   $    &\\
       \end{tabular}
       \end{center}
3(i) This proof makes use of the fact that $\mathbb{C}$ being a lattice-based precontact algebra implies that $\mathbb{C}^\ast$ is not only normal but also {\em tense} (cf.~Corollary \ref{cor: defined+si+wo imply tense hence normal}); therefore, $\wt^\pi$ and $\bt^\pi$ form an adjoint pair (cf.~Lemma \ref{lemma:tense and lifted adjunction}).
\begin{center}
    \begin{tabular}{ccll}
           $\forall a( a\leq \wt \wt a)$ &
     iff    & $\forall a( \wt a\leq \bt a)$ & adjunction\\
       &iff  & $\forall a \forall b   (b \leq \wt a\Rightarrow  b \leq \bt a )$ & Proposition \ref{prop:background can-ext}.1(ii)\\
       &iff  & $\forall a \forall b   (a\npcon b\Rightarrow  b \npcon a )$ & Lemma \ref{lem: diamond-output equivalence}.2(ii)(iii)  \\
       &iff &  $  \npcon \subseteq\ \npcon^{-1}\ $   &\\
      & iff &  $  \npcon^{-1} \subseteq\ (\npcon^{-1})^{-1} = \npcon $   &\\
       &iff  & $\forall a \forall b   (a \leq \bt b\Rightarrow  a \leq \wt b )$ & \\

      & iff  & $ \forall b   (\bt b\leq \wt b )$ & Proposition \ref{prop:background can-ext}.1(ii) \\
     & iff  & $ \forall b   ( b\leq \bt\bt b )$ & adjunction \\
       \end{tabular}
       \end{center}
4(i)
\begin{center}
    \begin{tabular}{cll}
         & $\forall a( \wt a\leq \wt( a \wedge \wt a))$ & \\
       iff  & $\forall a \forall k  \forall h(k \leq \wt a\ \&\ h \leq a\wedge \wt a  \Rightarrow  k \leq \wt h )$ & denseness +\\
       iff  & $\forall a \forall k  \forall h((k \leq \wt a\ \&\ h \leq a\ \&\ h\leq \wt a)  \Rightarrow  k \leq \wt h )$ & \\
       iff  & $\forall a \forall k  \forall h ((\exists b (a\npcon b\ \&\ k \leq b) \ \&\ h\leq a\ \&\ \exists c (a\npcon c\ \&\ h\leq c) )\Rightarrow  k \leq \wt h )$ & Lemma \ref{lem: diamond-output equivalence extended}.3(iv)\\
       iff  & $\forall a\forall b\forall c \forall k  \forall h ((a\npcon b\ \&\ k \leq b \ \&\ h\leq a\wedge c\ \&\  a\npcon c) \Rightarrow  k \leq \wt h )$ & \\
       iff  & $\forall a \forall b \forall c  (a\npcon b\ \&\ a\npcon c \Rightarrow  b \leq \wt (a\wedge c ))$ & $(\ast)$ \\
       iff  & $\forall a \forall b  \forall c (a\npcon b\ \&\ a\npcon c \Rightarrow   (a\wedge c)\npcon b)$ & Lemma \ref{lem: diamond-output equivalence}.2(ii)\\
       \end{tabular}
       \end{center}
Let us show the equivalence marked with $(\ast)$. The top-to-bottom direction obtains by instantiating $k: = b\in A\subseteq K(A^\delta)$ and $h: = a\wedge c \in A \subseteq K(A^\delta)$; conversely, let $a, b, c\in A$ and $h, k\in K(A^\delta)$ s.t.~$a\npcon b\geq k$ and $a\npcon c $ and $a\wedge c\geq h$. Then  the assumption and the antitonicity of $\wt$ imply that $k\leq b\leq \wt (a\wedge c)\leq \wt h$, as required.\\

5. The proofs of this item  make use of the fact that the precontact algebra  $\mathbb{C}$ is based on a distributive lattice $A$, and hence $A^\delta$ is {\em completely distributive}.  Hence, using this latter property in combination with denseness and, as discussed above, $\wt^{\pi}$ being completely join-reversing (in what follows, we will flag this out as `denseness ++'), a term such as $\wt a\vee \wt o$, for any $a\in A$ and  $o\in O(A^\delta)$, can be equivalently rewritten as follows:
\begin{center}
    \begin{tabular}{cc}
        $\wt a \vee \wt o$ &= $ \wt a \vee \wt \bigvee \{h\in K(A^\delta)\mid h\leq o\}$\\
         & $ = \wt a \vee \bigwedge \{\wt h\mid h\in K(A^\delta)\ \&\ h\leq o \}$\\
         & $ =\bigwedge \{\wt a \vee \wt h\mid h\in K(A^\delta)\ \&\ h\leq o \}$
    \end{tabular}
\end{center}

(i)
\begin{center}
    \begin{tabular}{cll}
         & $\forall a( \top \leq \wt a\vee \wt \wt a)$ & \\
       iff  & $\forall a \forall k  (k\leq \wt a \Rightarrow \top \leq \wt a \vee \wt k )$ & denseness ++\\
       iff  & $\forall a \forall k (\exists b (k\leq b \ \&\ a\npcon b) \Rightarrow \top \leq \wt a \vee \wt k )$ & Lemma \ref{lem: diamond-output equivalence extended}.3(iv)\\
      iff  & $\forall a \forall b  (a\npcon b \Rightarrow \top \leq \wt a \vee \wt b )$ & $(\ast)$\\
      iff  & $\forall a \forall b  (a\npcon b \Rightarrow \exists c\exists d(\top \leq c \vee d \ \&\ c\leq \wt a \ \&\ d\leq \wt b)$ & Proposition \ref{prop: compactness and existential ackermann}.2(iii)\\
      iff  & $\forall a \forall b  (a\npcon b \Rightarrow \exists c\exists d(\top \leq c \vee d \ \&\ a\npcon c \ \&\ b\npcon d)$ & Lemma \ref{lem: diamond-output equivalence}.2(ii)\\
       \end{tabular}
       \end{center}
(ii)
\begin{center}
    \begin{tabular}{cll}
         & $\forall a( \top \leq \bt a\vee \bt \bt a)$ & \\
       iff  & $\forall a \forall k  (k\leq \bt a \Rightarrow \top \leq \bt a \vee \bt k )$ & denseness ++\\
       iff  & $\forall a \forall k (\exists b (k\leq b \ \&\ b\npcon a) \Rightarrow \top \leq \bt a \vee \bt k )$ & Lemma \ref{lem: diamond-output equivalence extended}.4(iv)\\
      iff  & $\forall a \forall b  (b\npcon a \Rightarrow \top \leq \bt a \vee \bt b )$ & $(\ast)$\\
      iff  & $\forall a \forall b  (b\npcon a \Rightarrow \exists c\exists d(\top \leq c \vee d \ \&\ c\leq \bt a \ \&\ d\leq \bt b)$ & Proposition \ref{prop: compactness and existential ackermann}.2(iii)\\
      iff  & $\forall a \forall b  (b\npcon a \Rightarrow \exists c\exists d(\top \leq c \vee d \ \&\ c\npcon a \ \&\ d\npcon b)$ & Lemma \ref{lem: diamond-output equivalence}.2(ii)\\
       \end{tabular}
       \end{center}
6(i)
\begin{center}
    \begin{tabular}{cll}
         & $\forall a\forall b( \wt(a\vee b)\wedge (b\rightarrow \wt a) \leq  a\rightarrow \bt b)$ & \\
         iff & $\forall a\forall b \forall c(c\leq  \wt(a\vee b)\wedge (b\rightarrow \wt a) \Rightarrow x\leq  a\rightarrow \bt b)$ & Proposition \ref{prop:background can-ext}.1(ii)\\
         iff & $\forall a\forall b \forall c(c\leq  \wt(a\vee b) \ \&\  c\leq  b\rightarrow \wt a \Rightarrow c\leq  a\rightarrow \bt b)$\\
        % iff & $\forall a\forall x \forall y(x\leq  \wt(a\vee y) \ \&\  x\leq  y\rightarrow \wt a \Rightarrow x\leq  a\rightarrow \bt y)$\\
         iff & $\forall a\forall b \forall c(c\leq  \wt(a\vee b) \ \&\  b\wedge c\leq  \wt a \Rightarrow c\wedge a\leq   \bt b)$ & Heyting algebra\\
         iff & $\forall a\forall b \forall c((a\vee b)\npcon c \ \&\  a\npcon (b\wedge c)   \Rightarrow (c\wedge a)\npcon    b)$ & Lemma \ref{lem: diamond-output equivalence}.2(ii)\\
         \end{tabular}
       \end{center}
\end{proof}
The proof of the following proposition proceeds similarly to that of the proposition above, and is omitted.
\begin{proposition}
\label{prop:postcontact correspondence}
For any    dual proto-precontact algebra $\mathbb{D} = (A, \ppcon)$,
\begin{enumerate}
\item if $\mathbb{D}$ is antitone, then
\begin{enumerate}[label=(\roman*)]
%1
   \item  $\mathbb{D}\models\; \leq\ \subseteq\ \nppcon\ $ iff $\ \mathbb{D}^*\models\tw a\leq  a$.
%2
\item $\mathbb{D}\models\;  \nppcon\ \subseteq\ \leq\ $ iff $\ \mathbb{D}^*\models  a\leq \tw a$.
%3
  \item  $\mathbb{D}\models\; \geq\ \subseteq\ \nppcon\ $ iff $\ \mathbb{D}^*\models \tb a\leq  a$.
 %4
\item $\mathbb{D}\models\;  \nppcon\ \subseteq\ \geq\ $ iff $\ \mathbb{D}^*\models  a\leq \tb a$.
%5
\item $\mathbb{D}\models\;  \mathrm{(SFN)^\tw}$ iff $\ \mathbb{D}^*\models  a\wedge \tw a\leq \bot$.
 \item $\mathbb{D}\models\;  \mathrm{(SR)}$ iff $\ \mathbb{D}^*\models \top \leq  a\vee \tw a $.
%6

\end{enumerate}
 %2.1
\item if $\mathbb{D}$ is a dual precontact algebra, then
\begin{enumerate}[label=(\roman*)]
\item      $\mathbb{D}\models\; \nppcon\circ \nppcon^{-1} \subseteq   \nppcon \ $ iff $\ \mathbb{D}^*\models \tw \tw a\leq  \tw a\ $ iff     $\ \mathbb{D}^*\models \tb \tw a\leq  \tw a\ $ iff
      $\ \mathbb{D}\models\; \nppcon\circ \nppcon^{-1} \subseteq   \nppcon^{-1}  $
%2.2
     \item      $\mathbb{D}\models\; \nppcon^{-1}\circ \nppcon \subseteq  \nppcon\ $ iff  $\ \mathbb{D}^*\models \tw \tb a\leq \tb a\ $ iff $\ \mathbb{D}^*\models \tb \tb a\leq  \tb a\ $ iff $\ \mathbb{D}\models\; \nppcon^{-1}\circ \nppcon \subseteq  \nppcon^{-1} $
%2.3
      \item      $\mathbb{D}\models\; \nppcon\circ \nppcon \subseteq  \nppcon\ $ iff  $\ \mathbb{D}^*\models \tw \tb a\leq  \tw a\ $ iff $\ \mathbb{D}^*\models \tb \tw a\leq  \tb a $ iff $\nppcon^{-1} \circ \nppcon^{-1} \subseteq  \nppcon^{-1} $
%2.4
      \item      $\mathbb{D}\models\; \nppcon\circ \nppcon \subseteq  \nppcon^{-1} $ iff  $\ \mathbb{D}^*\models \tw \tw a\leq   \tb a\ $ iff $\ \mathbb{D}^*\models \tb \tb a\leq  \tw a $ iff $\nppcon^{-1} \circ \nppcon^{-1} \subseteq  \nppcon$

\item $\mathbb{D}\models\  \nppcon \subseteq\ \nppcon^{-1}\ $  iff $\ \mathbb{D}\models\  \nppcon^{-1} \subseteq\ \nppcon\ $  iff $\ \mathbb{D}^*\models \tw a\leq  \tb a\ $ iff $\ \mathbb{D}^*\models  \tb \tb a\leq   a\ $ iff $\ \mathbb{D}^*\models \tb a\leq  \tw a\ $ iff $\ \mathbb{D}^*\models \tw \tw a\leq   a$.
      \end{enumerate}

\item if $\mathbb{D}$ is a $\vee$-SL based dual precontact algebra, then
\begin{enumerate}[label=(\roman*)]

\item  $\mathbb{D}\models\; \mathrm{(CMO)}^\tw $ iff $\ \mathbb{D}^*\models  \tw (a \vee \tw a) \leq \tw a$.

\end{enumerate}

%3.1
\item if $\mathbb{D}$ is a DL-based dual precontact algebra, then
\begin{enumerate}[label=(\roman*)]

\item $\mathbb{D}\models\;  \mathrm{(ALT)^\tw} $ iff $\ \mathbb{D}^*\models \tw \tw a\wedge \tw a\leq \bot $.
\item $\mathbb{D}\models\;  \mathrm{(ALT)^\tb} $ iff $\ \mathbb{D}^*\models \tb \tb a\wedge \tb a \leq \bot $.
\end{enumerate}
\end{enumerate}
\end{proposition}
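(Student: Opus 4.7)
The proof will proceed by systematically dualizing the argument schemes used for Proposition \ref{prop:precontact correspondence}, exploiting the fact that dual proto-precontact algebras exhibit an order-dual structure: where precontact algebras deliver open-element values $\wt a, \bt a \in O(A^\delta)$, dual precontact algebras deliver closed-element values $\tw a, \tb a \in K(A^\delta)$. Consequently, every use of Proposition \ref{prop:background can-ext}.1(ii) (the denseness characterisation using open elements) must be replaced by 1(i) (the closed-element version); every invocation of Lemma \ref{lem: diamond-output equivalence extended}.3 or .4 must be replaced by the dual items .5 or .6; and the lifted adjointness of $\wt^\pi$ (completely join-reversing) is replaced by that of $\tw^\sigma$ (completely meet-reversing), a consequence of the fact that under the dual precontact axioms the slanted algebra $\mathbb{D}^\ast$ is normal (Corollary \ref{cor: charact monotone reg norm}.3(iii)) and, when $A$ is a bounded lattice, tense (Corollary \ref{cor: defined+si+wo imply tense hence normal}.3), hence $\tw^\sigma$ and $\tb^\sigma$ form an adjoint pair by Lemma \ref{lemma:tense and lifted adjunction}.3.

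For item 1, I would treat each sub-item by substituting the appropriate pointwise bound from Lemma \ref{lem: diamond-output equivalence}.3(ii)--(iii) into the denseness characterisation given in Proposition \ref{prop:background can-ext}.1(i), handling the self-referential atoms like $\mathrm{(SFN)^\tw}$ and $\mathrm{(SR)}$ by the same $(\ast)$-style trick used in the precontact proof, namely instantiating a generic closed element $k$ at the specific element $a$. For item 2, I would employ the ``denseness$+$'' rewriting, unfolding $\tw o$ for $o\in O(A^\delta)$ as $\bigwedge\{\tw a\mid a\in A,\ a\leq o\}$ via complete meet-reversingness of $\tw^\sigma$, then apply Lemma \ref{lem: diamond-output equivalence extended}.5(iv) (resp.~.6(iv)) to extract a concrete witness $a\nppcon c$; the resulting first-order condition is then translated back to the relational language via Lemma \ref{lem: diamond-output equivalence}.3(ii)--(iii). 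The adjunction $\tw^\sigma\dashv \tb^\sigma$ (from Lemma \ref{lemma:tense and lifted adjunction}.3) handles the cross-equivalences between $\tw$-atoms and $\tb$-atoms in item 2(v).

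For item 3, the $\vee$-SL (in fact $\wedge$-SL on $\mathbb{D}^\ast$, since the roles swap) hypothesis ensures that the relevant elements lie in $K(A^\delta)$, and the argument runs parallel to item 4(i) of Proposition \ref{prop:precontact correspondence}, with $(a\vee \tw a)$ replacing $(a\wedge \wt a)$ and with the existential Ackermann step supplied by Proposition \ref{prop: compactness and existential ackermann}.2(iii) dually used via .1(iv). For item 4, distributivity of $A$ is upgraded to complete distributivity of $A^\delta$, permitting the ``denseness$++$'' rewriting of $\tw a \wedge \tw\tw a$ as $\bigvee\{\tw a\wedge \tw k\mid k\in K(A^\delta),\ k\leq \tw a\}$; compactness (Proposition \ref{prop: compactness and existential ackermann}.1(iv)) then extracts the finite witnesses $c,d$ whose meet is $\leq \bot$, which via Lemma \ref{lem: diamond-output equivalence}.3(ii) translates to the two-witness form of $(\mathrm{ALT})^\tw$ (and symmetrically for $(\mathrm{ALT})^\tb$).

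The only genuine subtlety lies in item 2(v) and item 3, where one must carefully verify that the order-dualisation does not break any directedness hypothesis needed for compactness: specifically, one must check that the set $\nppcon[U]$ (for $U\subseteq A$ nonempty and up-directed) is nonempty and down-directed so that $\tw o = \bigwedge \nppcon[U] \in K(A^\delta)$ as asserted by Lemma \ref{lem: diamond-output equivalence extended}.5(ii). This has already been dispatched in Lemma \ref{lem: diamond-output equivalence extended}.5(i), so modulo this bookkeeping the entire proof is a mechanical dualisation of the proof of Proposition \ref{prop:precontact correspondence}.
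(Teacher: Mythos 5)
Your proposal is correct and coincides with the paper's own treatment: the paper explicitly omits this proof, stating only that it ``proceeds similarly to that of the proposition above,'' i.e.\ by exactly the order-dualisation of the proof of Proposition \ref{prop:precontact correspondence} that you spell out (swapping open for closed elements, Lemma \ref{lem: diamond-output equivalence extended}.3--4 for .5--6, and the join-reversing for the meet-reversing lifted extension). Your attention to the directedness bookkeeping needed for the compactness steps is the right place to focus, and it is indeed already discharged by Lemma \ref{lem: diamond-output equivalence extended}.5(i)--(ii).
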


We conclude this section by showing that the modal characterization results above  naturally extend to relational algebras endowed with more than one relation. Two such examples arise when considering the interaction between obligation and permission in input/output logic and also  in the context of the study of subordination and precontact algebras.

A {\em bi-subordination  algebra} \cite{celani2021bounded} is a structure $\mathbb{B} = (A, \prec_1, \prec_2)$ such that $(A, \prec_i)$ is a subordination algebra for $1\leq i\leq 2$. The following conditions on DL-based bi-subordination algebras have been considered in the literature (see also Section \ref{ssec:dunn-saxioms} for an expanded discussion on these conditions):
\begin{center}
\begin{tabular}{rlrl}
 (P1) & $ c \prec_1 a \vee b \Rightarrow \forall d [a \prec_2 d \Rightarrow \exists e (e \prec_1 b\ \&\ c \leq  e \vee d)]$\\
(P2) & $  a \wedge b \prec_2 c \Rightarrow \forall d[d \prec_1 a\Rightarrow \exists e (b \prec_2 e\ \&\ d \wedge e \leq c)]$.
\end{tabular}
\end{center}
A {\em (proto-) subordination precontact algebra}, abbreviated as  (proto) sp-algebra, is a structure $\mathbb{H} = (A, \prec, \pcon)$ such that $(A, \prec)$ is a (proto-)subordination algebra, and $(A, \pcon)$ is a (proto-)precontact  algebra\footnote{\label{footn:sdp}Likewise, we can define {\em (proto) sdp-algebras} as structures $\mathbb{K} = (A, \prec, \ppcon)$ such that $(A, \prec)$ is a (proto-)subordination algebra, and $(A, \ppcon)$ is a dual (proto-)precontact  algebra}. We will consider the following conditions on sp-algebras, some of which are the algebraic counterparts of  well known conditions on the relationship between norms and permissions:\footnote{Conditions (CS1) and (CS2) crop up in the literature on subordination algebras as the defining conditions of  pre-contact subordination lattices (cf.~\cite[Definition 3.2.1]{de2020subordination}). Interestingly,  (CS2) is the algebraic counterpart of a closure rule known as (AND)$^{\downarrow}$ in the input/output logic literature. }
\begin{center}
\begin{tabular}{rlrl}
(CS1) & $a \prec x\vee y\ \&\ a \npcon x \Rightarrow a \prec   y$ &
(CS2) & $a \npcon x\wedge y\ \&\ a \prec x \Rightarrow a  \npcon   y$.\\
%(AND)$^{\downarrow}$ &  $a \prec b\ \&\ a \, \npcon \, (b\wedge c) \Rightarrow a \npcon  c $.
(CT)$^\downarrow$ & $a\prec b\ \&\ a\npcon b\wedge c \Rightarrow a\wedge b\npcon c$ & (CT)$^\wt$ & $a\prec b\ \&\ a\wedge b\npcon c\Rightarrow a\npcon c$.
\\

(INC) & $a\prec b\Rightarrow a\npcon b$\\
\end{tabular}
\end{center}
Slanted algebras of the appropriate modal similarity type can be associated both with bi-subordination algebras and with sp-algebras  in the obvious way. The modal axioms characterizing conditions (P1) and (P2) in item 3 of the next proposition have been used  in \cite{dunn1995positive} to completely axiomatize the positive (i.e.~negation-free) fragment of basic classical normal modal logic.

\begin{proposition}
\label{prop:poly correspondence}
For any   bi-subordination algebra $\mathbb{B} = (A, \prec_1, \prec_2)$,
\begin{enumerate}
\item  $\mathbb{B}\models \prec_1\ \subseteq \prec_2$ iff
 $\mathbb{B}^\ast\models \Diamond_2 a\leq \Diamond_1 a$.
\item  $\mathbb{B}\models \prec_2\circ \prec_1\ \subseteq \ \leq$ iff $\mathbb{B}^\ast\models \blacksquare_2 a\leq \Diamond_1 a$.
\item  if $\mathbb{B}$ is DL-based, then
\begin{enumerate}[label=(\roman*)]
\item   $\mathbb{B}\models \mathrm{(P1)}$ iff $\mathbb{B}^\ast\models \blacksquare_1 (a\vee b)\leq \Diamond_2 a\vee \blacksquare_1 b $.

\item $\mathbb{B}\models \mathrm{(P2)}$ iff $\mathbb{B}^\ast\models \blacksquare_1 a\wedge \Diamond_2 b \leq \Diamond_2 (a\wedge b)$.
\end{enumerate}
\item $\mathbb{B}\models \prec_2  \circ\prec_1\ \subseteq\ \prec_1  \circ\prec_2$ iff $\mathbb{B}^\ast\models \Diamond_1\blacksquare_2  a\leq \blacksquare_2\Diamond_1 a $.
\item $\mathbb{B}\models \prec_1 \circ \prec_2 \ \subseteq\ \prec_1 $ iff $\mathbb{B}^\ast\models \Diamond_1 a \leq \Diamond_2\Diamond_1 a $.
\item $\mathbb{B}\models \forall a\forall b ( a \prec_1 b \ \& \ a \prec_2 b  \Rightarrow b =\top) $ iff $\mathbb{B}^\ast\models \Diamond_1 a  \vee \Diamond_2 a = \top $.
\item $\mathbb{B}\models  \forall b (\forall a (a \prec_1 b) \Rightarrow \forall a (a \prec_2 b)) $ iff $\mathbb{B}^\ast\models \Diamond_2 \top \leq \Diamond_1 \top$.
\item $\mathbb{B} \models \forall a (\exists b ( a \prec_1 b\ \&\ b \prec_2 \bot) \Rightarrow a = \bot)$ iff $\mathbb{B}^\ast \models \blacksquare_1 \blacksquare_2 \bot= \bot$.
\end{enumerate}
\end{proposition}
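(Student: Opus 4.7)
The plan is to prove each of the eight items using the standard technique developed throughout Section~\ref{sec: modal charact}: I first apply Proposition~\ref{prop:background can-ext}.1(i)--(iii) (denseness) to reduce a modal inequality on $\mathbb{B}^\ast$ to a universal statement over pairs of elements of $A$; I then unfold the closed elements $\Diamond_i a$ as meets indexed by $\prec_i$-successors and the open elements $\blacksquare_i a$ as joins indexed by $\prec_i$-predecessors; I apply compactness through Proposition~\ref{prop: compactness and existential ackermann} to extract concrete witnesses in~$A$; and I finally translate the resulting atoms $\Diamond_i a \leq b$ and $a \leq \blacksquare_i b$ into $\prec_i$-atoms via Lemma~\ref{lem: diamond-output equivalence}.1(ii)--(iii), whose applicability is guaranteed by $\mathbb{B}$ being a bi-subordination algebra (so both $\prec_1, \prec_2$ satisfy $(\mathrm{SI})+(\mathrm{WO})$).

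Items~1, 2 and~7 are direct one-step applications of this template. For item~1, $\forall a(\Diamond_2 a \leq \Diamond_1 a)$ unfolds via Proposition~\ref{prop:background can-ext}.1(i) to $\forall a\forall b(a\prec_1 b \Rightarrow a\prec_2 b)$. Item~2 expresses $\blacksquare_2 a \leq \Diamond_1 a$ as $\forall c\forall d(c\prec_2 a \ \&\ a\prec_1 d \Rightarrow c\leq d)$, which is exactly $\prec_2 \circ \prec_1 \subseteq {\leq}$, and item~7 follows by unfolding $\Diamond_i \top = \bigwedge\{b : \top \prec_i b\}$ directly. Items~4 and~5 involve compositions of operators: for item~5 one computes $\Diamond_2^\sigma \Diamond_1 a = \bigwedge\{d : \exists b(a\prec_1 b \prec_2 d)\}$ using Lemma~\ref{lem: diamond-output equivalence}.1(ii), whence the correspondence with $\prec_1 \circ \prec_2 \subseteq \prec_1$ is immediate; for item~4 one similarly writes $\Diamond_1^\sigma \blacksquare_2 a = \bigvee\{\Diamond_1 c : c\prec_2 a\}$ (via Lemma~\ref{lem: diamond-output equivalence extended}.2(iv) and monotonicity of $\Diamond_1^\sigma$) and $\blacksquare_2^\pi \Diamond_1 a = \bigwedge\{\blacksquare_2 d : a\prec_1 d\}$, then invokes compactness to reduce $\Diamond_1 c \leq \blacksquare_2 d$ to $\exists e(c\prec_1 e \prec_2 d)$.

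The main obstacle will be item~3, the Dunn-style correspondences for (P1) and (P2), because each inequality mixes $\Diamond_i$ and $\blacksquare_i$ and in particular features a join $\Diamond_2 a \vee \blacksquare_1 b$ (resp.~a meet $\blacksquare_1 a \wedge \Diamond_2 b$) of a closed and an open element. The key observation is that, since $A$ is a distributive lattice, $A^\delta$ is completely distributive; this lets me rewrite $\Diamond_2 a \vee \blacksquare_1 b = \bigwedge\{d \vee \blacksquare_1 b : a\prec_2 d\}$, after which Proposition~\ref{prop: compactness and existential ackermann}.2(iii) extracts a witness $e \in A$ with $e\prec_1 b$ from any inequality $c \leq d \vee \blacksquare_1 b$. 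Carrying this through, $\blacksquare_1(a\vee b) \leq \Diamond_2 a \vee \blacksquare_1 b$ unfolds to $\forall c(c\prec_1 a\vee b \Rightarrow \forall d(a\prec_2 d \Rightarrow \exists e(e\prec_1 b \ \&\ c \leq d\vee e)))$, which is exactly (P1); the proof of (P2) is dual, using that $A^\delta$ is also completely co-distributive and the Heyting-style meet $\blacksquare_1 a \wedge \Diamond_2 b = \bigvee\{d \wedge \Diamond_2 b : d\prec_1 a\}$.

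Items~6 and~8 concern global statements about $\top$ and~$\bot$. For item~6, I use Proposition~\ref{prop:background can-ext}.1(iv) to write $\Diamond_1 a \vee \Diamond_2 a = \bigwedge\{b\vee c : a\prec_1 b,\ a\prec_2 c\}$, so that $\top \leq \Diamond_1 a \vee \Diamond_2 a$ amounts to $\forall b\forall c(a\prec_1 b \ \&\ a\prec_2 c \Rightarrow b\vee c = \top)$; this two-variable condition is equivalent to the single-variable version stated in item~6 via $(\mathrm{WO})$ applied to $b\vee c$ in place of $b$ in both $\prec_1$ and $\prec_2$. Item~8 is obtained by applying Lemma~\ref{lem: diamond-output equivalence extended}.2(iv) twice to compute $\blacksquare_1^\pi \blacksquare_2^\pi \bot = \bigvee\{a : \exists b(a\prec_1 b \prec_2 \bot)\}$, which equals $\bot$ iff the indexing condition forces $a = \bot$.
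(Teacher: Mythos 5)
Your proposal is correct and follows essentially the same method as the paper's proof: unfold each inequality via denseness and Proposition \ref{prop:background can-ext}, extract witnesses in $A$ by compactness (Proposition \ref{prop: compactness and existential ackermann}), and translate the resulting atoms $\Diamond_i a\leq b$ and $a\leq\blacksquare_i b$ into $\prec_i$-atoms via Lemma \ref{lem: diamond-output equivalence}, with complete distributivity of $A^\delta$ handling the mixed closed/open joins and meets in item 3. The only (harmless) deviations are in item 6, where you compute $\Diamond_1 a\vee\Diamond_2 a$ as a meet of joins and then reconcile the two-variable condition with the stated one via $\mathrm{(WO)}$ instead of testing against a single upper bound $b\in A$, and in item 8, where you compute $\blacksquare_1\blacksquare_2\bot$ directly as a join rather than passing through the adjunction to $\Diamond_2\Diamond_1 a\leq\bot$; both are equivalent routes.
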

\begin{proof}
In what follows, variables $a, b, c\ldots$ range in $A$, $k$ and  $o$ in $K(A^\delta)$ and $O(A^\delta)$, respectively.  %Also, for the sake of a more concise and readable presentation, we will write conjunctions of  relational atoms as chains; for instance, $a\leq b \ \& \ b\prec c$ will be written as $a\leq b\prec c$.
In the following calculations, we omit the references to the various properties, since the justifications are similar to those given in the proof of the propositions above.

\begin{center}
    \begin{tabular}{cll}
  $\forall a(\Diamond_2 a\leq \Diamond_1 a )$ &
 iff & $\forall a\forall b(\Diamond_1 a\leq b\Rightarrow  \Diamond_2 a \leq b )$\\
 &iff & $\forall a\forall b( a\prec_1 b\Rightarrow   a \prec_2 b )$\\
 &iff & $\prec_1 \ \subseteq  \   \prec_2 $\\
 \end{tabular}
    \end{center}

\begin{center}
    \begin{tabular}{cll}
  $\forall a(\blacksquare_2 a\leq \Diamond_1 a )$ &
 iff & $\forall a\forall k\forall o[(k\leq \blacksquare_2 a\ \&\ \Diamond_1 a\leq o)\Rightarrow  k \leq o ]$\\
 &iff & $\forall a\forall b\forall c\forall k\forall o[(k\leq b\ \&\ b\prec_2 a\ \&\  a\prec_1 c\  \&\ c\leq o)\Rightarrow  k \leq o ]$\\
& iff & $\forall b\forall c[\exists a (b\prec_2 a\ \&\  a\prec_1 c)\Rightarrow  b \leq c ]$\\
 &iff & $\prec_2\circ\prec_1\ \subseteq\ \leq $
 \end{tabular}
    \end{center}

\begin{center}
    \begin{tabular}{cll}
         $\forall a\forall b(\blacksquare_1 (a\vee b)\leq \Diamond_2 a\vee \blacksquare_1 b )$ &
         iff & $\forall a\forall b\forall k\forall o[(k\leq \blacksquare_1 (a\vee b)\ \& \ \Diamond_2 a\leq o)\Rightarrow k\leq o\vee \blacksquare_1 b ]$\\
         &iff & $\forall a\forall b\forall c\forall d\forall k\forall o[(k\leq c\ \& \ c\prec_1 (a\vee b)\ \& \  a\prec_2 d\ \&\ d\leq o)$\\
         & & $\Rightarrow k\leq o\vee \blacksquare_1 b ]$\\
         &iff & $\forall a\forall b\forall c\forall d[( c\prec_1 (a\vee b)\ \& \  a\prec_2 d)\Rightarrow c\leq d\vee \blacksquare_1 b ]$\\
         &iff & $\forall a\forall b\forall c[ c\prec_1 (a\vee b) \Rightarrow \forall d( a\prec_2 d\Rightarrow c\leq d\vee \blacksquare_1 b )]$\\
         &iff & $\forall a\forall b\forall c[ c\prec_1 (a\vee b) \Rightarrow \forall d[ a\prec_2 d\Rightarrow \exists e(e\prec_1 b \ \&\ c\leq d\vee e )]]$\\
\end{tabular}
         \end{center}

\begin{center}
    \begin{tabular}{cll}
          $\forall a\forall b(\blacksquare_1 a\wedge\Diamond_2  b\leq \Diamond_2 (a\wedge b))$ &
         iff & $\forall a\forall b\forall k\forall o[(k\leq \blacksquare_1 a \ \&\ \Diamond_2 (a\wedge b)\leq o)\Rightarrow k\wedge \Diamond_2 b \leq o]$\\
         &iff & $\forall a\forall b\forall c \forall d\forall k\forall o[(k\leq d\ \&\  d\prec_1 a \ \&\  a\wedge b\prec_2 c \ \&\ c\leq o)$\\
         &&$\Rightarrow  k\wedge \Diamond_2 b \leq o]$\\
         &iff & $\forall a\forall b\forall c \forall d[( d\prec_1 a \ \&\  a\wedge b\prec_2 c)\Rightarrow d\wedge \Diamond_2 b \leq c]$\\
         &iff & $\forall a\forall b\forall c [ a\wedge b\prec_2 c\Rightarrow \forall d[d\prec_1 a \Rightarrow \exists e (b\prec_2 e \ \& \ d\wedge e \leq c)]]$\\
         \end{tabular}
         \end{center}

         \begin{center}
    \begin{tabular}{cll}
  $\forall a(\Diamond_1\blacksquare_2  a\leq \blacksquare_2\Diamond_1 a )$ &
 iff & $\forall a\forall k\forall o[(k\leq \blacksquare_2 a \ \&\ \Diamond_1 a\leq o)\Rightarrow  \Diamond_1 k\leq  \blacksquare_2 o ]$ \\
  &iff & $\forall a\forall b\forall c\forall k\forall o[(k\leq b\prec_2 a \ \&\  a \prec_1 c\leq o)\Rightarrow  \Diamond_1 k\leq  \blacksquare_2 o ]$\\
  &iff & $\forall a\forall b\forall c\forall k\forall o[(b\prec_2 a \ \&\  a \prec_1 c)\Rightarrow  \Diamond_1 b\leq  \blacksquare_2 c ]$\\
  &iff & $\forall b\forall c[\exists a(b\prec_2 a \ \&\  a \prec_1 c)\Rightarrow \exists e (  b\prec_1 e\ \&\  e\prec_2 c )]$\\
  &iff & $\prec_2  \circ\prec_1\ \subseteq \prec_1  \circ\prec_2$\\
\end{tabular}
    \end{center}

\begin{center}
    \begin{tabular}{cll}
         $\forall a (\Diamond_1 a \leq \Diamond_2\Diamond_1 a)$  &
         iff & $\forall a \forall c (\Diamond_2\Diamond_1 a  \leq c \Rightarrow \Diamond_1 a\leq c) $ \\
        & iff & $\forall a \forall c (\exists b ( a \prec_1 b \ \& \ \Diamond_2 b \leq c) \Rightarrow \Diamond_1 a \leq c) $\\
         &iff  & $\forall a  \forall c (\exists b( a \prec_1 b \ \& \ b \prec_2 c) \Rightarrow a \prec_1 c) $\\
         & iff & $\prec_1\circ \prec_2\ \subseteq \ \prec_1$
    \end{tabular}
\end{center}

\begin{center}
    \begin{tabular}{cll}
     $\forall a(\top \leq \Diamond_1 a \vee \Diamond_2 a)$ &
    iff& $\forall a\forall b (\Diamond_1 a \vee \Diamond_2 a \leq b \Rightarrow \top \leq b) $ \\
    &iff & $\forall a\forall b (\Diamond_1 a \leq b \ \& \ \Diamond_2 a \leq b \Rightarrow \top \leq b) $\\
   & iff &  $\forall a\forall b ( a \prec_1 b \ \& \  a \prec_2 b \Rightarrow  b=\top) $\\
     \end{tabular}
\end{center}

\begin{center}
    \begin{tabular}{cll}
          $\Diamond_2 \top \leq \Diamond_1 \top$  &
         iff & $\forall b(\Diamond_1\top \leq b \Rightarrow \Diamond_2 \top \leq b)$ \\
         &iff & $\forall b(\top \prec_1  b \Rightarrow \top \prec_2  b )$ \\
         & iff & $\forall b(\forall a (a \prec_1  b) \Rightarrow \forall a (a \prec_2  b ))$ \\
    \end{tabular}
\end{center}

\begin{center}
    \begin{tabular}{cll}
          $\blacksquare_1\blacksquare_2 \bot = \bot$  &
         iff & $\forall a ( a \leq \blacksquare_1\blacksquare_2 \bot \Rightarrow a \leq \bot) $\\
         &iff &  $\forall a ( \Diamond_2\Diamond_1 a \leq  \bot \Rightarrow a \leq \bot) $\\
         &iff &  $\forall a ( \exists b (a \prec_1 b \ \& \ b \prec_2 \bot) \leq  \bot \Rightarrow a \leq \bot) $
    \end{tabular}
\end{center}
\end{proof}

\begin{proposition}
\label{prop:poly correspondence permission}
For any   sp-algebra $\mathbb{H} = (A, \prec, \pcon)$ and any sdp-algebra $\mathbb{K} = (A, \prec, \ppcon)$,
\begin{enumerate}
\item $\mathbb{H} \models \mathrm{(INC)}$ iff
 $\mathbb{H}^\ast\models \blacksquare
 a\leq \wt  a$.
\item if $\mathbb{H}$ is $\wedge$-SL-based, then
\begin{enumerate}[label=(\roman*)]
\item $\mathbb{H} \models \mathrm{(CT)}^\wt$ iff
 $\mathbb{H}^\ast\models \wt(a\wedge \Diamond a)\leq \wt a$.
 \end{enumerate}
\item if $\mathbb{H}$ is Heyting algebra-based, then
\begin{enumerate}[label=(\roman*)]
\item $\mathbb{H} \models \mathrm{(CS2)}$ iff $\mathbb{H}^\ast\models \Diamond a\rightarrow \wt a\leq \wt a$
%\item $\mathbb{H} \models \mathrm{(AND)^\downarrow}$ iff $\mathbb{H}^\ast\models \Diamond a\rightarrow \wt a\leq \wt a$.
\item $\mathbb{H} \models \mathrm{(CT)}^\downarrow$ iff $\mathbb{H}^\ast\models\blacksquare b\wedge \bt(b\wedge c)\leq b\rightarrow \bt c$.
\end{enumerate}
\item if $\mathbb{H}$ is co-Heyting algebra-based\footnote{A {\em co-Heyting algebra} is a distributive lattice $A$ endowed with a binary operation $\pdla$ s.t.~$a\pdla b\leq c$ iff $ a\leq  c\vee b$ for all $a, b, c\in A$.}, then
\begin{enumerate}[label=(\roman*)]
\item $\mathbb{H} \models \mathrm{(CS1)}$ iff $\mathbb{H}^\ast\models \Diamond a \leq \Diamond a\pdla \wt a$.
\end{enumerate}
\item if $\mathbb{K}$ is co-Heyting algebra-based, then
\begin{enumerate}[label=(\roman*)]
\item $\mathbb{K} \models \mathrm{(CT)^\tw}$ iff $\mathbb{K}^\ast\models \tw a \leq \Diamond(\tw a\pdla  a)$.
\end{enumerate}
\iffalse
\marginnote{AP: I find the properties from 5 on not very interesting. I propose to delete them}
\item $\mathbb{H} \models \forall a ( \forall b (a \prec_1 b) \Rightarrow \forall b (b \npcon_2 a)) $ iff $\mathbb{H}^\ast\models \blacksquare_1 \bot \leq \rhd_2 \top$.
\item  $\mathbb{H} \models \pcon_1 \subseteq \pcon_2$ iff $\mathbb{H}^\ast\models  \rhd_2 a \leq \rhd_1 a$.
\item  $\mathbb{H} \models \forall b (a \npcon_1 b \, \& \, a \npcon_2 b \Rightarrow b =\bot) $ iff $\mathbb{H}^\ast\models  \rhd_1 a \wedge \rhd_2 a = \bot$.
\item $\mathbb{H} \models  \forall a (\exists b ( a\prec_1 b \, \& \, b \npcon_2 c) \Rightarrow a \npcon_1 c) $ iff $\mathbb{H}^\ast\models \blacksquare_1\bt_2 c \leq \bt_1 c $.
\item $\mathbb{H} \models \forall b (b \prec_2 c \Rightarrow a \npcon_1 b)$ iff  $\mathbb{H}^\ast\models  \blacksquare_2 c \leq \rhd_1 a $.
\item $\mathbb{H} \models \exists b (a \prec_1 b \, \& \, a \npcon_2 b) $ iff $\mathbb{H}^\ast\models  \Diamond_1 a \leq \rhd_2 a$. Thus, $\mathbb{H} \models \forall a \exists b (a \prec_1 b \, \& \, a \npcon_2 b) $ iff $\mathbb{H}^\ast\models  \Diamond_1 \top \leq \rhd_2 \top$.
\item $\mathbb{H} \models \forall a ((\exists b ( a \prec_1 b \& b \npcon_2 \bot)) \Rightarrow a = \bot)$ iff $\mathbb{H} \models \blacksquare_1 \bt_2 \top= \bot$.
\fi
\end{enumerate}
\end{proposition}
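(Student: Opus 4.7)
My proof plan is to treat each item as a standard unified correspondence calculation in the spirit of Propositions \ref{prop:characteriz}, \ref{prop:precontact correspondence}, \ref{prop:postcontact correspondence}, and \ref{prop:poly correspondence}. The template is: starting from a modal inequality, I would first use denseness (Proposition \ref{prop:background can-ext}.1) to reduce it to a statement quantifying over closed and open elements; then invoke the existential-Ackermann style reductions encoded in Lemma \ref{lem: diamond-output equivalence extended} together with compactness (Proposition \ref{prop: compactness and existential ackermann}) to descend to quantification over $A$; and finally apply Lemma \ref{lem: diamond-output equivalence} to rewrite $\Diamond a \leq b$, $a\leq \blacksquare b$, $b\leq \wt a$, $a\leq \bt b$, $\tw a\leq b$, $\tb b\leq a$ as $a\prec b$, $a\npcon b$, $a\nppcon b$ respectively. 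The standing monotone/antitone hypotheses on $\mathbb{H}$ and $\mathbb{K}$ via their proto-subordination/proto-precontact components will ensure that these translations apply with biconditionals.

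For items 1 and 2 the calculation is direct: for item 1, I unfold $\forall a(\blacksquare a\leq \wt a)$ via Proposition \ref{prop:background can-ext}.1(ii) to $\forall a\forall b(b\leq\blacksquare a\Rightarrow b\leq\wt a)$, then translate both sides using Lemmas \ref{lem: diamond-output equivalence}.1(iii) and \ref{lem: diamond-output equivalence}.2(ii) to recover the inclusion underlying (INC); for item 2(i), the term $\wt(a\wedge \Diamond a)$ is closed-like and is handled using Proposition \ref{prop: compactness and existential ackermann}.1(iii) (witnessing $b$ with $\Diamond a\leq b$ and $a\wedge b\leq d$) in complete analogy with the proof of Proposition \ref{prop:characteriz}.2(i), concluding via (SI) and (WO)$^\wt$ that $d$ may be taken to be $a\wedge b$ itself.

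Items 3--5 are the delicate ones because the modal terms now involve the residuals $\rightarrow$ and $\pdla$, which are not primitive in the signature of the slanted algebras defined in Definition \ref{def:slanted}. My plan is to treat these residuals as part of the underlying $\mathcal{L}$-signature (since $\mathbb{H}$ is assumed to be Heyting-algebra based, resp.\ co-Heyting-algebra based, by the hypotheses of these items), and to exploit the residuation laws $a\wedge b\leq c\Leftrightarrow b\leq a\rightarrow c$ and $a\pdla b\leq c\Leftrightarrow a\leq c\vee b$ to turn each modal inequality into one with no residual operator, but with an extra universally quantified parameter. For example, for item 3(i), the inequality $\Diamond a\rightarrow \wt a\leq \wt a$ becomes $\forall c(\Diamond a\wedge c\leq \wt a\Rightarrow c\leq \wt a)$, and then denseness, compactness and Lemma \ref{lem: diamond-output equivalence} convert $\Diamond a\wedge c\leq \wt a$ to the  pair ``$a\prec x$ and $x\wedge c\leq \wt a$'' which, again by Lemma \ref{lem: diamond-output equivalence}.2(ii), reads $a\npcon(x\wedge c)$, thereby delivering (CS2); items 3(ii), 4(i) and 5(i) are handled analogously by taking the $\pdla$-residual and reasoning over joins/open elements.

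The main obstacle I expect is ensuring, in items 3--5, that the canonical extension $\mathbb{H}^\delta$ (resp.\ $\mathbb{K}^\delta$) interprets the residuals $\rightarrow^\pi$ and $\pdla^\sigma$ correctly on the mixed arguments that arise (closed versus open elements), and that the compactness arguments in Proposition \ref{prop: compactness and existential ackermann} apply to the joins and meets produced by the residuals. I would guard against this by establishing once, at the start of the proof, a small auxiliary fact: for all $a\in A$, $k\in K(A^\delta)$ and $o\in O(A^\delta)$, $a\rightarrow o\in O(A^\delta)$ and $k\pdla a\in K(A^\delta)$, which follows from the adjunctions $(-)\wedge a\dashv a\rightarrow(-)$ and $(-)\pdla a\dashv(-)\vee a$ being (completely) join/meet preserving; once this is in place, the residuated manipulations go through by the same Ackermann-style witness extractions used throughout Section \ref{sec: modal charact}, and the proof reduces to the routine translations already carried out in Propositions \ref{prop:characteriz}--\ref{prop:poly correspondence}.
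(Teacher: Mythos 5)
Your plan is exactly the route the paper takes: each item is discharged by the same chain of equivalences — denseness via Proposition \ref{prop:background can-ext}, witness extraction via Lemma \ref{lem: diamond-output equivalence extended} and compactness, translation of $\Diamond a\leq b$, $b\leq\wt a$, $\tw a\leq b$, etc.\ via Lemma \ref{lem: diamond-output equivalence}, and elimination of $\rightarrow$ and $\pdla$ by residuation. The only difference is that you make explicit (as a preliminary auxiliary fact on $a\rightarrow o$ and $k\pdla a$) the lifting of the residuation laws to $A^\delta$, which the paper's calculation uses silently; this is a harmless and arguably welcome addition.
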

\begin{proof}
$\quad$
1. \begin{center}
    \begin{tabular}{cll}
    $\forall a (\blacksquare a\leq \bt a)$ & iff &$\forall a\forall b(b\leq \blacksquare a\Rightarrow b\leq \bt a )$\\
    & iff &$\forall a\forall b(b\prec   a\Rightarrow b\npcon a )$\\
  \end{tabular}
\end{center}
2(i) \begin{center}
    \begin{tabular}{cll}
$\forall a(\wt(a\wedge \Diamond a)\leq \wt a)$ & iff & $\forall a \forall c (c\leq \wt(a\wedge \Diamond a)\Rightarrow c\leq \wt a )$\\

& iff & $\forall a \forall c (\exists b ( \Diamond a \leq b \ \& \ c\leq \wt(a\wedge b))\Rightarrow c\leq \wt a )$\\
& iff & $\forall a \forall b \forall c ( a \prec b \ \& \ (a\wedge b)\npcon c\Rightarrow a \npcon c )$\\
   \end{tabular}
\end{center}

3(i) \begin{center}
    \begin{tabular}{cll}
    $\forall a (\Diamond a\rightarrow \wt a\leq  \wt a)$ & iff & $\forall  a \forall c(c\leq \Diamond a\rightarrow \wt a\Rightarrow c\leq \wt a)$\\
    & iff & $\forall  a \forall c(\Diamond a \wedge c\leq \wt a \Rightarrow c\leq \wt a)$\\
     & iff & $\forall  a \forall c(\exists b(\Diamond a\leq b \ \& \ b \wedge c\leq \wt a) \Rightarrow c\leq \wt a)$\\
     & iff & $\forall  a \forall b\forall c(\Diamond a\leq b \ \& \ b \wedge c\leq \wt a \Rightarrow c\leq \wt a)$\\
     & iff & $\forall  a \forall b\forall c(a\prec b \ \&\ a \npcon  b \wedge c \Rightarrow a\npcon c)$\\
   \end{tabular}
\end{center}

(ii)
\begin{center}
    \begin{tabular}{cll}
    $\forall b\forall c(\blacksquare b\wedge \bt(b\wedge c)\leq b\rightarrow \bt c)$ & iff & $\forall a\forall b\forall c(a\leq \blacksquare b\wedge \bt(b\wedge c)\Rightarrow a\leq b\rightarrow \bt c)$\\
    & iff & $\forall a\forall b\forall c(a\leq \blacksquare b \ \& \ a\leq \bt(b\wedge c)\Rightarrow (a\wedge b)\leq \bt c)$\\
    & iff & $\forall a\forall b\forall c(a\prec b \ \& \ a\npcon (b\wedge c)\Rightarrow (a\wedge b)\npcon c)$\\
    \end{tabular}
\end{center}

4(i)

\begin{center}
    \begin{tabular}{cll}
    $\forall a (\Diamond a \leq \Diamond a\pdla \wt a)$ & iff & $ \forall a \forall c ( \Diamond a\pdla \wt a\leq c \Rightarrow \Diamond a \leq c )$\\
    & iff & $ \forall a \forall c ( \Diamond a\leq c \vee \wt a\Rightarrow \Diamond a \leq c )$\\
    & iff & $ \forall a \forall c ( \exists b (b\leq \wt a \ \& \ \Diamond a\leq c \vee b)\Rightarrow \Diamond a \leq c )$\\
    & iff & $ \forall a \forall b\forall c ( b\leq \wt a \ \& \ \Diamond a\leq c \vee b\Rightarrow \Diamond a \leq c )$\\
    & iff & $ \forall a \forall b\forall c ( a \npcon b \ \& \ a\prec b \vee c\Rightarrow a \prec c )$\\
     \end{tabular}
\end{center}

5(i)

\begin{center}
    \begin{tabular}{cll}
    $\forall a (\tw a \leq \Diamond(\tw a\pdla a)$ & iff & $\forall a \forall c (\Diamond(\tw a\pdla a)\leq c \Rightarrow \tw a\leq c)$\\
    & iff & $\forall a \forall c (\exists b(\Diamond(b\pdla a)\leq c\ \& \ \tw a\leq b) \Rightarrow \tw a\leq c)$\\
     & iff & $\forall a\forall b \forall c (\Diamond(b\pdla a)\leq c\ \& \ \tw a\leq b \Rightarrow \tw a\leq c)$\\
     & iff & $\forall a\forall b \forall c ((b\pdla a)\prec c\ \& \ a\nppcon b \Rightarrow  a\nppcon c)$\\
       \end{tabular}
\end{center}
\end{proof}
\section{Applications}
\label{sec: applications}
In the present section, we discuss four independent but connected applications of the characterization results of the previous section. %Propositions \ref{prop:characteriz},\ref{prop:precontact correspondence} and \ref{prop:postcontact correspondence}.
\subsection{Characterizing output operators}
\label{ssec: characterizing output}

The output operators $out^N_i$ for $1\leq i\leq 6$ associated with a given input/output logic $\mathbb{L} = (\mathcal{L}, N)$ can be given semantic counterparts in the environment of proto-subordination algebras as follows: for every proto-subordination algebra $\mathbb{S} = (A, \prec)$,  we let $\mathbb{S}_i: = (A, \prec_i)$ where ${\prec_i}\subseteq A\times A$ is the smallest extension   of $\prec$ which satisfies the properties indicated in the following table:\footnote{While (the purely logical counterparts of) outputs 1 to 4 are  well known in the literature of input/output logic (cf.~\cite{Makinson00}), the last two outputs are novel.}
\begin{center}
	\begin{tabular}{ l l}
		\hline
		$\prec_i$ & Properties   \\
		\hline	
		$\prec_{1}$  & ($\top$), (SI), (WO), (AND)   \\
		$\prec_{2}$  & ($\top$), (SI), (WO), (AND), (OR)  \\
		$\prec_{3}$  & ($\top$), (SI), (WO), (AND), (CT)  \\
		$\prec_{4}$  & ($\top$), (SI), (WO), (AND), (OR), (CT) \\
		\hline
$\prec_{5}$  & ($\bot$), (SI), (WO), (OR)   \\
		$\prec_{6}$  & ($\bot$), (SI), (WO), (OR), (DCT)  \\
  \hline
	\end{tabular}
\end{center}

Then, for each $1\leq i\leq 4$ and every  $k \in K(A^\delta)$,\footnote{Recall that, by definition, if $k \in K(A^\delta)$, then $k = \bigwedge D$ for some nonempty down-directed $D\subseteq A$.}
\[\Diamond_i^{\sigma}k: = \bigwedge\{ \bigwedge {\prec_i}[a]\mid a\in A \text{ and } k\leq a\}\] encodes the algebraic  counterpart of $out^N_i(\Gamma)$ for any $\Gamma\subseteq \mathrm{Fm}$, and the characteristic properties of $\Diamond_i$ for each $1\leq i\leq 4$ are those  identified in Lemma  \ref{lem: axioms to relation}, Corollary \ref{cor: charact monotone reg norm}, and  Proposition \ref{prop:characteriz}. For any defined proto-subordination algebra $\mathbb{S} = (A, \prec)$, let $\mathbb{S}_i^\ast: = (A, \Diamond_i, \blacksquare_i)$ denote the slanted algebras associated with $\mathbb{S}_i = (A, \prec_i)$ for each $1\leq i\leq 4$.

\begin{proposition}
\label{prop:output}
For any defined proto-subordination algebra $\mathbb{S} = (A, \prec)$,
\begin{enumerate}
    \item $\Diamond_1$ is the largest monotone map dominated by $\Diamond$ (i.e.~pointwise-smaller than or equal to $\Diamond$). %and $\blacksquare_1$ is the smallest normal map dominating $\blacksquare$.
    \item $\Diamond_2$ is the largest regular map dominated by $\Diamond$. %and $\blacksquare_2$ is the smallest regular map dominating $\blacksquare$.
    \item $\Diamond_3$ is the largest monotone map satisfying $\Diamond_3 a\leq \Diamond_3(a\wedge \Diamond_3 a)$ dominated by $\Diamond$. %and $\blacksquare_3$ is the smallest monotone map satisfying $\blacksquare_3 (a \vee \blacksquare_3 a) \leq \blacksquare_3 a$ dominating $\blacksquare$.
    \item $\Diamond_4$ is the largest regular map satisfying $\Diamond_4 a\leq \Diamond_4(a\wedge \Diamond_4 a)$ dominated by $\Diamond$. %and $\blacksquare_4$ is the smallest regular map satisfying $\blacksquare_4 (a \vee \blacksquare_4 a) \leq \blacksquare_4 a$ dominating $\blacksquare$.
    \item $\Diamond_5$ is the largest normal map dominated by $\Diamond$.
    \item $\Diamond_6$ is the largest normal map dominated by $\Diamond$, the adjoint of which satisfies  $\blacksquare_6 (a \vee \blacksquare_6 a) \leq \blacksquare_6 a$.
\end{enumerate}
\end{proposition}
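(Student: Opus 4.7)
The plan is to prove all six items by a uniform Galois-style strategy, exploiting a duality between c-slanted maps $f : A \to A^\delta$ and binary relations on $A$.  For any such $f$, define $\prec_f \subseteq A \times A$ by $a \prec_f b$ iff $f(a) \leq b$; conversely, any relation $R$ on $A$ yields a c-slanted map $a \mapsto \bigwedge R[a]$.  The basic reciprocity is that, for $f(a) \in K(A^\delta)$, we have $\bigwedge \prec_f[a] = \bigwedge \{b \in A : f(a) \leq b\} = f(a)$, which follows from denseness and compactness (Proposition \ref{prop:background can-ext}.1(i)).  For each $i$ this will let me convert a pointwise domination $f \leq \Diamond_i$ into an inclusion $\prec_i \subseteq \prec_f$, and vice versa.

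First, I would verify the preliminary facts common to all six items.  Since $\prec \subseteq \prec_i$ by construction, one has $\prec[a] \subseteq \prec_i[a]$ and therefore $\Diamond_i a = \bigwedge \prec_i[a] \leq \bigwedge \prec[a] = \Diamond a$ for every $a\in A$, so $\Diamond_i$ is dominated by $\Diamond$.  Next, that $\Diamond_i$ has the claimed algebraic property is precisely what Lemma \ref{lem: axioms to relation}, Corollary \ref{cor: charact monotone reg norm} and Proposition \ref{prop:characteriz} deliver: (SI)+(WO) give monotonicity (item 1), adding (OR) gives regularity (item 2), (CT) gives $\Diamond_3 a \leq \Diamond_3(a\wedge \Diamond_3 a)$ via Proposition \ref{prop:characteriz}.2(i) (item 3), items 4 and 5 combine these observations together with $\Diamond_i\bot = \bot$ for item 5, and for item 6 the property $\blacksquare_6(a\vee \blacksquare_6 a)\leq \blacksquare_6 a$ corresponds to (DCT) via Proposition \ref{prop:characteriz}.3(iv), where the adjoint $\blacksquare_6$ exists because $\prec_6$ is $\blacksquare$-monotone (Corollary \ref{cor: defined+si+wo imply tense hence normal}).

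The heart of the proof is the maximality direction.  Fix $i$ and a c-slanted $f : A \to A^\delta$ which has the claimed property and satisfies $f\leq \Diamond$.  I would show that $\prec_f$ extends $\prec$ and is closed under all the rules defining $\prec_i$; by minimality this forces $\prec_i \subseteq \prec_f$, hence $\prec_i[a] \subseteq \prec_f[a]$, whence $f(a) = \bigwedge \prec_f[a] \leq \bigwedge \prec_i[a] = \Diamond_i a$.  That $\prec\subseteq\prec_f$ is immediate from $f\leq \Diamond$ and the definition of $\Diamond$.  The closure properties of $\prec_f$ translate into pointwise properties of $f$ in the expected way: (WO) and (AND) are automatic; ($\top$) is automatic and ($\bot$) reduces to $f(\bot)=\bot$ (normality); (SI) is exactly monotonicity of $f$; (OR) is exactly regularity ($f(a\vee b) = f(a)\vee f(b)$).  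For items 3 and 4 the key rule is (CT), which translates to: if $f(a) \leq b$ and $f(a\wedge b) \leq c$ then $f(a) \leq c$.  For item 6 one must verify (DCT) for $\prec_f$ using the adjoint condition.

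The main obstacles will be the (CT) and (DCT) clauses. For (CT), I plan to exploit the canonical extension $f^\sigma$: if $f(a)\leq b$, then monotonicity of $f^\sigma$ gives $f^\sigma(a\wedge f(a)) \leq f^\sigma(a\wedge b) = f(a\wedge b) \leq c$, and combining with the hypothesis $f(a) \leq f^\sigma(a\wedge f(a))$ yields $f(a)\leq c$.  This argument tacitly relies on $a\wedge f(a) \in K(A^\delta)$ (Proposition \ref{prop: compactness and existential ackermann}.1(ii)) and on $f^\sigma$ being monotone on $A^\delta$, which holds since $f$ is monotone and c-slanted.  For (DCT), given $f(c)\leq a\vee b$ and $f(b)\leq a$, I would apply the adjunction $f\dashv g$ (with $g$ denoting the adjoint $\blacksquare_6$) to rewrite these as $c\leq g(a\vee b)$ and $b\leq g(a)$; then $a\vee b \leq a\vee g(a)$, and monotonicity of $g$ combined with $g(a\vee g(a))\leq g(a)$ forces $c\leq g(a\vee b)\leq g(a\vee g(a)) \leq g(a)$, i.e.\ $f(c)\leq a$.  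The subtlety in item 6 is that the adjunction lives on $A^\delta$ rather than $A$, so I will also need to observe that normality of $f$ combined with boundedness of $A$ guarantees that $f^\sigma$ is completely join-preserving and hence admits an adjoint, so that the above computation is legitimate in $A^\delta$.
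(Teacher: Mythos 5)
Your proposal is correct and follows essentially the same route as the paper: both hinge on the correspondence $f\mapsto\;\prec_f$ between c-slanted maps and proto-subordination relations, together with the minimality of $\prec_i$ as an extension of $\prec$ closed under the relevant rules. The only difference is one of presentation — you argue directly (any $f\leq\Diamond$ with the required properties forces $\prec_i\;\subseteq\;\prec_f$, hence $f\leq\Diamond_i$) and explicitly check that the pointwise properties of $f$ translate into the closure rules for $\prec_f$, including the $\mathrm{(CT)}$ and $\mathrm{(DCT)}$ cases via $f^\sigma$ and the adjoint, whereas the paper runs a contrapositive argument and leaves that rule-by-rule verification implicit.
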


\begin{proof}
 By Lemma \ref{lem: axioms to relation} and Proposition \ref{prop:characteriz}, the properties stated in each item of the statement hold for $\Diamond_i$ and $\blacksquare_i$.  To complete the proof, we need to argue for $\Diamond_i$ being the largest such map %(the proof for $\blacksquare_i$ is similar).
 By Lemma \ref{lem: diamond-output equivalence} 1.(ii), $a\prec_i b$ iff $\Diamond_i a\leq b$  for  all $a, b\in A$ and $1\leq i\leq 4$. Any $f: A\to A^\delta$  s.t.~$f(a)\in K(A^\delta)$ for every $a\in A$ induces a proto-subordination relation  $\prec_f\subseteq A\times A$ defined as $a\prec_f b$ iff $f(a)\leq b$. Clearly, if $f(a)\leq f' (a)$ for every $a\in A$, then ${\prec}_{f'}\subseteq {\prec_f}$. Moreover, if $f(a)< f' (a)$, then, by denseness, $f(a)\leq b$ for some $b\in A$ s.t.~$f'(a)\nleq b$, hence ${\prec}_{f'}\subset {\prec_f}$.

 If $\Diamond_i$ is not the largest map endowed with the properties mentioned in the statement and dominated by $\Diamond$, then  a map $f$ exists which is endowed with these properties such that  $\Diamond_i a \leq f(a) \leq \Diamond a$ for all $a \in A$, and  $\Diamond_i b < f(b)$ for some $b \in A$. Then, by the argument in the previous paragraph,  ${\prec} = {\prec_\Diamond} \subseteq {\prec_f} \subset {\prec}_{\Diamond_i}=\prec_i$. As $f$ is endowed with the the properties mentioned in the statement,  $\prec_f$ is an extension of $\prec$ which enjoys the required properties, and is strictly contained in $\prec_i$. Hence, $\prec_i$ is not the smallest such extension.
\end{proof}

\begin{remark}
   The modal characterizations stated in the proposition  above are similar to the characterization of $out_2$-$out_4$shown in \cite[Observation 4 in Section 4.3]{Makinson00}. %Notice that they do it for , which is the one for which the modal operators are normal.
   However, while characterizations of this type are regarded in \cite{Makinson00} as ``interesting curiosities more than
useful tools'', in the context of the present formal framework, these modal characterizations can be made systematic and   acquire meaning and use.
\end{remark}

 For any selfextensional logic  $\mathcal{L}$ and any permission system $P$, the  output operators associated with %s.t.~$\wedge_P$ and $\vee_S$ hold for $\mathcal{L}$,
 $P_i^c$ for $1\leq i\leq 4$ can be given semantic counterparts in the environment of proto sp-algebras (cf.~Section \ref{sec: modal charact}) as follows: for every proto sp-algebra $\mathbb{H} = (A, \prec, \pcon)$,  we let $\mathbb{H}_i: = (A, \prec_i, \pcon_{i})$, where $\prec_i\ \subseteq \ A\times A$ is defined  as discussed at the beginning of the present section, and ${\pcon_i}\subseteq A\times A$ is the relative complement in $A\times A$ of the smallest extension   of $ \npcon$ satisfying the properties indicated in the following table:

\begin{center}
	\begin{tabular}{ l l}
		\hline
		$\mathcal{C}_i$ & Properties   \\
		\hline	
		$\mathcal{C}_{1}$  & $ \mathrm{(\top)^\wt, (SI)^\wt, (WO)^\wt, (AND)^\wt}$   \\
		$\mathcal{C}_{2}$  & $\mathrm{ (\top)^\wt, (SI)^\wt, (WO)^\wt, (AND)^\wt, (OR)^\wt}$  \\
		$\mathcal{C}_{3}$  & $\mathrm{ (\top)^\wt, (SI)^\wt, (WO)^\wt, (AND)^\wt, (CT)^\wt}$  \\
		$\mathcal{C}_{4}$  & $\mathrm{ (\top), (SI)^\wt, (WO)^\wt,(AND)^\wt, (OR)^\wt, (CT)^\wt}$ \\
		\hline
	\end{tabular}
\end{center}
Then, for each $1\leq i\leq 4$ and every  $k \in K(A^\delta)$, \[\Diamond_i^{\sigma}k: = \bigwedge\{ \bigwedge {\prec_i}[a]\mid a\in A \text{ and } k\leq a\}\ \text{ and }\ \wt_i^\pi k:= \bigvee\{ \bigvee (\pcon_i [a])^c\mid a\in A\mbox{ and } k\leq a\}\] respectively encode the algebraic  counterparts of the output operators associated with a given normative and permission system  on a given selfextensional logic $\mathcal{L}$, and the characteristic properties of $\Diamond_i$ and $\wt_i$ for each $1\leq i\leq 4$ are those  identified in Lemma  \ref{lem: axioms to relation}, Corollary \ref{cor: charact monotone reg norm}, and  Proposition \ref{prop:characteriz}. For any defined\footnote{A proto sp-algebra $\mathbb{H} = (A, \prec, \pcon)$ is {\em defined} if both $(A, \prec)$ and $(A, \pcon)$ are.} proto sp-algebra $\mathbb{H} = (A, \prec, \pcon)$, let $\mathbb{H}_i^\ast: = (A, \Diamond_i, \wt_i)$ denote the slanted algebra associated with $\mathbb{H}_i = (A, \prec_i, \pcon_i)$ for each $1\leq i\leq 4$.

\begin{proposition}
\label{prop:output contact}
For any defined proto sp-algebra $\mathbb{H} = (A, \prec, \pcon)$,
\begin{enumerate}
    \item $\wt_1$ is the smallest antitone map dominating $\wt$.
    \item $\wt_2$ is the smallest regular map dominating $\wt$.
    \item $\wt_3$ is the smallest antitone map satisfying $\wt_3(a\wedge \Diamond_3 a)\leq \wt_3 a$ dominating $\wt$.
    \item $\wt_4$ is the smallest regular map satisfying  $\wt_4(a\wedge \Diamond_4 a)\leq \wt_4 a$ dominating $\wt$.
\end{enumerate}
\end{proposition}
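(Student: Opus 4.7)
The proof proceeds by dualising the argument given for Proposition \ref{prop:output}. There are two directions to establish in each item: first, that $\wt_i$ enjoys the stated properties; second, that any $g: A \to A^\delta$ with $g(a) \in O(A^\delta)$ which satisfies those properties and dominates $\wt$ (i.e.~$\wt a \leq g(a)$ for all $a$) also dominates $\wt_i$.

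For the first direction, the properties of $\wt_i$ follow from the correspondence machinery already developed. By construction, $\pcon_i$ is the relation whose relative complement $\mathcal{C}_i$ is the smallest extension of $\npcon$ closed under the rules in the table for $\mathcal{C}_i$. Closure under $\mathrm{(SI)^\wt}$ and $\mathrm{(WO)^\wt}$ makes $\pcon_i$ antitone, hence by Corollary \ref{cor: charact monotone reg norm}.2(i) the map $\wt_i$ is antitone; adding closure under $\mathrm{(OR)^\wt}$ and $\mathrm{(AND)^\wt}$ yields regularity of $\pcon_i$, hence of $\wt_i$ by Corollary \ref{cor: charact monotone reg norm}.2(ii); finally, closure under $\mathrm{(CT)^\wt}$ (which in our coupled setting is stated with respect to $\prec_i$) corresponds via Proposition \ref{prop:poly correspondence permission}.2(i) to $\wt_i(a\wedge \Diamond_i a) \leq \wt_i a$.

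For the minimality direction, assume $g: A \to A^\delta$ takes values in $O(A^\delta)$, satisfies the properties listed in the $i$th item, and $\wt \leq g$ pointwise. Define a relation $\npcon_g \subseteq A \times A$ by $a \npcon_g b$ iff $b \leq g(a)$. The inclusion $\npcon \subseteq \npcon_g$ is immediate: if $a \npcon b$ with $b \in A$, then by Lemma \ref{lem: diamond-output equivalence}.2(i), $b \leq \wt a \leq g(a)$. The closure properties of $\npcon_g$ are then straightforward to verify directly from the definition: $\mathrm{(WO)^\wt}$ and $\mathrm{(AND)^\wt}$ hold trivially (since $b \leq c \leq g(a)$ forces $b \leq g(a)$, and $b_1, b_2 \leq g(a)$ force $b_1 \vee b_2 \leq g(a)$); $\mathrm{(SI)^\wt}$ follows from antitonicity of $g$; $\mathrm{(OR)^\wt}$ follows from regularity of $g$ via $g(a) \wedge g(b) = g(a \vee b)$; and $\mathrm{(CT)^\wt}$ reduces, using $a \prec_i b \Leftrightarrow \Diamond_i a \leq b$ (Lemma \ref{lem: diamond-output equivalence}.1(ii)), $a \wedge \Diamond_i a \leq a \wedge b$, the antitonicity of $g$, and the hypothesis $g(a \wedge \Diamond_i a) \leq g(a)$, to: $c \leq g(a \wedge b) \leq g(a \wedge \Diamond_i a) \leq g(a)$. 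The $(\top)^\wt$ condition $\bot \leq g(\top)$ is trivial.

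By the minimality of $\mathcal{C}_i = {\npcon_i}$ as the smallest such extension of $\npcon$, we obtain $\npcon_i \subseteq \npcon_g$. Hence, for every $a, b \in A$, $b \leq \wt_i a$ implies $a \npcon_i b$ (by Lemma \ref{lem: diamond-output equivalence}.2(ii), applicable since $\pcon_i$ is $\wt$-antitone, hence defined and satisfies $\mathrm{(WO)^\wt}$), which implies $a \npcon_g b$, i.e.~$b \leq g(a)$. Since $g(a) \in O(A^\delta)$, Proposition \ref{prop:background can-ext}.1(ii) yields $\wt_i a \leq g(a)$ for every $a \in A$, as desired. The subtlest step is the verification of closure under $\mathrm{(CT)^\wt}$, because it is the only rule whose premises involve the normative relation $\prec_i$ as well; handling this coupling is where the simultaneous fixing of $\Diamond_i$ (provided by Proposition \ref{prop:output}) together with the stated condition $g(a \wedge \Diamond_i a) \leq g(a)$ becomes essential.
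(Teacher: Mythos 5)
Your proof is correct and follows essentially the same route as the paper's: both parts reduce to the same ingredients, namely the correspondence results (Corollary \ref{cor: charact monotone reg norm} and Proposition \ref{prop:poly correspondence permission}) for the properties of $\wt_i$, and the translation $a \npcon_g b$ iff $b \leq g(a)$ together with the minimality of the relational closure $\npcon_i$ for the minimality of the map. The only differences are presentational and in your favour: you argue minimality directly for an arbitrary $g$ rather than by contradiction (thereby avoiding the paper's tacit assumption that a putative counterexample can be taken with $g \leq \wt_i$ pointwise), and you explicitly verify that $\npcon_g$ is closed under each rule --- including the coupled rule $(\mathrm{CT})^\wt$, which is indeed the delicate one --- where the paper merely asserts this closure.
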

\begin{proof}
 By Lemma \ref{lem: axioms to relation} and Proposition \ref{prop:poly correspondence permission}, the properties mentioned in each item of the statement hold for $\wt_i$.  To complete the proof, we need to argue for $\wt_i$ being the smallest such map.  By Lemma \ref{lem: diamond-output equivalence} 2.(ii), $a\npcon_i b$ iff $b \leq \wt_i a$  for  all $a, b\in A$ and $1\leq i\leq 4$. Any $g: A\to A^\delta$  s.t.~$g(a)\in O(A^\delta)$ for every $a\in A$ induces (the relative complement of) a proto-precontact relation  $\npcon_g\subseteq A\times A$ defined as $a\npcon_g b$ iff $b\leq g(a)$. Clearly, if $g'(a)\leq g (a)$ for every $a\in A$, then ${\npcon}_{g'}\subseteq \npcon_{g}$. Moreover, if $g'(a)< g(a)$, then, by denseness, $b\leq g(a)$ for some $b\in A$ s.t.~$b\nleq g'(a)$, hence ${\npcon}_{g'}\subset {\npcon_{g}}$.

 If $\wt_i$ is not the smallest map endowed with the properties mentioned in the statement dominating $\wt$, then  a map $g$ exists which is endowed with these properties such that  $\wt a \leq g(a)\leq  \wt_i a$ for all $a \in A$, and  $\wt_i b < g(b)$ for some $b \in A$. Then, by the argument in the previous paragraph,  ${\npcon} = {\npcon_\wt} \subseteq {\npcon_g} \subset {\npcon}_{\wt_i}=\npcon_i $. As $g$ is endowed with the the properties mentioned in the statement,  $\npcon_g$ is an extension of $\npcon$ which enjoys the required properties mentioned in the table, and is strictly contained in $\npcon_i$. Hence, $\npcon_i$ is not the smallest such extension.
\end{proof}

Likewise, for dual  permission systems $D$, the  output operator associated with %s.t.~$\wedge_P$ and $\vee_S$ hold for $\mathcal{L}$,
 $D_i^c$ for $1\leq i\leq 4$ can be given semantic counterpart in the environment of proto sdp-algebras (cf.~Footnote \ref{footn:sdp}) as follows: for every proto sdp-algebra $\mathbb{K} = (A, \prec, \ppcon)$,  we let $\mathbb{K}_i: = (A, \prec_i, \ppcon_{i})$, where $\prec_i\ \subseteq \ A\times A$ is defined  as discussed at the beginning of the present section, and ${\ppcon_i}\subseteq A\times A$ is the relative complement in $A\times A$ of the smallest extension   of $ \nppcon$ satisfying the properties indicated in the following table:

\begin{center}
	\begin{tabular}{ l l}
		\hline
		$\mathcal{D}_i$ & Properties   \\
		\hline	
		$\mathcal{D}_{1}$  & $ \mathrm{(\top)^\tw, (SI)^\tw, (WO)^\tw, (AND)^\tw}$   \\
		$\mathcal{D}_{2}$  & $\mathrm{ (\top)^\tw, (SI)^\tw, (WO)^\tw, (AND)^\tw, (OR)^\tw}$  \\
		$\mathcal{D}_{3}$  & $\mathrm{ (\top)^\tw, (SI)^\tw, (WO)^\tw, (AND)^\tw, (CT)^\tw}$  \\
		$\mathcal{D}_{4}$  & $\mathrm{ (\top), (SI)^\tw, (WO)^\tw,(AND)^\tw, (OR)^\tw, (CT)^\tw}$ \\
		\hline
	\end{tabular}
\end{center}
Then, for each $1\leq i\leq 4$, every  $k \in K(A^\delta)$, and every  $o \in O(A^\delta)$, \[\Diamond_i^{\sigma}k: = \bigwedge\{ \bigwedge {\prec_i}[a]\mid a\in A \text{ and } k\leq a\}\ \text{ and }\ \tw_i^\pi o:= \bigwedge\{ \bigvee (\ppcon_i [a])^c\mid a\in A\mbox{ and } a\leq o\}\] respectively encode the algebraic  counterparts of the output operators associated with a given normative and dual permission system  on a given selfextensional logic $\mathcal{L}$, and the characteristic properties of $\Diamond_i$ and $\tw_i$ for each $1\leq i\leq 4$ are those  identified in Lemma  \ref{lem: axioms to relation}, Corollary \ref{cor: charact monotone reg norm}, and  Proposition \ref{prop:characteriz}. For any defined\footnote{A proto sdp-algebra $\mathbb{K} = (A, \prec, \ppcon)$ is {\em defined} if both $(A, \prec)$ and $(A, \ppcon)$ are.} proto sdp-algebra $\mathbb{K} = (A, \prec, \ppcon)$, let $\mathbb{K}_i^\ast: = (A, \Diamond_i, \tw_i)$ denote the slanted algebra associated with $\mathbb{K}_i = (A, \prec_i, \ppcon_i)$ for each $1\leq i\leq 4$.

\begin{proposition}
\label{prop:output dual contact}
For any defined proto sdp-algebra $\mathbb{K} = (A, \prec, \ppcon)$,
\begin{enumerate}
    \item $\tw_1$ is the largest antitone map dominated by $\tw$.
    \item $\tw_2$ is the largest regular map dominated by $\tw$.
    \item $\tw_3$ is the largest antitone map satisfying $\tw_3 a\leq \Diamond_3 (\tw_3 a\pdla   a)$ dominated by $\tw$.
    \item $\tw_4$ is the largest regular map satisfying  $\tw_4 a\leq \Diamond_4 (\tw_4 a\pdla   a)$ dominated by $\tw$.
\end{enumerate}
\end{proposition}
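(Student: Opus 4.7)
The proof plan is to follow the template of Propositions \ref{prop:output} and \ref{prop:output contact}, with the necessary adjustments for the dual precontact setting and for the direction of domination (largest rather than smallest). First, I would verify that each $\tw_i$ actually enjoys the property claimed: the antitonicity of $\tw_1, \tw_3$ and the regularity of $\tw_2, \tw_4$ follow from Lemma \ref{lem: axioms to relation} (items 5 and 6) and Corollary \ref{cor: charact monotone reg norm} (item 3), applied to the defining closure rules of $\mathcal{D}_i$. For $\tw_3$ and $\tw_4$, the additional inequality $\tw_i a \leq \Diamond_i(\tw_i a \pdla a)$ is exactly the modal translation of the closure rule $\mathrm{(CT)}^\tw$, as established in Proposition \ref{prop:poly correspondence permission} item 5.

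For the maximality claim, I would exploit the correspondence $\tw_i a \leq b$ iff $a \nppcon_i b$ provided by Lemma \ref{lem: diamond-output equivalence} 3.(ii). Any map $g: A \to A^\delta$ with $g(a) \in K(A^\delta)$ for every $a \in A$ induces a relation $\nppcon_g \subseteq A\times A$ defined by $a \nppcon_g b$ iff $g(a)\leq b$. A routine verification, using Proposition \ref{prop:background can-ext} 1(i) (denseness for closed elements), shows that $\tw_i \leq g$ pointwise is equivalent to $\nppcon_g \subseteq \nppcon_{\tw_i}$, with strict inclusion whenever the pointwise inequality is strict at some point; dually, $g \leq \tw$ pointwise gives $\nppcon_\tw \subseteq \nppcon_g$.

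Suppose now, for contradiction, that some $\tw_i$ were not the largest map with the stated properties dominated by $\tw$. Then there would exist a map $g$ enjoying those properties with $\tw_i a \leq g(a) \leq \tw a$ for all $a\in A$ and $\tw_i b < g(b)$ for some $b$. By the preceding paragraph, $\nppcon = \nppcon_\tw \subseteq \nppcon_g \subsetneq \nppcon_{\tw_i} = \nppcon_i$. Since $g$ has the stated modal properties, the converse directions of Lemma \ref{lem: axioms to relation} and of Proposition \ref{prop:poly correspondence permission} item 5 ensure that $\nppcon_g$ is closed under exactly the rules defining $\mathcal{D}_i$. Hence $\nppcon_g$ would be a strictly smaller extension of $\nppcon$ satisfying these rules, contradicting the minimality built into the definition of $\nppcon_i$ (equivalently, the maximality of $\ppcon_i \subseteq \ppcon$).

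The one point demanding real care will be items 3 and 4, where the closure rule $\mathrm{(CT)}^\tw$ couples $\nppcon$ with $\prec$: when building $\nppcon_g$ one must make sure that the underlying $\prec_i$, whose output operator $\Diamond_i$ appears in the modal inequality characterizing the property, is compatible with $\nppcon_g$ satisfying $\mathrm{(CT)}^\tw$ with respect to this same $\prec_i$. This should be handled by invoking Proposition \ref{prop:poly correspondence permission} item 5 in both directions, viewing $(\mathbb{K}_i)^\ast = (A, \Diamond_i, \tw_i)$ as a single slanted algebra and reading the modal inequality on its canonical extension.
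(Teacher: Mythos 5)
Your proposal is correct and follows essentially the same route as the paper's own proof: establish the stated modal properties of $\tw_i$ via Lemma \ref{lem: axioms to relation} and Proposition \ref{prop:poly correspondence permission}, then derive maximality by translating any competing map $f$ with $\tw_i \leq f \leq \tw$ (strictly somewhere) into a relation $\nppcon_f$ via Lemma \ref{lem: diamond-output equivalence} 3.(ii) and denseness, obtaining an extension of $\nppcon$ with the required closure properties strictly contained in $\nppcon_i$, contradicting the minimality of $\nppcon_i$. Your closing caveat about the $\prec$-dependence of $\mathrm{(CT)}^\tw$ in items 3 and 4 is a point the paper leaves implicit, but your resolution matches what the paper's argument tacitly relies on.
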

\begin{proof}
 By Lemma \ref{lem: axioms to relation} and Proposition \ref{prop:poly correspondence permission}, the properties mentioned in each item of the statement hold for $\tw_i$.  To complete the proof, we need to argue for $\tw_i$ being the smallest such map.  By Lemma \ref{lem: diamond-output equivalence} 3.(ii), $a\nppcon_i b$ iff $\tw_i a\leq b$  for  all $a, b\in A$ and $1\leq i\leq 4$. Any $f: A\to A^\delta$  s.t.~$f(a)\in K(A^\delta)$ for every $a\in A$ induces (the relative complement of) a dual proto-precontact relation  $\nppcon_f\subseteq A\times A$ defined as $a\nppcon_f b$ iff $f(a)\leq b$. Clearly, if $f(a)\leq f' (a)$ for every $a\in A$, then ${\nppcon}_{f'}\subseteq \nppcon_{f}$. Moreover, if $f'(a)< f(a)$, then, by denseness, $f'(a)\leq b$ for some $b\in A$ s.t.~$f(a)\nleq b$, hence ${\nppcon}_{f'}\subset {\nppcon_{f}}$.

 If $\tw_i$ is not the largest map endowed with the properties mentioned in the statement and dominated by $\tw$, then  a map $f$ exists which is endowed with these properties such that  $\tw_i a \leq f(a) \leq \tw a$ for all $a \in A$, and  $\tw_i b < f(b)$ for some $b \in A$. Then, by the argument in the previous paragraph,  ${\nppcon} = {\nppcon_\tw} \subseteq {\nppcon_f} \subset {\nppcon}_{\tw_i}=\nppcon_i$. As $f$ is endowed with the the properties mentioned in the statement,  $\nppcon_f$ is an extension of $\nppcon$ which enjoys the required properties, and is strictly contained in $\nppcon_i$. Hence, $\nppcon_i$ is not the smallest such extension.
\end{proof}

\subsection{Algebraizing static positive permissions}
\label{ssec:algebraizing static permission}
Static positive permission has been introduced in \cite{Makinson03} as a stronger notion of conditional permission, introduced to mark the difference between permissions which derive their status from the fact that there is no norm forbidding them, and those that are or derive from e.g.~{\em civil rights}.  The intuitive idea is that if $\prec$ and $\pcon$ represent a set of  norms and a set of (explicit) permissions respectively, then every positive permission generated by $\prec$ and $\pcon$ is a member of the {\em normative} system generated by closing $\prec$ and some element in $\pcon$ under some Horn-type condition. In \cite{dedomenico2024obligations}, this notion has been generalized to normative and permission systems on selfextensional logics.
Throughout this section, we assume that $\mathbb{H} = (A, \prec, \pcon)$ is an sp-algebra  s.t.~$\pcon\subseteq \pcon_{\prec}$.
\begin{definition}
For any sp-algebra $\mathbb{H} = (A, \prec, \pcon)$ s.t.~$\pcon\subseteq \pcon_{\prec}$ %normative system $N$ on a selfextensional logic $\mathcal{L}$,  any conditional permission system $P\subseteq P_N$,   %defining $N_i$.
%
%For any normative system $N$ on $\mathcal{L}_{CPL}$, any non-empty set of permissive norms $P$,
and  any Horn-type condition $\mathrm{(R)}$ in the first order language of $\mathbb{H}$,  we let
 %the {\em static positive permission systems associated with $N^{(R)}$ and $P$} %and the {\em dynamic positive permission}  are
%are defined  as follows:
\begin{center}
	$ S^{\mathrm{(R)}}(\prec, \pcon) :=\begin{cases} \bigcup\{ \prec^{\mathrm{(R)}}_{(a, b)} \mid (a, b)\in \pcon\} & \text{if } \pcon\neq \varnothing\\
 \prec^{\mathrm{(R)}} & \text{ otherwise},
 \end{cases}$
 \end{center}
%For  any $1\leq i\leq 4$,
 %the {\em static positive permission systems associated with $N^i$ and $P$} %and the {\em dynamic positive permission}  are
%are defined  as follows:
%\begin{center}
%	$ S^i(P, N) :=\begin{cases} \bigcup\{ N^i_{(\alpha, \varphi)} \mid (\alpha, \varphi)\in P\} & \text{if } P\neq \varnothing\\
% N^i & \text{ otherwise}.
% \end{cases}$
where $\prec^{\mathrm{(R)}}_{(a, b)}$	is the smallest superset of $\prec\cup\{(a, b)\}$ satisfying condition $\mathrm{(R)}$.
	
%\end{center}
\end{definition}
% In what follows, we will suppress the index in the notation of positive permission whenever properties considered in each context do not depend on the specific closure properties.
 %\marginadnote{Generalize the notion of cross-coherence to arbitrary $A,B\subseteq\mathrm{Fm}^2$.\\
 %AP: yes it can certainly be generalized but I don't see the point at least for now}

 For any selfextensional logic $\mathcal{L}$, any $A\in \mathsf{Alg}(\mathcal{L})$ and any  $S\subseteq A$, we let $ F_{\mathcal{L}}(S)$ denote the $\mathcal{L}$-filter generated by $S$.
\begin{definition}
\label{def:cross-coherent}
  For any selfextensional logic $\mathcal{L}$,   an sp-algebra $\mathbb{H} = (A, \prec, \pcon)$ s.t.~$A\in \mathsf{Alg}(\mathcal{L})$ and $\pcon\subseteq \pcon_{\prec}$ is
   \begin{enumerate}
   \item $\mathrm{(R)}$-{\em cross-incoherent} if $a\prec b$ and $(a, c) \in S^{\mathrm{(R)}}(\mathcal{C},\prec)$ for some $a, b, c \in A$  s.t.~$F_{\mathcal{L}}(a) \neq A$ and $F_{\mathcal{L}}( b, c) = A$. If $\mathbb{H}$ is not $\mathrm{(R)}$-cross-incoherent, then it is   $\mathrm{(R)}$-{\em cross-coherent}.
   \item ${\mathrm{(R)}}$-\emph{updirected} if for all $a, b, c, d \in A$ s.t.~$a\pcon b$ and $c\pcon d$, some $e, f\in A$  exist s.t.~$e\pcon f$ s.t.$\prec_{(a,b)}^{\mathrm{(R)}} \cup \prec_{(c,d)}^{\mathrm{(R)}} \subseteq \prec_{(e,f)}^{\mathrm{(R)}}$.
   \end{enumerate}
\end{definition}

 Any $\mathbb{H} = (A, \prec, \pcon)$  s.t.~$\pcon$ is a singleton set is updirected. Moreover, if $\mathbb{H}$ is lattice-based, $(A, \prec)\models \mathrm{(WO)+(SI)}$, and
    $\pcon = \{(a,b),(c,d),(a\vee c,b\wedge d)\}$,  then  $\mathbb{H}$ is updirected.

\iffalse
Consider the following closure rules:
\begin{center}
\begin{tabular}{lll}
\infer[\mathrm{(AND)}^\downarrow]{(\alpha, \varphi \wedge \psi)}{(\alpha, \varphi)\in N &(\alpha, \psi) } &
\infer[\mathrm{(OR)}^\downarrow]{(\alpha \vee \beta,\varphi)}{(\alpha,  \varphi)\in N&(\beta, \varphi)} & \infer[\mathrm{(CT)}^\downarrow]{(\alpha, \varphi \wedge \psi)}{(\alpha, \varphi)\in N& (\alpha \wedge \varphi,  \psi)}
\end{tabular}
\end{center}
\begin{proposition}
\label{prop:properties positive}
(cf. \cite{paper1}) For any input/output logic $\mathbb{L} = (\mathcal{L}, N)$   any conditional permission system $P\subseteq P_N$,  and any rule $(\mathrm{X})\in \{(\top), \mathrm{(SI)}, \mathrm{(WO)}\}$,
\begin{enumerate}
    \item %if $\mathbb{L}$ is closed under $\mathrm{(AND)}$, then
    $S^{\mathrm{(AND)}}(P, N)$ is closed under $\mathrm{(AND)}^\downarrow$.
    \item %if $\mathbb{L}$ is closed under $\mathrm{(OR)}$, then
    $S^{\mathrm{(OR)}}(P, N)$ is closed under $\mathrm{(OR)}^\downarrow$.
    \item if $N^{\mathrm{(CT)}}$ and
$N^{\mathrm{(CT)}}_{(\beta, \gamma)}$ are closed under $\mathrm{(AND)}$ for any $(\beta, \gamma)\in P$,  %and $\mathrm{(CT)}$,
    then $S^{\mathrm{(CT)}}(P, N)$ is closed under $\mathrm{(CT)}^\downarrow$.
    \item   $S^{\mathrm{(X)}}(P, N)$ is closed under $\mathrm{(X)}$.
    \item if $S^i(P,N)$ is closed under $\mathrm{(SI)}$ and $\mathrm{(CT)}^\downarrow$, then $S^i(P,N)$ is closed under $\mathrm{(AND)}^\downarrow$.
\end{enumerate}
\end{proposition}
\fi
%\begin{definition}

%\end{definition}
For any Horn-type condition $\mathrm{(R)}$ and any sp-algebra $\mathbb{H} = (A, \prec, \pcon)$, let $\mathbb{S}_\mathbb{H}^{\mathrm{(R)}}: = (A, S^{\mathrm{(R)}}(\prec, \pcon))$ be the proto-subordination algebra associated with  $\mathbb{H}$.
\begin{proposition}
For any sp-algebra $\mathbb{H} = (A, \prec, \pcon)$ s.t.~$\pcon\subseteq \pcon_{\prec}$,
\begin{enumerate}
    \item $\mathbb{S}_\mathbb{H}^{\mathrm{(X)}}\models\mathrm{(X)}$ for   any $(\mathrm{X})\in \{\mathrm{(WO)}, \mathrm{(SI)}\}$.
    \item If $\mathbb{H}$ is lattice-based and ${\mathrm{(X)}}$-updirected, then $\mathbb{S}_\mathbb{H}^{\mathrm{(X)}}\models\mathrm{(X)}$ for   any $(\mathrm{X})\in \{\mathrm{(AND)}, \mathrm{(OR)}\}$.
\end{enumerate}
\end{proposition}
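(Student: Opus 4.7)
The plan is to exploit the crucial structural distinction between the closure rules $\mathrm{(WO)}$, $\mathrm{(SI)}$ on the one hand, and $\mathrm{(AND)}$, $\mathrm{(OR)}$ on the other: the first two are \emph{single-premise} with respect to $\prec$ (they have the shape ``$b\prec x\ \&\ (\text{lattice-condition})\Rightarrow b\prec y$''), whereas the latter two are \emph{two-premise} with respect to $\prec$. Single-premise Horn rules are automatically preserved under arbitrary unions of relations, while two-premise Horn rules generally are not, which is precisely why an extra combinatorial hypothesis (updirectedness) is required for part 2. I will also need to handle the degenerate case $\pcon=\varnothing$ separately; since then $S^{(\mathrm{R})}(\prec,\pcon)=\prec^{(\mathrm{R})}$, which by its very definition satisfies $(\mathrm{R})$, that case is immediate throughout.

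\emph{Part 1.} Assume $\pcon\neq\varnothing$. To verify $\mathrm{(WO)}$, suppose $b\, S^{(\mathrm{WO})}(\prec,\pcon)\, x$ and $x\leq y$. By definition of the union, there is $(c,d)\in\pcon$ with $(b,x)\in\prec^{(\mathrm{WO})}_{(c,d)}$. Since $\prec^{(\mathrm{WO})}_{(c,d)}$ is by construction closed under $\mathrm{(WO)}$, we get $(b,y)\in\prec^{(\mathrm{WO})}_{(c,d)}\subseteq S^{(\mathrm{WO})}(\prec,\pcon)$, as required. The verification of $\mathrm{(SI)}$ is entirely analogous, using a single witness $(c,d)\in\pcon$ for $(b,x)\in S^{(\mathrm{SI})}(\prec,\pcon)$ and the closure of $\prec^{(\mathrm{SI})}_{(c,d)}$ under $\mathrm{(SI)}$.

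\emph{Part 2.} Assume $\mathbb{H}$ is lattice-based and $\pcon\neq\varnothing$, and suppose $\mathbb{H}$ is $\mathrm{(AND)}$-updirected. If $(a,x),(a,y)\in S^{(\mathrm{AND})}(\prec,\pcon)$, then there exist $(c,d),(c',d')\in\pcon$ such that $(a,x)\in\prec^{(\mathrm{AND})}_{(c,d)}$ and $(a,y)\in\prec^{(\mathrm{AND})}_{(c',d')}$. By $\mathrm{(AND)}$-updirectedness, some $(e,f)\in\pcon$ exists with
\[
\prec^{(\mathrm{AND})}_{(c,d)}\ \cup\ \prec^{(\mathrm{AND})}_{(c',d')}\ \subseteq\ \prec^{(\mathrm{AND})}_{(e,f)}.
\]
Since $A$ is a lattice, $x\wedge y$ exists, and since $\prec^{(\mathrm{AND})}_{(e,f)}$ is closed under $\mathrm{(AND)}$ and contains both $(a,x)$ and $(a,y)$, we obtain $(a,x\wedge y)\in\prec^{(\mathrm{AND})}_{(e,f)}\subseteq S^{(\mathrm{AND})}(\prec,\pcon)$. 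The proof for $\mathrm{(OR)}$ proceeds symmetrically: from $(a,x),(b,x)\in S^{(\mathrm{OR})}(\prec,\pcon)$ witnessed by $(c,d),(c',d')\in\pcon$, $\mathrm{(OR)}$-updirectedness yields $(e,f)\in\pcon$ absorbing both components, and the join $a\vee b$ (available in the lattice $A$) together with closure of $\prec^{(\mathrm{OR})}_{(e,f)}$ under $\mathrm{(OR)}$ gives $(a\vee b,x)\in S^{(\mathrm{OR})}(\prec,\pcon)$.

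The main conceptual point, and the only step where anything non-routine happens, is the passage from two distinct witnesses $(c,d)$ and $(c',d')$ in $\pcon$ to a single absorbing witness $(e,f)\in\pcon$; this is exactly the content of $\mathrm{(R)}$-updirectedness, and the definition has been tailored so that the inclusion $\prec^{(\mathrm{R})}_{(c,d)}\cup\prec^{(\mathrm{R})}_{(c',d')}\subseteq\prec^{(\mathrm{R})}_{(e,f)}$ gives exactly what the two-premise rule demands. Everything else is bookkeeping with closure operators and the lattice structure on $A$.
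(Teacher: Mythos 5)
Your proof is correct and follows essentially the same route as the paper's: handle $\pcon=\varnothing$ by definition, use a single witness $\prec^{(\mathrm{X})}_{(c,d)}$ together with its closure under $(\mathrm{X})$ for the single-premise rules, and invoke $(\mathrm{X})$-updirectedness to merge the two witnesses into one absorbing $\prec^{(\mathrm{X})}_{(e,f)}$ before applying closure for $\mathrm{(AND)}$ and $\mathrm{(OR)}$. The framing in terms of single-premise versus two-premise Horn rules is a nice gloss but the underlying argument is identical.
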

\begin{proof}
    1. Let us show the statement for $\mathrm{(X)} = \mathrm{(WO)}$. If $\pcon = \varnothing$, the statement follows straightforwardly from the definition.  If $\pcon\neq \varnothing$, let $a, b, c\in A$ s.t.~$a\leq b$ and $b S^{\mathrm{(WO)}}(\prec, \pcon)) c$. Hence, $b\prec_{(d, e)}^{\mathrm{(WO)}}c$ for some $d, e\in A$ s.t.~$d\pcon e$. Since, by definition, $(A, \prec_{(d, e)}^{\mathrm{(WO)}})\models \mathrm{(WO)}$, we conclude $a \prec_{(d, e)}^{\mathrm{(WO)}} c$, and hence $aS^{\mathrm{(WO)}}(\prec, \pcon)) c$, as required. The proof of the statement for $\mathrm{(X)} = \mathrm{(SI)}$ is analogous and is omitted.

    2.  Let us show the statement for $\mathrm{(X)} = \mathrm{(AND)}$. If $\pcon = \varnothing$, the statement follows straightforwardly from the definition.  If $\pcon\neq \varnothing$, let $a, b, c\in A$ s.t.~$aS^{\mathrm{(AND)}}(\prec, \pcon) b$ and $a S^{\mathrm{(AND)}}(\prec, \pcon) c$. Hence, $a\prec_{(d, e)}^{\mathrm{(AND)}}b$ and $a\prec_{(d', e')}^{\mathrm{(AND)}}c$ for some $d, e, d', e'\in A$ s.t.~$d\pcon e$ and $d'\pcon e'$. Since $\mathbb{H}$ is ${\mathrm{(AND)}}$-updirected, some $d'', e''\in A$ exist s.t.~$a\prec_{(d'', e'')}^{\mathrm{(AND)}}b$ and $a\prec_{(d'', e'')}^{\mathrm{(AND)}}c$.  By definition, $(A, \prec_{(d'', e'')}^{\mathrm{(AND)}})\models \mathrm{(AND)}$, hence we conclude $a \prec_{(d'', e'')}^{\mathrm{(AND)}} (b\wedge c)$, and hence $aS^{\mathrm{(AND)}}(\prec, \pcon) (b\wedge c)$, as required. The proof of the statement for $\mathrm{(X)} = \mathrm{(OR)}$ is analogous and is omitted.
\end{proof}

The proposition above implies that,  if $\mathbb{H} = (A, \prec, \pcon)$ is bounded lattice-based and $\mathrm{(X)}$-updirected for every $\mathrm{(X)}\in \{\mathrm{(SI),(WO),(AND),(OR)}\}$, then $S^{\mathrm{(SI)},\mathrm{(WO)},\mathrm{(AND)},\mathrm{(OR)}}(\prec, \pcon)$ (abbreviated as $S(\prec, \pcon)$) is a subordination relation on $A$, which can be associated with a normal slanted operator $\blacksquare_s: A\to A^{\delta}$   as described in Section \ref{sec: proto and slanted}. With the added expressivity of this operator,   the notion of cross-coherence can be modally characterized as follows in sp-algebras as above, which in addition are based on Heyting-algebras:

\begin{center}
    \begin{tabular}{cll}
     & $\forall a \forall b \forall c(a S(\prec, \pcon) b \ \&\ a\prec c \ \&\ b\wedge c \leq \bot \Rightarrow a \leq \bot)$ \\
       iff  & $\forall a \forall b \forall c(a \leq \blacksquare_s b \ \&\ a\leq \blacksquare c \ \&\ b\wedge c \leq \bot \Rightarrow a \leq \bot)$ \\
       iff  & $\forall a \forall b \forall c(a \leq \blacksquare_s b \wedge \blacksquare c \ \&\ b\wedge c \leq \bot \Rightarrow a \leq \bot)$ \\
         iff & $ \forall b \forall c(b \wedge c \leq \bot \Rightarrow \forall a( a \leq \blacksquare_s b \wedge \blacksquare c  \Rightarrow a \leq \bot))$ \\
         iff & $ \forall b \forall c(b \wedge c \leq \bot \Rightarrow  \blacksquare_s b \wedge \blacksquare c  \leq \bot)$ \\
         iff  & $ \forall b \forall c( b\leq c\rightarrow\bot \Rightarrow  \blacksquare_s b \wedge \blacksquare c  \leq \bot)$ \\
         iff  & $  \forall c(\blacksquare_s(c\rightarrow \bot) \wedge \blacksquare c  \leq \bot)$. & $(\ast)$ \\

    \end{tabular}
\end{center}
Let us show the equivalence marked with $(\ast)$: from top to bottom, it is enough to instantiate $b: = c\rightarrow \bot$; conversely, by the monotonicity of $\wedge$ and $\blacksquare_s$,  If $b\leq c\rightarrow \bot$ then  $\blacksquare_s b\wedge \blacksquare c\leq \blacksquare_s (c\rightarrow \bot)\wedge \blacksquare c\leq\bot$.

\subsection{Dual characterization of conditions on relational algebras}
\label{ssec: dual
 charact}
 In \cite{celani2020subordination}, Celani introduces an  expansion of Priestley's duality for bounded distributive lattices to
 {\em subordination lattices}, i.e.~tuples $\mathbb{S} =(A, \prec)$ such that $A$ is a distributive lattice and ${\prec}\subseteq A\times A$ is a subordination relation.\footnote{In the terminology of the present paper, subordination lattices correspond to distributive lattice-based subordination algebras  (cf.~Definition \ref{def: subordination algebra}).} The  dual structure of  any subordination lattice $\mathbb{S} =(A, \prec)$ is referred to as the {\em (Priestley) subordination space} of $\mathbb{S}$, and is defined as  $\mathbb{S}_*: = (X(A), R_\prec)$, where $X(A)$ is (the Priestley space dual to $A$, based on) the poset of prime filters of $A$ ordered by inclusion,   and $R_\prec \subseteq X(A) \times X(A)$ is defined as follows: for all prime filters $P, Q$ of $A$,
\[
(P,Q) \in R_\prec \quad \text{ iff }\quad {\prec}[P]: =\{x\in A\mid \exists a(a\in P\ \&\ a\prec x)\} \subseteq Q.
\]

Up to isomorphism, we can equivalently define the subordination space of $\mathbb{S}$ as follows:
\begin{definition}
 \label{def: subordination space in adelta}
 The  {\em subordination space} associated with a subordination lattice $\mathbb{S} =(A, \prec)$ is $\mathbb{S}_*: = ((\jty(A^\delta), \sqsubseteq), R_\prec)$, where $(\jty(A^\delta), \sqsubseteq)$ is the poset of the completely join-irreducible elements of $A^\delta$ s.t.~$j\sqsubseteq k$ iff $j\geq k$, and $R_{\prec}$ is a binary relation on $ \jty(A^\delta)$ such that $  R_{\prec}(j, i)$ iff $i\leq \Diamond j$ iff $\blacksquare \kappa (i)\leq \kappa(j)$, where $\kappa (i): = \bigvee\{i'\in \jty(A^\delta)\mid i\nleq i'\}$.
\end{definition}
\begin{lemma}
For any subordination lattice $\mathbb{S} =(A, \prec)$, the subordination spaces $\mathbb{S}_*$ given according to the two definitions   above are isomorphic.
\end{lemma}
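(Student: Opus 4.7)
The plan is to exhibit a concrete isomorphism $\Phi: X(A) \to \jty(A^\delta)$ via the standard correspondence between prime filters of a distributive lattice $A$ and completely join-irreducible elements of its canonical extension $A^\delta$, and then verify that $\Phi$ carries the relation of the Celani-style subordination space to the one of Definition \ref{def: subordination space in adelta}. Concretely, for each prime filter $P$ of $A$, set $\Phi(P):= j_P$, where $j_P := \bigwedge P \in A^\delta$. It is a standard fact (see e.g.\ \cite{gehrke2001bounded}) that $j_P \in K(A^\delta)$ is a completely join-irreducible element of $A^\delta$, and that the assignment $P\mapsto j_P$ is a bijection between $X(A)$ and $\jty(A^\delta)$, the inverse being $j\mapsto \{a\in A\mid j\leq a\}$.

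Next I would check that $\Phi$ is an order-isomorphism between $(X(A),\subseteq)$ and $(\jty(A^\delta),\sqsubseteq)$. By compactness of $A^\delta$ and the fact that $P$ is a filter, one has $j_P\leq a$ iff $a\in P$ for every $a\in A$. Hence, for any prime filters $P, Q$, the inclusion $P\subseteq Q$ holds iff every $a\in P$ satisfies $j_Q\leq a$ iff $j_Q\leq \bigwedge P = j_P$, iff (by the definition of $\sqsubseteq$ in Definition \ref{def: subordination space in adelta}) $j_P\sqsubseteq j_Q$.

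The core step is to show that $(P,Q)\in R_\prec$ in Celani's sense iff $R_\prec(\Phi(P),\Phi(Q))$ in the sense of Definition \ref{def: subordination space in adelta}, i.e.\ that
\[\prec[P]\subseteq Q\quad \text{iff}\quad j_Q\leq \Diamond j_P.\]
For this I would compute $\Diamond j_P$ using Definition \ref{def: sigma and pi extensions of slanted}, the fact that $\mathbb{S}$ being a subordination algebra makes $\mathbb{S}^\ast$ monotone (Corollary \ref{cor: charact monotone reg norm}), and the observation that $j_P\leq a$ iff $a\in P$:
\[
\Diamond j_P = \bigwedge \{\Diamond a\mid a\in A,\; j_P\leq a\} = \bigwedge \{\Diamond a\mid a\in P\} = \bigwedge \{b\in A\mid \exists a\in P(a\prec b)\} = \bigwedge \prec[P].
\]
Then $j_Q\leq \Diamond j_P$ iff $j_Q\leq b$ for every $b\in \prec[P]$ iff (again by the characterization $j_Q\leq b$ iff $b\in Q$) $\prec[P]\subseteq Q$, as required. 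The equivalent formulation in terms of $\blacksquare\kappa(i)\leq \kappa(j)$ follows from the general adjunction-flavoured facts about closed/open elements collected in Proposition \ref{prop:background can-ext}, together with $\kappa(j_Q)$ being the completely meet-irreducible element of $A^\delta$ dual to $j_Q$.

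The only mildly delicate point will be the interchange between the two presentations of $A^\delta$ (prime-filter upsets of the Priestley space on the one hand, and the abstract canonical extension on the other), and to verify that the join-irreducibles under the concrete representation are exactly the $j_P$. Once this bookkeeping is done, the relational correspondence is forced by compactness and the definition of $\Diamond^\sigma$ on closed elements; everything else is routine.
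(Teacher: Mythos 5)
Your proposal is correct and follows essentially the same route as the paper's proof: identify prime filters $P$ with the closed join-prime elements $j_P=\bigwedge P$ of $A^\delta$, use compactness to get $j_P\leq a$ iff $a\in P$, and reduce the relational correspondence to the equivalence $\prec[P]\subseteq Q$ iff $\bigwedge Q\leq\bigwedge\prec[P]=\Diamond j_P$. The only differences are cosmetic — you spell out the computation $\Diamond j_P=\bigwedge\prec[P]$ and the order-isomorphism step slightly more explicitly than the paper does.
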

\begin{proof}
 As is well known, in the canonical extension $A^\delta$ of any  distributive lattice $A$, the set $\jty(A^\delta)$ of the completely join-irreducible elements of $A^\delta$  coincides with the set of its completely join-prime elements, which are in dual order-isomorphism with the poset of prime filters of $A$ ordered by inclusion. Specifically, if $P\subseteq A$ is a prime filter, then $j_P: = \bigwedge P\in K(A^\delta)$ is a completely join-prime element of $A^\delta$; conversely, if $j$ is a completely join-prime element of $A^\delta$, then  $P_j: = \{a\in A\mid j\leq a\}$  is a prime filter of $A$. Clearly, $j = \bigwedge P_j = j_{P_j}$ for any $j\in \jty(A^\delta)$; moreover, it is easy to show, by applying compactness, that $P_{j_{P}} = \{a\in A\mid \bigwedge P\leq a\} = P$ for any prime filter $P$ of $A$.

 To complete the proof and show that the two relations $R_{\prec}$ can be identified modulo the identifications above, it is enough to show that  ${\prec}[P]\subseteq Q$ iff $\bigwedge Q\leq \bigwedge {\prec}[P]$ for all prime filters $P$ and $Q$ of $A$. Clearly, ${\prec}[P]\subseteq Q$ implies  $\bigwedge Q\leq \bigwedge {\prec}[P]$. Conversely, if $b\in {\prec} [P]$, then  $\bigwedge Q\leq \bigwedge {\prec}[P]\leq b$, hence, by compactness and $Q$ being an up-set, $b\in Q$, as required.
\end{proof}

Likewise, we can consider {\em  precontact lattices} (resp.~{\em dual precontact lattices}) as  precontact algebras (resp.~dual precontact algebras) based on bounded distributive lattices,\footnote{ Analogously, we can define e.g.~{\em bi-subordination lattices}, or {\em sp-lattices} and  their corresponding spaces in the obvious way. We omit the details since they are straightforward.} and define
\begin{definition}
 \label{def: precontact space in adelta}
 The  {\em precontact space} associated with a precontact lattice $\mathbb{C} =(A, \pcon)$ is $\mathbb{C}_*: = ((\jty(A^\delta), \sqsubseteq), R_{\npcon})$, where $(\jty(A^\delta), \sqsubseteq)$ is the poset of the completely join-irreducible elements of $A^\delta$ s.t.~$j\sqsubseteq k$ iff $j\geq k$, and $R_{\npcon}$ is a binary relation on $ \jty(A^\delta)$ such that $  R_{\npcon}(j, i)$  iff $\wt  j\leq \kappa(i)$ iff  $\bt  i\leq \kappa(j)$, where $\kappa (i): = \bigvee\{i'\in \jty(A^\delta)\mid i\nleq i'\}$.\footnote{\label{footn:kappa} From the definition of $\kappa$ and the join primeness of every $j\in J^{\infty}(A^\delta)$, it immediately follows that $j\nleq u$ iff $u\leq \kappa(j)$ for every $u \in A^\delta$ and  every $j \in J^{\infty}(A^\delta)$.}

 The  {\em dual precontact space} associated with a dual precontact lattice $\mathbb{D} =(A, \ppcon)$ is $\mathbb{D}_*: = ((\jty(A^\delta), \sqsubseteq), R_{\nppcon})$, where $(\jty(A^\delta), \sqsubseteq)$ is the poset of the completely join-irreducible elements of $A^\delta$ s.t.~$j\sqsubseteq k$ iff $j\geq k$, and $R_{\nppcon}$ is a binary relation on $ \jty(A^\delta)$ such that $  R_{\nppcon}(j, i)$  iff $i\leq \tw \kappa(j)$ iff  $ j\leq \tb \kappa (i)$. %where $\kappa (i): = \bigvee\{i'\in \jty(A^\delta)\mid i\nleq i'\}$.
\end{definition}
In \cite{celani2020subordination}, some properties of subordination lattices are dually characterized in terms  properties of their associated subordination spaces. In the following proposition, we obtain  these results as consequences of the dual characterizations in  Proposition \ref{prop:characteriz}, slanted canonicity \cite{de2021slanted}, and correspondence theory for (standard) distributive  modal logic \cite{conradie2012algorithmic}.
\begin{proposition}
\emph{(cf.~\cite{celani2020subordination}, Theorem 5.7)}
\label{prop: celani}
For any subordination lattice $\mathbb{S}$,

\begin{enumerate}[label=(\roman*)]
    \item $\mathbb{S}\models {\prec} \subseteq {\leq}\quad$ iff $\quad R_{\prec}$ is reflexive;

    \item $\mathbb{S}\models \mathrm{(D)}\quad $ iff $\quad R_{\prec}$ is transitive, i.e.~$R_{\prec}\circ R_{\prec}\subseteq R_{\prec}$; \item $\mathbb{S}\models \mathrm{(T)}\quad $ iff $\quad R_{\prec}$ is dense, i.e.~$R_{\prec}\subseteq R_{\prec}\circ R_{\prec}$;
    %\item  $\mathbb{S}\models (a=\bot) \vee (\blacksquare a \neq \bot)$ iff $R_\prec$ is proper.
\end{enumerate}
\end{proposition}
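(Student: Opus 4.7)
The plan is to obtain each equivalence as a three-step chain: first translate the condition on $\prec$ into a modal inequality on the slanted algebra $\mathbb{S}^{\ast}$ via Proposition \ref{prop:characteriz}; then lift that inequality from $\mathbb{S}^{\ast}$ to its canonical extension $(\mathbb{S}^{\ast})^{\delta} = (A^{\delta},\Diamond^{\sigma},\blacksquare^{\pi})$ by slanted canonicity \cite{de2021slanted}; and finally read off the corresponding first-order condition on the dual Kripke frame $(\jty(A^{\delta}),R_{\prec})$ by  standard Sahlqvist/unified correspondence for (normal) distributive modal logic \cite{conradie2012algorithmic}. Since $\mathbb{S}$ is a subordination lattice, Corollary \ref{cor: charact monotone reg norm}.1(iii) ensures that $\mathbb{S}^{\ast}$ is a normal slanted algebra over a bounded distributive lattice, so these tools all apply.

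Concretely, for item (i), by Proposition \ref{prop:characteriz}.1(i), $\mathbb{S}\models {\prec}\subseteq{\leq}$ iff $\mathbb{S}^{\ast}\models a\leq \Diamond a$; by slanted canonicity this is equivalent to $(\mathbb{S}^{\ast})^{\delta}\models u\leq \Diamond^{\sigma} u$; and then the standard correspondence argument (instantiating $u$ with a completely join-prime $j\in \jty(A^{\delta})$ and using Proposition \ref{prop:background can-ext}.1) yields that $j\leq \Diamond^{\sigma}j$, i.e.~$R_{\prec}(j,j)$, for every $j$. The converse direction, that reflexivity of $R_{\prec}$ forces $(\mathbb{S}^{\ast})^{\delta}\models u\leq \Diamond^{\sigma}u$, uses that every $u\in A^{\delta}$ is the join of the completely join-primes below it (denseness) together with the complete additivity of $\Diamond^{\sigma}$ which follows from $\Diamond^{\sigma}$ being a left adjoint (Lemma \ref{lemma:tense and lifted adjunction} and Corollary \ref{cor: defined+si+wo imply tense hence normal}). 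Items (ii) and (iii) proceed in exactly the same way using Proposition \ref{prop:characteriz}.1(iv) and 1(iii), respectively, yielding the inequalities $\Diamond\Diamond a\leq \Diamond a$ (whose canonical correspondent is transitivity of $R_{\prec}$) and $\Diamond a\leq \Diamond\Diamond a$ (whose canonical correspondent is density of $R_{\prec}$).

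The only delicate point is checking that the dual frame relation used in our Definition \ref{def: subordination space in adelta}, namely $R_{\prec}(j,i)\Leftrightarrow i\leq \Diamond^{\sigma}j$, really coincides with the accessibility relation to which the standard first-order Sahlqvist correspondents refer. This is where the identifications in Footnote \ref{footn:kappa} come in: for any $u\in A^{\delta}$, $j\leq \Diamond^{\sigma} u$ iff there exists $i\in \jty(A^{\delta})$ with $i\leq u$ and $R_{\prec}(j,i)$ (by complete additivity of $\Diamond^{\sigma}$ plus join-primeness of $j$). Once this ``diamond over join-primes'' formula is in place, each of the three inequalities translates directly, via the textbook minimal-valuation computation, into the stated frame property, concluding the proof. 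The main obstacle is not any one computation but simply the bookkeeping of the three layers (slanted algebra $\leadsto$ canonical extension $\leadsto$ dual frame) and ensuring that $\Diamond^{\sigma}$ retains enough of the behaviour of $\Diamond$ to make the standard Kripke-style correspondence go through; this is precisely what slanted canonicity and the tenseness established in Corollary \ref{cor: defined+si+wo imply tense hence normal} guarantee for subordination lattices.
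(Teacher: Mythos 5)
Your proposal is correct and follows essentially the same route as the paper's proof: Proposition \ref{prop:characteriz} for the modal characterization, slanted canonicity of the (analytic inductive) inequalities to pass to $(\mathbb{S}^{\ast})^{\delta}$, and standard algorithmic correspondence for distributive modal logic together with Definition \ref{def: subordination space in adelta} to read off the frame conditions on $\jty(A^{\delta})$. The only blemish is a harmless argument-order slip in your auxiliary remark: with the paper's convention $R_{\prec}(j,i)\Leftrightarrow i\leq \Diamond j$, unfolding $j\leq\Diamond^{\sigma}u$ yields some $i\leq u$ with $R_{\prec}(i,j)$, not $R_{\prec}(j,i)$.
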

\begin{proof}
 (i) By Proposition \ref{prop:characteriz}.1(i), $\mathbb{S}\models {\prec} \subseteq {\leq}$ iff %the slanted algebras
 $\mathbb{S}^{*}\models a\leq \Diamond a$; the inequality $a\leq \Diamond a$ is analytic inductive (cf.~\cite[Definition 55]{greco2018unified}), and hence {\em slanted} canonical by \cite[Theorem 4.1]{de2021slanted}. Hence, from $\mathbb{S}^{*}\models a\leq \Diamond a$ it follows that $(\mathbb{S}^{*})^\delta\models a\leq \Diamond a$, where $(\mathbb{S}^{*})^\delta$  is a {\em standard} (perfect) distributive modal algebra. By algorithmic correspondence theory for distributive modal logic (cf.~\cite[Theorems 8.1 and 9.8]{conradie2012algorithmic}), $(\mathbb{S}^{*})^\delta\models a\leq \Diamond a$ iff $(\mathbb{S}^{*})^\delta\models \forall \nomj (\nomj \leq \Diamond \nomj)$ where $\nomj$ ranges in the set $\jty((\mathbb{S}^{*})^\delta)$. By Definition \ref{def: subordination space in adelta}, this is equivalent to $R_{\prec}$ being reflexive.

 \iffalse
 of the completely join-irreducible elements of $(\mathbb{S}^{*})^\delta$.

 As is well known, in the canonical extension $A^\delta$ of any  distributive lattice $A$, the set $\jty(A^\delta)$ of the completely join-irreducible elements of $A^\delta$  coincides with the set of its completely join-prime elements, which are in dual order-isomorphism with the prime filters of $A$. Specifically, if $P\subseteq A$ is a prime filter, then $\nomj_P: = \bigwedge P\in K(A^\delta)$ is a completely join-prime element of $A^\delta$; conversely, if $\nomj$ is a completely join-prime element of $A^\delta$, then $\nomj = \bigwedge P_\nomj$ for $P_\nomj: = \{a\in A\mid \nomj\leq a\}$ which is a prime filter of $A$. Therefore, for all $\nomj\in \jty(A^\delta)$,
 \begin{center}
 \begin{tabular}{cll}
     & $\nomj\leq \Diamond \nomj=  \bigwedge \{\Diamond a\mid \nomj\leq a\}$ &  def.~of $\Diamond^\sigma$ on $\nomj\in K(A^\delta)$ \\
     iff &$\nomj \leq \bigwedge \{b \mid \exists a(a \in P\ \&\ a\prec b)\}$ & def.~of $\Diamond$ \\
     iff & $\nomj \leq \bigwedge {\prec}[P]$ & def.~of ${\prec} [P]$\\
     iff & $\nomj \leq b$ for all $b\in {\prec} [P]$ & def.~of $\bigwedge$ \\ %\bigwedge {\prec}[P]$ & def.~of ${\prec} [P]$\\
     %iff & $\bigwedge P \leq \bigwedge \prec[P]$ & (prime filters -join prime elements correspondence)\\
     %iff & $\nomj \leq b$ for all $b\in {\prec} [P]$  \\
     iff & ${\prec}[P] \subseteq P$. &  $P: = \{b\in A\mid \nomj\leq b\}$\\
 \end{tabular}
 \end{center}

 Hence, the condition $\forall \nomj (\nomj \leq \Diamond \nomj)$ equivalently translates into ${\prec}[ P]\subseteq P$  for every prime filter $P$ of A,  i.e.~into the reflexivity of $R_{\prec}$.
 \fi

(ii) By Proposition  \ref{prop:characteriz}.1(iv), $\mathbb{S}\models \mathrm{(D)}$ iff  $\mathbb{S}^{*}\models \Diamond \Diamond a\leq \Diamond a$; the inequality $\Diamond \Diamond a\leq \Diamond a$ is analytic inductive (cf.~\cite[Definition 55]{greco2018unified}), and hence {\em slanted} canonical by \cite[Theorem 4.1]{de2021slanted}. Hence, from $\mathbb{S}^{*}\models \Diamond \Diamond a\leq \Diamond a$ it follows that $(\mathbb{S}^{*})^\delta\models \Diamond \Diamond a\leq \Diamond a$, where $(\mathbb{S}^{*})^\delta$ is a {\em standard} (perfect) distributive modal algebra. By algorithmic correspondence theory for distributive modal logic (cf.~\cite[Theorems 8.1 and 9.8]{conradie2012algorithmic}), $(\mathbb{S}^{*})^\delta\models \Diamond \Diamond a\leq \Diamond a$ iff $(\mathbb{S}^{*})^\delta\models \forall \nomj (\Diamond \Diamond \nomj \leq \Diamond \nomj)$ where $\nomj$ ranges in  $\jty((\mathbb{S}^{*})^\delta)$. The following chain of equivalences holds in any (perfect) algebra $A^\delta$:

\begin{center}
    \begin{tabular}{clll}
    $\forall \nomj (\Diamond\Diamond \nomj\leq \Diamond\nomj)$ & iff & $\forall \nomj\forall \nomk (\nomk\leq \Diamond\Diamond \nomj \Rightarrow \nomk\leq \Diamond\nomj)$ & ($\ast\ast$)\\
    & iff & $\forall \nomj\forall \nomk (\exists \nomi (\nomi\leq \Diamond \nomj \ \& \ \nomk\leq \Diamond\nomi) \Rightarrow \nomk\leq \Diamond\nomj)$ & ($\ast$)\\
    & iff & $\forall \nomj\forall \nomk\forall  \nomi ( (\nomi\leq \Diamond \nomj \ \& \ \nomk\leq \Diamond\nomi) \Rightarrow \nomk\leq \Diamond\nomj)$\\
    \end{tabular}
\end{center}
The equivalence marked with ($\ast\ast$) is due to the fact that canonical extensions of distributive lattices are completely join-generated by the completely join-prime elements.
Let us show the equivalence marked with ($\ast$): as is well known,  every  perfect distributive lattice is  join-generated by its completely join-irreducible elements; hence, $\Diamond j = \bigvee\{i\in \jty(A^\delta)\mid i\leq \Diamond j\}$, and since $\Diamond$ is completely join-preserving, $\Diamond \Diamond j = \bigvee\{\Diamond i\mid i\in \jty(A^\delta)\ \&\ i\leq \Diamond j\}$. Hence, since $k\in \jty(A^\delta)$ is completely join-prime, $k\leq \Diamond \Diamond j$ iff $k\leq \Diamond i$ for some $i\in \jty(A^\delta)$ s.t.~$i\leq \Diamond j$.

By Definition \ref{def: subordination space in adelta}, the last line of the chain of equivalences above is equivalent to $R_{\prec}$ being transitive.

 The proof of (iii) is argued in a similar way using item 1(iii)  of Proposition \ref{prop:characteriz}, and noticing that the modal inequality characterizing conditions (T) is analytic inductive.
 \iffalse
 (iv)

 \begin{tabular}{cll}
    & $R_\prec$ is proper& \\
    iff&  for any proper closed down-set $Y$ of $X(A)$, $\exists P \in X(A)$ such that & \\
    &$ P \neq A\ \&\ P \not \in R_\prec^{-1}[Y]$& \\
    iff & for any proper closed down-set $Y$ of $X(A)$, and any $Q \in Y$,  &\\
    & $\exists P \in X(A)$ such that  $ P \neq A\ \&\ \prec{[P]} \not\subseteq Q$& (Definition of $R_\prec$)\\
 \end{tabular}

 Suppose there exists $a \neq \bot$ in $A$ such that  $\prec^{-1}{[a]} \subseteq \{\bot\}$. Then for any $P \in X(A)$, $P \neq A$, we have $\prec [P] \subseteq Q$ for some prime filter $Q$ not containing $a$. Let $Y$ be the set of all  prime filters not containing $a$. Then the above condition does not hold for any $P \in X(A)$.  Therefore, $R_\prec$ is not proper.

 Suppose $\{\bot\}  \subseteq \prec^{-1}{[a]}$ and $\{\bot\}  \neq \prec^{-1}{[a]}$  for all $a \neq \bot \in A$. Let $Q$ be any prime filter not equal to $A$. Then there exists $a_Q \in A$ such that $a_Q \neq \bot$ and $a_Q \not\in Q$. Let $P$ be  proper prime filter generated by $\prec^{-1}{[a_Q]}$. Then, we have $\prec [P] \not\subseteq Q$. Therefore, $R_\prec$ is proper.

 Therefore, $R_\prec$ is proper iff  for all $a \neq \bot$ in $A$, $ \{\bot\}  \subseteq \prec^{-1}{[a]} $ and $\{\bot\}  \neq \prec^{-1}{[a]}$ iff $(a =\bot) \vee (\blacksquare a \neq \bot)$.
 \fi
\end{proof}
Likewise,  other items  of Proposition \ref{prop:characteriz} can be used to extend Celani's  results and provide  relational characterizations, on subordination spaces, of conditions (CT), (S9), (SL1), (SL2), noticing that the modal inequalities corresponding to those conditions are all analytic inductive (cf.~\cite[Definition 55]{greco2018unified}).

\begin{proposition}
\label{prop:more dual charact subordination}
For any subordination lattice $\mathbb{S}$,
\begin{enumerate}[label=(\roman*)]
    \item $\mathbb{S}\models \mathrm{(CT)}\quad $ iff $\quad \mathbb{S}_\ast\models\forall i\forall j(j R_\prec i\Rightarrow  \exists k(j \sqsubseteq k\ \&\ j R_\prec k\ \&\  k R_\prec i))$;
    \item $\mathbb{S}\models \mathrm{(S9)} \quad$ iff $\quad \mathbb{S}_\ast\models\forall i\forall j\forall k((kR_{\prec}i \ \&\ kR_{\prec}j)\Leftrightarrow \exists k'(k'\sqsubseteq i\ \&\ k'R_{\prec}j\ \&\ kR_{\prec}k' ))$;
    \item $\mathbb{S}\models \mathrm{(SL1)}\quad $ iff $\quad \mathbb{S}_\ast\models \forall i \forall j \forall k \forall i' (i'R_{\prec} i\ \&\ i' R_{\prec} j\ \&\ kR_{\prec} i' \Rightarrow \exists j' (j'\sqsubseteq i\ \&\ j'\sqsubseteq j\ \&\ kR_{\prec} j') )$;
    \item $\mathbb{S}\models \mathrm{(SL2)}\quad $ iff $\quad\mathbb{S}_\ast\models \forall i \forall j\forall k\forall i' (i'R_{\prec} k \ \&\ iR_{\prec}i' \ \&\ jR_{\prec}i'\Rightarrow \exists j'(j'R_{\prec}k \ \&\ i\sqsubseteq j' \ \&\ j\sqsubseteq j'))$.
\end{enumerate}
\end{proposition}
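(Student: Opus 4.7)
My plan mirrors exactly the three-step proof of Proposition \ref{prop: celani}, carried out separately for each of the four items. For each condition on $\prec$, I first invoke the appropriate sub-item of Proposition \ref{prop:characteriz} to replace it with an equivalent modal inequality valid on $\mathbb{S}^\ast$: (i) $\mathrm{(CT)}$ is equivalent to $\Diamond a\leq \Diamond(a\wedge \Diamond a)$ (via 2(i)); (ii) $\mathrm{(S9)}$ to the conjunction of the two inequalities of Proposition \ref{prop:characteriz}.3(ii)--(iii); (iii) $\mathrm{(SL1)}$ to $\blacksquare(a\vee b)\leq \blacksquare(\blacksquare a\vee \blacksquare b)$ (via 3(i)); and (iv) $\mathrm{(SL2)}$ to $\Diamond(\Diamond a\wedge \Diamond b)\leq \Diamond(a\wedge b)$ (via 2(ii)).

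Next, I will verify that each of the modal inequalities produced above is analytic inductive in the sense of \cite[Definition 55]{greco2018unified}, hence slanted canonical by \cite[Theorem 4.1]{de2021slanted}. Their validity therefore transfers from $\mathbb{S}^\ast$ to the perfect distributive modal algebra $(\mathbb{S}^\ast)^\delta$, on which I may apply algorithmic correspondence for distributive modal logic (cf.~\cite[Theorems 8.1 and 9.8]{conradie2012algorithmic}) to produce a pure first-order equivalent quantifying over the completely join-irreducible elements of $(\mathbb{S}^\ast)^\delta$. Translation via the dictionary of Definition \ref{def: subordination space in adelta} ($\nomj \sqsubseteq \nomk$ iff $\nomk\leq \nomj$, and $\nomj R_\prec \nomi$ iff $\nomi\leq \Diamond \nomj$ iff $\blacksquare\kappa(\nomi)\leq \kappa(\nomj)$) is then expected to yield precisely the relational conditions displayed in the statement. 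For item (ii), whose target is a biconditional, I will run the correspondence procedure on the two analytic-inductive inequalities of Proposition \ref{prop:characteriz}.3(ii)--(iii) separately and combine the two resulting first-order correspondents into the single displayed biconditional.

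The main obstacle lies in the ALBA-style elimination of propositional variables, and in making sure that it produces exactly the relational conditions in the statement rather than merely equivalent ones. The tools I will rely on throughout are: complete join-preservation of $\Diamond^\sigma$ and complete meet-preservation of $\blacksquare^\pi$ on the perfect distributive lattice $(\mathbb{S}^\ast)^\delta$; the equivalence $\nomj\nleq u$ iff $u\leq \kappa(\nomj)$ (cf.~Footnote \ref{footn:kappa}); and the fact that the completely meet-irreducibles of $(\mathbb{S}^\ast)^\delta$ are precisely the elements $\kappa(\nomi)$ for $\nomi\in J^\infty((\mathbb{S}^\ast)^\delta)$. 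Concretely, for item (i) the existential witness $\nomk$ will arise from applying complete join-preservation of $\Diamond^\sigma$ to the element $\nomj\wedge \Diamond \nomj$; for items (ii) and (iii) the delicate step is resolving the nested $\blacksquare$ by passing to meet-irreducibles of the form $\kappa(\nomi)$; and for item (iv) two successive applications of complete join-preservation of $\Diamond^\sigma$ to the element $\Diamond \nomi\wedge \Diamond \nomj$ produce the required witness $\nomj'$.
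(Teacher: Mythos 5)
Your proposal follows exactly the paper's own proof: the same appeals to Proposition \ref{prop:characteriz} (2(i), 3(ii)--(iii), 3(i), 2(ii) respectively), the same canonicity-plus-correspondence pipeline through $(\mathbb{S}^\ast)^\delta$, the same handling of (S9) as two separate analytic inductive inequalities, and the same key technical moves (join-preservation of $\Diamond^\sigma$ applied to $\nomj\wedge\Diamond\nomj$ for (CT), meet-irreducibles and $\kappa$ for (S9) and (SL1), two rounds of join-preservation for (SL2)). No substantive difference from the paper's argument.
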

\begin{proof}
 (i) By Proposition \ref{prop:characteriz}.2(i), $\mathbb{S}\models \mathrm{(CT)}$ iff  $\mathbb{S}^{*}\models \Diamond a\leq \Diamond (a \wedge \Diamond a)$; the inequality $\Diamond a\leq \Diamond (a \wedge \Diamond a)$ is analytic inductive, and hence {\em slanted} canonical by \cite[Theorem 4.1]{de2021slanted}. Hence, from $\mathbb{S}^{*}\models \Diamond a\leq \Diamond (a \wedge \Diamond a)$ it follows that $(\mathbb{S}^{*})^\delta\models \Diamond a\leq \Diamond (a \wedge \Diamond a)$, with $(\mathbb{S}^{*})^\delta$ being a {\em standard} (perfect) distributive modal algebra. By \cite[Theorems 8.1 and 9.8]{conradie2012algorithmic}, $(\mathbb{S}^{*})^\delta\models \Diamond a\leq \Diamond (a \wedge \Diamond a)$ iff $(\mathbb{S}^{*})^\delta\models \forall \nomj ( \Diamond \nomj \leq \Diamond (\nomj \wedge \Diamond \nomj))$ where $\nomj$ ranges in the set $\jty((\mathbb{S}^{*})^\delta)$ of the completely join-irreducible elements of $(\mathbb{S}^{*})^\delta$. Therefore,
    \begin{center}
    \begin{tabular}{cll}
    & $ \forall \nomj (\Diamond \nomj \leq \Diamond (\nomj \wedge \Diamond \nomj)) $ & \\
    iff& $ \forall \nomi \forall \nomj (\nomi \leq \Diamond \nomj \Rightarrow \nomi \leq \Diamond (\nomj \land \Diamond \nomj))$  &
    \\
    iff&$ \forall \nomi \forall \nomj  (\nomi \leq \Diamond \nomj \Rightarrow \exists \nomk(\nomi \leq \Diamond \nomk\ \&\  \nomk \leq \nomj\ \&\  \nomk \leq \Diamond \nomj))$  & $(\ast)$\\
    \end{tabular}
    \end{center}
    The equivalence marked with $(\ast)$ follows from the fact that $\Diamond$ is completely join-preserving (hence monotone) and $\nomi$ is completely join-prime.
By Definition \ref{def: subordination space in adelta}, the last line of the chain of equivalences above translates to the right-hand side of (i).

(ii) By Proposition \ref{prop:characteriz}.3(ii) and (iii), $\mathbb{S}\models \mathrm{(S9)}$ iff  $\mathbb{S}^{*}\models \blacksquare ( a \vee \blacksquare b) = \blacksquare a \vee \blacksquare b$; both the inequalities $\blacksquare ( a \vee \blacksquare b) \leq \blacksquare a \vee \blacksquare b$ and  $\blacksquare ( a \vee \blacksquare b) \geq \blacksquare a \vee \blacksquare b$ are analytic inductive, and hence {\em slanted} canonical by \cite[Theorem 4.1]{de2021slanted}. Hence, from $\mathbb{S}^{*}\models \blacksquare ( a \vee \blacksquare b) = \blacksquare a \vee \blacksquare b$ it follows that $(\mathbb{S}^{*})^\delta\models \blacksquare ( a \vee \blacksquare b) = \blacksquare a \vee \blacksquare b$, with $(\mathbb{S}^{*})^\delta$ being a {\em standard} (perfect) distributive modal algebra. By \cite[Theorems 8.1 and 9.8]{conradie2012algorithmic}, $(\mathbb{S}^{*})^\delta\models \blacksquare ( a \vee \blacksquare b) = \blacksquare a \vee \blacksquare b$ iff $(\mathbb{S}^{*})^\delta\models \forall \cnomm \forall \cnomn (\blacksquare ( \cnomm \vee \blacksquare \cnomn) = \blacksquare \cnomm \vee \blacksquare \cnomn)$ where $\cnomm$ and $ \cnomn$ range in the set $\mty((\mathbb{S}^{*})^\delta)$ of the completely meet-irreducible elements of $(\mathbb{S}^{*})^\delta$. Therefore,
\begin{center}
    \begin{tabular}{cll}
    & $\forall \cnomm \forall \cnomn( \blacksquare ( \cnomm \vee \blacksquare \cnomn) \leq \blacksquare \cnomm \vee \blacksquare \cnomn) $ & \\
  iff  & $\forall \cnomm \forall \cnomn\forall \cnomo (\blacksquare \cnomm \vee \blacksquare \cnomn \leq \cnomo \Rightarrow \blacksquare (\cnomm \vee \blacksquare \cnomn) \leq \cnomo)$ & \\
  iff  & $\forall \cnomm \forall \cnomn\forall \cnomo ((\blacksquare \cnomm \leq \cnomo\ \&\  \blacksquare \cnomn \leq \cnomo) \Rightarrow \exists \cnomo^\prime (\cnomm \leq \cnomo^\prime\ \&\  \blacksquare \cnomn \leq \cnomo^\prime\ \&\  \blacksquare \cnomo^\prime \leq \cnomo) ),$ & ($\ast$) \\
\end{tabular}
\end{center}
where the last equivalence follows from the fact that $\blacksquare$ is completely meet preserving, and $\cnomo$ is completely meet-prime. Likewise, for the converse inequality:

\begin{center}
    \begin{tabular}{cll}
    & $\forall \cnomm \forall \cnomn(\blacksquare \cnomm \vee \blacksquare \cnomn \leq \blacksquare ( \cnomm \vee \blacksquare \cnomn) )$ & \\
   iff  & $\forall \cnomm \forall \cnomn\forall \cnomo ( \exists \cnomo^\prime (\cnomm \leq \cnomo^\prime\ \&\   \blacksquare \cnomn \leq \cnomo^\prime\ \&\   \blacksquare \cnomo^\prime \leq \cnomo) \Rightarrow \blacksquare \cnomm \leq \cnomo\ \&\   \blacksquare \cnomn \leq \cnomo).$ & \\
\end{tabular}
\end{center}

where $\cnomo$ and $\cnomo'$ also range in $\mty((\mathbb{S}^{*})^\delta)$. By Definition \ref{def: subordination space in adelta}, the last lines of the chains of equivalences above translate to the right-hand side of (ii), given that  the assignment $\kappa$ mentioned in Definition \ref{def: subordination space in adelta} is an order-isomorphism %$\lambda$
between  the completely meet-irreducible elements and the completely join-irreducible elements of perfect distributive lattices. %(it always holds that $\blacksquare \cnomm \leq \cnomn$ iff $\lambda (\cnomm) \leq \Diamond \lambda (\cnomn)$).

 (iii) By Proposition \ref{prop:characteriz}.3(i), $\mathbb{S}\models \mathrm{(SL1)}$ iff $\mathbb{S}^{*}\models \blacksquare ( a \vee b) \leq \blacksquare (\blacksquare a \vee \blacksquare b)$; the inequality $\blacksquare ( a \vee b) \leq \blacksquare (\blacksquare a \vee \blacksquare b)$ is analytic inductive, and hence {\em slanted} canonical by \cite[Theorem 4.1]{de2021slanted}. Hence, from $\mathbb{S}^{*}\models \blacksquare ( a \vee b) \leq \blacksquare (\blacksquare a \vee \blacksquare b)$ it follows that $(\mathbb{S}^{*})^\delta\models \blacksquare ( a \vee b) \leq \blacksquare (\blacksquare a \vee \blacksquare b)$, with $(\mathbb{S}^{*})^\delta$ being  a {\em standard} (perfect) distributive modal algebra. By \cite[Theorems 8.1 and 9.8]{conradie2012algorithmic}, $(\mathbb{S}^{*})^\delta\models \blacksquare ( a \vee b) \leq \blacksquare (\blacksquare a \vee \blacksquare b)$ iff $(\mathbb{S}^{*})^\delta\models \forall \cnomm \forall \cnomn (\blacksquare ( \cnomm \vee \cnomn) \leq \blacksquare (\blacksquare \cnomm \vee \blacksquare \cnomn))$ where $\cnomm$ and $ \cnomn$ range in the set $\mty((\mathbb{S}^{*})^\delta)$ of the completely meet-irreducible elements of $(\mathbb{S}^{*})^\delta$. Therefore,
\begin{center}
    \begin{tabular}{cll}
    & $ \forall \cnomm \forall \cnomn(\blacksquare ( \cnomm \vee  \cnomn) \leq \blacksquare (\blacksquare \cnomm \vee \blacksquare \cnomn) )$ & \\
iff    & $\forall \cnomm \forall \cnomn \forall \cnomo (\blacksquare (\blacksquare \cnomm \vee \blacksquare \cnomn) \leq \cnomo \Rightarrow  \blacksquare ( \cnomm \vee  \cnomn) \leq \cnomo) $ & \\
 iff   & $\forall \cnomm \forall \cnomn \forall \cnomo \forall \cnomm' (\blacksquare \cnomm \vee \blacksquare \cnomn \leq \cnomm'\ \&\ \blacksquare \cnomm' \leq \cnomo \Rightarrow \exists \cnomn' ( \cnomm \vee \cnomn \leq \cnomn'\ \&\  \blacksquare \cnomn' \leq \cnomo) )$ &\\
  iff  & $\forall \cnomm \forall \cnomn \forall \cnomo \forall \cnomm' (\blacksquare \cnomm \leq \cnomm'\ \&\ \blacksquare \cnomn \leq \cnomm'\ \&\ \blacksquare \cnomm' \leq \cnomo \Rightarrow \exists \cnomn' ( \cnomm \leq \cnomn'\ \&\ \cnomn \leq \cnomn'\ \&\ \blacksquare \cnomn' \leq \cnomo) ).$ &\\
  i.e. &  $\forall i \forall j \forall k \forall i' (i'R_{\prec} i\ \&\ i' R_{\prec} j\ \&\ kR_{\prec} i' \Rightarrow \exists j' (j'\sqsubseteq i\ \&\ j'\sqsubseteq j\ \&\ kR_{\prec} j') ).$ &\\
 \end{tabular}
    \end{center}

The last line of the chain above is obtained by translating the penultimate line according to    Definition \ref{def: subordination space in adelta}.

 (iv) By Proposition \ref{prop:characteriz}.2(ii), $\mathbb{S}\models \mathrm{(SL2)}$ iff $\mathbb{S}^{*}\models \Diamond(\Diamond a\wedge \Diamond b)\leq \Diamond (a\wedge b)$; the inequality $\Diamond(\Diamond a\wedge \Diamond b)\leq \Diamond (a\wedge b)$ is analytic inductive, and hence {\em slanted} canonical by \cite[Theorem 4.1]{de2021slanted}. Hence, from $\mathbb{S}^{*}\models \Diamond(\Diamond a\wedge \Diamond b)\leq \Diamond (a\wedge b)$ it follows that $(\mathbb{S}^{*})^\delta\models \Diamond(\Diamond a\wedge \Diamond b)\leq \Diamond (a\wedge b)$, with $(\mathbb{S}^{*})^\delta$ being  a {\em standard} (perfect) distributive modal algebra. By \cite[Theorems 8.1 and 9.8]{conradie2012algorithmic}, $(\mathbb{S}^{*})^\delta\models \Diamond(\Diamond a\wedge \Diamond b)\leq \Diamond (a\wedge b)$ iff $(\mathbb{S}^{*})^\delta\models \forall \nomi \forall \nomj (\Diamond(\Diamond \nomi\wedge \Diamond \nomj)\leq \Diamond (\nomi\wedge \nomj))$ where $\nomi$ and $ \nomj$ range in the set $\jty((\mathbb{S}^{*})^\delta)$ of the completely join-irreducible elements of $(\mathbb{S}^{*})^\delta$. Therefore,
\begin{center}
    \begin{tabular}{cll}
    & $\forall \nomi \forall \nomj (\Diamond(\Diamond \nomi\wedge \Diamond \nomj)\leq \Diamond (\nomi\wedge \nomj))$\\
    iff & $\forall \nomi \forall \nomj\forall \nomk (\nomk\leq \Diamond(\Diamond \nomi\wedge \Diamond \nomj)\Rightarrow \nomk\leq \Diamond (\nomi\wedge \nomj))$\\
    iff & $\forall \nomi \forall \nomj\forall \nomk\forall \nomi' (\nomk\leq \Diamond \nomi' \ \&\ \nomi'\leq \Diamond \nomi\wedge \Diamond \nomj\Rightarrow \exists \nomj'(\nomk\leq \Diamond\nomj' \ \&\ \nomj'\leq  \nomi\wedge \nomj))$\\
    iff & $\forall \nomi \forall \nomj\forall \nomk\forall \nomi' (\nomk\leq \Diamond \nomi' \ \&\ \nomi'\leq \Diamond \nomi \ \&\ \nomi'\leq \Diamond \nomj\Rightarrow \exists \nomj'(\nomk\leq \Diamond\nomj' \ \&\ \nomj'\leq  \nomi \ \&\ \nomj'\leq \nomj))$\\
    i.e. & $\forall i \forall j\forall k\forall i' (i'R_{\prec} k \ \&\ iR_{\prec}i' \ \&\ jR_{\prec}i'\Rightarrow \exists j'(j'R_{\prec}k \ \&\ i\sqsubseteq j' \ \&\ j\sqsubseteq j'))$\\
    \end{tabular}
    \end{center}
\end{proof}

\begin{proposition}
\label{prop: precontact space}
For any precontact lattice $\mathbb{C}$,
\begin{enumerate}[label=(\roman*)]
    \item $\mathbb{C}\models \mathrm{(NS)}\quad$ iff  $\quad \mathbb{C}_\ast\models \forall j (j R_{\npcon} j) $

    \item $\mathbb{C}\models\;  \mathrm{(SFN)}$ iff $\quad \mathbb{C}_{*}\models \forall j (j R_{\npcon} j)$.

    \item $\mathbb{C}\models\;  \mathrm{(ALT)}^\wt $ iff $\ \mathbb{C}_*\models \forall j\forall k\forall i(iR_{\npcon}j  \ \&\ iR_{\npcon}k\Rightarrow kR_{\npcon}j) $.

    \item $\mathbb{C}\models  \mathrm{(CMO)} \quad$ iff  $\quad \mathbb{C}_*\models \forall k \forall j \forall i\forall h (kR_{\npcon} h \ \&\ j\sqsubseteq h \ \&\ i\sqsubseteq h \Rightarrow k R_{\npcon} i\ or \ jR_{\npcon} i )$

\end{enumerate}
\end{proposition}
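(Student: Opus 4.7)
The plan is to apply, for each of items (i)--(iv), the four-step template already used in the proofs of Propositions \ref{prop: celani} and \ref{prop:more dual charact subordination}. First, I would invoke Proposition \ref{prop:precontact correspondence} to translate each relational condition on $\mathbb{C}$ into a modal inequality valid on the associated slanted algebra $\mathbb{C}^\ast$: namely $a\wedge \wt a\leq \bot$ for (NS) by \ref{prop:precontact correspondence}.1(v), $\top\leq a\vee \wt a$ for (SFN) by \ref{prop:precontact correspondence}.1(vi), $\top\leq \wt a\vee \wt\wt a$ for (ALT)$^\wt$ by \ref{prop:precontact correspondence}.5(i), and $\wt a\leq \wt(a\wedge \wt a)$ for (CMO) by \ref{prop:precontact correspondence}.4(i). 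Second, each of these inequalities is analytic inductive (cf.~\cite[Definition 55]{greco2018unified}), so slanted canonicity \cite[Theorem 4.1]{de2021slanted} lifts its validity from $\mathbb{C}^\ast$ to the canonical extension $(\mathbb{C}^\ast)^\delta$, which is a standard (perfect) distributive modal algebra. Third, I would apply algorithmic correspondence for distributive modal logic \cite[Theorems 8.1 and 9.8]{conradie2012algorithmic} to reduce each lifted inequality to a first-order sentence over $\jty((\mathbb{C}^\ast)^\delta)$ (and, via $\kappa$, over $\mty((\mathbb{C}^\ast)^\delta)$); fourth, translate the resulting sentence into a condition on $\mathbb{C}_\ast$ using Definition \ref{def: precontact space in adelta} and footnote \ref{footn:kappa}.

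For (i), complete join-primeness of each $j\in \jty((\mathbb{C}^\ast)^\delta)$ unpacks $\forall a(a\wedge \wt a\leq \bot)$ as $\forall a\forall j(j\leq a\,\&\,j\leq \wt a\Rightarrow\bot)$; specialising $a:=j$ (sound because antitonicity of $\wt$ forces $\wt a\leq \wt j$ whenever $j\leq a$, collapsing the quantifier over $a$) yields $\forall j(j\nleq \wt j)$, which by footnote \ref{footn:kappa} reads $\forall j(\wt j\leq \kappa(j))$, i.e.~$\forall j(jR_{\npcon}j)$. For (ii), complete join-primeness reduces $\forall a(\top\leq a\vee \wt a)$ to $\forall a\forall j(j\leq a\text{ or }j\leq \wt a)$, and maximising $a$ to $\kappa(j)$ gives $\forall j(j\leq \wt \kappa(j))$. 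The main subtlety of this item is to reconcile this with the reflexivity of $R_{\npcon}$ already appearing in (i): by Corollary \ref{cor: defined+si+wo imply tense hence normal}, $\mathbb{C}^\ast$ is tense (since $\mathbb{C}$ is lattice-based), so the adjunction $u\leq \wt v\Leftrightarrow v\leq \bt u$ lifts to $(\mathbb{C}^\ast)^\delta$ (Lemma \ref{lemma:tense and lifted adjunction}) and converts $j\leq \wt\kappa(j)$ into $\kappa(j)\leq \bt j$; combining the instance $j\leq \wt j \Leftrightarrow j\leq \bt j$ of the tense adjunction with two applications of footnote \ref{footn:kappa} shows that this is equivalent to $\wt j\leq \kappa(j)$, that is, to $R_{\npcon}(j,j)$.

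For (iii) and (iv), the reductions proceed analogously but require iterating the expansion of $\wt$ on compound terms. Because $\mathbb{C}^\ast$ is a normal precontact algebra (Corollary \ref{cor: charact monotone reg norm}.2(iii)), $\wt^\pi$ is completely join-reversing on $(\mathbb{C}^\ast)^\delta$, so $j\leq \wt u$ iff $\forall i\in\jty((\mathbb{C}^\ast)^\delta)(i\leq u\Rightarrow j\leq \wt i)$, and in turn $j\leq \wt i$ reads $\neg R_{\npcon}(i,j)$ via footnote \ref{footn:kappa}. For (iii), applying these rewrites to $\top\leq \wt a\vee \wt\wt a$, then using complete join-primeness and maximising $a$ to $\kappa(i)$, yields the ternary condition $\forall i\forall j\forall k(iR_{\npcon}j\,\&\,iR_{\npcon}k\Rightarrow kR_{\npcon}j)$. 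For (iv), the same scheme applied to $\wt a\leq \wt(a\wedge \wt a)$ produces a disjunctive conclusion, because expanding the innermost clause $i\leq a\wedge \wt a$ as $i\leq a$ together with $i\leq \wt a$, and then unfolding $i\leq \wt a$ again, inserts the alternative that becomes the ``$kR_{\npcon}i$ or $jR_{\npcon}i$'' of the target formula. The main technical obstacle throughout is tracking the polarity of each operator position during the ALBA-style rewriting and choosing the correct specialisation of $a$ (to $j$, $\kappa(i)$, or a meet of such) at each step; item (ii) additionally depends on the tense structure to identify, at the frame level, the apparently distinct correspondents of (NS) and (SFN).
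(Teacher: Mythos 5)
Your overall strategy coincides with the paper's: for each item, invoke Proposition \ref{prop:precontact correspondence} to obtain the modal inequality, note that it is analytic inductive and hence slanted canonical by \cite[Theorem 4.1]{de2021slanted}, pass to the perfect distributive modal algebra $(\mathbb{C}^\ast)^\delta$, run algorithmic correspondence over the (co)atoms, and translate back via Definition \ref{def: precontact space in adelta} and Footnote \ref{footn:kappa}. Items (i) and (iii) are handled essentially as in the paper. The genuine gap is in your treatment of item (ii). Like the paper, you reduce (SFN) to $\forall j\,(j\leq \wt \kappa(j))$ (the paper phrases this as $\forall \cnomm(\wt\cnomm\nleq\cnomm)$ over $\mty((\mathbb{C}^\ast)^\delta)$, which is the same condition under $\cnomm=\kappa(j)$). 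But the reconciliation you then offer does not work: the lifted adjunction $u\leq \wt^\pi v\Leftrightarrow v\leq \bt^\pi u$ converts $j\leq \wt\kappa(j)$ into $\kappa(j)\leq \bt j$, whereas $R_{\npcon}(j,j)$ is, by Definition \ref{def: precontact space in adelta}, the \emph{opposite} inequality $\bt j\leq \kappa(j)$ (equivalently $\wt j\leq\kappa(j)$, i.e.~$j\nleq\wt j$). Footnote \ref{footn:kappa} only trades $j\nleq u$ for $u\leq\kappa(j)$; neither it nor the instance $j\leq\wt j\Leftrightarrow j\leq\bt j$ of the adjunction can reverse an inequality, so the chain you describe does not establish the claimed equivalence. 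You correctly sensed that (ii) is the delicate item — the paper itself asserts this final translation without displaying the computation — but the argument you supply does not close it.

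A smaller issue concerns item (iv): your sketch correctly predicts a disjunctive frame correspondent, but misattributes its origin. In the paper's computation the disjunction arises by contraposing the two antecedents $\nomk\leq\wt\nomi$ and $\nomj\leq\wt\nomi$, while the fourth variable $h$ together with the conditions $j\sqsubseteq h$ and $i\sqsubseteq h$ enters by rewriting $\nomk\leq\wt(\nomi\wedge\nomj)$ through the adjoint as $\nomi\wedge\nomj\leq\bt\nomk$ and then quantifying over join-irreducibles $\nomh\leq\nomi\wedge\nomj$. Your description (``unfolding $i\leq\wt a$ again inserts the alternative'') never invokes $\bt$ or introduces the fourth variable, so as written it would not yield the four-variable condition in the statement, even though the surrounding machinery is the right one.
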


\begin{proof}
(i) By Proposition \ref{prop:precontact correspondence}.1(v), $\mathbb{C}\models \mathrm{(NS)}$ iff
 $\ \mathbb{C}^*\models  a\wedge \wt a\leq \bot$; the inequality $ a\wedge \wt a\leq \bot$ is analytic inductive (cf.~\cite[Definition 55]{greco2018unified}), and hence {\em slanted} canonical (cf.~\cite[Theorem 4.1]{de2021slanted}). Hence, from$\ \mathbb{C}^*\models  a\wedge \wt a\leq \bot$ it follows that $\ (\mathbb{C}^*)^\delta \models  a\wedge \wt a\leq \bot$, where $(\mathbb{C}^{*})^\delta$  is a {\em standard} (perfect) distributive modal algebra. By algorithmic correspondence theory for distributive modal logic (cf.~\cite[Theorems 8.1 and 9.8]{conradie2012algorithmic}), $(\mathbb{C}^{*})^\delta\models a\wedge \wt a\leq \bot$ iff $(\mathbb{C}^{*})^\delta\models \forall \nomj (\nomj \leq \wt \nomj \Rightarrow \nomj \leq \bot)$ or equivalently, $(\mathbb{C}^{*})^\delta\models \forall \nomj (\nomj \nleq \bot\Rightarrow \nomj \nleq \wt \nomj)$, where $\nomj$ ranges in $\jty((\mathbb{C}^{*})^\delta)$. Since every element of $\jty((\mathbb{C}^{*})^\delta)$ is different from $\bot$, by Definition \ref{def: precontact space in adelta} and Footnote \ref{footn:kappa}, the latter condition translates to $\mathbb{C}_{*}\models \forall j (j R_{\npcon} j)$.

 (ii) By Proposition \ref{prop:precontact correspondence}.1(vi), $\mathbb{C}\models \mathrm{(SFN)}$ iff
 $\ \mathbb{C}^*\models  \top \leq a\vee \wt a$; the inequality $\top \leq a\vee \wt a$ is analytic inductive (cf.~\cite[Definition 55]{greco2018unified}), and hence {\em slanted} canonical (cf.~\cite[Theorem 4.1]{de2021slanted}). Hence, from$\ \mathbb{C}^*\models  \top \leq a\vee \wt a$ it follows that $\ (\mathbb{C}^*)^\delta \models  \top \leq a\vee \wt a$, where $(\mathbb{C}^{*})^\delta$  is a {\em standard} (perfect) distributive modal algebra. By algorithmic correspondence theory for distributive modal logic (cf.~\cite[Theorems 8.1 and 9.8]{conradie2012algorithmic}), $(\mathbb{C}^{*})^\delta\models \top \leq a\vee \wt a$ iff $(\mathbb{C}^{*})^\delta\models \forall \cnomm ( \wt \cnomm \leq  \cnomm  \Rightarrow \top\leq \cnomm)$ or equivalently, $(\mathbb{C}^{*})^\delta\models \forall \cnomm (\top\nleq \cnomm \Rightarrow \wt \cnomm \nleq  \cnomm) )$, where $\cnomm$ ranges in $\mty((\mathbb{C}^{*})^\delta)$. Since every element of $\mty((\mathbb{C}^{*})^\delta)$ is different from $\top$, by Definition \ref{def: precontact space in adelta}, Footnote \ref{footn:kappa}, and the fact that $\kappa: \jty(A^\delta)\to \mty(A^\delta)$ is an order-isomorphism, the latter condition translates to $\mathbb{C}_{*}\models \forall j (j R_{\npcon} j)$.

(iii) By Proposition \ref{prop:precontact correspondence}.4(i), $\mathbb{C}\models \mathrm{(ALT)^\wt}$ iff
 $\ \mathbb{C}^*\models  \top \leq \wt a\vee \wt\wt a$; the inequality $\top \leq \wt a\vee \wt\wt a$ is analytic inductive (cf.~\cite[Definition 55]{greco2018unified}), and hence {\em slanted} canonical (cf.~\cite[Theorem 4.1]{de2021slanted}). Hence, from$\ \mathbb{C}^*\models  \top \leq \wt a\vee \wt\wt a $ it follows that $\ (\mathbb{C}^*)^\delta \models  \top \leq \wt a\vee \wt\wt a $, where $(\mathbb{C}^{*})^\delta$  is a {\em standard} (perfect) distributive modal algebra. By algorithmic correspondence theory for distributive modal logic (cf.~\cite[Theorems 8.1 and 9.8]{conradie2012algorithmic}), the following chain of equivalences holds in $A^\delta$:
 \begin{center}
     \begin{tabular}{cll}
        & $\forall a(\top \leq \wt a\vee \wt\wt a )$\\
        iff & $\forall \nomj\forall \nomk\forall a(\nomj\leq a\ \&\ \nomk\leq \wt a\Rightarrow \top\leq \wt \nomj\vee \wt \nomk)$\\
        iff & $\forall \nomj\forall \nomk(\nomk\leq \wt \nomj\Rightarrow \top\leq \wt \nomj\vee \wt \nomk)$\\
        iff & $\forall \nomj\forall \nomk(\nomk\leq \wt \nomj\Rightarrow \forall\cnomm( \wt \nomj\vee \wt \nomk\leq \cnomm\Rightarrow \top\leq \cnomm))$\\
        iff & $\forall \nomj\forall \nomk\forall\cnomm(\nomk\leq \wt \nomj\ \&\  \wt \nomj\vee \wt \nomk\leq \cnomm\Rightarrow \top\leq \cnomm)$\\
        iff & $\forall \nomj\forall \nomk\forall\cnomm(\nomk\leq \wt \nomj\ \&\  \wt \nomj\leq \cnomm\ \&\ \wt \nomk\leq \cnomm\Rightarrow \top\leq \cnomm)$\\
        iff & $\forall \nomj\forall \nomk\forall\cnomm(  \wt \nomj\leq \cnomm\ \&\ \wt \nomk\leq \cnomm\Rightarrow \nomk\nleq \wt \nomj)$ & ($\ast$)\\
        i.e. & $\forall j\forall k\forall i(iR_{\npcon}j  \ \&\ iR_{\npcon}k\Rightarrow kR_{\npcon}j). $ & \\
     \end{tabular}
 \end{center}
 The equivalence marked with $(\ast)$ holds since every element of $\mty((\mathbb{C}^{*})^\delta)$ is different from $\top$, and hence the inequality $\top\leq \cnomm$ is always false. The last line of the chain in display is obtained  by applying Definition \ref{def: precontact space in adelta}, Footnote \ref{footn:kappa}, and the fact that $\kappa: \jty(A^\delta)\to \mty(A^\delta)$ is an order-isomorphism.

(iv) By Proposition \ref{prop:precontact correspondence}.3(i), $\mathbb{C}\models \mathrm{(CMO)}$ iff
 $\ \mathbb{C}^*\models  \wt a\leq \wt( a \wedge \wt a)$; the inequality $  \wt a\leq \wt( a \wedge \wt a)$ is analytic inductive (cf.~\cite[Definition 55]{greco2018unified}), and hence {\em slanted} canonical by \cite[Theorem 4.1]{de2021slanted}. Hence, from$\ \mathbb{C}^*\models  \wt a\leq \wt( a \wedge \wt a)$ it follows that $\ (\mathbb{C}^*)^\delta \models  \wt a\leq \wt( a \wedge \wt a)$, where $(\mathbb{C}^{*})^\delta$  is a {\em standard} (perfect) distributive modal algebra. By algorithmic correspondence theory for distributive modal logic (cf.~\cite[Theorems 8.1 and 9.8]{conradie2012algorithmic}),
 \begin{center}
     \begin{tabular}{cll}
     & $\forall a(\wt a\leq \wt(a \wedge \wt a))$\\
     iff & $\forall \nomk\forall \nomi\forall\nomj\forall a(\nomk\leq\wt a \ \&\ \nomi\leq a \ \&\ \nomj\leq \wt a\Rightarrow \nomk\leq \wt(\nomi\wedge \nomj)) $\\
     iff & $\forall \nomk \forall \nomj \forall \nomi (\nomk \leq \wt \nomi\ \&\ \nomj \leq \wt \nomi  \Rightarrow \nomk \leq \wt(\nomi \wedge \nomj))$\\
     iff & $\forall \nomk \forall \nomj \forall \nomi (\nomk \leq \wt \nomi\ \&\ \nomj \leq \wt \nomi  \Rightarrow \nomi \wedge \nomj\leq \bt \nomk)$\\
     iff & $\forall \nomk \forall \nomj \forall \nomi (\nomk \leq \wt \nomi\ \&\ \nomj \leq \wt \nomi  \Rightarrow \forall\nomh(\nomh\leq \nomj\wedge\nomi \Rightarrow \nomh\leq \bt \nomk))$\\
     iff & $\forall \nomk \forall \nomj \forall \nomi (\nomk \leq \wt \nomi\ \&\ \nomj \leq \wt \nomi  \Rightarrow \forall\nomh(\nomh\leq \nomj\ \&\ \nomh\leq\nomi \Rightarrow \nomh\leq \bt \nomk))$\\
     iff & $\forall \nomk \forall \nomj \forall \nomi\forall\nomh (\nomk \leq \wt \nomi\ \&\ \nomj \leq \wt \nomi  \ \&\ \nomh\leq \nomj \ \&\ \nomh\leq \nomi \Rightarrow \nomh\leq \bt \nomk)$\\
     iff & $\forall \nomk \forall \nomj \forall \nomi\forall\nomh (\nomh\nleq \bt \nomk \ \&\ \nomh\leq \nomj \ \&\ \nomh\leq \nomi \Rightarrow \nomk \nleq \wt \nomi\ or \ \nomj \nleq \wt \nomi )$\\
     iff & $\forall \nomk \forall \nomj \forall \nomi\forall\nomh ( \bt \nomk\leq \kappa(\nomh) \ \&\ \nomh\leq \nomj \ \&\ \nomh\leq \nomi \Rightarrow \wt \nomi\leq\kappa(\nomk)\ or \ \wt \nomi\leq \kappa(\nomj) )$\\
     i.e. & $\forall k \forall j \forall i\forall h (kR_{\npcon} h \ \&\ j\sqsubseteq h \ \&\ i\sqsubseteq h \Rightarrow k R_{\npcon} i\ or \ jR_{\npcon} i )$,\\
     \end{tabular}
 \end{center}
  where $\nomj,\nomk,\nomi, \nomh$ range in  $\jty((\mathbb{C}^{*})^\delta)$, and the last steps make use of Footnote \ref{footn:kappa} and Definition \ref{def: precontact space in adelta}.
\end{proof}
Conditions on dual precontact lattices, or on distributive lattice-based bi-subordination algebras or sp-algebras, can be dualized on the corresponding spaces by analogous arguments which make use of the remaining characterization results in Propositions \ref{prop:precontact correspondence}, \ref{prop:postcontact correspondence}, \ref{prop:poly correspondence}, and \ref{prop:poly correspondence permission}. A general characterization result encompassing all the instances presented above will be discussed in the companion paper \cite{dedomenico2024correspond}.

\subsection{On positive bi-subordination algebras and quasi-modal operators}
\label{ssec:dunn-saxioms}
In \cite{celani2021bounded}, {\em positive bi-subordination lattices} are introduced as those DL-based bi-subordination algebras $\mathbb{B} = (A, \prec_1, \prec_2)$ satisfying conditions (P1) and (P2) (cf.~Section \ref{sec: modal charact}, before Proposition \ref{prop:poly correspondence}). In the same paper, this definition is  motivated by the statement that the link between $\prec_1$ and $\prec_2$ in positive bi-subordination algebras is `similar' to the link
between the $\Box$ and $\Diamond$ operations in positive modal algebras \cite{dunn1995positive}. To  substantiate this statement, the authors state (without proof) that conditions (P1) and (P2) `say' that the following inclusions hold involving the quasi-modal operators associated with $\prec_1$ and $\prec_2$ for every $a, b\in A$:
\begin{equation}
\label{eq: clumsy dunn}
\Delta_{\prec_1}(a\vee b)\subseteq \nabla_{\prec_2} a\odot \Delta_{\prec_1}b \quad\quad
 \nabla_{\prec_2}(a \wedge b) \subseteq \Delta_{\prec_1}a\oplus \nabla_{\prec_2}b,
\end{equation}
where the quasi-modal operator  $\Delta_{\prec_1}$ (resp.$\nabla_{\prec_2}$) associates each $a\in A$ with the ideal ${\prec_1^{-1}}[a]$, (resp.~with the filter ${\prec_2}[a]$), and moreover, for any filter $F$ and ideal $I$ of $A$, the ideal $F \odot I$ and the filter $I \oplus F$ are defined as follows:
\[
F \odot I :=\bigcap\{\lceil I\cup\{a\}\rceil\mid a\in F\} \quad\quad  I \oplus F :=\bigcap\{\lfloor F\cup\{a\}\rfloor\mid a\in I\}.
\]
Having clarified (cf.~Proposition \ref{prop:poly correspondence} 3 (i) and (ii)) in which sense  the link between $\prec_1$ and $\prec_2$ encoded in (P1) and (P2) is `similar' to the link
between  $\Box$ and $\Diamond$ encoded in the interaction axioms in positive modal algebras,
in the present subsection, we clarify the link between (P1) and (P2) and  the inclusions \eqref{eq: clumsy dunn}, by showing, again as an application of
 Proposition \ref{prop:poly correspondence}, that these inclusions  equivalently characterize  (P1) and (P2).

 \smallskip
 Via the identification of any filter $F$  of $A$ with the closed element $k = \bigwedge F \in K(A^\delta)$ and of any ideal $I$ of $A$ with the open element $o = \bigvee I\in O(A^\delta)$, the ideal $F\odot I$ (resp.~the filter $I\oplus F$) can be  identified with the following open (resp.~closed) element of $A^\delta$:
\[
k \odot o :=\bigvee\{b\in A\mid \forall a(k\leq a\Rightarrow b\leq a\vee o)\} \quad\quad  o \oplus k :=\bigwedge\{b\in A\mid \forall a(a\leq o \Rightarrow  a\wedge k\leq b)\}.
\]
Notice  that \[k\odot o = int(k\vee o): = \bigvee \{o'\in O(A^\delta)\mid o'\leq k\vee o\},\] and \[o\oplus k = cl(o\wedge k): = \bigwedge \{k'\in K(A^\delta)\mid o\wedge k\leq k'\},\]
where $int(k\vee o)$ (i.e.~the {\em interior} of $k\vee o$) is defined as the greatest open element of $A^\delta$ which is smaller than or equal to $k\vee o$, and dually, $cl(o\wedge k)$ (i.e.~the {\em closure} of $o\wedge k$) is defined as the smallest closed element of $A^\delta$ which is greater than or equal to $o\wedge k$.  Indeed,
\begin{center}
\begin{tabular}{rcl}
$k \odot o $ & $=$ & $\bigvee\{b\in A\mid \forall a(k\leq a\Rightarrow b\leq a\vee o)\}$\\
& $=$ & $\bigvee\{b\in A\mid b\leq \bigwedge \{a\vee o\mid k\leq a\}\}$\\
& $=$ & $\bigvee\{b\in A\mid b\leq o\vee \bigwedge \{a\mid k\leq a\}\}$\\
& $=$ & $\bigvee\{b\in A\mid b\leq o\vee  k\}$\\
& $=$ & $\bigvee \{o'\in O(A^\delta)\mid o'\leq k\vee o\}$\\
& $=$ & $int(k\vee o)$,\\
\end{tabular}
\end{center}
and likewise one shows that $o\oplus k = cl(o\wedge k)$.
Hence, the inclusions \eqref{eq: clumsy dunn} equivalently translate as the following inequalities on $A^\delta$:
\begin{equation}
\label{eq: clumsy dunn translated}
\blacksquare_{1}(a\vee b)\leq int( \Diamond_{2} a\vee \blacksquare_{1}b) \quad\quad
  cl(\blacksquare_{1}a\wedge \Diamond_{2}b)\leq \Diamond_{2}(a \wedge b),
\end{equation}
and since $\blacksquare_{1}(a\vee b)\in O(A^\delta)$ and $\Diamond_{2}(a \wedge b)\in K(A^\delta)$, the inequalities in \eqref{eq: clumsy dunn translated} are respectively equivalent to the following inequalities
\begin{equation}
\label{eq: dunn }
\blacksquare_{1}(a\vee b)\leq  \Diamond_{2} a\vee \blacksquare_{1}b \quad\quad
  \blacksquare_{1}a\wedge \Diamond_{2}b\leq \Diamond_{2}(a \wedge b),
\end{equation}
which, in Proposition \ref{prop:poly correspondence}, are shown to be equivalent to (P1) and (P2), respectively.

Finally, notice that the equivalence between \eqref{eq: clumsy dunn translated} and \eqref{eq: dunn } hinges on the fact that the left-hand side (resp.~right-hand side) of the first (resp.~second) inequality is an open (resp.~closed) element of $A^\delta$.
Analogous equivalences  would not hold for inequalities such as $\Diamond_1\blacksquare_2  a\leq \blacksquare_2\Diamond_1 a $ (cf.~Proposition \ref{prop:poly correspondence} 4), in which each  side is neither closed nor open. Hence, we would not be able to equivalently characterize conditions on bi-subordination algebras such as $\prec_2  \circ\prec_1\ \subseteq\ \prec_1  \circ\prec_2$ (cf.~Proposition \ref{prop:poly correspondence} 4) as  {\em inclusions} of filters and ideals in the language of the quasi-modal operators associated with $\prec_1$ and $\prec_2$ and the term-constructors $\odot$ and $\oplus$.
%That is, conditions on bi-subordination algebras can be equivalently characterized in terms of inclusions involving quasi-modal operators and term-constructors $\odot$ and $\oplus$ only if
\section{Conclusions}
\label{sec:conclusions}

\paragraph{Contributions of the present paper.} The present paper expands on the research initiated in \cite{wollic22}, in which a novel connection is established between the research themes of subordination algebras and of input/output logic, via notions and insights stemming from  algebraic logic. Building on \cite{dedomenico2024obligations}, in which various notions of normative and permission systems, originally introduced in \cite{Makinson00,Makinson03}, have been  introduced and  studied in the general context of the class of selfextensional logics \cite{font2017general},  the present paper focuses on various notions of algebras endowed with binary relations (collectively referred to in the present paper  as `relational algebras') and uses them as  semantic environment for normative and permission systems. %``a code permits something with respect to a condition iff it does not forbid it under that same condition, i.e.~iff the code does not require the contrary under that condition.''
 In particular, we have   studied  the notions of (dual) negative permission in the environment of (dual proto) precontact algebras \cite{dimov2005topological},  algebraic structures related to subordination algebras, and introduced in the context of a research program aimed at investigating spatial reasoning in a way that takes regions rather than points as the basic notion. This algebraic perspective has benefited from the insight, developed within the theory of unified correspondence \cite{conradie2014unified, de2021slanted}, that  relational algebras endowed with certain basic properties can be associated with  {\em slanted algebras}, i.e.~a generalized type of modal algebras, the modal operators of which  are not operations of the given  algebra, but are maps from the given algebra to its canonical extension.   %can be used
%as algebraic structures modelling conditional permission. \marginkmnote{Do we want to use word algebraic model?} %Future research could explore the relation between pre-contact algebra and different concepts of permissions proposed by Makinson and van der Torre \cite{Makinson03}.

The environment of slanted algebras allows us to achieve  correspondence-theoretic  characterizations of several well known conditions on the various relational algebras (some of which are the algebraic counterparts of well known closure properties in input/output logic, while others have been considered in  research lines related to subordination algebras) in terms of the validity of certain modal axioms on their associated slanted algebra. Several applications of these results ensue,
  spanning from the characterization of various types of output operators arising from normative and permission systems (some of which novel, to our knowledge) in terms of properties of their associated slanted modal operators, to a shorter and more streamlined proof of Celani's dual characterization results for subordination algebras \cite{celani2020subordination}, which we can obtain as consequences of the general correspondence theory on slanted algebras. These diverse applications  are hence  relevant both to input/output logic and to subordination algebras.   %We have presented a bi-modal characterization of input/output logic in the context of selfextensional logics, a class of logics defined in terms of minimal properties, which are satisfied both by classical propositional logic and by the best known nonclassical logics. %
The modal characterization results of Section \ref{sec: modal charact}
are different from other modal formulations of input/output logic \cite{Makinson00,Parent2021,strasser2016adaptive}, including those set on a non-classical propositional base, in that the output operators themselves are semantically characterized as (suitable restrictions of) modal operators, and their properties characterized in terms of modal axioms (inequalities). Finally, Propositions \ref{prop:poly correspondence} and \ref{prop:poly correspondence permission}, and Section \ref{ssec:algebraizing static permission} illustrate how these modal characterizations   straightforwardly extend also to conditions describing the {\em interaction} between obligations and various kinds of permissions.   %of  We provided a modal characterization for each input/output logical conditional norm inference relation \cite{Makinson00}, and consequently the modal characterization of different input/output systems.

\paragraph{Characterization results.}
The modal characterization results of Section \ref{sec: modal charact}, and hence the dual characterization  results of Section \ref{ssec: dual
 charact}, cover a finite number of  conditions. A natural direction is to generalize these results to infinite syntactic classes of conditions on relational algebras of various types. This is the focus of the companion paper \cite{dedomenico2024correspond}, currently in preparation.

\paragraph{Algebraizing dynamic positive permissions.} In \cite{dedomenico2024obligations},  the notion of dynamic positive permissions have been generalized to the environment of selfextensional logics. A natural application of the theory developed in the present paper is to achieve the algebraization of this notion, analogously to the algebraization of static positive permissions discussed in Section \ref{ssec:algebraizing static permission}. More in general, a natural future direction is to use the algebraic logical setting introduced in the present paper to explore various notions of  coherent interaction between obligations and permissions such as those recently discussed in \cite{olszewski2023permissive}.
\paragraph{Conceptual interpretations of input/output logic and relational algebras.}
As discussed earlier on,  the two research areas of input/output logic and relational algebras have been developed independently of each other, pivoting on different formal tools and insights and motivated in very different ways.
%Finally, we hope that
The bridge between these two areas which the present results contribute to develop can be used to cross-fertilize the two areas, not only from a technical viewpoint (e.g.~to import
 mathematical tools such as topological, algebraic, and duality-theoretic techniques in the formal study of normative reasoning), but also  to find conceptual interpretations and applications for  relational algebras. %literature on the other hand. For instance, finding normative meaning for  the new rules discussed here such as (DD), (UD), (SL1), and (SL2) would be interesting.

\paragraph{ Computational implementations.} The LogiKEy methodology \cite{J48} was introduced to design and engineer ethical and legal reasoners, as well as responsible AI systems. LogiKEy’s unifying formal
framework is based on the semantic embedding of various logical formalisms, including deontic logic,  into expressive classic higher-order logic,  which enables the use of off-the-shelf theorem provers. While this methodology proved successful for several  deontic logics, %notably the dyadic deontic logic introduced by Åqvist~\cite{J45}, and the more complex model proposed by Carmo and Jones~\cite{C71}. However,
treating input/output logic in the same way presented a challenge, due to its operational semantics. It would be interesting to see whether this difficulty can be overcome thanks to the systematic connection with various modal logic languages established in the present paper. % rendering the use of shallow semantical embedding somewhat ineffectual. While there have been attempts to translate I/O logic into modal logic, such translations fail to capture the full spectrum of the logic's nuances~\cite{J46}. In light of these complexities, this paper motivates the exploration of algebraic models for I/O logic, such as subordination algebra and contact algebra, specifically for implementation considerations. Crucially, our approach establishes a correspondence theory that translates input/output operations into first-order logic, enhancing efficiency for implementation and AI applications~\cite{robaldo2019formalizing}.

\appendix
\section{Selfextensional logics
with disjunction}
\label{sec: selfextensional w disj}
In the present section, we adapt some of the results in \cite{Jansana2006conjunction} to the setting of selfextensional logics with disjunction.   Throughout the present section, we fix an arbitrary similarity type $\tau$ which is common to  the logics and the algebras we consider.

\begin{definition}
    \label{def: disjunction of logic}
    For any logic $\mathcal{L} =(\mathrm{Fm}, \vdash)$,
     a binary term $t(x, y): = x\vee y$ is a \emph{disjunction} of $\mathcal{L}$ if, for all $\varphi, \psi, \chi\in\mathrm{Fm}$'
     \begin{center}
        $\varphi\vdash \chi$ and $\psi\vdash \chi\quad $ if and only if $\quad \varphi\vee \psi \vdash \chi$.
     \end{center}
If so, we also say that the logic is {\em disjunctive}.
\end{definition}
\begin{proposition}
\label{prop:idempotence assoc comm of vee}
    If $\mathcal{L}$ is a disjunctive logic, then for any $\varphi, \psi, \chi\in \mathrm{Fm}$,
    \[\varphi\dashv\vdash \varphi\vee \varphi\quad\quad  \varphi\vee (\psi\vee\chi)\dashv\vdash (\varphi\vee \psi)\vee \chi \quad\quad  \varphi\vee\psi\dashv\vdash \psi\vee \varphi\]
\end{proposition}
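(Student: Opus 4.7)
My plan is to derive everything from two elementary consequences of the defining biconditional of $\vee$, together with reflexivity and transitivity of $\vdash$. The key auxiliary fact I would establish first is the pair of injection rules $\varphi\vdash\varphi\vee\psi$ and $\psi\vdash\varphi\vee\psi$. Both follow by instantiating Definition \ref{def: disjunction of logic} with $\chi:=\varphi\vee\psi$: the right-hand side $\varphi\vee\psi\vdash\varphi\vee\psi$ holds by reflexivity, so the right-to-left direction of the biconditional yields both injections simultaneously. Once these are in hand, the other direction of the biconditional becomes a useful ``case analysis'' principle that, together with transitivity, takes care of everything else.

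For idempotence, the injection $\varphi\vdash\varphi\vee\varphi$ gives one direction, while applying the left-to-right direction of the biconditional to the trivially true premises $\varphi\vdash\varphi$ and $\varphi\vdash\varphi$ (with $\chi:=\varphi$) yields $\varphi\vee\varphi\vdash\varphi$. For commutativity, I would note that the injections give $\psi\vdash\psi\vee\varphi$ and $\varphi\vdash\psi\vee\varphi$, and then the left-to-right direction of the biconditional (taking $\chi:=\psi\vee\varphi$) delivers $\varphi\vee\psi\vdash\psi\vee\varphi$; swapping the roles of $\varphi$ and $\psi$ gives the converse.

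For associativity, I would prove $\varphi\vee(\psi\vee\chi)\vdash(\varphi\vee\psi)\vee\chi$ by applying the biconditional with consequent $(\varphi\vee\psi)\vee\chi$, which reduces the task to verifying the two premises $\varphi\vdash(\varphi\vee\psi)\vee\chi$ and $\psi\vee\chi\vdash(\varphi\vee\psi)\vee\chi$. The first follows by composing two injections via transitivity. The second unfolds, again via the biconditional, into $\psi\vdash(\varphi\vee\psi)\vee\chi$ and $\chi\vdash(\varphi\vee\psi)\vee\chi$, both of which are immediate from (compositions of) injections. The converse direction $(\varphi\vee\psi)\vee\chi\vdash\varphi\vee(\psi\vee\chi)$ is symmetric, obtained by the same strategy.

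There is no genuine obstacle here: the proof is a routine equational-style manipulation driven by the two directions of the defining biconditional. The only care needed is bookkeeping, in particular recognizing that the injection lemmas are the right preliminary step so that the more substantive associativity argument reduces cleanly to transitivity plus two further applications of the biconditional.
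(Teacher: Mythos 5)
Your proof is correct and follows essentially the same route as the paper's: both establish the injection entailments $\varphi\vdash\varphi\vee\psi$ and $\psi\vdash\varphi\vee\psi$, use the proof-by-cases direction of the biconditional on trivial premises for idempotence, and reduce associativity via two nested applications of proof-by-cases to compositions of injections. The only (harmless) difference is that you explicitly derive the injections from the right-to-left direction of the defining biconditional, whereas the paper cites them as an unlabelled clause ``(a)''.
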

\begin{proof}
    Instantiating (a) with $\psi: = \varphi$ yields $\varphi\vdash \varphi\vee \varphi$; instantiating (b) with $\chi: =\psi: = \varphi$ yields $\varphi\vee \varphi\vdash \varphi$. By (b), to show that $\varphi\vee (\psi\vee\chi)\vdash (\varphi\vee \psi)\vee \chi$,  it is enough to show that $\varphi\vdash (\varphi\vee \psi)\vee \chi$ and $ \psi\vee\chi\vdash (\varphi\vee \psi)\vee \chi$, and, again by (b), to show the latter entailment, it is enough to show that $ \psi\vdash (\varphi\vee \psi)\vee \chi$ and $ \chi\vdash (\varphi\vee \psi)\vee \chi$. By (a), $\varphi\vdash \varphi\vee \psi$, and $\varphi\vee \psi\vdash (\varphi\vee \psi)\vee \chi$, hence $\varphi \vdash (\varphi\vee \psi)\vee \chi$. The remaining entailments are shown similarly.
\end{proof}
\begin{definition}
\label{def: algebra semilattice based}

A class of algebras $\mathsf{K}$  is \emph{$\vee$-semilattice based} if a binary term $t(x, y): = x\vee y$ exists such that the following identities are valid in  $\mathsf{K}$:
    \begin{itemize}
        \item[L1] $x\vee x=x$.
        \item[L2] $x\vee(y\vee z)=(x\vee y)\vee z$.
        \item[L3] $x\vee y=y\vee x$.
    \end{itemize}
If so, we also say that $\mathsf{K}$ is join-semilattice based relative to $\vee$ or that $\mathsf{K}$ is a semilattice class relative to $\vee$.
\end{definition}

If $\mathsf{K}$ is join-semilattice based relative to $\vee$, then %the variety $V(\mathsf{K})$ generated by $\mathsf{K}$ is also semilattice-based relative to $\vee$ (cf. Corollary \ref{col: semilattice relative to V(K)}).
%Then
the following partial order can be defined
 on every algebra $\mathbb{A}\in \mathsf{K}$: for all $a,b\in\mathbb{A}$,
\begin{center}
 $a\leq^\mathbb{A} b\quad $ iff  $\quad a\vee^\mathbb{A} b=b$
\end{center}
 We will omit the superscript in $\leq^\mathbb{A}$ and $\vee^\mathbb{A}$ when no confusion is likely to arise.
\iffalse
\begin{definition}
    Given a semilattice based class of algebras $\mathrm{K}$ relative to $\vee$ and an algebra
$\mathbb{A}\in \mathrm{K}$, a set $I \subseteq A$ is \emph{a semilattice ideal of $\mathbb{A}$} if
\begin{itemize}
    \item[(1)] $I$ is non-empty.
    \item[(2)] For any $a,b\in I$, $a\vee b\in I$.
    \item[(3)] For every $a\in I$ and every $b\in A$, if $b\leq^\mathbb{A} a$, then $b\in I$.
\end{itemize}
A set $F \subseteq A$ is a semilattice filter of $\mathbb{A}$ if
\begin{itemize}
    \item[(1)] $F$ is non-empty.
    \item[(2)] For any $a,b\in F$, exist $c\in F$ such that $c\leq^\mathbb{A} a$ and $c\leq^\mathbb{A} b$
    \item[(3)] For every $a\in I$ and every $b\in A$, if $a\leq^\mathbb{A} b$, then $b\in F$.
\end{itemize}
Given an element $a\in \mathbb{A}$, the semilattice filter generated by $a$ is the set
$$\uparrow a=\{b\mid a\leq^\mathbb{A} b\}$$
\end{definition}
\fi

\begin{definition}
\label{def: logic semilattice based}
     A logic $\mathcal{L}=(\mathrm{Fm}, \vdash)$ is \emph{$\vee$-semilattice based} if    a class of algebras $\mathsf{K}$ exists which is join-semilattice based relative to some term $\vee$, and is such that, for all $\varphi, \psi, \chi\in \mathrm{Fm}$,
\begin{center}
     $\varphi\vdash\chi$ and $\psi\vdash\chi\quad $ iff  $\quad \forall\mathbb{A}\in \mathsf{K}\  \forall h\in Hom(\mathrm{Fm}, \mathbb{A})$, $h(\varphi)\vee h(\psi)\leq^\mathbb{A}h(\chi)$.
\end{center}
If so, we say that  $\mathcal{L}$ is {\em join-semilattice based relative to $\vee$ and $\mathsf{K}$}.
\end{definition}
Let  $V(\mathsf{K})$ denote the variety generated by $\mathsf{K}$.\footnote{A {\em variety} (cf.~\cite{burris1981course}) is a class of algebras  defined by equations; $V(\mathsf{K})$ is the class of algebras defined by all equations holding in $\mathsf{K}$, and is equivalently characterized as $HSP(\mathsf{K})$, that is, every $\mathbb{B}\in V(\mathsf{K})$ is the homomorphic image of a subalgebra of a direct product of algebras in $\mathsf{K}$.
}
\begin{proposition}\label{col: semilattice relative to V(K)}
    If  $\mathcal{L}$ is join-semilattice based relative to $\vee$ and $\mathsf{K}$, then $\mathcal{L}$ is also join-semilattice based relative to $\vee$ and $V(\mathsf{K})$.
\end{proposition}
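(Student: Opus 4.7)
The plan is to verify the two clauses of Definition \ref{def: logic semilattice based} for the pair $(\vee, V(\mathsf{K}))$. For the first clause, we need $V(\mathsf{K})$ to still be join-semilattice based relative to $\vee$, i.e.~to satisfy the identities L1, L2, L3 of Definition \ref{def: algebra semilattice based}. By assumption, these equations hold on every $\mathbb{A} \in \mathsf{K}$, and since Birkhoff's HSP theorem guarantees that equational theories are preserved under homomorphic images, subalgebras, and direct products, they hold on every $\mathbb{B} \in V(\mathsf{K}) = HSP(\mathsf{K})$. In particular, the relation $\leq^{\mathbb{B}}$ is well-defined as $b \leq^{\mathbb{B}} b'$ iff $b \vee^{\mathbb{B}} b' = b'$ for every $\mathbb{B} \in V(\mathsf{K})$.

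For the second clause, I need to establish, for all $\varphi, \psi, \chi \in \mathrm{Fm}$, the biconditional: $\varphi \vdash \chi$ and $\psi \vdash \chi$ iff for every $\mathbb{B} \in V(\mathsf{K})$ and every $h \in \mathrm{Hom}(\mathrm{Fm}, \mathbb{B})$, $h(\varphi) \vee^{\mathbb{B}} h(\psi) \leq^{\mathbb{B}} h(\chi)$. The right-to-left direction is immediate: $\mathsf{K} \subseteq V(\mathsf{K})$, so if the semantic condition holds on every $\mathbb{B} \in V(\mathsf{K})$, it holds in particular on every $\mathbb{A} \in \mathsf{K}$, and the assumption that $\mathcal{L}$ is join-semilattice based relative to $\vee$ and $\mathsf{K}$ yields $\varphi \vdash \chi$ and $\psi \vdash \chi$.

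The left-to-right direction is the substantive step. The key observation is that, thanks to the semilattice structure, the semantic condition ``$h(\varphi) \vee h(\psi) \leq^{\mathbb{B}} h(\chi)$ for every $h$'' can be rephrased as the validity of the $\tau$-equation $(\varphi \vee \psi) \vee \chi \approx \chi$ in $\mathbb{B}$, because $a \leq^{\mathbb{B}} b$ iff $a \vee^{\mathbb{B}} b = b$. Thus, the assumption that $\mathcal{L}$ is join-semilattice based relative to $\vee$ and $\mathsf{K}$ implies, whenever $\varphi \vdash \chi$ and $\psi \vdash \chi$, that $\mathsf{K} \models (\varphi \vee \psi) \vee \chi \approx \chi$ (viewing $\varphi, \psi, \chi$ as $\tau$-terms in the variables of $\mathsf{Prop}$ actually occurring in them). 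Invoking once more the preservation of equations under H, S, P, we conclude $V(\mathsf{K}) \models (\varphi \vee \psi) \vee \chi \approx \chi$, which is exactly the required semantic condition over $V(\mathsf{K})$.

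The proof is essentially a reduction to Birkhoff's theorem, and I do not foresee a genuine obstacle. The only point that requires care is the passage between the inequality formulation of the semantic clause and its equational counterpart; this passage is available precisely because $\vee$ interprets the defined partial order on every algebra of the relevant class, and is witnessed by identities (L1)--(L3) that are already known to lift to $V(\mathsf{K})$.
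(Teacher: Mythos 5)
Your proof is correct, but it takes a genuinely different route from the paper's. The paper proves the left-to-right direction contrapositively: given a failure $h(\varphi)\vee h(\psi)\nleq^{\mathbb{B}} h(\chi)$ in some $\mathbb{B}\in V(\mathsf{K})$, it writes $\mathbb{B}$ as a homomorphic image of a subalgebra $\mathbb{C}$ of a product of members of $\mathsf{K}$, lifts the assignment along the surjection $\mathbb{C}\twoheadrightarrow\mathbb{B}$ by choosing preimages, pushes it through the embedding into the product, and projects to a coordinate where the inequality still fails, contradicting the hypothesis on $\mathsf{K}$. You instead observe that, because $\leq^{\mathbb{B}}$ is defined by $a\leq b$ iff $a\vee b=b$, the semantic clause ``$h(\varphi)\vee h(\psi)\leq h(\chi)$ for all $h$'' is literally the validity of the equation $(\varphi\vee\psi)\vee\chi\approx\chi$, and equations valid in $\mathsf{K}$ are by definition valid in the generated variety $V(\mathsf{K})$. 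Your argument is shorter and avoids the explicit manipulation of homomorphisms through the HSP decomposition; the one point requiring care --- that the order-theoretic clause really is equational, which presupposes that L1--L3 lift to $V(\mathsf{K})$ so that $\leq^{\mathbb{B}}$ is well defined --- is one you address explicitly. The paper's argument, while more laborious, is the one that generalizes to situations where the semantic condition is not purely equational (e.g.\ quasi-equational or order-theoretic conditions not reducible to identities); in the present setting your reduction to Birkhoff is cleaner.
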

\begin{proof}
     We need to show that, for all $\varphi,\psi,\chi\in \mathrm{Fm}$, \begin{center} $\varphi\vdash\chi\ $ and $\ \psi\vdash\chi\quad $ iff  $\quad\forall \mathbb{B}\in V(\mathsf{K})$  $\forall h\in Hom(\mathrm{Fm},\mathbb{B})$, $h(\varphi)\vee h(\psi)\leq^\mathbb{B} h(\chi)$.
     \end{center}
     The right-to-left direction immediately follows from the fact that  $\mathsf{K}\subseteq V(\mathsf{K})$, and $\mathcal{L}$ is join-semilattice based relative to $\vee$ and $\mathsf{K}$. For the left-to-right direction, let $\varphi, \psi, \chi\in \mathrm{Fm}$, and assume contrapositively that $h(\varphi)\vee h(\psi)\nleq^\mathbb{B} h(\chi)$ for some  %let
     %s.t.~$\varphi\vdash\chi$ and $\psi\vdash\chi$ but exists an
     algebra $\mathbb{B}\in V(\mathsf{K})$ and some $h\in Hom(\mathrm{Fm}, \mathbb{B})$. Since  $\mathbb{B}\in V(\mathsf{K})$, %which means that $h(\varphi)\vee h(\psi)\vee h(\chi)\neq h(\chi)$. Because that $V(\mathrm{K})=HSP(\mathrm{K})$, so
     \[\Pi_{\mathbb{A}\in\mathsf{K}'}\mathbb{A}\hookleftarrow \mathbb{C}\twoheadrightarrow \mathbb{B}\]
     a surjective homomorphism $k: \mathbb{C}\to \mathbb{B}$ exists from an algebra $\mathbb{C}$ for which an injective homomorphism $e: \mathbb{C}\to \Pi_{\mathbb{A}\in \mathsf{K}'}\mathbb{A}$ exists s.t.~$\mathsf{K}'\subseteq \mathsf{K}$.
     Let $k^\ast\in Hom(\mathrm{Fm}, \mathbb{C})$ be uniquely defined by the assignment $p\mapsto c$ for some $c\in\mathbb{C}$ s.t.~$k(c) = h(p)$. Hence, $h = k\circ k^\ast$, and so it must be $k^\ast(\varphi)\vee k^\ast(\psi)\nleq^\mathbb{C} k^\ast(\chi)$, for if not, then $h(\varphi)\vee h(\psi) = k (k^\ast (\varphi))\vee^\mathbb{A} k (k^\ast (\psi)) = k(k^\ast (\varphi)\vee^\mathbb{C} k^\ast (\psi))\leq^\mathbb{A}k(k^\ast(\chi)) = h(\chi)$, against the assumption.
     Let  $h^\ast\in Hom(\mathrm{Fm}, \Pi_{\mathbb{A}\in \mathsf{K}'}\mathbb{A})$ be uniquely defined by the assignment $p\mapsto e(k^\ast(p))$ for every $p\in\mathsf{Prop}$. Hence, $h^\ast = e\circ k^\ast$, and since $e$ is an injective homomorphism,  $k^\ast(\varphi)\vee k^\ast(\psi)\nleq^\mathbb{C} k^\ast(\chi)$ implies that $h^\ast(\varphi)\vee h^\ast(\psi)\nleq^{\Pi_{\mathbb{A}\in \mathsf{K}'}\mathbb{A}} h^\ast(\chi)$. For any algebra $\mathbb{A}$ in $\mathsf{K}'$ let $h' = \pi\circ h^\ast\in Hom (\mathrm{Fm}, \mathbb{A})$, where $\pi: \Pi_{\mathbb{A}\in \mathsf{K}'}\mathbb{A}\to \mathbb{A}$ is the canonical projection. Since equalities  in product algebras are defined coordinatewise, $h^\ast(\varphi)\vee h^\ast(\psi)\nleq^{\Pi_{\mathbb{A}\in \mathsf{K}'}\mathbb{A}} h^\ast(\chi)$ implies that   $h'(\varphi)\vee h'(\psi)\nleq^\mathbb{A} h'(\chi)$ for some
     $\mathbb{A}$ in $\mathrm{K}'\subseteq \mathsf{K}$, as required.
\end{proof}

\begin{proposition}\label{prop: interdiravable}
    If $\mathcal{L} = (\mathrm{Fm}, \vdash)$ is join-semilattice based relative to $\vee$ and $\mathsf{K}$, then, for any $\varphi, \psi\in \mathrm{Fm}$,
    \begin{enumerate}

        \item $\varphi\vdash\psi\quad $ iff $\quad \forall \mathbb{A}\in\mathsf{K}\ \forall h\in Hom(\mathrm{Fm},\mathbb{A} ), h(\varphi)\leq^\mathbb{A} h(\psi)$.
        \item $\varphi\dashv \vdash\psi\quad $ iff $\quad\mathsf{K}\models \varphi = \psi$.
        \item For any $\mathbb{A}\in\mathsf{K}$ and any $F\in Fi_{\mathcal{L}}(\mathbb{A})$, $F$ is  upward-closed w.r.t.~$\leq^\mathbb{A}$.

    \end{enumerate}
\end{proposition}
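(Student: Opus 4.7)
The plan is to extract all three items directly from the defining biconditional in Definition \ref{def: logic semilattice based}, which is the only place where the interaction between $\vdash$ and $\leq^{\mathbb{A}}$ is asserted. Each item just requires a clever instantiation of that biconditional together with the semilattice laws L1--L3.

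For item 1, I would instantiate the defining biconditional of Definition \ref{def: logic semilattice based} with $\psi := \varphi$ and $\chi := \psi$. The left-hand side then collapses to the single conjunct $\varphi \vdash \psi$, while the right-hand side becomes $h(\varphi) \vee^{\mathbb{A}} h(\varphi) \leq^{\mathbb{A}} h(\psi)$ for every $\mathbb{A} \in \mathsf{K}$ and every $h \in \mathrm{Hom}(\mathrm{Fm}, \mathbb{A})$. By the idempotence identity L1 valid in $\mathsf{K}$, $h(\varphi) \vee^{\mathbb{A}} h(\varphi) = h(\varphi)$, so the right-hand side simplifies to $h(\varphi) \leq^{\mathbb{A}} h(\psi)$, as required.

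Item 2 is an immediate consequence of item 1: by definition $\varphi \dashv\vdash \psi$ means both $\varphi \vdash \psi$ and $\psi \vdash \varphi$, which by item 1 is equivalent to the conjunction of $h(\varphi) \leq^{\mathbb{A}} h(\psi)$ and $h(\psi) \leq^{\mathbb{A}} h(\varphi)$ holding for all $\mathbb{A} \in \mathsf{K}$ and all $h$. Since $\leq^{\mathbb{A}}$ is a partial order on every $\mathbb{A} \in \mathsf{K}$, this conjunction is equivalent to $h(\varphi) = h(\psi)$ for all such $\mathbb{A}, h$, which is exactly $\mathsf{K} \models \varphi = \psi$.

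For item 3, the key observation is that  $p \vdash p \vee q$ holds in $\mathcal{L}$ for distinct propositional variables $p, q$: indeed, in any $\mathbb{A} \in \mathsf{K}$ and any $h$, the defining order satisfies $h(p) \leq^{\mathbb{A}} h(p) \vee^{\mathbb{A}} h(q) = h(p \vee q)$ (because $h(p) \vee^{\mathbb{A}} (h(p) \vee^{\mathbb{A}} h(q)) = h(p) \vee^{\mathbb{A}} h(q)$ by L1--L2), so by item 1 we get $p \vdash p \vee q$. Now take $F \in \mathrm{Fi}_{\mathcal{L}}(\mathbb{A})$, $a \in F$ and $b \in A$ with $a \leq^{\mathbb{A}} b$, which means $a \vee^{\mathbb{A}} b = b$. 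Let $h : \mathrm{Fm} \to \mathbb{A}$ be any homomorphism with $h(p) = a$ and $h(q) = b$. Then $h[\{p\}] = \{a\} \subseteq F$, so by the defining property of $\mathcal{L}$-filters applied to $p \vdash p \vee q$, we conclude $h(p \vee q) = a \vee^{\mathbb{A}} b = b \in F$, as required.

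None of these steps presents a real obstacle; the only point requiring some care is item 3, where one must unpack the definition of $\mathcal{L}$-filter from Section~\ref{ssec:selfextensional} and choose the right entailment $p \vdash p \vee q$ to feed into it. The idempotence L1 is crucial in both item 1 and in the derivation of $p \vdash p \vee q$ used in item 3, so the whole proposition really rests on having $\vee$ interpret as a genuine semilattice operation in $\mathsf{K}$.
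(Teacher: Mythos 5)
Your proof is correct and follows essentially the same route as the paper's: item 1 by collapsing one side of the defining biconditional of a join-semilattice based logic, item 2 by antisymmetry, and item 3 by evaluating a homomorphism with $h(p)=a$, $h(q)=b$ and applying the $\mathcal{L}$-filter property to $p\vdash p\vee q$. Your explicit justification of $p\vdash p\vee q$ via item 1 is a small but welcome addition, since the paper asserts that entailment without comment and the fact that $\vee$ is a disjunction is only established afterwards.
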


\begin{proof}
1. For the  left-to-right direction, let $\varphi, \psi\in\mathrm{Fm}$ s.t.~$\varphi\vdash\psi$. Hence, $\varphi\vdash\psi$ and $\psi\vdash\psi$, which implies, since $\mathcal{L}$ is join-semilattice based relative to $\mathsf{K}$ and $\vee$, that   $h(\varphi)\vee h(\psi)\leq^\mathbb{A} h(\psi)$ for any $\mathbb{A}\in \mathsf{K}$ and any $h\in Hom( \mathrm{Fm}, \mathbb{A})$. By the definition of $\leq^\mathbb{A}$, this is equivalent to  %Therefore $(h(\varphi)\vee h(\psi))\vee h(\psi)=h(\psi)$, so $h(\varphi)\vee h(\psi)=h(\psi)$ which means that
$h(\varphi)\leq^\mathbb{A} h(\psi)$ for every $\mathbb{A}\in \mathsf{K}$ and any $h\in Hom( \mathrm{Fm}, \mathbb{A})$, as required.

Conversely, since $\mathcal{L}$ is join-semilattice based  relative to $\vee$ and $\mathsf{K}$, it is enough to show that, if $\varphi, \psi\in \mathrm{Fm}$ s.t.~$\mathsf{K}\vDash \varphi \leq \psi$, $\mathbb{A}\in \mathsf{K}$ and  $h\in Hom(\mathrm{Fm}, \mathbb{A})$, then $h(\varphi)\vee h(\psi)\leq^\mathbb{A} h(\psi)$. %and  $h(\varphi)\vee h(\psi)\leq^\mathbb{A} h(\psi)$.
The assumption that  $\mathsf{K}\vDash \varphi \leq \psi$ implies that $h(\varphi)\leq^\mathbb{A} h(\psi)$, i.e.~$h(\varphi)\vee h(\psi)=h(\psi)$. Hence, $h(\varphi)\vee h(\psi)\leq^\mathbb{A} h(\psi)$, as required.

2. Immediate from the previous item.

3. Let $a, b\in \mathbb{A}$ s.t.~$a\leq^\mathbb{A}b$ and $a\in F$. By definition, $a\leq^\mathbb{A} b$ iff $a\vee b = b$. Let $p, q\in \mathsf{Prop}$, and let $h\in Hom(\mathsf{Fm}, \mathbb{A})$ s.t.~$h(p) = a$ and $h(q) = b$. Hence, $h(p\vee q) = h(p)\vee h(q) = a\vee b = b$. Moreover, since $F$ is an $\mathcal{L}$-filter, $p\vdash p\vee q$ and $h(p) = a\in F$ imply that  $b = h(p\vee q) \in F$, as required.
\end{proof}

\begin{corollary}
\label{cor: only one variety}
    If $\mathcal{L} = (\mathrm{Fm}, \vdash)$ is join-semilattice based relative to $\vee$ and $\mathsf{K}$, and  is join-semilattice-based relative to $\vee^{'} $ and $\mathsf{K}'$, then  $V(\mathsf{K}) =  V(\mathsf{K}')$.
\end{corollary}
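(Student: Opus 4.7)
The plan is to reduce the corollary to an application of Proposition \ref{prop: interdiravable}.2 together with Birkhoff's HSP characterization of varieties. The key observation is that the relation $\dashv\vdash$ is intrinsic to $\mathcal{L}$ (it depends only on the consequence relation, not on any chosen disjunction term or algebra class), so both presentations of $\mathcal{L}$ as a semilattice-based logic must yield algebra classes with the same equational theory.

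First, I would apply Proposition \ref{prop: interdiravable}.2 twice. From the hypothesis that $\mathcal{L}$ is join-semilattice based relative to $\vee$ and $\mathsf{K}$, we obtain, for all $\varphi, \psi \in \mathrm{Fm}$,
\[
\mathsf{K} \models \varphi = \psi \quad \text{iff} \quad \varphi \dashv\vdash \psi.
\]
From the hypothesis that $\mathcal{L}$ is also join-semilattice based relative to $\vee'$ and $\mathsf{K}'$, we similarly obtain
\[
\mathsf{K}' \models \varphi = \psi \quad \text{iff} \quad \varphi \dashv\vdash \psi.
\]
Combining these two equivalences gives that $\mathsf{K}$ and $\mathsf{K}'$ satisfy exactly the same $\tau$-equations.

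Second, I would invoke Birkhoff's HSP theorem: a variety is uniquely determined by its equational theory, and the variety $V(\mathsf{K})$ generated by a class $\mathsf{K}$ is the class of all $\tau$-algebras satisfying every equation valid in $\mathsf{K}$. Since $\mathsf{K}$ and $\mathsf{K}'$ have identical equational theories, the classes of algebras defined by those theories coincide, yielding $V(\mathsf{K}) = V(\mathsf{K}')$.

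There is no serious obstacle: the only mild subtlety is that Proposition \ref{prop: interdiravable}.2 is stated using the particular $\vee$ (through the defining condition of being join-semilattice based), but the conclusion of that item, namely $\varphi \dashv\vdash \psi \Leftrightarrow \mathsf{K}\models \varphi = \psi$, refers only to arbitrary $\tau$-formulas and makes no further reference to $\vee$. This is what allows the two instantiations — one for $(\vee, \mathsf{K})$ and one for $(\vee', \mathsf{K}')$ — to be chained through the common relation $\dashv\vdash$. As a byproduct, one also obtains that $\vee$ and $\vee'$ induce the same order on every algebra of the common variety, i.e.\ $x \vee y = y \Leftrightarrow x \vee' y = y$ is valid in $V(\mathsf{K}) = V(\mathsf{K}')$, since both reduce to $x \vdash y$ via Proposition \ref{prop: interdiravable}.1.
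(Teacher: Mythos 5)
Your proposal is correct and follows essentially the same route as the paper: both proofs chain Proposition \ref{prop: interdiravable}.2 through the intrinsic relation $\dashv\vdash$ to show that $\mathsf{K}$ and $\mathsf{K}'$ have the same equational theory, and then conclude $V(\mathsf{K}) = V(\mathsf{K}')$ from the fact that a generated variety is determined by that theory. Your remark that the conclusion of Proposition \ref{prop: interdiravable}.2 makes no reference to the chosen disjunction term is exactly the point that makes the two instantiations compatible.
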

\begin{proof}\
    \begin{center}
    \begin{tabular}{clll}
    $V(\mathsf{K})\models \varphi = \psi$  &  iff   & $\mathsf{K}\models\varphi = \psi$& definition of generated variety\\
     & iff   & $\varphi \dashv\vdash \psi$&Proposition \ref{prop: interdiravable}.2 \\
     & iff   & $\mathsf{K}'\models \varphi = \psi$& Proposition \ref{prop: interdiravable}.2\\
     & iff   & $V(\mathsf{K}')\models \varphi = \psi$& definition of generated variety
    \end{tabular}
     \end{center}
\end{proof}
We let $V(\mathcal{L})$ denote the only variety relative to which $\mathcal{L}$ is semilattice-based.
\begin{proposition}
\label{prop: semilattice based implies selfextensional}
    If $\mathcal{L}$ is join-semilattice-based relative to $\vee$, then
    \begin{enumerate}
        \item $\mathcal{L}$ is selfextensional,
        \item $\vee $ is a disjunction of $\mathcal{L}$,
        \item If $\mathcal{L}$ is also join-semilattice-based relative to  $\vee'$, then $\varphi\vee\psi \dashv\vdash \varphi \vee' \psi$ for all $\varphi, \psi\in \mathrm{Fm}$.

    \end{enumerate}
\end{proposition}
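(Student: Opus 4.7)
The plan is to leverage Proposition A.5 (especially its items 1 and 2) as the main bridge between syntax and algebra, so that all three claims reduce to straightforward calculations with the semilattice order $\leq^{\mathbb{A}}$ on algebras in $\mathsf{K}$. Since $V(\mathcal{L})$ is well-defined by Corollary A.4, one can freely work with either $\mathsf{K}$ or $V(\mathcal{L})$.

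For item 1, I would show directly that the interderivability relation $\equiv$ is a congruence of $\mathrm{Fm}$. By Proposition A.5.2, $\varphi \equiv \psi$ iff $\mathsf{K} \models \varphi = \psi$. So suppose $\varphi_i \equiv \psi_i$ for $1 \leq i \leq n$, and let $f$ be any $n$-ary operation symbol of $\tau$. Then for every $\mathbb{A} \in \mathsf{K}$ and every $h \in Hom(\mathrm{Fm}, \mathbb{A})$, we have $h(\varphi_i) = h(\psi_i)$, and since $h$ is a homomorphism, $h(f(\varphi_1,\ldots,\varphi_n)) = f^{\mathbb{A}}(h(\varphi_1),\ldots,h(\varphi_n)) = f^{\mathbb{A}}(h(\psi_1),\ldots,h(\psi_n)) = h(f(\psi_1,\ldots,\psi_n))$. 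Hence $\mathsf{K} \models f(\bar\varphi) = f(\bar\psi)$, and by Proposition A.5.2 again, $f(\bar\varphi) \equiv f(\bar\psi)$, as required.

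For item 2, I would directly verify Definition A.1. Unfolding the semilattice-based definition and using that every $h$ is a homomorphism (so $h(\varphi \vee \psi) = h(\varphi) \vee^{\mathbb{A}} h(\psi)$), one obtains the chain
\[
\varphi \vdash \chi \text{ and } \psi \vdash \chi \;\;\text{iff}\;\; \forall \mathbb{A}\in \mathsf{K}\, \forall h\, (h(\varphi) \vee^{\mathbb{A}} h(\psi) \leq^{\mathbb{A}} h(\chi)) \;\;\text{iff}\;\; \forall \mathbb{A}\in \mathsf{K}\, \forall h\, (h(\varphi \vee \psi) \leq^{\mathbb{A}} h(\chi)),
\]
and the right-hand side is equivalent to $\varphi \vee \psi \vdash \chi$ by Proposition A.5.1.

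For item 3, I would apply item 2 twice: both $\vee$ and $\vee'$ are disjunctions of $\mathcal{L}$, so for every $\chi \in \mathrm{Fm}$, $\varphi \vee \psi \vdash \chi$ iff ($\varphi \vdash \chi$ and $\psi \vdash \chi$) iff $\varphi \vee' \psi \vdash \chi$. Instantiating $\chi := \varphi \vee \psi$ gives $\varphi \vee' \psi \vdash \varphi \vee \psi$ (since $\varphi \vee \psi \vdash \varphi \vee \psi$ holds trivially), and instantiating $\chi := \varphi \vee' \psi$ yields the converse. Hence $\varphi \vee \psi \dashv\vdash \varphi \vee' \psi$. I anticipate no real obstacle here: the substantive work was already done in establishing Proposition A.5, and the present proposition is essentially a corollary of that and of the homomorphism property of assignments.
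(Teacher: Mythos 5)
Your proposal is correct and follows essentially the same route as the paper: all three items are reduced to Proposition \ref{prop: interdiravable} (your A.5), with item 1 obtained from the equational characterization of $\dashv\vdash$, item 2 by unfolding Definition \ref{def: logic semilattice based} and applying the homomorphism property, and item 3 by playing the two disjunctions off against each other. The only cosmetic difference is in item 1, where you verify the congruence property operation-by-operation while the paper substitutes into arbitrary contexts $\delta[\varphi/p]$; these are equivalent formulations of the same argument.
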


\begin{proof}
    \begin{enumerate}
        \item \ \begin{center}
    \begin{tabular}{clll}
    $\varphi \dashv\vdash \psi$ &  iff   & $V(\mathcal{L})\models \varphi = \psi$  & Proposition \ref{prop: interdiravable}.2 \\
    & implies   & $V(\mathcal{L})\models \delta [\varphi/p] = \delta[\psi/p]$ for every $\delta\in\mathrm{Fm}$\\

     & iff   & $\delta [\varphi/p] \dashv\vdash \delta[\psi/p]$ for every $\delta\in\mathrm{Fm}$ & Proposition \ref{prop: interdiravable}.2\\
    \end{tabular}
     \end{center}

%        Assume $\varphi \dashv\vdash \psi$, by proposition \ref{prop: interdiravable}, this holds iff $V(K)\vDash \varphi = \psi$, then for any formula $\delta$, $V(K)\vDash \delta(p/\varphi) = \delta(p/\psi)$, which is equal to $\delta(p/\varphi) \dashv\vdash \delta(p/\psi)$.
        \item Let $\varphi,\psi, \chi\in \mathrm{Fm}$. By Definition \ref{def: logic semilattice based}, $\varphi\vdash\varphi\vee\psi$ and $\psi\vdash\varphi\vee\psi$ iff $h(\varphi)\vee^{\mathbb{A}}h(\psi)\leq h(\varphi \vee \psi) = h(\varphi) \vee^{\mathbb{A}} h(\psi)$ for any $\mathbb{A}\in V(\mathcal{L})$ and any $h\in Hom(\mathrm{Fm}, \mathbb{A})$, which is always the case. By Definition \ref{def: logic semilattice based}, $\varphi\vdash\chi$ and $\psi\vdash\chi$ iff $h(\varphi\vee\psi) = h(\varphi)\vee^{\mathbb{A}}h(\psi)\leq h(\chi)$ for any $\mathbb{A}\in V(\mathcal{L})$ and any $h\in Hom(\mathrm{Fm}, \mathbb{A})$,  which, by Proposition \ref{prop: interdiravable}.1, is equivalent to   $\varphi\vee\psi\vdash\chi$, as required.
        \item The assumptions imply, by item 2, that both $\vee$ and $\vee'$ are disjunctions. Let $\varphi, \psi\in\mathrm{Fm}$. Applying (a) to $\vee$ yields  $\varphi \vdash \varphi \vee \psi$ and $\psi \vdash \varphi \vee \psi$, which, applying (b) to $\vee'$, is equivalent to $\varphi\vee^{'}\psi \vdash \varphi \vee \psi$. The converse entailment is shown by the same argument,  inverting the roles of $\vee$ and $\vee'$.
    \end{enumerate}
\end{proof}
By item 3 of the proposition above,  we can say that $\mathcal{L}$ is join-semilattice-based in an absolute sense.

\begin{proposition}
\label{prop:charact self and disj}
    A logic $\mathcal{L}$ is selfextensional and disjunctive iff
it is join-semilattice based.
\end{proposition}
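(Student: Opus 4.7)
The right-to-left direction of the biconditional is already available: if $\mathcal{L}$ is join-semilattice based, then Proposition \ref{prop: semilattice based implies selfextensional}.1 and .2 say exactly that $\mathcal{L}$ is selfextensional and that $\vee$ is a disjunction, so $\mathcal{L}$ is disjunctive. The work is all in the converse. The plan is to take $\mathsf{K} := \mathsf{K}_{\mathcal{L}}$ (the intrinsic variety, generated by the Lindenbaum--Tarski algebra $Fm$) as the witnessing class.

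First I would check that $\mathsf{K}_{\mathcal{L}}$ is $\vee$-semilattice based in the sense of Definition \ref{def: algebra semilattice based}. Disjunctiveness of $\mathcal{L}$ together with Proposition \ref{prop:idempotence assoc comm of vee} gives $\varphi\vee\varphi\dashv\vdash\varphi$, associativity, and commutativity up to $\dashv\vdash$. Since $\mathcal{L}$ is selfextensional, $\dashv\vdash$ is a congruence on $\mathrm{Fm}$, which means that in $Fm$ these interderivabilities become genuine equations; hence $Fm\models s=t$ iff $s\dashv\vdash t$ for all terms $s,t$, and consequently the same holds in every algebra of $\mathsf{K}_{\mathcal{L}}=V(Fm)$. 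In particular, the identities L1, L2, L3 are valid throughout $\mathsf{K}_{\mathcal{L}}$.

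The main step is to establish the defining equivalence of Definition \ref{def: logic semilattice based} for this class, namely
\[\varphi\vdash\chi\ \text{and}\ \psi\vdash\chi\quad\text{iff}\quad\forall\mathbb{A}\in\mathsf{K}_{\mathcal{L}}\ \forall h\in Hom(\mathrm{Fm},\mathbb{A}),\ h(\varphi)\vee h(\psi)\leq^{\mathbb{A}} h(\chi).\]
By disjunctiveness, the left-hand side rewrites as $\varphi\vee\psi\vdash\chi$; unfolding $\leq^{\mathbb{A}}$ and using the characterization of valid equations in $\mathsf{K}_{\mathcal{L}}$ obtained above, the right-hand side rewrites as $(\varphi\vee\psi)\vee\chi\dashv\vdash\chi$. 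So the task reduces to proving $\varphi\vee\psi\vdash\chi$ iff $(\varphi\vee\psi)\vee\chi\dashv\vdash\chi$.

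For this last equivalence, a preliminary observation I would record is that $\alpha\vdash\alpha\vee\beta$ for all $\alpha,\beta\in\mathrm{Fm}$ (apply the disjunction property with $\chi:=\alpha\vee\beta$; the resulting right-hand side is reflexivity). The forward direction then proceeds as follows: from $\varphi\vee\psi\vdash\chi$ and $\chi\vdash\chi$, the disjunction property yields $(\varphi\vee\psi)\vee\chi\vdash\chi$; conversely $\chi\vdash\chi\vee(\varphi\vee\psi)\dashv\vdash(\varphi\vee\psi)\vee\chi$, combining the preliminary observation with Proposition \ref{prop:idempotence assoc comm of vee}. The backward direction is shorter: the preliminary observation gives $\varphi\vee\psi\vdash(\varphi\vee\psi)\vee\chi$, and transitivity with the assumed interderivability closes the argument. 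The only thing to be careful about is not to invoke Proposition \ref{prop: interdiravable} (whose statement presupposes what we are trying to prove); instead I rely directly on the characterization $\mathsf{K}_{\mathcal{L}}\models s=t\Leftrightarrow s\dashv\vdash t$ that comes from selfextensionality and the construction of $\mathsf{K}_{\mathcal{L}}$ as $V(Fm)$.
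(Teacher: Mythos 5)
Your proof is correct. The right-to-left direction is handled exactly as in the paper, via Proposition \ref{prop: semilattice based implies selfextensional}. For the converse, you and the paper share the same underlying idea --- the Lindenbaum--Tarski algebra supplies the witness --- but you package it differently: the paper takes the singleton class $\mathsf{K}=\{Fm\}$ and discharges the quantification over homomorphisms $h\in Hom(\mathrm{Fm},Fm)$ by hand, observing that each such $h$ is of the form $\varphi\mapsto[\sigma(\varphi)]$ for a substitution $\sigma$ and then invoking structurality of $\vdash$; you instead take $\mathsf{K}:=\mathsf{K}_{\mathcal{L}}=V(Fm)$ and collapse the entire semantic side of Definition \ref{def: logic semilattice based} into the single equation $(\varphi\vee\psi)\vee\chi=\chi$, which the already-recorded fact $\mathsf{K}_{\mathcal{L}}\models s=t\Leftrightarrow s\dashv\vdash t$ turns into a purely syntactic statement. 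Your route is slightly more economical (no explicit substitution argument) at the price of leaning on that equational characterization of the intrinsic variety, and it delivers the conclusion directly for $\mathsf{K}_{\mathcal{L}}$, whereas the paper gets that as an afterthought via Proposition \ref{col: semilattice relative to V(K)}. Your caution about not invoking Proposition \ref{prop: interdiravable} is well placed, since its hypotheses presuppose the conclusion; the derivations you substitute for it (the preliminary $\alpha\vdash\alpha\vee\beta$ from the disjunction property with $\chi:=\alpha\vee\beta$, and the two-way argument for $\varphi\vee\psi\vdash\chi$ iff $(\varphi\vee\psi)\vee\chi\dashv\vdash\chi$ using Proposition \ref{prop:idempotence assoc comm of vee}) are all sound.
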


\begin{proof}
    The right-to-left direction is the content of items 1 and 2 of Proposition \ref{prop: semilattice based implies selfextensional}. As to the left-to-right direction, let us show that $\mathcal{L}$ is join-semilattice based relative to $\vee$ and $\mathsf{K}: = \{Fm\}$, where $Fm: = \mathrm{Fm}/{\equiv}$ (cf.~Section \ref{ssec:selfextensional}).
    Let us verify that $\mathsf{K}$ is $\vee$-semilattice based: for all $\varphi, \psi, \chi\in \mathrm{Fm}$, let $[\varphi], [\psi], [\chi]$ be the corresponding elements ($\equiv$-equivalence classes) in $Fm$. The following identities hold thanks to  the selfextensionality of $\mathcal{L}$ and Proposition \ref{prop:idempotence assoc comm of vee}:
\begin{enumerate}
    \item $[\varphi]\vee^{Fm}[\varphi]=[\varphi \vee \varphi]=[\varphi]$,
    \item $[\varphi]\vee^{Fm}([\psi]\vee^{Fm}[\chi])=[\varphi \vee (\psi \vee \chi)]=[(\varphi \vee \psi) \vee \chi]=([\varphi]\vee^{Fm}[\psi])\vee^{Fm}[\chi]$,
    \item $[\varphi]\vee^{Fm}[\psi]=[\varphi \vee \psi]=[\psi \vee \varphi]=[\psi]\vee^{Fm}[\varphi]$.
\end{enumerate}
Then, for any $\varphi, \psi\in \mathrm{Fm}$,
\begin{equation}
\label{eq: leq in LT algebra}[\varphi]\leq^{Fm}[\psi] \quad \text{ iff }\quad [\varphi\vee \psi] = [\varphi]\vee^{Fm}[\psi] = [\psi] \quad \text{ iff }\quad \varphi\vee\psi\dashv\vdash \psi\quad \text{ iff }\quad \varphi\vdash \psi.\end{equation}
    To finish the proof, we need to show that, for all $\varphi, \psi, \chi\in \mathrm{Fm}$,
    \[\varphi\vdash\chi \ \text { and } \ \psi\vdash\chi\quad \text{ iff } \quad  \forall h\in Hom(\mathrm{Fm}, Fm), \ h(\varphi) \vee^{Fm} h(\psi)\leq^{Fm} h(\chi).\]
Notice preliminarily that
    \begin{center}
    \begin{tabular}{r c ll}
    $\varphi\vdash\chi \ \text { and } \ \psi\vdash\chi$ & iff & $\varphi\vee\psi\vdash \chi$\\
    & iff & $[\varphi\vee\psi]\leq^{Fm}[\chi]$. & \eqref{eq: leq in LT algebra} \\
   % & iff & $[\varphi\vee\psi] = [\chi]$ &  \\
    \end{tabular}
    \end{center}
 As to the right-to-left direction,  if $h: \mathrm{Fm}\to Fm$ denotes the homomorphism uniquely defined by the assignment $p\mapsto [p]$, then, since $\mathcal{L}$ is selfextensional, $h(\varphi) = [\varphi]$ for every $\varphi\in \mathrm{Fm}$, and so $[\varphi\vee \psi] = h(\varphi\vee \psi) = h(\varphi)\vee^{Fm} h(\psi)  \leq^{Fm} h(\chi) = [\chi]$,  which entails the required statement by the preliminary observation above. As to the left-to-right direction, let $h\in Hom(\mathrm{Fm}, Fm)$. Then $h(\varphi) =  [\sigma(\varphi)]$, where $\sigma\in Hom(\mathrm{Fm}, \mathrm{Fm})$ is any substitution s.t.~$\sigma(p)\in h(p)$  for every $p\in \mathsf{Prop}$. Since $\vdash$ is structural, $\varphi\vdash \chi$ and $\psi\vdash \chi$ imply that $\sigma(\varphi)\vdash \sigma(\chi)$ and $\sigma(\psi)\vdash \sigma(\chi)$, i.e., by the preliminary observation above, $[\sigma(\varphi)\vee\sigma(\psi)]\leq^{Fm}[\sigma(\chi)]$. Hence,
 \[h(\varphi)\vee^{Fm} h(\psi) = [\sigma(\varphi)]\vee^{Fm}[\sigma(\psi)] = [\sigma(\varphi)\vee \sigma(\psi)]  \leq^{Fm}  [\sigma(\chi)] = h(\chi),\]
 as required.
\end{proof}

Recall that the intrinsic variety $\mathsf{K}_{\mathcal{L}}$ is the variety generated by $\mathsf{K}: = \{Fm\}$ (cf.~Section \ref{ssec:selfextensional}). Hence, from the proof of the left-to-right direction of the proposition above, Proposition \ref{col: semilattice relative to V(K)}, and Corollary \ref{cor: only one variety}, it immediately follows that
\begin{proposition}
     If $\mathcal{L}$ is join-semilattice-based, then   $V(\mathcal{L}) = \mathsf{K}_{\mathcal{L}} $. %is the intrinsic variety $K_\mathcal{L}$, i.e. $\mathcal{L}$ is $\vee$-semilattice-based relative to its intrinsic variety.
\end{proposition}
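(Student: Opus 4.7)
The plan is to show that the intrinsic variety $\mathsf{K}_{\mathcal{L}}$ itself witnesses the semilattice-based character of $\mathcal{L}$, and then to invoke the uniqueness statement (Corollary \ref{cor: only one variety}) which defines $V(\mathcal{L})$.

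First, since $\mathcal{L}$ is join-semilattice-based, Proposition \ref{prop:charact self and disj} gives that $\mathcal{L}$ is selfextensional and disjunctive. Unpacking the left-to-right direction of the proof of Proposition \ref{prop:charact self and disj}, one sees that the construction used there exhibits $\mathcal{L}$ as being join-semilattice-based relative to $\vee$ and the singleton class $\mathsf{K}_0 := \{Fm\}$, where $Fm = \mathrm{Fm}/{\equiv}$ is the Lindenbaum-Tarski algebra; this uses only the selfextensionality (to make $Fm$ well-defined and to pass assignments back and forth) and the disjunction property (to get the semilattice identities L1--L3).

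Next I would apply Proposition \ref{col: semilattice relative to V(K)} to this witness: it upgrades the relation ``join-semilattice-based relative to $\mathsf{K}_0$'' to ``join-semilattice-based relative to $V(\mathsf{K}_0)$''. But $V(\mathsf{K}_0) = V(\{Fm\})$ is exactly the variety generated by the Lindenbaum-Tarski algebra of $\mathcal{L}$, which is by definition the intrinsic variety $\mathsf{K}_{\mathcal{L}}$. Hence $\mathcal{L}$ is join-semilattice-based relative to $\vee$ and $\mathsf{K}_{\mathcal{L}}$.

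Finally, Corollary \ref{cor: only one variety} (which asserts that the generated variety is independent of the witnessing class) justifies the notation $V(\mathcal{L})$ as the unique variety with respect to which $\mathcal{L}$ is join-semilattice-based. Since $\mathsf{K}_{\mathcal{L}}$ is a variety relative to which $\mathcal{L}$ is join-semilattice-based, this uniqueness forces $V(\mathcal{L}) = \mathsf{K}_{\mathcal{L}}$, as required. There is no real obstacle here: every ingredient has already been set up, and the argument is essentially a bookkeeping chain $\mathsf{K}_0 \rightsquigarrow V(\mathsf{K}_0) = \mathsf{K}_{\mathcal{L}}$ combined with the uniqueness-of-variety corollary. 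The only point worth double-checking is that the $V(\mathsf{K})$ in Proposition \ref{col: semilattice relative to V(K)} and the generated variety in the definition of $\mathsf{K}_{\mathcal{L}}$ refer to the same construction, which they do by definition ($HSP$ of the given class).
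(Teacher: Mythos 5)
Your proposal is correct and follows essentially the same route as the paper, which derives the result immediately from the proof of the left-to-right direction of Proposition \ref{prop:charact self and disj} (exhibiting $\{Fm\}$ as a witnessing class), Proposition \ref{col: semilattice relative to V(K)} (upgrading to $V(\{Fm\}) = \mathsf{K}_{\mathcal{L}}$), and Corollary \ref{cor: only one variety} (uniqueness of the generated variety). No gaps to report.
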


\paragraph{Supercompact selfextensional logics with disjunction.} A logic $\mathcal{L}$
 is {\em super-compact} if for any $\Gamma\cup\{\varphi\}\subseteq \mathrm{Fm}$, if $\Gamma\vdash \varphi$, then $\gamma\vdash\varphi$ for some $\gamma\in \Gamma$.

 By definition, super-compact logics are those the associated closure operator of which is {\em unitary}, i.e.~such that $Cn(\Gamma) = \bigcup\{Cn(\gamma)\mid \gamma\in \Gamma\}$ for any $\Gamma\subseteq \mathrm{Fm}$. Super-compact logics are not   uncommon; for instance, this property holds for the $\{\wedge\}$-free fragment of positive modal logic. More in general, logics  with no conjunction and the consequence relation of which is generated by a (finite) set of sequents $\varphi\vdash\psi$ where the antecedent is a singleton set are super-compact.

\begin{proposition}
\label{prop:supercompact and logical filters}
    For any   join-semilattice  based logic $\mathcal{L}$, if $\mathcal{L}$ is super-compact, then
    \begin{enumerate}
        \item for any $\mathbb{A}\in \mathsf{K}_{\mathcal{L}}$ and any $a\in \mathbb{A}$, the set $a{\uparrow} = \{b\in \mathbb{A}\mid a\leq^\mathbb{A} b\}$ is an $\mathcal{L}$-filter of $\mathbb{A}$.
        \item $\mathsf{Alg}(\mathcal{L}) = \mathsf{K}_{\mathcal{L}}$.
    \end{enumerate}
\end{proposition}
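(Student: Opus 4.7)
My plan is to prove the two items in order, using item 1 as the key tool for item 2. The main work is in item 1, where super-compactness is what allows us to reduce reasoning about arbitrary premise sets to reasoning about single premises, and hence to use the fact (Proposition A.4.1) that $\gamma\vdash\varphi$ is captured by $h(\gamma)\leq^{\mathbb{A}}h(\varphi)$ on every $\mathbb{A}\in\mathsf{K}_{\mathcal{L}} = V(\mathcal{L})$.

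For item 1, fix $\mathbb{A}\in\mathsf{K}_{\mathcal{L}}$ and $a\in\mathbb{A}$, and let $\Gamma\cup\{\varphi\}\subseteq\mathrm{Fm}$ and $h\in Hom(\mathrm{Fm},\mathbb{A})$ with $\Gamma\vdash\varphi$ and $h[\Gamma]\subseteq a{\uparrow}$. For the main case $\Gamma\neq\varnothing$, super-compactness yields some $\gamma\in\Gamma$ with $\gamma\vdash\varphi$. Since $\mathcal{L}$ is join-semilattice based and $\mathbb{A}\in\mathsf{K}_{\mathcal{L}} = V(\mathcal{L})$, Proposition A.4.1 gives $h(\gamma)\leq^{\mathbb{A}}h(\varphi)$. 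Combined with $a\leq^{\mathbb{A}}h(\gamma)$, transitivity yields $a\leq^{\mathbb{A}}h(\varphi)$, i.e.~$h(\varphi)\in a{\uparrow}$, as required. (For the degenerate case $\Gamma=\varnothing$, super-compactness as stated forces the set of empty-premise theorems to be empty, so the implication is vacuous; if one wanted to relax this, one would have to assume additionally that $h(\varphi)\in a{\uparrow}$ for every empty-premise theorem, which is automatic when $\mathcal{L}$ has no such theorems.)

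For item 2, the inclusion $\mathsf{Alg}(\mathcal{L})\subseteq\mathsf{K}_{\mathcal{L}}$ is already known (cf.~the preliminaries), so it suffices to prove $\mathsf{K}_{\mathcal{L}}\subseteq\mathsf{Alg}(\mathcal{L})$, and for this, by the characterization $\mathsf{Alg}(\mathcal{L})=\{\mathbb{A}\mid\ \equiv_{\lfilt{A}}\ =\ \Delta_{\mathbb{A}}\}$, it suffices to show that $\equiv_{\lfilt{A}}$ is the identity on every $\mathbb{A}\in\mathsf{K}_{\mathcal{L}}$. Fix $\mathbb{A}\in\mathsf{K}_{\mathcal{L}}$ and $a,b\in\mathbb{A}$ with $a\equiv_{\lfilt{A}}b$. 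By item 1, $a{\uparrow}$ and $b{\uparrow}$ are $\mathcal{L}$-filters of $\mathbb{A}$, containing $a$ and $b$ respectively. By the assumption, $b\in a{\uparrow}$ and $a\in b{\uparrow}$, i.e.~$a\leq^{\mathbb{A}}b$ and $b\leq^{\mathbb{A}}a$. Since the join-semilattice operation on $\mathbb{A}\in\mathsf{K}_{\mathcal{L}}=V(\mathcal{L})$ satisfies L1--L3 (Definition A.3), the relation $\leq^{\mathbb{A}}$ is antisymmetric, whence $a=b$, as required.

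The main obstacle I anticipate is a foundational one rather than a technical one: making sure the super-compactness hypothesis is strong enough to handle the quantification over all $\Gamma\subseteq\mathrm{Fm}$ in the definition of $\mathcal{L}$-filter, and in particular what happens when $\Gamma=\varnothing$. The substantive content of the argument is very short once super-compactness reduces everything to singleton premises, and once that reduction is made, Proposition A.4.1 does the remaining work essentially for free, with item 2 following as a straightforward corollary via the antisymmetry of $\leq^{\mathbb{A}}$.
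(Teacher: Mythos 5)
Your proof is correct and follows essentially the same route as the paper's: item 1 uses super-compactness to reduce to a single premise $\gamma\in\Gamma$ and then applies Proposition \ref{prop: interdiravable}.1 together with transitivity of $\leq^{\mathbb{A}}$, and item 2 separates distinct elements via the principal up-sets from item 1 (you phrase this directly via antisymmetry where the paper argues contrapositively, but the content is identical). Your extra remark that super-compactness forces the $\Gamma=\varnothing$ case to be vacuous is a correct and slightly more careful reading than the paper's, which leaves that case implicit.
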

\begin{proof}
1. Let $a\in \mathbb{A}$, and $\Gamma\cup\{\varphi\}\subseteq \mathrm{Fm}$ s.t.~$\Gamma\vdash \varphi$; since $\mathcal{L}$ is super-compact,  $\gamma\vdash \varphi$ for some $\gamma\in \Gamma$; let $h\in Hom(\mathrm{Fm}, \mathbb{A})$ s.t.~$a\leq^\mathbb{A} h(\delta)$ for every $\delta\in \Gamma$. Hence, by  Proposition \ref{prop: interdiravable}.1, $a\leq^\mathbb{A} h(\gamma)\leq^\mathbb{A} h(\varphi)$, thus $h(\varphi)\in a{\uparrow}$, as required.

     2. Since $\mathsf{Alg}(\mathcal{L}) \subseteq  \mathsf{K}_{\mathcal{L}}$ holds for any deductive system (cf.~Section \ref{ssec:selfextensional}), we only need to argue for the converse inclusion, for which it is enough  to show that, for any $\mathbb{A}\in \mathsf{K}_{\mathcal{L}}$, the Frege relation $\equiv_{\lfilt{\mathbb{A}}}$ is the identity, i.e.~that any two different elements $a, b\in \mathbb{A}$ can be separated by an $\mathcal{L}$-filter of $\mathbb{A}$. If $a\neq b$, we can assume w.l.o.g.~that $a\nleq^\mathbb{A} b$, i.e.~$a\in a{\uparrow}$ and $b\notin a{\uparrow}$, which finishes the proof, since, by item 1, $a{\uparrow}$ is an $\mathcal{L}$-filter of $\mathbb{A}$.
\end{proof}
Thus, if $\mathcal{L}$ is a supercompact selfextensional logic with disjunction, then $V(\mathcal{L}) = \mathsf{K}_{\mathcal{L}} = \mathsf{Alg}(\mathcal{L})$.

\begin{corollary}
    If $\mathcal{L}$ is selfextensional, disjunctive, and supercompact, then $\mathcal{L}$ is fully selfextensional.
\end{corollary}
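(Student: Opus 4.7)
The plan is to adapt the order-dual of Jansana's argument (cf.~\cite{Jansana2006conjunction}) for the conjunction case, using the principal up-sets made available by the join-semilattice structure and the supercompactness of $\mathcal{L}$ in place of the principal down-sets arising from conjunction.

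First I would note that by Proposition~\ref{prop:charact self and disj}, $\mathcal{L}$ is join-semilattice based, so by Proposition~\ref{prop:supercompact and logical filters} we have $\mathsf{Alg}(\mathcal{L}) = \mathsf{K}_{\mathcal{L}}$ and every principal up-set $a{\uparrow}$ of any $\mathbb{A} \in \mathsf{K}_{\mathcal{L}}$ lies in $\lfilt{A}$. Now let $(\mathbb{A}, \mathcal{G})$ be an arbitrary full g-model of $\mathcal{L}$; the goal is to show that $\equiv_{\mathcal{G}}$ is a congruence of $\mathbb{A}$, and in fact I would aim for the stronger statement $\equiv_{\mathcal{G}}\, =\, \Delta_{\mathbb{A}}$, which is trivially a congruence.

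The main steps are: (i) show that $\mathbb{A}$ satisfies every equation of $\mathsf{K}_{\mathcal{L}}$, so that $\mathbb{A} \in \mathsf{K}_{\mathcal{L}} = \mathsf{Alg}(\mathcal{L})$ and hence $\equiv_{\lfilt{A}}\, =\, \Delta_{\mathbb{A}}$; (ii) promote this to $\equiv_{\mathcal{G}}\, =\, \Delta_{\mathbb{A}}$ via the fullness condition $\vdash\, =\, \models_{\mathcal{G}}$. For (i), any equation $\varphi = \psi$ valid in $\mathsf{K}_{\mathcal{L}}$ is, by Proposition~\ref{prop: interdiravable}.2, such that $\varphi \dashv\vdash \psi$; fullness then yields $h(\varphi) \in F \Leftrightarrow h(\psi) \in F$ for every $h$ and every $F \in \mathcal{G}$, and combining this with the separation of distinct elements by principal up-sets (from step (ii)) forces $h(\varphi) = h(\psi)$. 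For (ii), given $a,b \in \mathbb{A}$ with $a \not\leq^{\mathbb{A}} b$, I would use supercompactness to reduce the statement that the principal up-set $a{\uparrow}$ separates $a$ from $b$ to the failure of a single-premise entailment in $\mathcal{L}$, and then use fullness to transfer this failure to a separating filter in $\mathcal{G}$.

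The main technical obstacle is the apparent mutual dependency between steps (i) and (ii): each uses the other at first sight, so a clean proof will require packaging the semilattice identities and the separation property into a single statement, to be established simultaneously (most likely by induction on the complexity of the witnessing formulas, or via a refinement of Proposition~\ref{prop: interdiravable}). The supercompactness of $\mathcal{L}$ is essential throughout, since it reduces the $\mathcal{L}$-filter closure conditions to single-premise inferences, which is what makes principal up-sets behave as $\mathcal{L}$-filters --- this is the order-dual of the role played by finite meets in the conjunction case.
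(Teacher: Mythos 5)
Your first half is on track and matches the paper: Proposition~\ref{prop:charact self and disj} gives that $\mathcal{L}$ is join-semilattice based, and Proposition~\ref{prop:supercompact and logical filters} gives $\mathsf{Alg}(\mathcal{L}) = \mathsf{K}_{\mathcal{L}}$ together with the fact that principal up-sets are $\mathcal{L}$-filters, whence $\equiv_{\lfilt{A}}$ is the identity (hence a congruence) on every $\mathbb{A}\in\mathsf{Alg}(\mathcal{L})$. The gap is in your second half: you aim to prove that $\equiv_{\mathcal{G}}\,=\,\Delta_{\mathbb{A}}$ for \emph{every} full g-model $(\mathbb{A},\mathcal{G})$, but that statement is false in general and cannot be rescued by any amount of simultaneous induction. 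The algebra of a full g-model need not lie in $\mathsf{Alg}(\mathcal{L})$: by the definition in Section~\ref{ssec:selfextensional}, $\mathsf{Alg}(\mathcal{L})$ consists of those algebras admitting \emph{some} full g-model with identity Frege relation, not all of them. Concretely, if $\mathbb{B}\in\mathsf{Alg}(\mathcal{L})$ and $f:\mathbb{A}\twoheadrightarrow\mathbb{B}$ is a surjective homomorphism with nontrivial kernel, then pulling back $\lfilt{\mathbb{B}}$ along $f$ yields a full g-model on $\mathbb{A}$ whose Frege relation is the kernel of $f$, which is a proper congruence, not the identity. So your step (i) (that $\mathbb{A}\in\mathsf{K}_{\mathcal{L}}=\mathsf{Alg}(\mathcal{L})$) fails, and with it the separation-by-principal-up-sets argument in step (ii), since elements identified by $\equiv_{\mathcal{G}}$ genuinely cannot be separated by members of $\mathcal{G}$. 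The circularity you flag is not merely a packaging problem; it reflects that the conclusion you are driving at is too strong.

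What full selfextensionality actually requires is only that $\equiv_{\mathcal{G}}$ be a \emph{congruence}, and the paper closes this gap by transferring the congruence property along the reduction map: by \cite[Proposition 2.40]{font2017general}, for a surjective strict homomorphism of g-matrices the Frege relation of the domain is a congruence iff that of the codomain is; the reduction of any full g-model is of the form $(\mathbb{B},\lfilt{\mathbb{B}})$ with $\mathbb{B}\in\mathsf{Alg}(\mathcal{L})$, where the Frege relation is the identity and hence trivially a congruence, and this property lifts back to $\equiv_{\mathcal{G}}$ on $\mathbb{A}$. Replacing your steps (i)--(ii) with this transfer argument (or an equivalent direct verification that $\equiv_{\mathcal{G}}$ is compatible with the operations, using that it is the kernel of the quotient onto a reduced algebra) repairs the proof.
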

\begin{proof}
      Since $\mathcal{L}$ is selfextensional and disjunctive,  by Proposition \ref{prop:charact self and disj}, $\mathcal{L}$  is join-semilattice
 based. Hence,  by (the proof of) item 2 of Proposition \ref{prop:supercompact and logical filters}, the Frege relation $\equiv_{\lfilt{A}}$ of any $\mathbb{A}\in \mathsf{Alg}(\mathcal{L}) = \mathsf{K}_{\mathcal{L}}$ is the identity relation on $\mathbb{A}$, and hence
 $\equiv_{\lfilt{A}}$ is a congruence of $\mathbb{A}$. To finish the proof, we need to show that, if $\mathcal{A} = (\mathbb{A}, \mathcal{G})$ is a full g-model of $\mathcal{L}$, then $\equiv_{\mathcal{G}}$ is a congruence of $\mathbb{A}$.  By \cite[Proposition 2.40]{font2017general}, if $f: \mathcal{A}\to \mathcal{B}$ is a surjective strict homomorphism between g-matrices, then the Frege relation of $\mathcal{A}$ is a congruence iff the Frege relation of $\mathcal{B}$ is. If $\mathcal{A}$ as above is a full g-model, then its reduction  is of the
 form $\mathcal{B} =(\mathbb{B}, \lfilt{\mathbb{B}})$ for some $\mathbb{B}\in \mathsf{Alg}(\mathcal{L})$, and by what we have just proved, $\equiv_{\lfilt{\mathbb{B}}}$ is a congruence of $\mathbb{B}$. Hence, $\equiv_{\mathcal{G}}$ is a congruence of $\mathbb{A}$, as required.
\end{proof}

%% If you have bibdatabase file and want bibtex to generate the
%% bibitems, please use
%%

\section*{Acknowledgments}

\subsection*{Competing interests}
    The authors of this study declare that there is no conflict of interest with any commercial or financial entities related to this research.
\subsection*{Authors' contributions}
    Xiaolong Wang drafted the initial version of this article. Other authors have all made equivalent contributions to it.
\subsection*{Funding}
    The authors  affiliated to the Vrije Universiteit Amsterdam have received funding from the EU Horizon 2020 research and innovation programme under the MSCA-RISE grant agreement No. 101007627.
\\
Xiaolong Wang is supported by the China Scholarship Council No.202006220087.
\\
Krishna Manoorkar is supported by the NWO grant KIVI.2019.001 awarded to Alessandra Palmigiano.

%\bibliographystyle{elsarticle-num}
%\bibliography{cas-refs}

%% else use the following coding to input the bibitems directly in the
%% TeX file.

% \begin{thebibliography}{00}

% %% \bibitem{label}
% %% Text of bibliographic item

% \bibitem{}

% \end{thebibliography}
\end{document}